\documentclass[letterpaper,11pt,oneside,reqno]{amsart}
\usepackage{amsmath,amsthm,amsfonts,amssymb,bbm,color}
\usepackage{graphicx,psfrag,subfigure,url}
\usepackage{cite}
\usepackage{mathrsfs}
\usepackage[colorlinks=true]{hyperref}
\usepackage[DIV13]{typearea}
\usepackage{comment}
\usepackage{caption}
\numberwithin{equation}{section}
\newcommand{\bflambda}{\ensuremath{\bar{\lambda}}}
\newcommand{\bfmu}{\ensuremath{\bar{\mu}}}
\newcommand{\bfnu}{\ensuremath{\bar{\nu}}}
\newcommand{\I}{{\rm i}}
\newcommand{\Id}{\mathbbm{1}}
\newcommand{\Or}{\mathcal{O}}

\newcommand{\e}[0]{\varepsilon}

\newcommand{\EE}{\ensuremath{\mathbb{E}}}
\newcommand{\PP}{\ensuremath{\mathbb{P}}}
\newcommand{\R}{\ensuremath{\mathbb{R}}}
\newcommand{\C}{\ensuremath{\mathbb{C}}}
\newcommand{\Z}{\ensuremath{\mathbb{Z}}}

\newcommand{\Sym}{\ensuremath{\mathrm{Sym}}}

\renewcommand{\d}{\mathrm d}
\renewcommand{\Re}{\operatorname{Re}}
\renewcommand{\Im}{\operatorname{Im}}

\newtheorem{theorem}{Theorem}[section]
\newtheorem{proposition}[theorem]{Proposition}

\newtheorem{lemma}[theorem]{Lemma}
\newtheorem{corollary}[theorem]{Corollary}

\newtheorem{remark}[theorem]{Remark}

\title{Anisotropic $(2+1)$d growth and Gaussian limits of $q$-Whittaker processes}

\author[A. Borodin]{Alexei Borodin}
\address{A. Borodin,
Massachusetts Institute of Technology,
Department of Mathematics,
77 Massachusetts Avenue, Cambridge, MA 02139-4307, USA}
\email{borodin@math.mit.edu}

\author[I. Corwin]{Ivan Corwin}
\address{I. Corwin, Columbia University,
Department of Mathematics,
2990 Broadway,
New York, NY 10027, USA}
\email{ivan.corwin@gmail.com}

\author[P.L. Ferrari]{Patrik Ferrari}
\address{P.L. Ferrari,
 Bonn University,
Institute for Applied Mathematics,
Endenicher Allee 60, 53115 Bonn, Germany}
\email{ferrari@uni-bonn.de}

\begin{document}

\begin{abstract}
We consider a discrete model for anisotropic $(2+1)$-dimensional growth of an interface height function. Owing to a connection with $q$-Whittaker functions, this system enjoys many explicit integral formulas. By considering certain Gaussian stochastic differential equation limits of the model we are able to prove a space-time limit of covariances to those of the $(2+1)$-dimensional additive stochastic heat equation (or Edwards-Wilkinson equation) along characteristic directions. In particular, the bulk height function converges to the Gaussian free field which evolves according to this stochastic PDE.
\end{abstract}

\sloppy \maketitle
\setcounter{tocdepth}{1}
\tableofcontents

\section{Introduction}

A key notion in statistical mechanics and probability is that of universality classes. Roughly, this holds that the long-time and large-scale behavior of possibly complex stochastic systems group into broad classes which all show the same scaling exponents and statistics describing fluctuations. The connection between microscopic dynamics and the associated universality class is generally facilitated by a few physically relevant quantities which can be computed on the microscopic side. This article will probe the universality class associated with two-dimensional interface growth.

\subsection{Random growth in $(2+1)$-dimensions}
Random growth models have received significant attention recently. In one spatial dimensional, generic local random growth models with slope dependent growth rates fall into the $(1+1$)-dimensional Kardar-Parisi-Zhang (KPZ) universality class of which quite a lot is now known -- see the reviews and lecture notes~\cite{FS10,Fer10b,Cor11,Cor14,QS15,BG12,Qua11}.
In two spatial dimensions much less is known. It is predicted (see, for example,~\cite{Wol91}) that generic local random growth models with slope dependent growth rates fall into one of two $(2+1)$-dimensional KPZ universality classes -- the isotropic or the anisotropic class. The archetypical $(2+1)$-dimensional model is the continuum KPZ stochastic PDE
$$
\frac{\partial h}{\partial t}(t,x) = \frac{1}{2}\Delta h(t,x) + (\nabla h,Q\nabla h)(t,x) + \xi(t,x).
$$
Here $x=(x_1,x_2)\in \R^2$ and $t\in \R_{\geq 0}$ (despite saying ``space-time'' we put time $t$ before space $x$). The function $h(t,x)\in \R$ is the height above location $x$ at time $t$, the Laplacian $\Delta$ is on $\R^2$ and the noise $\xi$ is space-time white. The quadratic form in $\nabla h$ is defined with respect to a $2\times 2$ matrix $Q$. When the signature of $Q$ is $(+,+)$ or $(-,-)$, the equation is called ``isotropic'' while in the mixed case $(+,-)$ or $(-,+)$ (and the boarder case when one term is 0) it is called ``anisotropic''. Presently this equation has not been shown to be well-posed -- the noise is sufficiently rough so that solutions are distribution valued and hence not regular enough to define the non-linearity by standard means.

The difference between isotropic and anisotropic growth is quite marked. In the isotropic case, all directions are roughly the same, and there is no theoretical prediction for the scaling exponent or fluctuations. Numerics predict fluctuation growth of order $t^{0.24}$ (with $0.24$ only approximate, but different from $1/4$) -- see~\cite{HH12}. The anisotropic case has very different behavior. In particular, it was predicted by Wolf~\cite{Wol91} in 1991 that the anisotropic equation should have fluctuations which grow like $\sqrt{\ln t}$ and behave asymptotically like the equation without non-linearity -- the $(2+1)$-dimensional Edwards-Wilkinson (EW) / additive stochastic heat equation
\begin{equation}\label{EWeq}
\frac{\partial u}{\partial t}(t,x) = \frac{1}{2}\Delta u(t,x) + \xi(t,x).
\end{equation}
This equation is not function valued, but rather takes values in the space of generalized functions. The Gaussian free field is an invariant measure for the equation.

There are some results in the literature confirming the $\sqrt{\ln t}$, and Gaussian free field behavior for certain discrete growth models. Numerics performed by Halpin-Healy and Assdah~\cite{HHA92} support the $\sqrt{\ln t}$ prediction. The first rigorous result was in 1997 by Pr\"{a}hofer and Spohn~\cite{PS97} who showed $\sqrt{\ln t}$ scale fluctuations for the Gates-Westcott model~\cite{GW95} through exact calculations. The Gaussian free field prediction (in addition to the $\sqrt{\ln t}$ scaling) was demonstrated in 2008 by Borodin and Ferrari~\cite{BF08} for a discrete model related to Schur processes (the $q=0$ case of the model we introduce below). Aspects of that result were extended to a slightly more general model recently by Toninelli~\cite{Ton15}. These results (which are essentially the full set of rigorous results for $(2+1)$-dimensional anisotropic KPZ models) deal only with behavior at a single time, and it remained an important open problem to demonstrate the non-trivial temporal limit of such models.

To our knowledge, this present paper, along with the work of Borodin, Corwin and Toninelli~\cite{BCT15} on growth models on the torus (initiated near the completion of this present work, though completed prior to it), is the first work in which a scaling limit to the $(2+1)$-dimensional EW equation has been established for a model in the $(2+1)$-dimensional anisotropic KPZ universality class. The previous work mentioned above have dealt with only a single time. From the outset, let us be clear about two things. First -- we do not prove convergence as space-time processes, but rather work with covariances (avoiding complications related to working with generalized function valued solutions). Second -- our convergence result is for a system of SDEs which arise as limits of a particular discrete growth model. We rely on exact covariance formulas for the SDEs which come from the fact that the discrete growth model is an ``integrable probabilistic system''. Direct analysis of the discrete model is presently beyond our techniques.

This paper is the first instance where bulk asymptotics have been extracted from Macdonald processes (apart from the free-fermionic Schur case). The exact formulas provided by the structure Macdonald processes results in a number of nice formulas as we take certain limits. For instance, in Section~\ref{SecLLN} we demonstrate simple determinant form solutions to a system of ODEs which arises in describing the law of large numbers behavior of the bulk as the Macdonald parameter $q\to 1$. Another example is the explicit covariance formulas given in Section~\ref{SecFluc} for the SDEs which describe the fluctuations around that bulk law of large numbers behavior.

\subsection{The $q$-Whittaker particle system}

\begin{figure}
\begin{center}
\psfrag{11}[l]{$\lambda_1^{(1)}$}
\psfrag{12}[l]{$\lambda_1^{(2)}$}
\psfrag{22}[l]{$\lambda_2^{(2)}$}
\psfrag{1n}[l]{$\lambda_1^{(N)}$}
\psfrag{nn}[l]{$\lambda_N^{(N)}$}
\includegraphics[height=4cm]{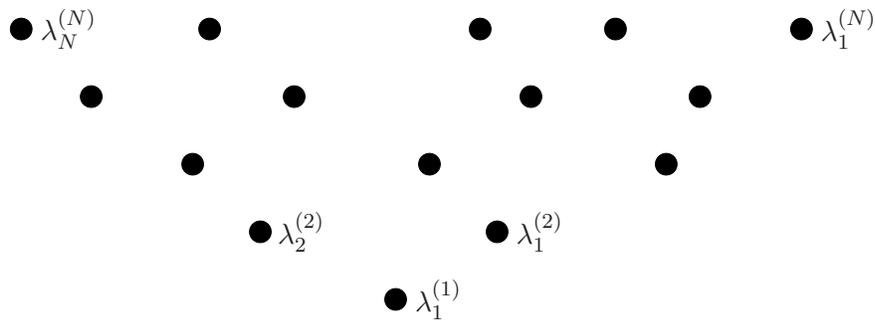}
\caption{The plot of $(\lambda^{(n)}_k,n)$ for $1\leq k\leq n\leq N=5$ yields an interlacing triangular array.}
\label{Figinterlacing}
\end{center}
\end{figure}

Measures on interlacing partitions or Gelfand-Tsetlin patterns defined in terms of symmetric functions have played an important role in asymptotic representation theory and probability, especially related to models of interacting particles, growth, directed polymers, and more broadly the Kardar-Parisi-Zhang universality class. In particular, special properties of families of symmetric functions, such as those in the Macdonald hierarchy (including $q$-Whittaker, Hall-Littlewood, Jack and Schur), enable one to construct interesting Markov dynamics which preserve these classes of measures, and also obtain exact and concise formulas for expectations of many observables under these measures. While there have been some clear successes in this direction (see for example~\cite{BC11, BG12, BP13, Cor14} for some surveys and reviews) there remain many directions untouched and many open problems unresolved. In this present paper we probe the bulk fluctuation behavior of certain $(2+1)$-dimensional growth models associated with these measures, as well as study certain SDE and SPDE limits.

Our investigation starts at the level of $q$-Whittaker processes. These are measures on interlacing sequences of partitions, or equivalently Gelfand-Tsetlin patterns, or interlacing triangular arrays of non-negative integers -- see Figure~\ref{Figinterlacing} for an illustration and Section~\ref{secnotback} for definitions and notations related to the objects we presently discuss. The measures we consider on such interlacing arrays are called $q$-Whittaker processes and are specified by ``specializations'' $\rho$ of $q$-Whittaker functions.

The $q$-Whittaker process under the ``Plancherel'' specialization $\rho$ which is indexed by a parameter $\gamma>0$ can be realized as the time $\gamma$ distribution of a fairly simple Markov dynamic on the interlacing triangular array $\big\{\lambda^{(n)}_k(\gamma):1\leq k\leq n\leq N\big\}$. Start with packed initial data $\lambda^{(n)}_k(0) \equiv 0$ for all $1\leq k\leq n\leq N$. At time $\gamma$, associate to each $\lambda^{(n)}_k(\gamma)$ a rate
$$
\frac{\big(1-q^{\lambda^{(n-1)}_{k-1}(\gamma) - \lambda^{(n)}_{k}(\gamma)}\big) \big(1-q^{\lambda^{(n)}_{k}(\gamma) - \lambda^{(n)}_{k+1}(\gamma)+1}\big)}{1-q^{\lambda^{(n)}_{k}(\gamma) - \lambda^{(n-1)}_{k}(\gamma)+1}}
$$
exponential clock (i.e. in time $d\gamma$ the clock rings with probability given by $d\gamma$ times the above rate). When the $\lambda^{(n)}_{k}(\gamma)$-clock rings, find the longest string $\lambda^{(n)}_k(\gamma) = \lambda^{(n+1)}_k(\gamma) = \cdots = \lambda^{(n+\ell)}_k(\gamma)$ and increase all coordinates in this string by one. Observe that if $\lambda^{(n)}_{k}(\gamma) = \lambda^{(n-1)}_{k-1}(\gamma)$ the jump rate automatically vanishes, hence the interlacing partitions remain interlacing under these dynamics. This ``push-block'' $q$-Whittaker particle system was introduced in~\cite{BC11} and generalizes the $q=0$ dynamics of~\cite{BF08} which relate to Schur processes (instead of $q$-Whittaker processes). Section~\ref{secAlphaDyn} describes this and other dynamics which grow $q$-Whittaker processes for various specializations.

\begin{figure}
\begin{center}
\psfrag{n}[l]{$n$}
\psfrag{x}[l]{$x$}
\includegraphics[height=4cm]{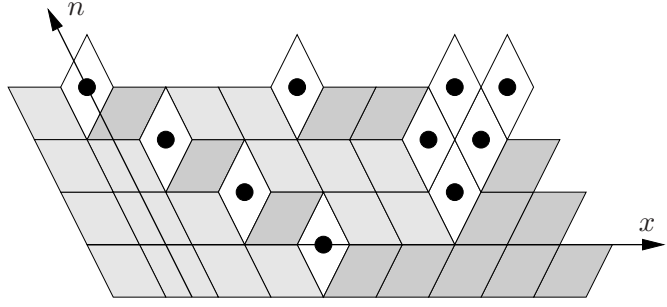}
\caption{An interface associated with the $q$-Whittaker particle system. The coordinates of the particles corresponding to $\lambda_k^{(n)}$ is $(\lambda_k^{(n)}-k+n,n)$.}
\label{green}
\end{center}
\end{figure}

The $q$-Whittaker particle system can also be mapped onto an interface growth model as illustrated in Figure~\ref{green}. In the $q=0$ case, this interface was shown to fluctuate like $\sqrt{\ln \gamma}$ and, after a suitable change of variables, have Gaussian free field statistics at a single long time. The key to that result was the ``determinantal'' structure of Schur processes which provides effective means to extract many asymptotic limits. Presently we do not know how to prove an analogous set of results for the $q$-Whittaker process -- only for certain Gaussian limits of it which arise as we take $q\to 1$. Given similar results in the two extremes of $q=0$ and $q\to 1$, it is reasonable to expect that the asymptotic behavior remains the same for the whole range of $q\in (0,1)$.

In place of the determinantal structure of Schur processes, we instead have the following type of exact formulas which follow from eigenrelations for $q$-Whittaker functions (see~\cite{BC11} or Proposition~\ref{PropTheorem45} for the precise statement of this result along with the contours of integration):
\begin{equation*}
\begin{aligned}
\EE \Bigg[\prod_{i=1}^{m} q^{\lambda^{(n_i)}_{n_i}(\gamma) +\cdots + \lambda^{(n_i)}_{n_i-r_i+1}(\gamma)}\Bigg]
&= \prod_{i=1}^{m} \frac{1}{(2\pi \I)^{r_i} r_i!} \, \oint \cdots \oint \prod_{1\leq i<j\leq m} \, \prod_{k=1}^{r_i} \prod_{\ell=1}^{r_j} \frac{q(z_{i,k} - z_{j,\ell})}{z_{i,k} - q z_{j,\ell}}\\
&\times \prod_{i=1}^{m} \frac{ (-1)^{\frac{r_i(r_i+1)}{2}} \!\!\!\prod\limits_{1\leq k<\ell\leq r_i} (z_{i,k} - z_{i,\ell})^2}{\prod\limits_{k=1}^{r_i} (z_{i,k})^{r_i}} \, \prod_{k=1}^{r_i} \frac{1}{(1-z_{i,k})^{n_{i}}}\, e^{z_{i,k}(q-1)} \, dz_{i,k}.
\end{aligned}
\end{equation*}
While these formulas contain enough information to entirely identify the distribution of the time $\gamma$ distribution, combining them so as to extract useful asymptotics remains a challenge and has really only been successful in the study of $\lambda^{(n)}_n$ or $\lambda^{(n)}_1$.

Simulations of the $q$-Whittaker particle system created soon after its introduction, such as illustrated in Figure~\ref{configs}, revealed an interesting phenomena as $q=e^{-\e}\to 1$ (i.e. $\e\to 0$). If time is scaled so that $\gamma = \e^{-1}\tau$, and the entire picture is centered by $\gamma$ and scaled by $\e^{-1}$, then starting from the right, bands of particles seemed to deterministically peel off. There still seemed to be some randomness, but on a smaller $\e^{-1/2}$ scale. Indeed, the first set of results which we prove in this paper are a characterization of this curious law of large number (LLN) behavior and then a description of the scale $\e^{-1/2}$ central limit theorem (CLT) type fluctuations around the LLN. Following from our $q$-Whittaker dynamics, we can write down ODEs for the LLN and SDEs for the CLT. Following from our integral formulas, we can also write down explicit integral and determinant formula solutions to the LLN ODEs, and explicit integral formulas for the space-time covariance of the SDEs. These LLN results are contained in Section~\ref{SecLLN} (in particular, the integral formulas are given in Proposition~\ref{PropLLN} while the ODEs are verified in Corollary~\ref{ODEcheck}) while the CLT results are contained in Section~\ref{SecFluc} (in particular the integral formulas for the covariance is given in Proposition~\ref{PropGaussian} while the SDEs are derived in Proposition~\ref{propSDE}). These explicit solutions / covariances are quite remarkable and we do not know of a mechanism which would produce them without having had the prelimiting $q$-Whittaker analogs. Of course, once written down, it is possible to directly verify them.

Let us briefly note that there is another $q\to 1$ limit which was considered in~\cite{BC11,BCF12}. In that case, time is scaled like $\e^{-2}$. In this longer time scale, the triangular array LLN crystalizes with spacing of order $\e^{-1}\ln\e^{-1}$ and fluctuation SDEs of order $\e^{-1}$ around that given by the Whittaker process and SDEs introduced in~\cite{OCon09}.

\begin{figure}
\begin{center}
\includegraphics[width=\textwidth]{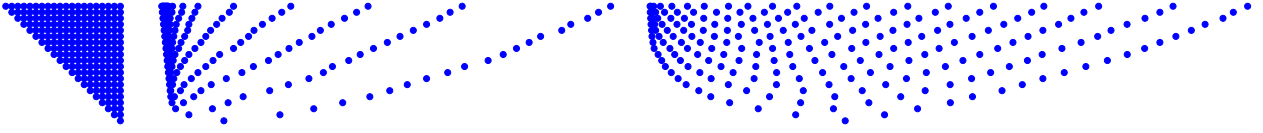}
\caption{Simulation of $q$-Whittaker particle system with $N=20$ particles, $q=e^{-\e}$ and $\e=0.01$. The centered and diffusively scaled particle process $\lambda^{(n)}_k(\e^{-1} \tau)$ is plotted for $\tau=1$ and $\tau=10$.}
\label{configs}
\end{center}
\end{figure}

The system of SDEs derived in this $q\to 1$ limit is given in Proposition~\ref{propSDE}, and exact integral formulas for the covariance of this system is given in Proposition~\ref{PropGaussian}. Given this limiting system, it was again our goal to extract long-time and large-scale ($N\to \infty$) limits and recover the EW equation. The covariance formulas take the form of random matrix type integrals, and unfortunately they were not yet in a form amenable to perform the required asymptotic analysis to reach the goal.

In Section~\ref{seclargetimesim}, we take yet another limit to further reduce the complexity of formulas and SDEs, still keeping the model complex enough to be interesting. We consider the limit of the triangular array for fixed $N$, as time goes to infinity and the SDE solutions (which are already centered) are rescaled diffusively. Calling $\zeta^{(n)}_k(T)$ the resulting particle system, we show in Proposition~\ref{PropLargeLsde} that the $\zeta$ satisfy the system of SDEs (the $W^{(n)}_k(T)$ are independent Brownian motions) indexed by $\{k,n\in \Z: 1\leq k\leq n\}$
$$
d \zeta^{(n)}_k(T) = \left(\frac{-n+1}{T} \zeta^{(n)}_k + \frac{k-1}{T} \zeta^{(n-1)}_{k-1}(T) + \frac{n-k}{T} \zeta^{(n-1)}_{k}(T)\right) dT + dW^{(n)}_k(T)
$$
and in Proposition~\ref{PropLargeL} that their fixed time covariance is given by the following formula which holds for $n_1\geq n_2$, and any $r_i\leq n_i$ for $i=1,2$:
\begin{equation*}
\begin{aligned}
&{\rm Cov}\left(\zeta^{(n_1)}_{n_1}(T)+\ldots+\zeta^{(n_1)}_{n_1-r_1+1}(T);\zeta^{(n_2)}_{n_2}(T)+\ldots+\zeta^{(n_2)}_{n_2-r_2+1}(T)\right)\\
=\,&\frac{\displaystyle\oint \oint \sum_{k=1}^{r_1}\sum_{\ell=1}^{r_2}\frac{1}{z_k-w_\ell} \Big(\prod\limits_{1\leq i<j\leq r_1}(z_j-z_i)^2\prod\limits_{m=1}^{r_1} \frac{e^{T z_m}}{(z_m)^{n_1}}dz_m\Big)
\Big(\prod\limits_{1\leq i<j\leq r_2}(w_j-w_i)^2\prod\limits_{m=1}^{r_2} \frac{e^{T w_m}}{(w_m)^{n_2}}dw_m\Big)}
{\Big(\displaystyle\oint \prod\limits_{1\leq i<j\leq r_1}(z_j-z_i)^2\prod\limits_{m=1}^{r_1} \frac{e^{T z_m}}{(z_m)^{n_1}}dz_m\Big)
\Big(\oint \prod\limits_{1\leq i<j\leq r_2}(w_j-w_i)^2\prod\limits_{m=1}^{r_2} \frac{e^{T w_m}}{(w_m)^{n_2}}dw_m\Big)},
\end{aligned}
\end{equation*}
where the integrals are around $0$ and the $z$-contours contains the $w$-contours.

\begin{figure}
\begin{center}
\psfrag{0}[l]{Time $0$}
\psfrag{s}[l]{Time $S$}
\psfrag{t}[l]{Time $T$}
\psfrag{N}[r]{Size $\sim N$}
\includegraphics[height=4cm]{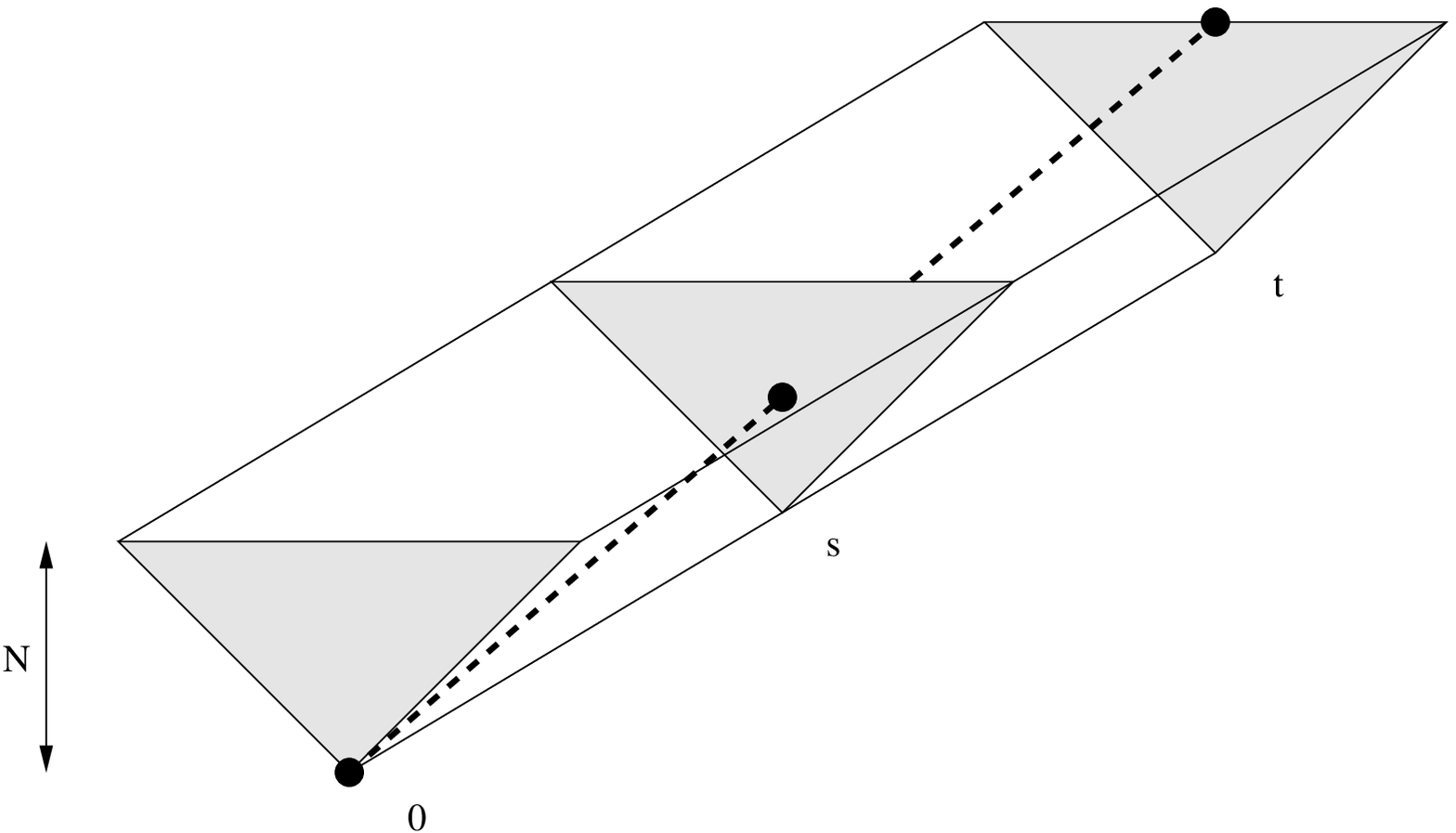}
\caption{A characteristic ray (dotted) comes from the origin at time $0$ at a constant velocity. The dots represent the spatial points associated with time $S$ and time $T$.}
\label{multitime}
\end{center}
\end{figure}

The remainder of the paper focuses on studying the large-scale $N\to \infty$ behavior of this system and its covariance. The SDEs are invariant under diffusive scaling (that is how they arose) so we keep $T$ fixed. Theorem~\ref{thm5.1} and Proposition~\ref{PropCovTwoTimes} contain the limiting covariance for $\zeta$. The asymptotics in Theorem~\ref{thm5.1} deal with the fixed time, two-point covariance and are quite involved and take up a considerable length of the paper. There are essentially three contour integrals and the integrand (in exponential form, centered around the critical points) has one slow-manifold and two fast-manifolds. We first integrate over the fast-manifolds and then consider the asymptotic behavior along the slow manifold. The result of this analysis shows that there are non-trivial fixed time covariances at distances $\Or(N)$ with an explicit limiting covariance function. Proposition~\ref{PropCovTwoTimes} deals with the two-time covariance, and since the temporal dynamics are at this point rather simple, this analysis is fairly straight-forward. It is also due to these simple temporal dynamics that we see that fluctuations never fully decorrelate in time.

Having figured out the asymptotic covariance, we then consider the space-time covariance in $\sqrt{N}$-neighborhoods around rays emanating from the origin at time $T=0$. These rays (illustrated in Figure~\ref{multitime}) are sometimes called characteristics. Fluctuations of space-time points not on such characteristics decorrelate quickly, whereas those on the characteristics display the ``slow decorrelation'' phenomena. This phenomena was observed in $1+1$ growth models~\cite{Fer08,CFP10b}, conjectured (but never proven) for models in $2+1$ dimensions as well~\cite{BF08}. In particular, Corollary~\ref{cor530} shows that
for $d>0$, $a\in (0,1)$, and $T>S>0$ fixed, if we set, for $\eta=(\eta_1,\eta_2)\in \R^2$, $\zeta(T,\eta;N) := N^{1/2} \zeta^{(n)}_k(T)$ where
$$
n=\left(dN+\left(\eta_1\sqrt{(1-a)d}+\eta_2\sqrt{ad}\right)\sqrt{N}\right)T, \qquad k= \left((1-a)dN+\eta_1\sqrt{(1-a)d}\sqrt{N}\right)T,
$$
then for $\eta,\lambda,\mu,\nu\in \R^2$,
\begin{align*}
&\lim_{N\to\infty} {\rm Cov}\left(\zeta(T,\eta;N)-\zeta(T,\lambda;N),\, \zeta(S,\mu;N)-\zeta(S,\nu;N)\right)\\
&\qquad = \frac{S}{\pi d\sqrt{a(1-a)}} \Big(G_{\tau}(|\eta-\mu|)-G_{\tau}(|\eta-\nu|)-G_{\tau}(|\lambda-\mu|)+G_{\tau}(|\lambda-\nu|)\Big).
\end{align*}
The limiting covariance is in terms of
$$\tau=\frac{T-S}{T}, \qquad p_{\tau}(\sigma) = \frac{1}{2\pi \tau} e^{-(\sigma_1^2+\sigma_2^2)/2\tau}, \qquad \Gamma(s,x) = \int_x^{\infty} t^{s-1}e^{-t}dt,$$
and, for $r\in(0,\infty)$,
$$
G_{\tau}(r) = -\Gamma\big(0,\tfrac{r^2}{2\tau}\big) - \ln(r^2) = - \int_{\R^2}p_{\tau}(\sigma)\ln\big(|\sigma-\xi|^2\big)d\sigma
$$
where $\xi=(\xi_1,\xi_2)$, $r= |\xi|$. The reason we take differences of $\zeta$'s above is that the individual covariances grow like $\ln N$, and taking differences cancels this divergence.

The covariance for $\zeta$ can be matched to that of the EW equation (\ref{EWeq}) in equilibrium, which is given by (Section~\ref{secidentify})
$$
{\rm Cov}\big[u(t,x)-u(t,y) ,u(\tilde t,\tilde x) -u(\tilde t,\tilde y)\big] = \frac{1}{4\pi} \Big(G_{t-\tilde t}(|x-\tilde x|) -G_{t-\tilde t}(|x-\tilde y|) - G_{t-\tilde t}(|y-\tilde x|) + G_{t-\tilde t}(|y-\tilde y|)\Big).
$$
In particular, taking $S = \frac{d\sqrt{a(1-a)}}{4}$, $\tilde t= 0$ and $t = \tau$ gives an exact matching of the covariances as we hoped to achieve. It is notable that for $S$ fixed, as $T$ varies from $S$ to infinity, the parameter $\tau$ goes from 0 to $1$ and hence our $\zeta$ process only relates to the EW equation for a fixed time interval $t\in [0,1]$. We do not have an explanation for why this occurs in our limit. We suspect that this apparent time dilation may result from some of the intermediate limits we took of the $q$-Whittaker particle system. The Wolf prediction would seem to suggest that had we taken the $N\to \infty$ of the $q$-Whittaker particle system directly, the EW equation would arise as the limit with time varying as $t\in [0,\infty)$. This is certainly a point which warrants further study.

\subsection*{Acknowledgements}
The authors are grateful to discussions with Julien Dub\'edat, Leonid Petrov, Hao Shen, Fabio Toninelli, and Li-Cheng Tsai. The authors also appreciate funding from the Simons Foundation in the form of the ``The Kardar-Parisi-Zhang Equation and Universality Class'' Simons Symposium, from the Galileo Galilei Institute in the form of the ``Statistical Mechanics, Integrability and Combinatorics'' program, and the Kavli Institute for Theoretical Physics in the form of the ``New approaches to non-equilibrium and random systems: KPZ integrability, universality, applications and experiments'' program. This research was supported in part by the National Science Foundation under grant PHY-1125915.
A. Borodin was partially supported by the NSF grants DMS-1056390 and DMS-1607901, and by a Radcliffe Institute for Advanced Study Fellowship, and a Simons Fellowship.
I. Corwin was partially supported by the NSF through DMS-1208998, Microsoft Research and MIT through the Schramm Memorial Fellowshop, the Clay Mathematics Institute through a Clay Research Fellowship, the Institute Henri Poincar\'e through the Poincar\'e Chair, and the Packard Foundation through a Packard Fellowship for Science and Engineering.
P.L.~Ferrari is supported by the German Research Foundation via the SFB 1060--B04 project.

\section{Notation and background}\label{secnotback}

\subsection{$q$-deformed functions}\label{secqdef}
We will assume throughout that $q\in (0,1)$ and generally associate $q$ with $\e>0$ via $q=e^{-\e}$. The {\it $q$-Pochhammer symbol} is defined as
$$
(a;q)_{n} = \prod_{i=0}^{n-1}(1-q^i a)
$$
with the obvious extension to the infinite product when $n=\infty$. We will make use of the following, readily observed asymptotics that for $a$ fixed, as $\e\to 0$,
$$
\ln (a;e^{-\e})_{\e^{-1}b} \approx \e^{-1} g_a(b) \qquad \textrm{and}\qquad \ln (a;e^{\e})_{\e^{-1}b} \approx \e^{-1} g_a(-b),
$$
where
$$
g_a(b) = \int_{0}^{b} \ln(1-a e^{-s})ds.
$$

A random variable follows the {\it $q$-geometric distribution} with parameters $q,\alpha\in (0,1)$ if it takes values in $s\in \{0,1,\ldots\}$ according to probability
$$
\PP(s) = \frac{\alpha^s}{(q;q)_s} (\alpha;q)_\infty.
$$

Let $c\in \{0,1,\ldots\}\cup \{+\infty\}$ and $s\in \{0,1,\ldots, c\}$. For real parameters $q,\xi,\eta$ define
$$
\varphi_{q,\xi,\eta}(s|y) = \xi^s \frac{(\eta/\xi;q)_s (\xi;q)_{c-s}}{(\eta;q)_c}\, \frac{(q;q)_c}{(q;q)_s (q;q)_{c-s}}
$$
with the obvious extension to $c=\infty$ given by
$$
\varphi_{q,\xi,\eta}(s|\infty) = \xi^s \frac{(\eta/\xi;q)_s (\xi;q)_\infty} {(\eta;q)_{\infty} (q;q)_s}.
$$
For $q\in (0,1)$ and $0\leq \eta\leq \xi<1$, this defines a probability distribution on $s\in \{0,1,\ldots, c\}$ called the {\it $q$-Hahn distribution}~\cite{P13,C14}. We will make use of another set of parameters which also defines a probability distribution using this function. For $q\in (0,1)$ and $a,c\leq b$ nonnegative integers,
$$
\varphi_{q^{-1},q^a,q^b}\big(s|c\big)
$$
defines a probability distribution on $s\in \{0,1,\ldots,c\}$, as observed in~\cite[Section 6.1]{PM15}.

\subsection{Partitions}\label{secpart}

A {\it partition} $\lambda$ is a non-increasing sequence of non-negative integers $\lambda_1\geq \lambda_2\geq \cdots$. The {\it length} of a partition (written as $\ell(\lambda)$) is the number of non-zero parts. We define $|\lambda| = \sum \lambda_i$. A partition $\mu$ {\it interlaces} with $\lambda$ (written as $\mu \preceq \lambda$ or $\lambda \succeq \mu$) if for all $i$, $\lambda_i \geq \mu_i \geq \lambda_{i+1}$. We will consider measures on sequences of interlacing partitions $\bflambda = (\lambda^{(N)}\succeq \lambda^{(N-1)} \succeq \cdots \succeq \lambda^{(1)})$ wherein $\ell\big(\lambda^{(n)}\big)=n$ for $1\leq n\leq N$. For notational convenience, we adopt the convention that $\lambda^{(0)}$ is the empty partition containing only zeros, and $\lambda^{(n)}_k\equiv +\infty$ for $k\leq 0$. In general, we will use a bar (e.g. $\bflambda$, $\bfmu$) to denote a sequence of interlacing partitions such as described above. We call such a sequence $\bflambda$ {\it packed} if $\lambda^{(n)}_k \equiv 0$ for all $1\leq k\leq n \leq N$.

For a collection of variables $\vec{x} = (x_1,x_2,\ldots)$, and any pair of interlacing partitions $\lambda \succeq \mu$, {\it skew $q$-Whittaker symmetric functions} $P_{\lambda/\mu}(\vec{x})$ and $Q_{\lambda/\mu}(\vec{x})$ are symmetric functions in the $\vec{x}$ variables with coefficients which are rational functions of $q$. They are the same as skew Macdonald symmetric functions with the parameter $t=0$ (we will soon use $t$ to represent a time, having nothing to do with the Macdonald $t$ parameter). There exist explicit combinatorial formulas for these functions as well as many beautiful relations and applications. In the present article we quote all necessary results pertaining to these functions. Those unfamiliar with them and interested in learning more can refer to~\cite{Mac79,BC11} for further information and background.

\subsection{$q$-Whittaker processes}
A {\it specialization} of the algebra of symmetric functions $\Sym$ is an algebra homomorphism $\Sym\to \C$. A specialization is {\it $q$-Whittaker non-negative} if it takes non-negative values on all skew $q$-Whittaker $P$ and $Q$ functions. We will work with two types of $q$-Whittaker non-negative specializations -- the {\it alpha}, and the {\it Plancherel} (see~\cite[Section 2.2.1]{BC11} for more on specializations). The alpha specialization is parameterized by a finite collection of positive real $\vec{\alpha}= (\alpha_1,\ldots,\alpha_t)$ and amounts to substituting $x_i=\alpha_i$ for $1\leq i\leq t$ and $x_i=0$ for all other $i$. The Plancherel specialization is parameterized by one positive real $\gamma$ and corresponds to the $t\to \infty$ limit of substituting $x_i= (1-q)\gamma/t$ for $1\leq i\leq t$ and $x_i=0$ for all other $i$. These specializations will be written as $P_{\lambda}(\vec{\alpha})$ or $P_{\lambda}(\gamma)$ (and likewise for $Q$). Equivalently, the specializations can be defined through the values taken by $g_{k}= Q_{(k)}$, the $q$-analog of the complete homogeneous symmetric functions $h_n$, the monomials of which form a linear basis of $\Sym$: for a formal parameter $u$,
\begin{equation}\label{eqPiu}
\Pi(u;\vec{\alpha}):=\sum_{k\geq 0} g_k(\vec{\alpha}) u^n = \prod_{j=1}^{t} \frac{1}{(\alpha_j u;q)_{\infty}},\qquad \textrm{and} \qquad \Pi(u;\gamma):=\sum_{k\geq 0} g_k(\gamma) u^n = e^{\gamma u}.
\end{equation}
We do not consider the {\it beta specialization} (see~\cite[Section 2.2.1]{BC11}) here since it is unclear whether it admits a limit under the scaling we focus on soon. The alpha and Plancherel specializations are $q$-Whittaker nonnegative. Note that it is easy to combine the alpha and Plancherel specializations. We proceed considering each separately, though all subsequent results can also be stated considering both simultaneously.

We now define the {\it $q$-Whittaker process} under the alpha or Plancherel specializations. Consider a sequence of positive reals $\vec{a}=(a_1,\ldots, a_N)$. For the alpha specialization, let $\vec{\alpha}=(\alpha_1,\ldots, \alpha_t)$ be positive reals such that $a_i\alpha_j<1$ for all $i,j$. For the Plancherel specialization let $\gamma$ be a positive real. Denote both of these specialization by the symbol $\rho$. The {\it ascending $q$-Whittaker process}~\cite{BC11} is a measure $\PP_{\vec{a};\rho}$ on interlacing partitions $\bflambda$ defined by
$$
\PP_{\vec{a};\rho}(\bflambda) =\frac{ Q_{\lambda^{(N)}}(\rho) \, P_{\lambda^{(N)}/\lambda^{(N-1)}}(a_N)P_{\lambda^{(N-1)}/\lambda^{(N-2)}}(a_{N-1})\cdots P_{\lambda^{(1)}}(a_1)}{\Pi(\vec{a};\rho)}
$$
where the normalizing constant (i.e. the sum of the numerators over all collections of interlacing partitions) is given by
$$
\Pi(\vec{a};\rho) = \prod_{i=1}^{N} \Pi(a_i;\rho),
$$
with $\rho=\vec{\alpha}$ or $\rho=\gamma$ and $\Pi(u;\rho)$ defined as in (\ref{eqPiu}). Likewise define $\EE_{\vec{a};\rho}$ as the expectation operator with respect to this probability measure. Figure~\ref{Figinterlacing} illustrates how to associate an interlacing triangular array with a sequence of interlacing partitions $\bar{\lambda}$.

\subsection{Integral formulas}\label{secintforms}

Utilizing Macdonald difference operators,~\cite{BC11} provided a general approach to computing expectations of a wide variety of observables with respect to Macdonald measures. This approach was generalized to the full Macdonald process in~\cite{BCGS13}. The integral formulas we recall here follow from the Macdonald parameter $t=0$ degeneration of those results. In particular,
Proposition~\ref{PropTheorem45} follows from~\cite[Theorem 4.5]{BCGS13} whereas Proposition~\ref{PropTheorem46} follows from~\cite[Theorem 4.6]{BCGS13}.

\begin{figure}
\begin{center}
\psfrag{a}[c]{}
\psfrag{zm}[r]{$z_{m,k}$}
\psfrag{zm1}[r]{$z_{m-1,k}$}
\psfrag{z11}[r]{$z_{1,k}$}
\psfrag{zik}[c]{$z_{i,k}$}
\includegraphics[height=5.5cm]{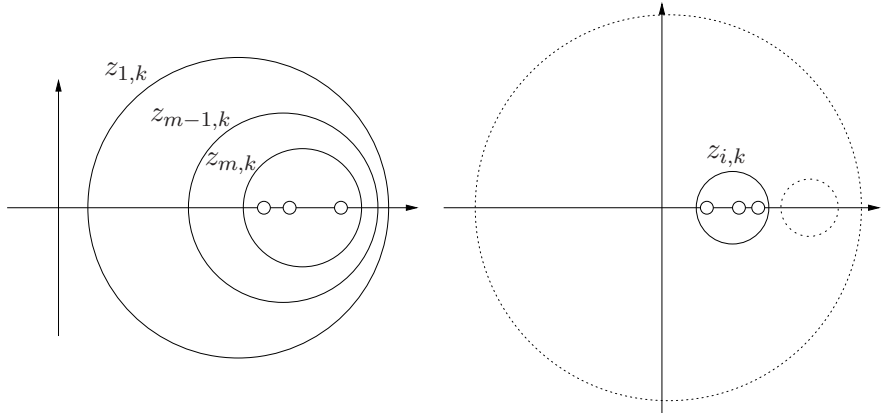}
\caption{Contours for Propositions~\ref{PropTheorem45} (left) and~\ref{PropTheorem46} (right). Left: the inner contour is for all $z_{m,k}$, $1\leq k\leq r_m$. It encloses $a_1,\ldots,a_N$ (represented as white dots). The next contour contains $q$ times this contour and is for all $z_{m-1,k}$, , $1\leq k\leq r_{m-1}$. The final contour, which contains $q$ times all previous ones, is for $z_{1,k}$, , $1\leq k\leq r_1$. Right: all contours are chosen to lie on a small circle enclosing the $a_1,\ldots,a_N$, and not intersecting the image of the contour times $q^{-1}$ (the smaller dotted circle) or the origin. For the alpha specialization we also require the contours be contained in the large dotted circle centered at the origin of radius $q\min_j \alpha_j^{-1}$.}
\label{Figtheorem45contours}
\end{center}
\end{figure}

\begin{proposition}\label{PropTheorem45}
Fix $N\geq 1$, positive reals $\vec{a} = (a_1,\ldots, a_N)$, $m\geq 1$, a sequence of $m$ integers $N\geq n_1\geq n_2\geq \cdots \geq n_m\geq 1$, and $r_1,\ldots, r_m$ such that $0\leq r_i\leq n_i$ for $1\leq i\leq m$. Then for an alpha specialization $\rho = \vec{\alpha}$ with $a_i\alpha_j<1$ for all $i,j$, or for a Plancherel specialization $\rho=\gamma>0$,
\begin{equation}\label{eqMoments}
\begin{aligned}
\EE_{\vec{a};\rho} \Bigg[\prod_{i=1}^{m} q^{\lambda^{(n_i)}_{n_i} +\cdots + \lambda^{(n_i)}_{n_i-r_i+1}}\Bigg]
&= \prod_{i=1}^{m} \frac{1}{(2\pi \I)^{r_i} r_i!} \, \oint \cdots \oint \prod_{1\leq i<j\leq m} \, \prod_{k=1}^{r_i} \prod_{\ell=1}^{r_j} \frac{q(z_{i,k} - z_{j,\ell})}{z_{i,k} - q z_{j,\ell}}\\
&\times \prod_{i=1}^{m} \frac{ (-1)^{\frac{r_i(r_i+1)}{2}} \!\!\!\prod\limits_{1\leq k<\ell\leq r_i} (z_{i,k} - z_{i,\ell})^2}{\prod\limits_{k=1}^{r_i} (z_{i,k})^{r_i}} \, \prod_{k=1}^{r_i} \prod_{\ell=1}^{n_{i}} \frac{-a_\ell}{z_{i,k} - a_\ell}\, \frac{\Pi(q z_{i,k};\rho)}{\Pi(z_{i,k};\rho)} \, dz_{i,k},
\end{aligned}
\end{equation}
with $\Pi(u;\rho)$ as in (\ref{eqPiu}).
We assume that the parameters have been chosen so that the following choice of integration contours exist (see the left-hand side of Figure~\ref{Figtheorem45contours} for an illustration of such contours). The $z_{i,k}$ contour includes all $a_1,\ldots, a_N$ as well as contains the image under multiplication by $q$ of all $z_{j,\ell}$ contours for $j>i$ and arbitrary $\ell$; no contours include 0.
\end{proposition}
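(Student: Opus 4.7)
The plan is to derive~(\ref{eqMoments}) from the spectral theory of Macdonald's commuting family of difference operators, specialized to $t=0$ (the $q$-Whittaker case). The key ingredient is a collection of operators $\mathcal{D}_{n,r}$, acting on symmetric functions in $(a_1,\dots,a_N)$, whose joint eigenfunctions are the $q$-Whittaker polynomials $P_\lambda$ with eigenvalues
\[
\mathcal{D}_{n,r}\,P_\lambda(\vec{a})=q^{\lambda_n+\lambda_{n-1}+\cdots+\lambda_{n-r+1}}\,P_\lambda(\vec{a}).
\]
Such $\mathcal{D}_{n,r}$ arise as suitable combinations in Macdonald's hierarchy at $t=0$. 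Combined with the Cauchy identity $\sum_\lambda P_\lambda(\vec{a})Q_\lambda(\rho)=\Pi(\vec{a};\rho)$, the eigenrelation immediately reduces the expectation on the left of~(\ref{eqMoments}) to
\[
\frac{1}{\Pi(\vec{a};\rho)}\,\big(\mathcal{D}_{n_1,r_1}\cdots\mathcal{D}_{n_m,r_m}\big)_{\vec{a}}\,\Pi(\vec{a};\rho),
\]
so the task becomes computing the action of a composition of explicit difference operators on a fully explicit generating function.

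The second step is to rewrite each $\mathcal{D}_{n,r}$ as a multivariate contour integral, in the style of Noumi--Shiraishi and as developed in~\cite{BC11}. Acting on a symmetric function $f$ of $(a_1,\dots,a_N)$,
\[
\mathcal{D}_{n,r}\,f(\vec{a})=\frac{(-1)^{r(r+1)/2}}{(2\pi\I)^r r!}\oint\cdots\oint\frac{\prod_{k<\ell}(z_k-z_\ell)^2}{\prod_k z_k^{r}}\prod_{k=1}^r\prod_{\ell=1}^{n}\frac{-a_\ell}{z_k-a_\ell}\,\big[T_{q,z_1}\cdots T_{q,z_r}f\big]\,dz_1\cdots dz_r,
\]
with each contour enclosing $\{a_1,\dots,a_n\}$ and no other singularities. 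Verifying this amounts to computing residues at $z_k\in\{a_1,\dots,a_n\}$ and matching against the explicit $t=0$ form of the Macdonald operators. When applied to $f=\Pi(\vec{a};\rho)$, each shift $T_{q,z_k}$ produces the factor $\Pi(qz_k;\rho)/\Pi(z_k;\rho)$, which equals $\prod_j(1-\alpha_j z_k)$ for $\rho=\vec{\alpha}$ and $e^{(q-1)\gamma z_k}$ for $\rho=\gamma$ --- exactly the specialization factors on the right of~(\ref{eqMoments}).

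The composition $\mathcal{D}_{n_1,r_1}\cdots\mathcal{D}_{n_m,r_m}$ is then handled by iterating the contour integral representation and nesting the contours as in the left of Figure~\ref{Figtheorem45contours}: the $z_{i,\cdot}$-contours enclose the images under multiplication by $q$ of all $z_{j,\cdot}$-contours with $j>i$. This arrangement ensures that each outer shift $T_{q,z_{i,k}}$ acts holomorphically inside the integrand generated by the inner operators, without crossing any poles. The non-commutativity of these shifts with the rational factors of the inner operators, after the $a_\ell$-dependence has been resolved by residues, produces precisely the cross-term kernel
\[
\prod_{i<j}\prod_{k=1}^{r_i}\prod_{\ell=1}^{r_j}\frac{q(z_{i,k}-z_{j,\ell})}{z_{i,k}-q z_{j,\ell}}.
\]

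The hard part is the nested-contour bookkeeping in this final step: one must verify that no spurious poles are crossed during successive shifts, that the contours can be chosen consistently (the content of the existence hypothesis in the statement), and that all shifted factors assemble correctly into the cross-term kernel. This delicate argument is carried out for generic Macdonald parameter $t$ in~\cite[Theorem 4.5]{BCGS13}, of which the present formula is the $t=0$ specialization; in the $t=0$ case the operators and integrand simplify significantly while the structural argument is unchanged.
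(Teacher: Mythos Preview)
Your proposal is correct and matches the paper's approach: the paper simply states that the formula follows from the $t=0$ degeneration of~\cite[Theorem 4.5]{BCGS13}, and you have sketched precisely the mechanism behind that theorem --- Macdonald difference operators acting diagonally on $P_\lambda$, their contour-integral realization, and the nested-contour composition producing the cross-term kernel --- before deferring to the same reference for the bookkeeping. Your outline is in fact more informative than the paper's one-line citation.
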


\begin{proposition}\label{PropTheorem46}
Fix $N\geq 1$, positive $\vec{a} = (a_1,\ldots, a_N)$, $m\geq 1$, a sequence of $m$ integers $N\geq n_1\geq n_2\geq \cdots \geq n_m\geq 1$, and $r_1,\ldots, r_m$ such that $0\leq r_i\leq n_i$ for $1\leq i\leq m$. Then for an alpha specialization $\rho = \vec{\alpha}$ with $a_i\alpha_j<q^m$ for all $i,j$, or for a Plancherel specialization $\rho=\gamma>0$,
\begin{align*}
\EE_{\vec{a};\rho} \Bigg[\prod_{i=1}^{m} q^{-\lambda^{(n_i)}_{1} - \cdots - \lambda^{(n_i)}_{r_i}}\Bigg]
&= \prod_{i=1}^{m} \frac{1}{(2\pi \I)^{r_i} r_i!} \, \oint \cdots \oint \prod_{1\leq i<j\leq m} \, \prod_{k=1}^{r_i} \prod_{\ell=1}^{r_j} \frac{z_{i,k} - z_{j,\ell}}{z_{i,k} - q^{-1} z_{j,\ell}}\\
&\times \prod_{i=1}^{m} \frac{ (-1)^{\frac{r_i(r_i-1)}{2}} \!\!\!\prod\limits_{1\leq k<\ell\leq r_i} (z_{i,k} - z_{i,\ell})^2}{\prod\limits_{k=1}^{r_i} (z_{i,k})^{r_i}} \, \prod_{k=1}^{r_i} \prod_{\ell=1}^{n_{i}} \frac{z_{i,k}}{z_{i,k} - a_\ell}\, \frac{\Pi(q^{-1} z_{i,k};\rho)}{\Pi(z_{i,k};\rho)} \, dz_{i,k},
\end{align*}
with $\Pi(u;\rho)$ as in (\ref{eqPiu}).
We assume that the parameters have been chosen so that the following choice of integration contours exist (see the right-hand side of Figure~\ref{Figtheorem45contours} for an illustration of such contours). The $z_{i,k}$ contour includes all $a_1,\ldots, a_N$, does not include 0, and is not contained in the image under multiplication by $q^{-1}$ of any of the $z_{j,\ell}$ contours for $j>i$ and arbitrary $\ell$. Additionally, for the alpha specialization, we assume that the contours are contained in the disc centered at the origin of radius $q\min_{j} \alpha_j^{-1}$.
\end{proposition}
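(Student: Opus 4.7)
The plan is to derive Proposition~\ref{PropTheorem46} as a direct specialization of \cite[Theorem 4.6]{BCGS13} for Macdonald processes to the $t=0$ case, which gives $q$-Whittaker processes. That general theorem is itself proved via the \emph{lower} (or dual) Macdonald difference operators, which are diagonalized by Macdonald polynomials with eigenvalues that, in the $t=0$ limit, reduce to products of the form $\prod_{k=1}^{r_i} q^{-\lambda^{(n_i)}_k}$. This is exactly the observable appearing on the left-hand side, so the integral formula arises by applying iterated dual operators to the normalizing Cauchy kernel $\Pi(\vec{a};\rho) = \sum_{\lambda} P_\lambda(\vec{a}) Q_\lambda(\rho)$ and converting each operator into a contour integral.

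Concretely, I would proceed in three stages. First, for fixed $i$, the operator producing the eigenvalue $\prod_{k=1}^{r_i} q^{-\lambda_k}$ admits an $r_i$-fold contour integral realization in which only the variables $a_1,\dots,a_{n_i}$ enter (through the factor $\prod_\ell z_{i,k}/(z_{i,k}-a_\ell)$, since the operator ``sees'' only those $a$'s — equivalent to restricting attention to the partition $\lambda^{(n_i)}$); the shift ratio $\Pi(q^{-1}z_{i,k};\rho)/\Pi(z_{i,k};\rho)$ is the action of the difference operator on the $\rho$-dependent piece of the kernel, and the squared Vandermonde $\prod_{k<\ell}(z_{i,k}-z_{i,\ell})^2$ together with the factor $(z_{i,k})^{-r_i}$ and the sign $(-1)^{r_i(r_i-1)/2}$ come from antisymmetrizing the residues and taking the $t\to 0$ limit of the Macdonald inner-product weights. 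Second, concatenating $m$ such operators (with indices $n_1\geq\cdots\geq n_m$) generates the cross-interaction factors $\prod_{k,\ell}(z_{i,k}-z_{j,\ell})/(z_{i,k}-q^{-1}z_{j,\ell})$ for $i<j$, which records the non-commutativity of the operators. Third, the factorial $r_i!$ and the sign come from collecting the symmetric expression over permutations of the integration variables.

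The only subtle points are the contour choices. Unlike Proposition~\ref{PropTheorem45}, here all contours can be chosen as small circles around $a_1,\dots,a_N$, provided the $z_{i,k}$-contour is \emph{not} contained inside $q^{-1}$ times the $z_{j,\ell}$-contour for $j>i$; this ensures that the denominators $z_{i,k}-q^{-1}z_{j,\ell}$ stay non-vanishing and no spurious residues are collected. For the alpha specialization one must additionally note that
$$\frac{\Pi(q^{-1}z;\vec{\alpha})}{\Pi(z;\vec{\alpha})} = \prod_{j=1}^{t} \frac{1}{1-\alpha_j q^{-1}z}$$
has poles at $z=q/\alpha_j$, forcing the contours to lie inside the disc of radius $q\min_j \alpha_j^{-1}$; the hypothesis $a_i\alpha_j<q^m$ guarantees room to accommodate all $m$ successive $q^{-1}$-shifts while keeping the $a_\ell$'s inside. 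The main obstacle is purely bookkeeping — tracking how the residue expansion, symmetrization, and $t\to 0$ degeneration combine to give precisely the stated signs and Vandermonde structure — but no new analytic ideas are required beyond those already developed in \cite{BC11,BCGS13}.
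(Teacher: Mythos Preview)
Your proposal is correct and matches the paper's approach exactly: the paper does not give an independent proof but simply states that Proposition~\ref{PropTheorem46} is the Macdonald $t=0$ degeneration of \cite[Theorem 4.6]{BCGS13}. Your sketch of how the dual difference operators, contour constraints, and the $t\to 0$ limit combine to yield the stated formula is a faithful unpacking of that citation.
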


\subsection{Dynamics preserving the $q$-Whittaker process}\label{secAlphaDyn}

Besides having many observables whose expectations admit concise formulas, Macdonald processes (or in this case, $q$-Whittaker processes) arise as the fixed time marginals of certain Markov dynamics on interlacing partitions. We will describe two dynamics -- one in discrete time which relates to the alpha specialization and one in continuous time which relates to the Plancherel specialization. In both cases we will initialize $\bflambda(0)$ to the packed configuration (i.e., $\lambda^{(n)}_k \equiv 0$ for $1\leq k\leq n\leq N$). These two so-called ``push-block'' dynamics were introduced in~\cite{BC11}. Their Schur analogs were introduced in~\cite{BF08} based on a construction related to earlier work of~\cite{DF90}. There are other types of discrete and continuous time dynamics which preserve the respective alpha and Plancherel specialized $q$-Whittaker processes, such as considered in~\cite{OP12,PM15,P16}. Appendix~\ref{appother} contains a description of some of these dynamics (including some related to generalizations of the Robinson-Schensted-Knuth or RSK correspondence) as well as some of the parallel analysis in those cases as we preform for the push-block dynamics below.

\subsubsection{Alpha dynamics}\label{alphadyn}
We describe the push-block discrete time alpha dynamic which arises as the degeneration of~\cite[Example 2.3.4 (1)]{BC11} when the Macdonald parameter $t=0$ (not to be confused with the time parameter $t$ we use in what follows). Here the time parameter $t\in \{0,1,\ldots\}$ and we assume $a_i\alpha_j<1$ for all $i,j$. Proposition~\ref{propalphapres} shows that these dynamics preserve the $q$-Whittaker process. For arbitrary $n\geq 1$, given partitions $\mu$ of length $n$ and $\lambda$ of length $n-1$ which interlace as $\mu \succeq \lambda$, define a probability distribution on partitions $\nu$ of length $n$ by
\begin{equation}\label{Paalpha}
P_{a,\alpha}\big(\nu | \lambda,\mu\big) = \begin{cases} \textrm{const} \cdot \phi_{\nu/\mu} \psi_{\nu/\lambda} (a \alpha)^{|\nu|} & \nu\succeq \lambda \textrm{ and } \nu\succeq \mu,\\ 0 & \textrm{otherwise}\end{cases}.
\end{equation}
Here $\textrm{const}$ is a constant (with respect to $\nu$, though depending on all other variables) which makes this a probability distribution on $\nu$ and the factors
\begin{align*}
\phi_{\lambda/\mu} &= \prod_{i=1}^{\ell(\lambda)} \frac{(q^{\lambda_i-\mu_i+1};q)_{\infty} (q^{\mu_i-\lambda_{i+1}+1};q)_{\infty}}{(q;q)_{\infty} (q^{\mu_i-\mu_{i+1}+1};q)_{\infty}},\\
\psi_{\lambda/\mu} &= \prod_{i=1}^{\ell(\lambda)} \frac{(q^{\lambda_i-\mu_i+1};q)_{\infty} (q^{\mu_i-\lambda_{i+1}+1};q)_{\infty}}{(q;q)_{\infty} (q^{\lambda_i-\lambda_{i+1}+1};q)_{\infty}}.
\end{align*}

Given a sequence of interlacing partitions (recall the notation and definitions from Section~\ref{secpart}) $\bflambda= \big(\lambda^{(N)}\succeq \lambda^{(N-1)}\succeq \cdots \succeq \lambda^{(1)}$ with $\ell(\lambda^{(n)}=n$ for all $1\leq n\leq N$, we define a Markov transition matrix to another set of interlacing partitions $\bfmu=\big(\mu^{(N)}\succeq \mu^{(N-1)}\succeq \cdots \succeq \mu^{(1)}$ with $\ell(\mu^{(n)}=n$ for all $1\leq n\leq N$ (recall the convention that $\lambda^{(0)}$ and $\mu^{(0)}$ equal the partition of all zeros) by
$$
P^{\rm{PB}}_{\vec{a};\alpha}\big(\bflambda \to \bfmu\big) = \prod_{n=1}^{N} P_{a_n,\alpha}\big(\mu^{(n)}|\mu^{(n-1)},\lambda^{(n)}\big),
$$
where $P_{a,\alpha}$ is defined in \eqref{Paalpha}.

\begin{proposition}\label{propalphapres}
Define a Markov process indexed by $t$ on interlacing partitions $\bflambda(t)$ with packed initial data and Markov transition between time $t-1$ and $t$ given by $P^{\rm{PB}}_{\vec{a};\alpha_{t}}\big(\bflambda(t-1) \to \bflambda(t)\big)$. Then, for any $t\in \{0,1,\ldots\}$, $\bflambda(t)$ is marginally distributed according to the $q$-Whittaker measure $\PP_{\vec{a};\vec{\alpha}(t)}$ with $\vec{\alpha}(t) = (\alpha_1,\ldots, \alpha_t)$.
\end{proposition}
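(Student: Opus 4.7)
The plan is to prove this by induction on $t$. The base case $t=0$ is immediate: the empty specialization $\vec\alpha(0)$ makes $Q_{\lambda^{(N)}}(\vec\alpha(0)) = \Id_{\{\lambda^{(N)}=\emptyset\}}$, so $\PP_{\vec{a};\vec\alpha(0)}$ is concentrated on the packed array, matching the initial condition $\bflambda(0)$.

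For the inductive step, assume $\bflambda(t-1) \sim \PP_{\vec{a};\vec\alpha(t-1)}$ and write $\bfmu = \bflambda(t-1)$, $\bfnu = \bflambda(t)$; the goal is
$$\sum_{\bfmu} \PP_{\vec{a};\vec\alpha(t-1)}(\bfmu)\, P^{\rm PB}_{\vec{a};\alpha_t}(\bfmu \to \bfnu) \;=\; \PP_{\vec{a};\vec\alpha(t)}(\bfnu).$$
The first step is to re-express the elementary transition in skew-function language. Using the one-variable evaluations $P_{\nu/\lambda}(a) = \psi_{\nu/\lambda}\, a^{|\nu/\lambda|}$ and $Q_{\nu/\mu}(\alpha) = \phi_{\nu/\mu}\, \alpha^{|\nu/\mu|}$, equation \eqref{Paalpha} becomes, up to a factor independent of $\nu$,
$$P_{a,\alpha}(\nu \,|\, \lambda, \mu) \;=\; \frac{P_{\nu/\lambda}(a)\, Q_{\nu/\mu}(\alpha)}{\sum_{\nu'} P_{\nu'/\lambda}(a)\, Q_{\nu'/\mu}(\alpha)},$$
whose denominator is evaluated by the one-variable skew Cauchy identity.

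The main step is then the level-by-level summation over $\bfmu$. At each level, after inserting the rewritten transition, we encounter sums of the form $\sum_\mu P_{\mu/\kappa}(a_n)\, Q_{\nu^{(n)}/\mu}(\alpha_t)$, each handled by the skew Cauchy identity (the $t=0$ case of~\cite[Ch.~VI, (7.9)]{Mac79}):
$$\sum_\mu P_{\mu/\kappa}(a)\, Q_{\nu/\mu}(\alpha) \;=\; \frac{1}{(a\alpha;q)_\infty}\sum_\tau Q_{\kappa/\tau}(\alpha)\, P_{\nu/\tau}(a).$$
Summing from the top row downward, the auxiliary partition $\tau$ produced at level $n$ is exactly what feeds into the sum at level $n-1$, so the cascade telescopes; the scalar prefactors $\prod_n (a_n\alpha_t;q)_\infty^{-1}$ accumulate to the ratio $\Pi(\vec{a};\vec\alpha(t))/\Pi(\vec{a};\vec\alpha(t-1))$, correctly converting the old normalization into the new one, while the coproduct identity $Q_\nu(\vec\alpha(t-1),\alpha_t) = \sum_\kappa Q_\kappa(\vec\alpha(t-1))\, Q_{\nu/\kappa}(\alpha_t)$ reassembles the top-row $Q$-factor into $Q_{\nu^{(N)}}(\vec\alpha(t))$; the reshuffled skew-$P$ functions reorganize into $\prod_n P_{\nu^{(n)}/\nu^{(n-1)}}(a_n)$.

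The main obstacle is the combinatorial bookkeeping of this telescoping cascade: one must check that the auxiliary partitions produced by the skew Cauchy identity at adjacent levels align with the $\mu$-variables being summed, and that the $\nu$-independent normalization constants absorbed into $P_{a,\alpha}$ combine precisely with the Cauchy prefactors to produce $\Pi(\vec{a};\vec\alpha(t))$. A more conceptual alternative, in the framework underlying~\cite{BC11}, is to identify $P^{\rm PB}_{\vec{a};\alpha_t}$ as the canonical Markov kernel obtained by intertwining a univariate top-row update $T_{\alpha_t}$ with the branching (conditional) kernel $\Lambda$ of the ascending $q$-Whittaker process; preservation of the measure is then equivalent to the commutation $\Lambda\, P^{\rm PB}_{\vec{a};\alpha_t} = T_{\alpha_t}\, \Lambda$, which at each level reduces to exactly the same skew Cauchy identity.
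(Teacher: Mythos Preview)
Your proposal is correct and in fact supplies the actual argument that the paper's one-line proof merely defers to: the paper simply cites~\cite[Section~2.3]{BC11}, and your induction via the skew Cauchy identity, together with the alternative intertwining formulation, is precisely the machinery developed there (specialized to the Macdonald parameter $t=0$). So the approaches coincide in substance; you have just unpacked the citation.
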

\begin{proof}
This is a direct consequence of the Macdonald $t=0$ degeneration of the results of~\cite[Section~2.3]{BC11}.
\end{proof}

\subsubsection{Plancherel dynamics}\label{Plandyn}

We define a continuous time push-block dynamic on interlacing partitions $\bflambda(\gamma)$, introduced in~\cite[Definition 3.3.3]{BC11} (and called the $q$-Whittaker growth process therein). In this case, $\gamma>0$ represents time. This dynamic, in fact, arises as the continuous time limit of the alpha push-block dynamics discussed above. Proposition~\ref{propplanpres} shows that this dynamic preserve the $q$-Whittaker process. Given a state $\bflambda$ at time $\gamma$, for $1\leq k\leq n\leq N$, each $\lambda^{(n)}_k$ has its own independent exponential clock with rate
\begin{equation}\label{eqDynamics}
{\rm R}^{(n)}_k=a_n\, \frac{(1-q^{\lambda^{(n-1)}_{k-1} - \lambda^{(n)}_{k}}) (1-q^{\lambda^{(n)}_{k} - \lambda^{(n)}_{k+1}+1})}{1-q^{\lambda^{(n)}_{k} - \lambda^{(n-1)}_{k}+1}}.
\end{equation}
When the $\lambda^{(n)}_{k}$-clock rings we find the longest string $\lambda^{(n)}_k = \lambda^{(n+1)}_k = \cdots = \lambda^{(n+\ell)}_k$ and increase all coordinates in this string by one. Observe that if $\lambda^{(n)}_{k} = \lambda^{(n-1)}_{k-1}$ the jump rate automatically vanishes, hence the interlacing partitions remain interlacing under these dynamics.

\begin{proposition}\label{propplanpres}
Define a continuous time Markov processes $\bflambda(\gamma)$ with the above push-block dynamics started from packed initial data. Then, for any $\gamma>0$, $\bflambda(\gamma)$ is marginally distributed according to the Plancherel specialized $q$-Whittaker process $\PP_{\vec{a};\gamma}$.
\end{proposition}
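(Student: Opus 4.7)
The plan is to obtain Proposition~\ref{propplanpres} as a Poisson scaling limit of the alpha push-block dynamics of Proposition~\ref{propalphapres}, exploiting the well-known fact that the Plancherel specialization with parameter $\gamma$ is a degeneration of the alpha specialization with $T$ equal parameters $\alpha_t=(1-q)\gamma/T$ as $T\to\infty$. Indeed, from \eqref{eqPiu} one has
$$\prod_{t=1}^{T}\frac{1}{((1-q)\gamma u/T;q)_\infty}=\exp\Bigl(-T\ln\bigl(((1-q)\gamma u/T;q)_\infty\bigr)\Bigr)\longrightarrow e^{\gamma u},$$
using $\ln(1-\alpha u/(1-q))\sim -\alpha u/(1-q)$ for small $\alpha$ and $\sum_{k\geq 0}q^k=1/(1-q)$. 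Granted this, the $T$-step alpha Markov chain started from packed data is, at time $T$, distributed according to $\PP_{\vec a;\vec\alpha(T)}$, which converges to $\PP_{\vec a;\gamma}$. What must be shown is that the $T$-step chain itself converges in distribution to the continuous-time push-block chain run up to time $\gamma$; since both chains are Markov on the same (countable) state space, it suffices to check that the infinitesimal generator of the limit agrees with \eqref{eqDynamics}.

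The key calculation is to expand $P_{a,\alpha}(\nu|\lambda,\mu)$ for $\alpha=(1-q)\gamma/T$ small. The weight $(a\alpha)^{|\nu|-|\mu|}$ (hidden in the normalized form \eqref{Paalpha}) forces $|\nu|-|\mu|\in\{0,1\}$ up to $O(\alpha^2)$, so during a single step at most one column of $\bflambda$ changes, and with probability $1-O(\alpha)$ nothing changes at all. I then read off the probability that a single coordinate $\lambda^{(n)}_k$ triggers an update (pushing the longest equal string $\lambda^{(n)}_k=\lambda^{(n+1)}_k=\cdots=\lambda^{(n+\ell)}_k$ up by one): it factorises as $\alpha\cdot a_n$ times ratios of the single-column contributions of $\phi_{\nu/\mu}$ and $\psi_{\nu/\lambda}$ before and after the jump. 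The three $q$-Pochhammer ratios that survive are exactly
$$\frac{1-q^{\lambda^{(n-1)}_{k-1}-\lambda^{(n)}_k}}{1},\qquad \frac{1-q^{\lambda^{(n)}_k-\lambda^{(n)}_{k+1}+1}}{1},\qquad \frac{1}{1-q^{\lambda^{(n)}_k-\lambda^{(n-1)}_k+1}},$$
coming respectively from the numerator factor $(q^{\mu_i-\lambda_{i+1}+1};q)_\infty$ of $\phi_{\nu/\mu}$ at level $n-1$, the numerator factor of $\psi_{\nu/\lambda}$ at level $n$, and the denominator factor $(q^{\lambda_i-\lambda_{i+1}+1};q)_\infty$ of $\psi_{\nu/\lambda}$ at level $n$. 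Collecting these reproduces exactly the rate ${\rm R}^{(n)}_k$ of \eqref{eqDynamics} up to the factor $(1-q)/(1-q)=1$ arising from the scaling of $\alpha$. The remaining terms in $\phi$ and $\psi$ at untouched columns cancel between numerator and denominator, and the pushing of lower levels $\lambda^{(n+1)}_k,\ldots,\lambda^{(n+\ell)}_k$ along with $\lambda^{(n)}_k$ is forced by the interlacing support constraint $\nu\succeq\lambda$, $\nu\succeq\mu$ in \eqref{Paalpha}, producing the deterministic push rule.

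Granted the generator identification, a standard Trotter-type argument gives convergence of the $T$-step chain to the continuous-time chain at time $\gamma$, so the marginal at time $\gamma$ is $\PP_{\vec a;\gamma}$ as required. The main obstacle is the bookkeeping in the second paragraph: in $\phi_{\nu/\mu}\psi_{\nu/\lambda}$ there are many $q$-Pochhammer factors at every level, and one must verify that all of them except the three listed above cancel when one forms the ratio of the post-jump to the pre-jump weight. This is a purely combinatorial verification (all but three $q^{\cdot}$ arguments are unchanged by the update because only a single contiguous string of coordinates moves), but it is the place where one genuinely sees the specific form of the rate. Alternatively, one may bypass this by invoking the corresponding limit already carried out at the Macdonald level in~\cite[Section 3.3]{BC11} and specializing to $t=0$, which is the shortest route and presumably what the authors will do.
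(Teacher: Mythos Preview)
Your final sentence is exactly right: the paper's proof is the one-line citation ``This follows from the results of~\cite[Section 3.3]{BC11}.'' So you correctly anticipated the authors' route.

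Your longer sketch is an attempt to reproduce that argument rather than cite it, and the outline is sound: the Plancherel specialization is the $T\to\infty$ limit of $T$ equal alpha parameters $(1-q)\gamma/T$; the one-step alpha transition \eqref{Paalpha} then degenerates to ``stay put with probability $1-O(\alpha)$, and with probability $O(\alpha)$ a single coordinate jumps and pushes its string''; and the coefficient of $\alpha$ in that jump probability is the rate $R^{(n)}_k$ of \eqref{eqDynamics}. The three $q$-Pochhammer ratios you isolate are the correct ones. The only place your write-up is loose is exactly where you flag it: the bookkeeping showing that all other factors of $\phi_{\nu/\mu}\psi_{\nu/\lambda}$ cancel when a single string moves, and the correct handling of the sequential level-by-level update (the push is forced because level $n-1$ has \emph{already} updated when level $n$ is sampled, so $\nu^{(n)}\succeq\lambda^{(n-1)}(t)$ may force $\nu^{(n)}_k>\lambda^{(n)}_k(t-1)$). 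Filling that in would make the argument self-contained, but as the paper is content to cite \cite{BC11}, so may you be.
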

\begin{proof}
This follows from the results of~\cite[Section 3.3]{BC11}.
\end{proof}

\section{Law of large numbers for the $q$-Whittaker particle system}\label{SecLLN}
As there are many parameters in play, it is possible to consider a variety of different limits of the $q$-Whittaker process and the dynamics which preserve it. In~\cite[Section 4]{BC11} one such limit was considered for the alpha and Plancherel case. That limit involved simultaneously taking $q$ as well as the $a$'s and $\alpha$'s to 1 while taking time to infinity in a suitable manner. That limit led to the Whittaker process as well as certain dynamics preserving it (which turned out to relate to directed polymer models).

Here we consider taking $q=e^{-\e}\to 1$ while fixing the $a$'s and $\alpha$'s. In the alpha case, time remains discrete and is not scaled while in the Plancherel case, time is scaled so that $\gamma = \e^{-1} \tau$ for some new time $\tau$ which stays fixed. In this section we demonstrate how under these scalings, $\lambda^{(n)}_k(t)$ (respectively, $\lambda^{(n)}_{k}(\gamma)$) behave like $\e^{-1} x^{(n)}_k(t)$ (respectively, $\e^{-1} x^{(n)}_{k}(\tau)$) where the $x$'s are deterministic functions. In particular, this explains the curves that are peeling off in Figure~\ref{configs}. We provide integral formulas for exponentials of sums of these $x$'s, as well as equivalent determinantal formulas which are useful in our subsequent analysis of further limits. By appealing to the dynamics described in Section~\ref{secAlphaDyn}, we derive certain difference / differential equations that the $x$'s should satisfy. We do not prove these relations directly from the dynamics, but rather show (for two of them) how one can directly observe that our formulas for the $x$'s satisfy them. Appendix~\ref{appother} contains derivations of other difference / differential equations relates to alternative dynamics defined there in. We do not, in those cases, verify that our formulas indeed satisfy these equations.

In Section~\ref{SecFluc} we probe beyond the law of large number behavior and study the $\e^{-1/2}$ scale fluctuations, deriving a Gaussian limit of the $q$-Whittaker process as well as certain dynamics which act nicely on this Gaussian measure.

In this section we work with both the discrete alpha dynamics and the continuous Plancherel dynamics. In subsequent sections, we only consider limits of the continuous dynamics. We opt to consider the LLN behavior of the discrete dynamics here since it leads to certain formulas which display greater symmetry. %and may be useful in relating these LLN differential / difference equations to cluster algebras such as the $Q$-systems (see, for example,~\cite{DiK10}). \note{cluster algebra reference}

\subsection{Integral formulas}\label{limint}

\begin{proposition}[Law of large number for the $q$-Whittaker process]\label{PropLLN}
\mbox{}

\noindent{\bf Alpha case:} For $\e>0$, consider the following scalings
\begin{equation}\label{eqScalingLLNalpha}
q=e^{-\e}, \qquad t,\,\, \vec{a}, \,\,\vec{\alpha}\, \textrm{ fixed},\qquad \lambda^{(n)}_{k}(t) = \e^{-1} x^{(n)}_{k}(t;\e).
\end{equation}
Then the following limit in probability exists
\begin{equation*}
\lim_{\e\to 0} x^{(n)}_{k}(t;\e) = x^{(n)}_{k}(t),
\end{equation*}
and the limiting $x^{(n)}_{k}(t)$ satisfies the following (defining) integral formulas
\begin{equation}\label{eq3.3alpha}
e^{-(x^{(n)}_{n}(t) +\cdots + x^{(n)}_{n-r+1}(t))} = \oint\cdots \oint \mathfrak{F}_{\vec{\alpha}(t)}(n,r;z_{1},\ldots, z_{r}) dz_1\cdots dz_{r}
\end{equation}
where, for $\vec{\alpha}(t)=(\alpha_1,\ldots,\alpha_t)$,
\begin{equation}\label{eqnfrakFalpha}
\mathfrak{F}_{\vec{\alpha}(t)}(n,r;z_{1},\ldots,z_{r}) =\frac{(-1)^{\frac{r(r+1)}{2}} \prod\limits_{1\leq k<\ell\leq r} (z_{k} - z_{\ell})^2}{(2\pi \I)^{r} r! \prod\limits_{k=1}^{r} (z_{k})^{r}} \, \prod_{k=1}^{r} \bigg(\prod_{\ell=1}^{n} \frac{a_\ell}{a_\ell-z_{k}}\bigg)\, \prod_{i=1}^{t} (1-\alpha_i z_k),
\end{equation}
and where the integration is along counterclockwise simple loops enclosing $a_1,\ldots,a_n$ but not $0$.

\noindent{\bf Plancherel case:} For $\e>0$, consider the following scalings
\begin{equation}\label{eqScalingLLNPlan}
q=e^{-\e}, \qquad \gamma=\e^{-1}\tau,\qquad \vec{a},\,\, \tau \textrm{ fixed},\qquad \lambda^{(n)}_{k} = \e^{-1} x^{(n)}_{k}(\tau;\e).
\end{equation}
Then the following limit in distribution (and in probability) exists
\begin{equation*}
\lim_{\e\to 0} x^{(n)}_{k}(\tau;\e) = x^{(n)}_{k}(\tau),
\end{equation*}
and the limiting $x^{(n)}_{k}(\tau)$ satisfy the same (defining) integral formulas as in (\ref{eq3.3alpha}) except with $\mathfrak{F}_{\vec{\alpha}(t)}(n,r;z_{1},\ldots,z_{r})$ replaced by $\mathfrak{F}_{\tau}(n,r;z_{1},\ldots,z_{r})$ where the only difference is that in (\ref{eqnfrakFalpha}), we replace the last factor $\prod_{i=1}^{t} (1-\alpha_i z_k)$ with $e^{-z_k \tau}$.
\end{proposition}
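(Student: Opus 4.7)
The strategy is to derive the claimed integral formulas from Proposition~\ref{PropTheorem45} by substituting the scalings of (\ref{eqScalingLLNalpha}) and (\ref{eqScalingLLNPlan}), passing to $\e\to 0$ inside the contour integral, and then upgrading convergence of expectations to convergence in probability by exploiting the factorization of higher moments.

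Under $q=e^{-\e}$ and $\lambda^{(n)}_k=\e^{-1}x^{(n)}_k(\cdot;\e)$, the left-hand side of (\ref{eqMoments}) with $m=1$ becomes $\EE[e^{-S^{(n)}_r(\cdot;\e)}]$, where $S^{(n)}_r:=x^{(n)}_n+\cdots+x^{(n)}_{n-r+1}$. The $\Pi$-ratio telescopes as
$$
\frac{\Pi(qz;\vec\alpha)}{\Pi(z;\vec\alpha)}=\prod_{j=1}^{t}\frac{(\alpha_j z;q)_\infty}{(\alpha_j qz;q)_\infty}=\prod_{j=1}^{t}(1-\alpha_j z)
$$
in the alpha case, so the integrand is already $\e$-independent and coincides with $\mathfrak F_{\vec\alpha(t)}$. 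In the Plancherel case with $\gamma=\e^{-1}\tau$,
$$
\frac{\Pi(qz;\gamma)}{\Pi(z;\gamma)}=e^{(q-1)\gamma z}=e^{(e^{-\e}-1)\e^{-1}\tau z}\longrightarrow e^{-\tau z}\qquad(\e\to 0),
$$
uniformly on a compact contour enclosing $a_1,\ldots,a_N$ but not $0$. By dominated convergence (the integrand is uniformly bounded on the contour), the limit $\lim_{\e\to 0}\EE[e^{-S^{(n)}_r(\cdot;\e)}]$ equals the right-hand side of (\ref{eq3.3alpha}).

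To upgrade to convergence in probability, apply Proposition~\ref{PropTheorem45} with $m\geq 2$. As $q\to 1$, the interaction factor $q(z_{i,k}-z_{j,\ell})/(z_{i,k}-qz_{j,\ell})\to 1$ uniformly on compacts away from the diagonal, and the $q$-nested contours can be collapsed to a common loop $\Gamma$. Consequently the $m$-fold integral factors in the limit:
$$
\lim_{\e\to 0}\EE\Bigl[\prod_{i=1}^{m}e^{-S^{(n_i)}_{r_i}(\cdot;\e)}\Bigr]=\prod_{i=1}^{m}\lim_{\e\to 0}\EE\bigl[e^{-S^{(n_i)}_{r_i}(\cdot;\e)}\bigr].
$$
Specializing to $m=2$ with identical indices yields $\var(e^{-S^{(n)}_r(\cdot;\e)})\to 0$, so $e^{-S^{(n)}_r(\cdot;\e)}$ converges in $L^2$, and hence in probability, to the positive deterministic constant given by the right-hand side of (\ref{eq3.3alpha}); by the continuous mapping theorem, $S^{(n)}_r(\cdot;\e)$ converges in probability to the negative logarithm of that constant, and the telescoping identity $x^{(n)}_{n-r+1}(\cdot;\e)=S^{(n)}_r(\cdot;\e)-S^{(n)}_{r-1}(\cdot;\e)$ promotes convergence to each individual coordinate.

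The main delicate step is the contour deformation: one must choose, uniformly in small $\e$, a single compact contour $\Gamma$ enclosing $\{a_\ell\}$ and avoiding $0$ on which all $q$-scaled nesting constraints are simultaneously satisfied and on which the Plancherel factor $e^{-\tau z}$ remains uniformly bounded. Once such a $\Gamma$ is selected, the interaction factors are seen to converge uniformly to $1$, pointwise convergence of the integrand combined with dominated convergence yields the formulas, and the factorization of higher moments provides the desired concentration.
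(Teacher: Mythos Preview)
Your proof is correct and follows essentially the same strategy as the paper's: take the $\e\to 0$ limit inside the moment formula of Proposition~\ref{PropTheorem45}, observe that the cross-terms tend to $1$ so that all joint moments factorize, and conclude convergence in probability from vanishing variance (Chebyshev). The only minor difference is in the contour handling: rather than collapsing to a single loop $\Gamma$ (where uniform control of the interaction factor near the diagonal needs an extra argument), the paper simply fixes a system of \emph{nested} circles valid for all $\e\in[0,\e_0]$, on which $z_{i,k}$ and $z_{j,\ell}$ for $i\neq j$ stay uniformly separated and the convergence $\frac{q(z_{i,k}-z_{j,\ell})}{z_{i,k}-qz_{j,\ell}}\to 1$ is manifestly uniform.
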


\begin{proof}
The idea of the proof is encapsulated in the following example. Consider a sequence of random variables $x_{\e}$ such that $\EE\big[e^{-k x_{\e}}\big]\to e^{-k x}$ for $k=1,2$ and $x$ deterministic. Then $\textrm{var}\big(e^{-x_{\e}}\big)\to 0$ and thus by Chebyshev's inequality, $e^{-x_{\e}}$ converges in probability to the determinstic value $e^{-x}$. This, likewise, implies that $x_{\e}$ converges in probability to $x_{\e}$. We now proceed with the proof.

The proof of both the alpha and Plancherel cases are effectively identical. As such, we will only write the Plancherel case. Under the scaling (\ref{eqScalingLLNPlan}), we have
\begin{equation*}
\EE_{\vec{a};\rho} \Bigg[\prod_{i=1}^{m} q^{\lambda^{(n_i)}_{n_i} +\cdots + \lambda^{(n_i)}_{n_i-r_i+1}}\Bigg]
=\EE_{\vec{a};\rho} \Bigg[\prod_{i=1}^{m} e^{-\left(x^{(n_i)}_{n_i}(\tau;\e) +\cdots + x^{(n_i)}_{n_i-r_i+1}(\tau;\e)\right)}\Bigg],
\end{equation*}
which by the moment formula (\ref{eqMoments}) can be written as a multiple contour integral, where the only $\e$-dependence is in the portion of the integrand given by
\begin{equation*}
\prod_{1\leq i<j\leq m} \, \prod_{k=1}^{r_i} \prod_{\ell=1}^{r_j} \frac{q(z_{i,k} - z_{j,\ell})}{z_{i,k} - q z_{j,\ell}}
 \prod_{i=1}^{m} \prod_{k=1}^{r_i} \frac{\Pi(q z_{i,k};\rho)}{\Pi(z_{i,k};\rho)}.
\end{equation*}
Here, in the Plancherel case, $\rho=\gamma=\e^{-1} \tau$, whereas in the alpha case $\rho= \vec{\alpha}(t)$.

Set $\e_0>0$, then the integration contours in (\ref{eqMoments}) can be chosen to be independent of $\e$ for all $\e\in[0,\e_0]$ by taking nested circles enclosing the given poles. Furthermore, on these given contours, we have the following uniform convergence
\begin{equation*}
\lim_{\e\to 0}\prod_{1\leq i<j\leq m} \, \prod_{k=1}^{r_i} \prod_{\ell=1}^{r_j} \frac{q(z_{i,k} - z_{j,\ell})}{z_{i,k} - q z_{j,\ell}} = 1, \qquad \lim_{\e\to 0}\prod_{i=1}^{m} \prod_{k=1}^{r_i} \frac{\Pi(q z_{i,k};\rho)}{\Pi(z_{i,k};\rho)} = \prod_{i=1}^{m} \prod_{k=1}^{r_i} e^{-z_{k} \tau}.
\end{equation*}
In the alpha case, the uniform convergence still holds, and the term $e^{-z_{k} \tau}$ in the second relation above is replaced by $\prod_{i=1}^{t} (1-\alpha_i z_k)$.

Therefore we have
\begin{equation*}
\EE_{\vec{a};\rho} \Bigg[\prod_{i=1}^{m} e^{-\left(x^{(n_i)}_{n_i}(\tau;\e) +\cdots + x^{(n_i)}_{n_i-r_i+1}(\tau;\e)\right)}\Bigg]
=\prod_{i=1}^{m}e^{-\left(x^{(n_i)}_{n_i}(\tau) +\cdots + x^{(n_i)}_{n_i-r_i+1}(\tau)\right)},
\end{equation*}
i.e., all the mixed moments of the vector $\big\{e^{-\big(x^{(n)}_{n}(\tau;\e) +\cdots + x^{(n)}_{n-r+1}(\tau;\e)\big)}\big\}_{1\leq r \leq n \leq N}$ converge to the mixed moments of the constant vector $\big\{e^{-\big(x^{(n)}_{n}(\tau) +\cdots + x^{(n)}_{n-r+1}(\tau)\big)}\big\}_{1\leq r \leq n \leq N}$. In other words, this implies that all of the variances converge to zero. Hence, by Chebyshev's inequality, we conclude that each random variable $e^{-\big(x^{(n)}_{n}(\tau;\e) +\cdots + x^{(n)}_{n-r+1}(\tau;\e)\big)}$ converges in probability to the deterministic limit of its mean $e^{-\big(x^{(n)}_{n}(\tau) +\cdots + x^{(n)}_{n-r+1}(\tau)\big)}$. This proves \eqref{eqnfrakFalpha} (in the Plancherel case). The convergence of the $x^{(n)}_k(\tau;\e)$ to the deterministic limits $x^{(n)}_k(\tau)$ readily follows from this.
\end{proof}

Note that the above result was stated for a single time $\tau$. However, it is easily extended to hold for all $\tau$ in an interval. This is because the $x^{(n)}_k(\tau;\e)$ are weakly increasing with $\tau$ and the limiting $x^{(n)}_k(\tau)$ is continuous in $\tau$. The convergence in probability can be extended to hold for any finite set of $\tau$'s and in this way we can show that the $L^\infty$ norm of the difference between $x^{(n)}_k(\tau;\e)$ and $x^{(n)}_k(\tau)$ as $\tau$ varies in an interval must go to zero.

\subsection{Determinant formulas}\label{sectDetFormulas}
Our integral expressions for the law of large number (\ref{eq3.3alpha}) can be rewritten with the help of the Cauchy-Binet formula as determinants. These formulas will be useful in subsequent asymptotics -- in particular the proof of Proposition~\ref{PropLargeLsde}.
Appendix~\ref{appother} contains a positivity results in the particular case when $a_1,\ldots, a_N=1$ which relates these determinants to certain non-intersecting lattice path partition functions.

\begin{lemma}\label{LemmaLLNdeterminant}
The alpha case law of large numbers (\ref{eq3.3alpha}) can be rewritten as
\begin{equation}\label{eq3.9}
e^{-(x^{(n)}_{n}(t) +\cdots + x^{(n)}_{n-r+1}(t))} = \det\big[A(i-j)\big]_{i,j=1}^r = (-1)^{r(r+1)/2}
\det\left[\tilde A_{i,j}\right]_{i,j=1}^r
\end{equation}
via function
\begin{equation}\label{eq3.10}
A(s)=\frac{-1}{2\pi\I}\oint z^s \prod_{\ell=1}^n\frac{a_\ell}{a_\ell-z}\,\prod_{i=1}^{t}(1-\alpha_i z)\, \frac{dz}{z},
\end{equation}
with the integral along a contour containing the $a_1,\ldots,a_N$ but not 0, and the matrix entries
\begin{equation}\label{eq3.11}
\tilde A_{i,j}=\frac{1}{2\pi\I}\oint p_{i-1}(z) p_{j-1}(z)\prod_{\ell=1}^n\frac{a_\ell}{a_\ell-z}\, \prod_{i=1}^{t}(1-\alpha_i z) \frac{dz}{z^r},
\end{equation}
where the $p_j(z)$'s are any monic polynomials of degree $j$, and the contour is the same as above.

In the Plancherel case, the law of large numbers with $x^{(n)}_{k}(\tau)$ replacing $x^{(n)}_{k}(t)$ is given in the same form, except with the term $\prod_{i=1}^{t}(1-\alpha_i z)$ in \eqref{eq3.10} and \eqref{eq3.11} replaced by $e^{-\tau z}$.
\end{lemma}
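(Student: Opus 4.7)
The plan is to apply the Andr\'eief (generalized Cauchy--Binet) identity to the $r$-fold contour integral \eqref{eq3.3alpha}--\eqref{eqnfrakFalpha}, exploiting the squared Vandermonde factor $\prod_{k<\ell}(z_k-z_\ell)^2$ that sits in front. Recall the Andr\'eief identity: when the $r$ integrations share a common contour,
\[
\frac{1}{r!}\int\!\!\cdots\!\!\int \det[f_i(z_k)]_{i,k=1}^r\det[g_j(z_k)]_{j,k=1}^r\prod_{k=1}^r w(z_k)\,dz_k = \det\!\left[\int f_i(z)g_j(z)w(z)\,dz\right]_{i,j=1}^r.
\]
The two determinantal forms in \eqref{eq3.9} correspond to two different factorizations of $\prod_{k<\ell}(z_k-z_\ell)^2$ as a product of two determinants, combined with different choices of the single-variable weight $w$.

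For the polynomial form, I would first note that for any monic polynomial $p_{j-1}$ of degree $j-1$, elementary column operations give $\det[p_{j-1}(z_k)]_{k,j}=\det[z_k^{j-1}]_{k,j}=\prod_{k<\ell}(z_\ell-z_k)$, so
\[
\prod_{1\le k<\ell\le r}(z_k-z_\ell)^2 = \det[p_{i-1}(z_k)]_{i,k}\cdot\det[p_{j-1}(z_k)]_{j,k}.
\]
Substituting into \eqref{eqnfrakFalpha} and absorbing the remaining single-variable factor $\frac{1}{(2\pi\I)z_k^r}\prod_\ell\frac{a_\ell}{a_\ell-z_k}\prod_i(1-\alpha_i z_k)$ into $w(z_k)$, one application of Andr\'eief immediately yields the matrix $\tilde A_{i,j}$ of \eqref{eq3.11}; the prefactor $(-1)^{r(r+1)/2}$ passes through from \eqref{eqnfrakFalpha} unchanged, giving the second equality.

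For the Toeplitz form, I would instead absorb the whole factor $\prod_k z_k^{-r}$ into only one of the two Vandermonde determinants, using $\det[z_k^{j-1-r}]_{k,j}=\prod_k z_k^{-r}\cdot\det[z_k^{j-1}]_{k,j}$, so that
\[
\frac{\prod_{k<\ell}(z_k-z_\ell)^2}{\prod_k z_k^r} = \det[z_k^{i-1}]_{i,k}\cdot\det[z_k^{j-1-r}]_{j,k}.
\]
Andr\'eief with weight $w(z)=\frac{1}{2\pi\I}\prod_\ell\frac{a_\ell}{a_\ell-z}\prod_i(1-\alpha_i z)$ produces a matrix whose $(i,j)$-entry is $\frac{1}{2\pi\I}\oint z^{i+j-r-2}f(z)\,dz$, where $f(z)=\prod_\ell\frac{a_\ell}{a_\ell-z}\prod_i(1-\alpha_i z)$. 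This depends only on $i+j$; to convert it to the Toeplitz form $A(i-j)$, I would reverse the column order by $j\mapsto r+1-j$, which multiplies the determinant by the reversal sign $(-1)^{r(r-1)/2}$ and turns the entry into $\frac{1}{2\pi\I}\oint z^{i-j-1}f(z)\,dz=-A(i-j)$. Pulling the minus out of each of the $r$ columns gives a further $(-1)^r$, and the combined sign $(-1)^{r(r-1)/2+r}=(-1)^{r(r+1)/2}$ cancels against the outside prefactor inherited from \eqref{eqnfrakFalpha}, leaving exactly $\det[A(i-j)]_{i,j=1}^r$.

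The only step that requires real care is this last sign bookkeeping across the column reversal and the row-scaling of $\prod z_k^{-r}$; everything else is a formal application of Andr\'eief. The Plancherel case demands no new idea: under the scaling \eqref{eqScalingLLNPlan} the only change inside $w$ is that $\prod_i(1-\alpha_i z)$ is replaced by $e^{-\tau z}$, and both factorizations above go through verbatim.
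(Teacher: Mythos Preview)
Your argument is correct and follows essentially the same route as the paper: both proofs apply the Andr\'eief/Cauchy--Binet identity to the integral \eqref{eq3.3alpha} after factoring the squared Vandermonde as a product of two determinants, and the $\tilde A_{i,j}$ form is obtained identically. The only minor variation is in the Toeplitz step: the paper absorbs $\prod_k z_k^{-(r-1)}$ by rewriting one Vandermonde factor as $\prod_{k<\ell}(z_k^{-1}-z_\ell^{-1})$ and the remaining $\prod_k z_k^{-1}$ into the weight, which yields the entries $\oint z^{i-j}w_n(z)\,dz$ directly in Toeplitz form, whereas you shift exponents to land on a Hankel matrix and then reverse columns; both sign computations close to the same answer.
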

\begin{proof}
Since the alpha and Plancherel cases are quite similar, we only treat the alpha case presently.
Let us first prove the equality with the determinant involving the kernel in (\ref{eq3.10}).
Using the identity
\begin{equation}\label{eq3.13}
\prod_{1\leq k<\ell\leq r}\left(\frac1{z_k}-\frac1{z_\ell}\right) =\frac{(-1)^{r(r-1)/2}\prod_{1\leq k<\ell\leq r}(z_k-z_\ell)}{\prod_{k=1}^r (z_k)^{r-1}}
\end{equation}
and setting $w_n(z):=\prod_{i=1}^{t} (1-\alpha_i z) \frac{1}{z}\prod_{\ell=1}^n\frac{a_\ell}{a_\ell-z}$ we have
\begin{equation*}
\begin{aligned}
e^{-(x^{(n)}_{n}(t) +\cdots + x^{(n)}_{n-r+1}(t))} &=
\frac{(-1)^{r}}{(2\pi \I)^{r} r!}\oint \cdots \oint \prod_{1\leq k<\ell\leq r} (z_{k} - z_{\ell})(z_k^{-1}-z_\ell^{-1}) \prod_{k=1}^r w_n(z_k) dz_k\\
&=\det\left[
\frac{-1}{2\pi \I}\oint w_n(z) z^{i-j} dz
\right]_{i,j=1}^r
\end{aligned}
\end{equation*}
where in the second equality we used the Cauchy-Binet identity. The kernel of this determinant matches that in (\ref{eq3.10}). Turning to the second determinant expression,
let $\tilde w_n(z)=\prod_{i=1}^{t}(1-\alpha_i t)\frac{1}{z^r}\prod_{\ell=1}^n\frac{a_\ell}{a_\ell-z}$ and let $p_j(z)=z^j+...$ be any polynomial of degree $j$ with leading coefficient $z^j$. Then $\prod_{1\leq k<\ell\leq r}(z_\ell-z_k)=\det\big[p_{j-1}(z_i)\big]_{i,j=1}^r$.
The Cauchy-Binet identity gives
\begin{equation*}
e^{-(x^{(n)}_{n}(t) +\cdots + x^{(n)}_{n-r+1}(t))} =
(-1)^{r(r+1)/2}\det\left[
\frac{1}{2\pi\I}\oint dz \tilde w_n(z) p_{i-1}(z)p_{j-1}(z)
\right]_{i,j=1}^r,
\end{equation*}
completing the proof.
\end{proof}
In the case that $a_1=\cdots= a_N=1$, we provide an even more explicit determinant formula. For the statement of the below corollary, we need one piece of notation. For two vectors $\vec{b}=(b_{1},\ldots, b_t)$ and $\vec{c}=(c_1,\ldots, c_t)$, define $e_{i}(\vec{b};\vec{c})$, for $1\leq i\leq t$ by the equality
$$
\prod_{i=1}^{t}(b_i + c_i z) = \sum_{i=0}^{t} e_i(\vec{b};\vec{c}) z^i.
$$
The $e_{i}(\vec{b};\vec{c})$ are separately symmetric in both the $\vec{b}$ and $\vec{c}$ variables.

\begin{corollary}\label{CorPolynomials}
For the alpha case, define (recall $(x)_n=x(x+1)\cdots (x+n-1)$ for $n>0$, $(x)_0\equiv 1$, and $\frac{1}{(n-1)!}=0$ for $n\leq 0$)
\begin{equation*}
G_{r,t}(m):=\sum_{i=0}^t \frac{e_{i}(\vec{1}-\vec{\alpha};\vec{\alpha}) (r)_{m-i-1}}{(m-i-1)!},
\end{equation*}
which is $0$ if $m\leq 0$. Then it holds
\begin{equation}\label{eq3.16}
e^{-(x^{(n)}_{n}(t) +\cdots + x^{(n)}_{n-r+1}(t))} = \det\Big[G_{r,t}(n+1-r+j-i)\Big]_{i,j=1}^r.
\end{equation}

In the Plancherel case, define
\begin{equation*}
G_{r,\tau}(m):=\sum_{i\geq 0} \frac{\tau^i (r)_{m-i-1}}{i!(m-i-1)!}.
\end{equation*}
Then it holds
\begin{equation}\label{eq3.16p}
e^{-(x^{(n)}_{n}(\tau) +\cdots + x^{(n)}_{n-r+1}(\tau))} = e^{-\tau r} \det\Big[G_{r,\tau}(n+1-r+j-i)\Big]_{i,j=1}^r.
\end{equation}
\end{corollary}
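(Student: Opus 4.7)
The plan is to start from Lemma~\ref{LemmaLLNdeterminant} with $a_1=\cdots=a_N=1$ and convert the Toeplitz determinant $\det[A(i-j)]_{i,j=1}^r$ into the claimed form by an explicit triangular factorization that makes the generating function of $G_{r,t}$ (resp.\ $G_{r,\tau}$) manifest.

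First I would substitute $z=1-u$ in (\ref{eq3.10}) to turn the residue at $z=1$ into a coefficient extraction at $u=0$, obtaining
\begin{equation*}
A(s) \;=\; [u^{n-1}]\,(1-u)^{s-1}\,Q(u),
\end{equation*}
where $Q(u) = \prod_{\ell=1}^t\bigl((1-\alpha_\ell)+\alpha_\ell u\bigr) = \sum_{k=0}^t e_k(\vec 1-\vec\alpha;\vec\alpha)\,u^k$ in the alpha case, and $Q(u) = e^{-\tau(1-u)} = e^{-\tau}\sum_{k\ge 0}\tfrac{\tau^k}{k!}u^k$ in the Plancherel case. Setting $F(u):=Q(u)/(1-u)^r$, a direct comparison of Taylor coefficients gives $G_{r,t}(m)=[u^{m-1}]F(u)$ in the alpha case and $e^{-\tau}G_{r,\tau}(m)=[u^{m-1}]F(u)$ in the Plancherel case, with the convention $G(m)=0$ for $m\le 0$ automatic.

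The key algebraic step is the factorization $(1-u)^{(i-j)+r-1} = (1-u)^{i-1}\cdot(1-u)^{r-j}$. Expanding both factors by the binomial theorem yields
\begin{equation*}
A(i-j) \;=\; \sum_{a,b=0}^{r-1} V_{i,a}\,M_{a,b}\,W_{b,j},
\end{equation*}
with $V_{i,a}=(-1)^a\binom{i-1}{a}$, $W_{b,j}=(-1)^b\binom{r-j}{b}$, and $M_{a,b}=[u^{n-1-a-b}]F(u)$ (which equals $G_{r,t}(n-a-b)$ in the alpha case and $e^{-\tau}G_{r,\tau}(n-a-b)$ in the Plancherel case). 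As $r\times r$ matrices this reads $[A(i-j)]_{i,j=1}^r = V\cdot M\cdot W$, so $\det[A(i-j)] = \det V\cdot\det M\cdot\det W$.

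The rest is sign bookkeeping. The matrix $V$ is lower triangular with diagonal $(-1)^{i-1}$, so $\det V=(-1)^{r(r-1)/2}$; reversing the columns of $W$ turns it into $V^\top$, whence $\det W=1$; and the substitution $a=i-1$, $b=r-j$ identifies $M$ with the claimed matrix $[G_{r,t}(n+1-r+j-i)]_{i,j=1}^r$ after one further column reversal, contributing another factor $(-1)^{r(r-1)/2}$. The three signs pair off to $1$, yielding (\ref{eq3.16}); for (\ref{eq3.16p}), pulling the scalar $e^{-\tau}$ out of each of the $r$ rows of $M$ produces the extra $e^{-\tau r}$. The only real obstacle is correctly tracking these three independent sign contributions; beyond the coefficient-extraction form of $A(s)$, no further analytic input is needed.
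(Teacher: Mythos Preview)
Your proof is correct. Both you and the paper start from Lemma~\ref{LemmaLLNdeterminant} with $a_1=\cdots=a_N=1$ and pass to the variable $u=1-z$ so that the coefficient extraction is at $u=0$; the difference is purely in how the determinant is handled. The paper exploits the second representation in Lemma~\ref{LemmaLLNdeterminant}, using the freedom to choose the monic polynomials as $p_k(z)=(z-1)^k$: after the substitution these become monomials $(-w)^k$, so $\tilde A_{i,j}$ is read off directly as $(-1)^{i+j-1}G_{r,t}(n+2-i-j)$ and a single column reversal finishes. You instead work with the Toeplitz form $\det[A(i-j)]$ and produce the triangular factorization $A=VMW$; this is in effect performing by hand the row and column operations that the polynomial choice $p_k(z)=(z-1)^k$ encodes implicitly. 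The paper's route is shorter and avoids the three-fold sign bookkeeping, while your factorization makes it transparent why the resulting matrix is again a function of $n+1-r+j-i$ only (namely, $M_{a,b}$ depends on $a+b$).
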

\begin{proof}
Let us describe how to derive the alpha case result (\ref{eq3.16}). With $a_1=\ldots=a_N=1$, let us choose $p_k(z)=(z-1)^k$ in (\ref{eq3.11}). By the change of variables $w=1-z$ and setting $m=n+2-i-j$ we then obtain
\begin{equation*}
\begin{aligned}
\tilde A_{i,j}&=\frac{(-1)^{i+j-1}}{2\pi\I}\oint \frac{\prod_{\ell=1}^{t} (1-\alpha_\ell + \alpha_\ell w)}{(1-w)^r}\frac{dw}{w^{m+1}}=\sum_{\ell=0}^{t}\sum_{b\geq 0} e_{\ell}(\vec{1}-\vec{\alpha};\vec{\alpha})\frac{(r)_b}{b!} \frac{(-1)^{i+j-1}}{2\pi\I}\oint\frac{dw}{w^{m+1-\ell-b}}\\
&=(-1)^{i+j-1}\sum_{\ell=0}^{t} \frac{e_{\ell}(\vec{1}-\vec{\alpha};\vec{\alpha}) (r)_{m-\ell-1}}{(m-\ell-1)!},
\end{aligned}
\end{equation*}
where the contour integrals in the first line are along small circles enclosing $0$.
Plugging this into (\ref{eq3.10}), taking out the $(-1)^{i}$ factor and reshuffling the columns with $j\to r+1-j$ we get (\ref{eq3.16}). The derivation of (\ref{eq3.16p}) follows similarly.
\end{proof}

\subsection{Derivation of ODEs satisfied by the law of large numbers}\label{secderodes}
In Section~\ref{limint} we determined the law of large numbers for the alpha and Plancherel $q$-Whittaker processes under certain specified scalings. In Section~\ref{secAlphaDyn} we recalled various types of Markov dynamics which preserve these classes of $q$-Whittaker processes. Therefore, it is natural to hope that taking a suitable limit of the Markov dynamics will lead us to certain deterministic ODEs which the law of large numbers satisfy. (Later in Section~\ref{SecFluc} we will push this further to consider fluctuations as well.) We presently provide heuristic (i.e. without proof) derivations of the ODEs that we expect the law of large numbers satisfies. In Section~\ref{secdirver}, in the case of the push-block ODEs, we provide direct verification that the formulas from Section~\ref{limint} do satisfy these equations. We do not pursue verifying the other cases.

\subsubsection{Alpha ODEs}\label{secalphaode}
We consider the limiting difference equations which follow from the alpha dynamics introduced earlier in Section~\ref{alphadyn}.
First consider how the dynamics observed by $\lambda^{(1)}_1$ behaves as $\e\to 0$ and with the scalings given in (\ref{eqScalingLLNalpha}). In a single time step, the change $\lambda^{(1)}_1(t) - \lambda^{(1)}_1(t-1)$ is distributed according to a $q$-geometric distribution with parameter $\alpha_t a_1$. Thus, it suffices to consider how such a distribution behaves under our scalings. Let us call $b\in (0,1)$ the fixed $q$-geometric parameter. Then, for $q=e^{-\e}$ and $x$ such that $\e^{-1}x$ is a non-negative integer, we have
$$
\PP(X=\e^{-1}x) = \frac{b^{\e^{-1} x}(b;e^{-\e})_{\infty}}{(e^{-\e};e^{-\e})_{\e^{-1}x}}.
$$
The right-hand side will be maximal for $x$ such that
$$
x\ln b -\ln (e^{-\e};e^{-\e})_{\e^{-1}x}
$$
is maximal. As observed in Section~\ref{secqdef}, as $\e\to 0$,
$\ln (e^{-\e};e^{-\e})_{\e^{-1}x} \approx \e^{-1}g_1(x)$.
Thus, $\PP(X=\e^{-1}x)$ should be maximal around the $x$ which maximizes
$ x\ln b - g_1(x)$.
Using the fact that
$
\frac{d}{dx} g_a(x) = \ln(1-a e^{-x})
$
we readily deduce that the maximizing $x$ is the solution to
$$
\ln b = \ln(1-e^{-x}),
$$
or in other words $x= -\ln(1-b)$. From this reasoning we expect the difference equation
\begin{equation}\label{x1eqn}
x^{(1)}_1(t) - x^{(1)}_1(t-1) = -\ln(1-\alpha_t a_1).
\end{equation}
Proving the above law of large numbers for the $q$-geometric distribution should be quite doable, moreover one expects that looking to higher order Taylor approximation terms, we should see a Gaussian fluctuations of order $\e^{-1/2}$. This would be relevant to the fluctuations of the alpha dynamics (though we do not pursue these any further herein).

Let us turn to the general case of $\lambda^{(n)}_k(t)$. Given $x^{(n-1)}(t)$ and $x^{(n)}(t-1)$ we seek to maximize over all $x^{(n)}(t)$ the log of the transition probability (recall the relation between $x$'s and $\lambda$'s from (\ref{eqScalingLLNalpha}) and the definition of the probability distribution used below which is given in (\ref{Paalpha}))
$$
\ln P_{a_n,\alpha_t}(\lambda^{(n)}(t)|\lambda^{(n-1)}(t),\lambda^{(n)}(t-1)),
$$
as $\e\to 0$. In fact, we really only need to identify the argmax of this quantity. In the same spirit as above, we can express the maximization problem as equivalent to finding the $\e\to 0$ limiting argmax over $\{x^{(n)}_{k}(t)\}_{k=1}^{n}$ of
\begin{align*}
&\sum_{k=1}^{n} \Big[g_1\big(x^{(n)}_{k}(t-1) -x^{(n)}_{k+1}(t-1)\big)
- g_1\big(x^{(n)}_{k}(t) -x^{(n)}_{k}(t-1)\big)
- g_1\big(x^{(n)}_{k}(t-1) -x^{(n)}_{k+1}(t)\big)\Big]
\\
&+\sum_{k=1}^{n-1} \Big[g_1\big(x^{(n)}_{k}(t) -x^{(n)}_{k+1}(t)\big)
- g_1\big(x^{(n)}_{k}(t) -x^{(n-1)}_{k}(t)\big)
- g_1\big(x^{(n-1)}_{k}(t) -x^{(n)}_{k+1}(t)\big)\Big]
\,+ \ln(\alpha_t a_n) \sum_{k=1}^{n} x^{(n)}_{k}(t).
\end{align*}
Differentiating in each $x^{(n)}_k(t)$ as $1\leq k\leq n$ varies yields a collection of critical point equations which, after introducing the notation
$$
y^{(n)}_k(t) = e^{-x^{(n)}_k(t)}
$$
(we assume the convention that $y^{(n)}_k(t)\equiv 1$ if $k>n$, and $y^{(n)}_k(t)\equiv 0$ if $k\leq 0$) becomes
\begin{equation}\label{eqsymstar}
a_n \frac{\left(1-\frac{y^{(n-1)}_{k-1}(t)}{y^{(n)}_k(t)}\right)\left(1-\frac{y^{(n)}_k(t)}{y^{(n)}_{k+1}(t)}\right)}
{1-\frac{y^{(n)}_k(t)}{y^{(n-1)}_k(t)}} =\frac{1}{\alpha_t} \frac{\left(1-\frac{y^{(n)}_k(t)}{y^{(n)}_k(t-1)}\right)\left(1-\frac{y^{(n)}_{k-1}(t)}{y^{(n)}_k(t)}\right)}{ 1-\frac{y^{(n)}_{k-1}(t-1)}{y^{(n)}_k(t)}}.
\end{equation}

In Section~\ref{secdirver} we provide a direct verification and proof that the integral formulas from Section~\ref{limint} satisfy the above equation. Notice that when $n=1$ this reduces to
$$
a_1\alpha_t = 1- \frac{y^{(1)}_1(t)}{y^{(1)}_1(t-1)},
$$
which is equivalent to (\ref{x1eqn}) derived above.

\subsubsection{Plancherel ODEs}\label{secplancode}
We consider the limiting ODE which follow from the push-block Plancherel dynamic introduced earlier in Section~\ref{Plandyn}.
Recall the scalings given in (\ref{eqScalingLLNPlan}) and consider any particle $\lambda^{(n)}_k(t)$. The rate at which it increases by one is given by $R^{(n)}_k$ in (\ref{eqDynamics}). Substituting the scalings from (\ref{eqScalingLLNPlan}) we get a rate of (recall that $\lambda^{(n)}_k(\e^{-1}\tau) = \e^{-1}x^{(n)}_k(\tau;\e)$)
\begin{equation}\label{eqnresc}
a_n\, \frac{(1-e^{x^{(n)}_{k}(\tau;\e)-x^{(n-1)}_{k-1}(\tau;\e)}) (1-q e^{x^{(n)}_{k+1}(\tau;\e)-x^{(n)}_{k}(\tau;\e)})}{1-q e^{x^{(n-1)}_{k}(\tau;\e)-x^{(n)}_{k}(\tau;\e)}}.
\end{equation}
If we assume that all $x^{(n)}_k(\tau;\e)$ vary only on a time scale of order one in $\tau$, then in time of order $\e^{-1}$, this rate will remain essentially unchanged over a time interval of length $\e^{-1}\delta \tau$ for some small $\delta \tau$. Hence the law of large numbers for Poisson random variables suggests that the change in $x^{(n)}_k(\tau;\e)$ over that $\e^{-1}\delta\tau$ time interval is exactly given by (\ref{eqnresc}). As $\e\to 0$ this suggests the limiting system of ODEs (with the conventions $x^{(n)}_k(\tau) \equiv 0$ if $k>n$, and $x^{(n)}_k(\tau)\equiv +\infty$ if $k\leq 0$)
\begin{equation}\label{eqnpushode}
\frac{d x^{(n)}_k(\tau)}{d\tau}= a_n\frac{\left(1-e^{x^{(n)}_{k}(\tau)-x^{(n-1)}_{k-1}(\tau)}\right) \left(1-e^{x^{(n)}_{k+1}(\tau)-x^{(n)}_k(\tau)}\right)}{1-e^{x^{(n-1)}_k(\tau)-x^{(n)}_k(\tau)}}.
\end{equation}
In Section~\ref{secdirver} we provide a direct verification and proof that the integral formulas from Section~\ref{limint} satisfies the above equation.

\subsection{Direct verification of the push-block ODEs}\label{secdirver}

In this section we give a direct verification that the integral formulas from Section~\ref{limint} satisfy the push-block ODEs heuristically derived above in the alpha and Plancherel cases. From (\ref{eq3.9}) we have that $e^{-x^{(n)}_{n-r+1}(t)}$ is a ratio of two Toeplitz determinants with same symbol but different sizes. The following result is a general identity concerning such ratios of specific types of Toeplitz determinants.
\begin{proposition}\label{PropMainRelLLN}
For a function $\varphi(z)$ of a complex variable, denote by
\begin{equation*}
D_r(\varphi)=\det\left[\varphi_{i-j}\right]_{i,j=1}^r,\quad \varphi_k=\frac{1}{2\pi\I}\oint_{\Gamma} \varphi(z)\frac{dz}{z^{k+1}},
\end{equation*}
where $\Gamma$ is any fixed contour.
For a function $F(z)$ of a complex variable, define
\begin{equation*}
y^{(n)}_k(t)=-\frac{D_{n+1-k}\left(\prod_{\ell=1}^{n} \frac{a_\ell}{a_\ell-z} \prod_{i=1}^t (1-\alpha_i z) F(z)\right)}{D_{n-k}\left(\prod_{\ell=1}^{n} \frac{a_\ell}{a_\ell-z} \prod_{i=1}^t (1-\alpha_i z) F(z)\right)}.
\end{equation*}
Then, (\ref{eqsymstar}) holds with the above redefinition of the $a$, $\alpha$, and $y$ parameters / functions.
\end{proposition}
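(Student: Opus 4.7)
The plan is to recognize equation \eqref{eqsymstar} as a discrete Hirota-type bilinear identity for the Toeplitz determinants $\tau^{(n)}_r(t) := D_r(w_{n,t})$, where $w_{n,t}(z) := \prod_{\ell=1}^n \frac{a_\ell}{a_\ell - z}\prod_{i=1}^t (1-\alpha_i z)\, F(z)$, and to deduce it from Jacobi's determinantal identity (Desnanot--Jacobi / Sylvester). First I would rewrite $y^{(n)}_k(t) = -\tau^{(n)}_{n-k+1}(t)/\tau^{(n)}_{n-k}(t)$ and express each factor $(1-y^{(\cdot)}_{\cdot}/y^{(\cdot)}_{\cdot})$ appearing in \eqref{eqsymstar} as a ratio of two $\tau$-values sharing a common denominator. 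Clearing denominators turns \eqref{eqsymstar} into a homogeneous polynomial identity in the values $\tau^{(n)}_{r+s}(t)$, $\tau^{(n)}_{r+s}(t-1)$, $\tau^{(n-1)}_{r+s}(t)$ for $s\in\{-1,0,1,2\}$ with $r := n-k$.

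Next I would exploit the two elementary symbol relations
\[
w_{n-1,t}(z) = \bigl(1 - z/a_n\bigr)\, w_{n,t}(z), \qquad w_{n,t}(z) = (1 - \alpha_t z)\, w_{n,t-1}(z),
\]
which translate into the moment relations $(w_{n-1,t})_k = (w_{n,t})_k - a_n^{-1}(w_{n,t})_{k-1}$ and $(w_{n,t})_k = (w_{n,t-1})_k - \alpha_t\,(w_{n,t-1})_{k-1}$. These describe an elementary column (or row) operation on the Toeplitz matrix $T_r(w_{n,t})$, so both $\tau^{(n-1)}_r(t)$ and $\tau^{(n)}_r(t-1)$ are realized as determinants of Toeplitz-type matrices sharing most rows with $T_r(w_{n,t})$. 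Then I would apply Jacobi's identity
\[
\det M \cdot \det M^{\{1,r+1\}}_{\{1,r+1\}} = \det M^{\{1\}}_{\{1\}} \det M^{\{r+1\}}_{\{r+1\}} - \det M^{\{1\}}_{\{r+1\}} \det M^{\{r+1\}}_{\{1\}}
\]
to $M = T_{r+1}(w_{n,t})$, together with one or two variants in which the first or last row is replaced by its shifted analog. The corner minors are then interpretable, after using the moment relations above, as products of $\tau$-values at the neighbouring $(n,r)$, $(n-1,r)$, and $(n, r$ at time $t-1)$ indices that occur in \eqref{eqsymstar}. As a sanity check, the boundary case $n=k=1$ reduces to $y^{(1)}_1(t)=(1-\alpha_t a_1)\,y^{(1)}_1(t-1)$, which is transparent from the integral formula \eqref{eq3.10} since the only pole inside the contour is at $z=a_1$ and the residue gains exactly the factor $(1-\alpha_t a_1)$ when one passes from $t-1$ to $t$.

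The main obstacle is organizational rather than conceptual. Because \eqref{eqsymstar} couples three distinct shift directions ($n\to n-1$, $k\to k\pm 1$, and $t\to t-1$) and the factors $(1-z/a_n)$ and $(1-\alpha_t z)$ enter asymmetrically (one multiplying, the other dividing the symbol), it is not obvious how to choose a single matrix whose Jacobi relation produces \eqref{eqsymstar} in one stroke. The pragmatic fallback is to derive two or three simpler bilinear $\tau$-identities (each from one application of Jacobi) and then combine them algebraically; this is tedious but purely mechanical. An alternative route that bypasses the combinatorial bookkeeping is to substitute the multiple-contour expression $D_r(\varphi) = \tfrac{(-1)^{r(r-1)/2}}{r!(2\pi\I)^r}\oint\!\cdots\!\oint \prod_{i<j}(z_i-z_j)^2 \prod_i \varphi(z_i) z_i^{-r}\,dz_i$ directly for every $D_r$ appearing in both sides of \eqref{eqsymstar} and verify the resulting identity on contour integrals by a residue calculation, using the factorization of $w_{n-1,t}/w_{n,t}$ and $w_{n,t-1}/w_{n,t}$ to simplify.
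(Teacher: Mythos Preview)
Your plan is in the right ballpark---the proof does rest on bilinear determinantal identities for Toeplitz matrices whose symbols differ by a linear factor---but you have not yet found the organizational principle that makes the argument clean, and the identities actually used are not the standard Desnanot--Jacobi relation.

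The paper does \emph{not} clear denominators and attack a single polynomial identity. Instead it factorizes each of the six factors of \eqref{eqsymstar} separately. Writing $g(z)=w_{n,t}(z)$, $f(z)=w_{n,t-1}(z)$, and $h(z)=w_{n-1,t}(z)$ (so $g=(1-\alpha_t z)f$ and $h=(1-z/a_n)g$), each difference of the form $1-y^{(\cdot)}_{\cdot}/y^{(\cdot)}_{\cdot}$ is rewritten, via a suitable Toeplitz identity, as a ratio whose numerator involves the \emph{shifted} symbols $zg$, $z^{-1}g$, $zf$, $z^{-1}h$, etc. Once all six factors are rewritten this way, the two sides of \eqref{eqsymstar} are visibly equal by telescopic cancellation---no further algebra is needed.

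The two identities that do the work are not Sylvester's identity but the pair (see Lemma~\ref{LemToeplitzIdentities})
\[
D_{M+1}(\varphi)\,D_M((1+\gamma z)\varphi) - D_{M+1}((1+\gamma z)\varphi)\,D_M(\varphi) + \gamma\,D_{M+1}(z\varphi)\,D_M((1+\gamma z)z^{-1}\varphi)=0,
\]
\[
D_{M+1}(\varphi)\,D_{M-1}((1+\gamma z)\varphi) - D_M((1+\gamma z)\varphi)\,D_M(\varphi) + \gamma\,D_M(z\varphi)\,D_M((1+\gamma z)z^{-1}\varphi)=0.
\]
These are specializations of two matrix identities (Proposition~\ref{propMatrixIdentities}) for any pair $B,C$ with $C_{i,j}=B_{i,j}+\gamma B_{i,j+1}$, and they are proved not by quoting Jacobi but by embedding the relevant minors as blocks of a larger $(2M{+}1)\times(2M{+}1)$ matrix whose determinant is forced to vanish by a row/column reduction, then expanding by multilinearity. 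Your fallback of ``two or three simpler bilinear identities combined algebraically'' is therefore morally correct, but the specific identities and the factor-by-factor strategy are what make the computation a few lines rather than a page. The contour-integral alternative you mention would be substantially harder to carry out.
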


We delay the proof of this result for a moment and record a corollary which shows that the limiting law of large numbers $x^{(n)}_k(t)$ computed in Proposition~\ref{PropLLN} satisfy the heuristically derived ODEs from Section~\ref{secderodes}. We do not speculate here on whether these are the unique solutions to these ODEs, or if some addition conditions are required to ensure uniqueness.

\begin{corollary}\label{ODEcheck}
Recalling the respective Plancherel and alpha limiting law of large numbers $x^{(n)}_k(t)$ and $x^{(n)}_k(\tau)$ from Proposition~\ref{PropLLN}, and defining
$$
y^{(n)}_k(t) = e^{-x^{(n)}_k(t)}, \qquad y^{(n)}_k(\tau) = e^{-x^{(n)}_k(\tau)},
$$
we have that $y^{(n)}_k(t)$ satisfies (\ref{eqsymstar}),
and that $y^{(n)}_k(\tau)$ satisfies (\ref{eqnpushode}).
Recall the conventions that $x^{(n)}_k=0$ for $k>n$ and $x^{(n)}_k=+\infty$ for $k\leq 0$.
\end{corollary}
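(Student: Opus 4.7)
The plan is to derive both equations from Proposition~\ref{PropMainRelLLN} by identifying $y^{(n)}_k := e^{-x^{(n)}_k}$ with the Toeplitz determinant ratio featured there. First I would handle the alpha case. Setting $\varphi(z) := \prod_{\ell=1}^n \frac{a_\ell}{a_\ell-z}\prod_{i=1}^t(1-\alpha_i z)$, a direct comparison with \eqref{eq3.10} shows that the kernel $A(s)$ of Lemma~\ref{LemmaLLNdeterminant} equals $-\varphi_{-s}$, where the $\varphi_k$ are the Laurent coefficients defined in Proposition~\ref{PropMainRelLLN}. Since a Toeplitz determinant is invariant under transposition, this yields
\[
e^{-(x^{(n)}_n(t)+\cdots+x^{(n)}_{n-r+1}(t))} = (-1)^r D_r(\varphi).
\]
Dividing this identity at level $r$ by the one at level $r-1$ and setting $k = n-r+1$ gives $e^{-x^{(n)}_k(t)} = -D_{n-k+1}(\varphi)/D_{n-k}(\varphi)$, which is precisely the form of $y^{(n)}_k(t)$ required by Proposition~\ref{PropMainRelLLN} with $F(z)\equiv 1$. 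That proposition then immediately produces \eqref{eqsymstar}.

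For the Plancherel case, the same argument applied to the $e^{-\tau z}$ version of Lemma~\ref{LemmaLLNdeterminant} identifies $y^{(n)}_k(\tau) = -D_{n-k+1}(\varphi_\tau)/D_{n-k}(\varphi_\tau)$ with $\varphi_\tau(z) := \prod_\ell \frac{a_\ell}{a_\ell-z}e^{-\tau z}$. To pass from the discrete identity \eqref{eqsymstar} to the continuous ODE \eqref{eqnpushode}, I would exploit the standard realization of the Plancherel specialization as a limit of alpha specializations, namely $e^{-\tau z} = \lim_{s\to\infty}\prod_{i=1}^s\bigl(1-(\tau/s)z\bigr)$. Concretely, I would apply \eqref{eqsymstar} along a sequence of $s$ alpha steps of common size $\alpha = \tau/s$, substitute $y^{(n)}_k(t) - y^{(n)}_k(t-1) = (\tau/s)\,\partial_\tau y^{(n)}_k(\tau) + O(s^{-2})$, and let $s\to\infty$. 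The right-hand side of \eqref{eqsymstar} then collapses to $-\partial_\tau y^{(n)}_k/y^{(n)}_k = dx^{(n)}_k/d\tau$, while the left-hand side is unchanged, yielding \eqref{eqnpushode}. Continuity of the integral formulas of Proposition~\ref{PropLLN} in the alpha parameters justifies all convergences involved.

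The true difficulty lies not in this corollary but in Proposition~\ref{PropMainRelLLN}, which packages the nonlinear algebraic identity underlying both push-block dynamics. I expect its proof to rest on a Desnanot--Jacobi / Sylvester-type identity for the $(r+1)\times(r+1)$ Toeplitz matrix with symbol $\varphi$, combined with the observation that multiplying $\varphi$ by $a_n/(a_n-z)$ (which passes from level $n-1$ to level $n$) and by $1-\alpha_t z$ (which passes from time $t-1$ to time $t$) produces controlled linear shift relations among the coefficients $\varphi_k$. Inserting these shifts into the Jacobi identity and simplifying should recover \eqref{eqsymstar}; executing this algebra cleanly is where the real work lies.
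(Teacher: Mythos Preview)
Your proof is correct and follows essentially the same route as the paper: identify $y^{(n)}_k$ with the Toeplitz determinant ratio via Lemma~\ref{LemmaLLNdeterminant}, apply Proposition~\ref{PropMainRelLLN} to obtain \eqref{eqsymstar}, and then recover the Plancherel ODE \eqref{eqnpushode} as a small-$\alpha$ limit of the discrete identity. The only cosmetic difference is that the paper, rather than discretizing the entire Plancherel specialization into $s$ alpha steps, exploits the freedom of $F(z)$ in Proposition~\ref{PropMainRelLLN} by setting $F(z)=e^{-\tau z}$ (so that ``time $t-1$'' is already the Plancherel state) and appending a single alpha step $\alpha_t=\epsilon\to 0$; the cancellation of the two $\bigl(1-y^{(n)}_{k-1}/y^{(n)}_k\bigr)$ factors on the right-hand side and the emergence of the logarithmic derivative are identical to what you describe.
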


\begin{proof}
The fact that $y^{(n)}_k(t)$ satisfies (\ref{eqsymstar}) follows immediately in light of Lemma~\ref{LemmaLLNdeterminant} and Proposition~\ref{PropMainRelLLN}. The fact that $y^{(n)}_k(\tau)$ satisfies (\ref{eqnpushode}) follows from a simple limiting procedure. Let $\alpha_n=\e\to 0$. Then $1-\frac{y^{(n)}_{k-1}(t-1)}{y^{(n)}_{k}(t-1)}$ on the right-hand side of (\ref{eqsymstar}) cancels with the corresponding time $t$ term (still on the right-hand side). The remaining right-hand side term
$1-\frac{y^{(n)}_{k}(t)}{y^{(n)}_{k}(t-1)}$ limits to minus the logarithmic derivative of $y^{(n)}_k(t)$ in time, thus yielding the resulting left-hand side of (\ref{eqnpushode}). The left-hand side of (\ref{eqsymstar}) does not change and becomes the right-hand side of (\ref{eqnpushode}).
\end{proof}

To prove Proposition~\ref{PropMainRelLLN} we need two identities for Toeplitz matrices. These are derived by the following linear algebraic identities. The proof will be presented in Appendix~\ref{AppMatrixIdentites}.
\begin{proposition}\label{propMatrixIdentities}
Let $B$ and $C$ be two matrices of sizes at least $(M+2)\times(M+2)$ and assume that $B$ and $C$ are related by
\begin{equation}\label{eqApp11}
C_{i,j}=B_{i,j}+\gamma B_{i,j+1},\quad 1\leq i,j\leq M+1.
\end{equation}
Then
\begin{equation}\label{eqApp12}
\begin{aligned}
\det[B_{i,j}]_{i,j=1}^{M+1} \det[C_{i+1,j+1}]_{i,j=1}^M - \det[C_{i,j}]_{i,j=1}^{M+1} \det[B_{i+1,j+1}]_{i,j=1}^M& \\
+ \gamma \det[B_{i,j+1}]_{i,j=1}^{M+1} \det[C_{i+1,j}]_{i,j=1}^M &= 0,
\end{aligned}
\end{equation}
and
\begin{equation}\label{eqApp13}
\begin{aligned}
\det[B_{i,j}]_{i,j=1}^{M+1} \det[C_{i+1,j+1}]_{i,j=1}^{M-1} - \det[C_{i,j}]_{i,j=1}^M \det[B_{i+1,j+1}]_{i,j=1}^M&\\
+\det[B_{i,j+1}]_{i,j=1}^M \det[C_{i+1,j}]_{i,j=1}^M &=0.
\end{aligned}
\end{equation}
\end{proposition}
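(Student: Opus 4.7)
The plan is to reduce both identities of Proposition~\ref{propMatrixIdentities} to classical Plücker and Desnanot--Jacobi relations for minors of $B$, by expanding every $C$-determinant via multilinearity and then collecting coefficients in powers of $\gamma$.

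\textbf{Step 1 (Expansion).} Using $C_{i,j}=B_{i,j}+\gamma B_{i,j+1}$ and multilinearity of the determinant in its columns, any $k\times k$ determinant of $C$-entries on a row set $I$ and consecutive column set $\{j_0,\ldots,j_0+k-1\}$ equals
\begin{equation*}
\sum_{\epsilon\in\{0,1\}^k}\gamma^{|\epsilon|}\det\left(B_{i,\,j_0+j-1+\epsilon_j}\right)_{i\in I,\,1\leq j\leq k}.
\end{equation*}
Whenever a pattern $(\epsilon_j,\epsilon_{j+1})=(1,0)$ occurs, two of the selected $B$-columns coincide and the summand vanishes; hence only ``suffix'' patterns $\epsilon=(0,\ldots,0,1,\ldots,1)$, parametrized by a switch-point $j_\star\in\{1,\ldots,k+1\}$, survive. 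Each $C$-determinant is therefore a polynomial of degree $k$ in $\gamma$ whose coefficient of $\gamma^{k+1-j_\star}$ is the $B$-minor with row set $I$ and column set $\{j_0,\ldots,j_0+k\}\setminus\{j_0+j_\star-1\}$.

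\textbf{Step 2 (Identification of coefficients).} Substitute these expansions into (\ref{eqApp12}) and (\ref{eqApp13}); each side becomes a polynomial in $\gamma$ and it suffices to show that every coefficient vanishes. For (\ref{eqApp12}), let $p_i$ be the $(M+1)\times(M+1)$ minor of $B$ on rows $[1,M+1]$ omitting column $i$, and $q_{\{i,j\}}$ the $M\times M$ minor on rows $[2,M+1]$ omitting columns $i$ and $j$. The coefficients of $\gamma^{0}$ and $\gamma^{M+1}$ cancel by direct inspection, while for $s\in\{1,\ldots,M\}$ and $k=M+2-s$ the coefficient of $\gamma^s$ reduces to the three-term Plücker relation
\begin{equation*}
p_1\,q_{\{k,M+2\}}-p_k\,q_{\{1,M+2\}}+p_{M+2}\,q_{\{1,k\}}=0,
\end{equation*}
which is proved by Laplace-expanding each $p_i$ along row $1$ of $B$ and invoking the short Plücker relation $q_{\{a,b\}}q_{\{c,d\}}-q_{\{a,c\}}q_{\{b,d\}}+q_{\{a,d\}}q_{\{b,c\}}=0$ for the maximal minors of the $M\times(M+2)$ matrix $(B_{i,j})_{i\in[2,M+1]}$. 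For (\ref{eqApp13}), the coefficient of $\gamma^{M}$ vanishes trivially, and for $s\in\{0,\ldots,M-1\}$ with $k=M+1-s$ the coefficient of $\gamma^s$ is exactly the Desnanot--Jacobi / Lewis Carroll identity applied to the $(M+1)\times(M+1)$ submatrix $A=(B_{i,j})_{i,j\in[1,M+1]}$, removing rows $\{1,M+1\}$ and columns $\{1,k\}$.

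\textbf{Main obstacle.} The principal difficulty is combinatorial bookkeeping: tracking the signs and column shifts in the multilinear expansion and matching the resulting quadratic expressions in $B$-minors to the standard forms of the Plücker and Desnanot--Jacobi identities. Introducing compact notation such as $P^I_J := \det(B_{i,j})_{i\in I,\,j\in J}$ makes this manageable, and once it is done the algebraic content of each $\gamma$-coefficient is entirely classical, so no ingredient beyond Laplace expansion and the short Plücker / Dodgson identities is required.
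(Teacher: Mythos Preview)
Your argument is correct and takes a genuinely different route from the paper's proof. The paper does not expand in powers of $\gamma$; instead, for \eqref{eqApp12} it builds a single $(2M+1)\times(2M+1)$ matrix $Q$ containing two overlapping blocks of $C$-entries and a column of $B_{\cdot,M+1}$'s, shows $\det Q=0$ by elementary row and column operations, and then expands $\det Q$ by multilinearity along the three ``full'' columns to obtain exactly $Q_1Q_2-Q_3Q_4+Q_5Q_6$. For \eqref{eqApp13} it proceeds analogously, isolating the coefficient of each $B_{k,M+1}$ and killing it with another auxiliary block matrix. This is entirely self-contained: no Pl\"ucker or Dodgson identity is invoked, at the price that the large matrices $Q$ and $P$ appear somewhat out of thin air.

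Your approach trades that ad hoc construction for a transparent structural explanation: the suffix-pattern observation turns each $C$-minor into a short generating function in $\gamma$ of $B$-minors with one column deleted, and then each $\gamma$-coefficient of \eqref{eqApp12} is literally a three-term Pl\"ucker relation for the $(M+1)\times(M+2)$ and $M\times(M+2)$ row slices of $B$, while each $\gamma$-coefficient of \eqref{eqApp13} is literally Desnanot--Jacobi for the $(M+1)\times(M+1)$ block with rows $\{1,M+1\}$ and columns $\{1,k\}$ removed. The cost, as you note, is the sign bookkeeping in the Laplace expansion step for \eqref{eqApp12}; this is routine but should be written out once (in particular, checking that the boundary coefficients $\gamma^0$, $\gamma^{M+1}$ and the degenerate cases $j\in\{1,k,M+2\}$ cancel pairwise rather than via Pl\"ucker). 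Your proof is shorter and more conceptual; the paper's is more elementary in that it never names an identity beyond ``determinant of a matrix with a repeated row/column is zero''.
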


Examples of matrices satisfying (\ref{eqApp11}) are Toeplitz matrices with symbols $\varphi(z)$ for $B$ and $(1+\gamma z)\varphi(z)$ for $C$. Then (\ref{eqApp12}) and (\ref{eqApp13}) become the following identities.
\begin{lemma}\label{LemToeplitzIdentities}
It holds
\begin{equation}\label{eqApp14}
\begin{aligned}
D_{M+1}\big(\varphi(z)\big) D_M\big((1+\gamma z)\varphi(z)\big)- D_{M+1}\big((1+\gamma z)\varphi(z)\big) D_M\big(\varphi(z)\big)&\\
+\gamma D_{M+1}\big(z\varphi(z)\big) D_M\big((1+\gamma z)z^{-1} \varphi(z)\big) &=0,
\end{aligned}
\end{equation}
and
\begin{equation}\label{eqApp15}
\begin{aligned}
D_{M+1}\big(\varphi(z)\big) D_{M-1}\big((1+\gamma z)\varphi(z)\big)- D_{M}\big((1+\gamma z)\varphi(z)\big) D_M\big(\varphi(z)\big)&\\
+\gamma D_{M}\big(z\varphi(z)\big) D_M\big((1+\gamma z)z^{-1} \varphi(z)\big) &=0.
\end{aligned}
\end{equation}
\end{lemma}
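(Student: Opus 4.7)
The plan is to apply Proposition~\ref{propMatrixIdentities} directly to the pair of Toeplitz matrices $B$ and $C$ whose symbols are $\varphi(z)$ and $\psi(z):=(1+\gamma z)\varphi(z)$ respectively, and then translate each minor appearing in (\ref{eqApp12}) and (\ref{eqApp13}) into a Toeplitz determinant with an appropriate symbol.

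First I would verify the hypothesis (\ref{eqApp11}). Writing $\varphi_k = \tfrac{1}{2\pi\I}\oint \varphi(z)\tfrac{dz}{z^{k+1}}$ and $\psi_k = \tfrac{1}{2\pi\I}\oint \psi(z)\tfrac{dz}{z^{k+1}}$, a one-line contour-integral computation gives $\psi_k = \varphi_k + \gamma\varphi_{k-1}$. Setting $B_{i,j} = \varphi_{i-j}$ and $C_{i,j} = \psi_{i-j}$ then yields
\begin{equation*}
C_{i,j} = \varphi_{i-j} + \gamma\varphi_{i-j-1} = B_{i,j} + \gamma B_{i,j+1},
\end{equation*}
as required.

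Next I would identify each of the minors appearing in (\ref{eqApp12}) and (\ref{eqApp13}) as a Toeplitz determinant. The principal minors are trivial by translation invariance: $\det[B_{i,j}]_{i,j=1}^k = \det[B_{i+1,j+1}]_{i,j=1}^k = D_k(\varphi)$ and $\det[C_{i,j}]_{i,j=1}^k = \det[C_{i+1,j+1}]_{i,j=1}^k = D_k((1+\gamma z)\varphi)$. For the shifted minors one uses the elementary observation that multiplying the Toeplitz symbol by $z^{\pm 1}$ shifts the coefficient indices by $\mp 1$: the entries $\varphi_{i-j-1}$ of $\det[B_{i,j+1}]_{i,j=1}^k$ are the Laurent coefficients of $z\varphi(z)$, hence $\det[B_{i,j+1}]_{i,j=1}^k = D_k(z\varphi)$; and the entries $\psi_{i-j+1}$ of $\det[C_{i+1,j}]_{i,j=1}^k$ are the Laurent coefficients of $z^{-1}\psi(z) = (1+\gamma z)z^{-1}\varphi(z)$, hence $\det[C_{i+1,j}]_{i,j=1}^k = D_k((1+\gamma z)z^{-1}\varphi)$.

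Substituting these six identifications into (\ref{eqApp12}) with the appropriate sizes ($M+1$ versus $M$) produces (\ref{eqApp14}), and substituting into (\ref{eqApp13}) with sizes $(M+1, M-1, M, M)$ produces (\ref{eqApp15}). There is no genuine obstacle: the entire content of the lemma is the re-expression of the linear-algebraic identities of Proposition~\ref{propMatrixIdentities} in Toeplitz language, and the only bookkeeping step which deserves care is the correct accounting of the $z^{\pm 1}$ factors (and the associated power of $\gamma$) produced by the column shifts. Thus the proof is essentially a one-step reduction to Proposition~\ref{propMatrixIdentities}, whose proof is deferred to Appendix~\ref{AppMatrixIdentites}.
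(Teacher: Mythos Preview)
Your proposal is correct and is exactly the paper's approach: the lemma is stated there as the immediate specialization of Proposition~\ref{propMatrixIdentities} to the Toeplitz matrices $B_{i,j}=\varphi_{i-j}$ and $C_{i,j}=\psi_{i-j}$ with $\psi=(1+\gamma z)\varphi$, and your identifications of the shifted minors with $D_k(z^{\pm 1}\cdot)$ are precisely what is needed. One small caution in your bookkeeping: the third term of (\ref{eqApp13}) carries no factor of $\gamma$, so a literal substitution yields (\ref{eqApp15}) \emph{without} the $\gamma$ in front of the last term---the printed $\gamma$ there is a typo, as you can confirm from the $M=1$ case or from how the identity is actually used in the proof of Proposition~\ref{PropMainRelLLN}.
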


\begin{proof}[Proof of Proposition~\ref{PropMainRelLLN}]
Let us use the notations
\begin{equation*}
\begin{aligned}
f(z)&=\prod_{\ell=1}^{n} \frac{a_\ell}{a_\ell-z} \prod_{i=1}^{t-1} (1-\alpha_i z) F(z), \qquad g(z)=(1-\alpha_t z) f(z),\\
h(z)&=\left(\frac{a_n}{a_n-z}\right)^{-1},\qquad g(z)=\left(\frac{a_n}{a_n-z}\right)^{-1} (1-\alpha_t z) f(z).
\end{aligned}
\end{equation*}

We factorize the 6 terms in (\ref{eqsymstar}) using either (\ref{eqApp14}) or (\ref{eqApp15}). The terms in the lhs. of (\ref{eqsymstar}) are
\begin{equation*}
\begin{aligned}
a_n\left(1-\frac{y^{(n-1)}_{k-1}(t)}{y^{(n)}_k(t)}\right) = a_n &\frac{D_{n-k}(h) D_{n+1-k}(g) - D_{n+1-k}(h) D_{n-k}(g)}{D_{n-k}(h)D_{n+1-k}(g)}\\
\stackrel{(\ref{eqApp14})}{=}&
\frac{D_{n+1-k}(z g) D_{n-k}(z^{-1}h)}{D_{n+1-k}(g) D_{n-k}(h)}
\end{aligned}
\end{equation*}
and
\begin{equation*}
1-\frac{y^{(n)}_k(t)}{y^{(n)}_{k+1}(t)} = \frac{(D_{n-k}(g))^2-D_{n+1-k}(g) D_{n-1-k}(g)}{(D_{n-k}(g))^2} \stackrel{(\ref{eqApp13})}{=} \frac{D_{n-k}(z g) D_{n-k}(z^{-1} g)}{(D_{n-k}(g))^2}
\end{equation*}
and
\begin{equation*}
\frac{1}{1-\frac{y^{(n)}_k(t)}{y^{(n-1)}_k(t)}} = \frac{D_{n-k}(g) D_{n-k}(h)}{D_{n-k}(g) D_{n-k}(h)-D_{n+1-k}(g) D_{n-1-k}(h)}\stackrel{(\ref{eqApp15})}{=}\frac{D_{n-k}(g) D_{n-k}(h)}{D_{n-k}(zg) D_{n-k}(z^{-1} h)}.
\end{equation*}
The terms in the rhs. of (\ref{eqsymstar}) are
\begin{equation*}
\begin{aligned}
\alpha_t^{-1} \left(1-\frac{y^{(n)}_k(t)}{y^{(n)}_k(t-1)}\right) = \alpha_t^{-1} &\frac{D_{n-k}(g) D_{n+1-k}(f)-D_{n+1-k}(g) D_{n-k}(f)}{D_{n-k}(g) D_{n+1-k}(f)} \\ \stackrel{(\ref{eqApp14})}{=} &\frac{D_{n+1-k}(zf)D_{n-k}(z^{-1}g)}{D_{n-k}(g) D_{n+1-k}(f)}
\end{aligned}
\end{equation*}
and
\begin{equation*}
1-\frac{y^{(n)}_{k-1}(t)}{y^{(n)}_k(t)}=\frac{(D_{n+1-k}(g)^2-D_{n+2-k}(g) D_{n-k}(g)}{(D_{n+1-k}(g))^2} \stackrel{(\ref{eqApp13})}{=} \frac{D_{n+1-k}(z g) D_{n+1-k}(z^{-1} g)}{(D_{n+1-k}(g))^2}
\end{equation*}
and
\begin{equation*}
\begin{aligned}
\frac{1}{1-\frac{y^{(n)}_{k-1}(t-1)}{y^{(n)}_k(t)}} = \frac{D_{n+1-k}(f)D_{n+1-k}(g)}{D_{n+1-k}(f) D_{n+1-k}(g)-D_{n+2-k}(f)D_{n-k}(g)} \stackrel{(\ref{eqApp15})}{=}\frac{D_{n+1-k}(f) D_{n+1-k}(g)}{D_{n+1-k}(zf) D_{n+1-k}(z^{-1} g)}.
\end{aligned}
\end{equation*}
Multiplying the above expressions one immediately gets (\ref{eqsymstar}).
\end{proof}

\section{Stochastic differential equation limit for the fluctuations}\label{SecFluc}

From this point on we focus entirely on the Plancherel push-block dynamics. In Sections~\ref{secderodes} and~\ref{secdirver} we derived (based on these dynamics) and then directly verified ODEs satisfied by the law of large numbers for our $q$-Whittaker processes under certain prescribed scaling. In this section we probe the fluctuations around the law of large number behavior. We start by determining the Gaussian fluctuation limit for the $q$-Whittaker process and then describe the system of SDEs which come from the Plancherel push-block dynamics and which preserves the covariance structure of this Gaussian fluctuation limit.

Let us start by fixing the scalings considered hereafter to be
\begin{equation}\label{eqScalingGaussian}
q=e^{-\e}, \qquad t=\e^{-1}\tau,\qquad \vec{a} \textrm{ fixed},\qquad \lambda^{(n)}_{k} = \e^{-1} x^{(n)}_{k}(\tau) + \e^{-1/2} \xi^{(n)}_{k}(\tau;\e)
\end{equation}
with $x^{(n)}_{k}(\tau)$ given by the results of Proposition~\ref{PropLLN}. In other words, $x^{(n)}_{k}(\tau)$ is the law of large numbers (on the $\e^{-1}$ scale) and $\xi^{(n)}_k(\tau;\e)$ is the fluctuation around it (on the $\e^{-1/2}$ scale).

\subsection{Fixed time Gaussian limit}
We start by proving a Gaussian limit to the $q$-Whittaker process under the scaling (\ref{eqScalingGaussian}) for $\tau>0$ fixed.

Let us first explain the strategy of the proof. Using the scaling (\ref{eqScalingGaussian}) one has
\begin{equation}\label{eq4.1}
\begin{aligned}
&\EE_{\vec{a};\rho} \Bigg[\prod_{i=1}^{m} \left(q^{\lambda^{(n_i)}_{n_i} +\cdots + \lambda^{(n_i)}_{n_i-r_i+1}}
-\EE_{\vec{a};\rho}\Big[ q^{\lambda^{(n_i)}_{n_i} +\cdots + \lambda^{(n_i)}_{n_i-r_i+1}}\Big]\right)\Bigg] \\
=& \prod_{i=1}^m e^{-(x^{(n_i)}_{n_i} +\cdots + x^{(n_i)}_{n_i-r_i+1})} \EE_{\vec{a};\rho} \Bigg[\prod_{i=1}^{m} \left(e^{-\sqrt{\e}(\xi^{(n_i)}_{n_i} +\cdots + \xi^{(n_i)}_{n_i-r_i+1})}
-\EE_{\vec{a};\rho}\Big[ e^{-\sqrt{\e}(\xi^{(n_i)}_{n_i} +\cdots + \xi^{(n_i)}_{n_i-r_i+1})}\Big]\right)\Bigg].
\end{aligned}
\end{equation}
As $\e\to 0$, the r.h.s.\ of (\ref{eq4.1}) behaves as
\begin{equation}\label{eq4.1b}
\prod_{i=1}^m e^{-(x^{(n_i)}_{n_i} +\cdots + x^{(n_i)}_{n_i-r_i+1})} \e^{m/2} \EE_{\vec{a};\rho}\Bigg[\prod_{i=1}^{m} \left((\xi^{(n_i)}_{n_i} +\cdots + \xi^{(n_i)}_{n_i-r_i+1})-\EE_{\vec{a};\rho}\left[(\xi^{(n_i)}_{n_i} +\cdots + \xi^{(n_i)}_{n_i-r_i+1})\right]\right)\Bigg].
\end{equation}
Using the moment formula (\ref{eqMoments}) we can analyze the $\e\to 0$ behavior of the integral and obtain that
\begin{equation*}
\left\{\xi^{(n_i)}_{n_i} +\cdots + \xi^{(n_i)}_{n_i-r_i+1}-\EE_{\vec{a};\rho}\left[\xi^{(n_i)}_{n_i} +\cdots + \xi^{(n_i)}_{n_i-r_i+1}\right]\right\}_{1\leq i\leq m}
\end{equation*}
have centered Gaussian moments. One can then easily extend this to
\begin{equation*}
\left\{\xi^{(n_i)}_{k_i}-\EE_{\vec{a};\rho}\left[\xi^{(n_i)}_{k_i}\right]\right\}_{1\leq i\leq m}
\end{equation*}
by linear combination, see Remark~\ref{Rem4.2}.

\begin{proposition}\label{PropGaussian}
Consider the Plancherel specialization. Fix $N\geq 1$, positive $\vec{a} = (a_1,\ldots, a_N)$, $m\geq 1$, a sequence of $m$ integers $N\geq n_1\geq n_2\geq \cdots \geq n_m\geq 1$, and $r_1,\ldots, r_m$ such that $0\leq r_i\leq n_i$ for $1\leq i\leq m$.

Then, under the scaling (\ref{eqScalingGaussian}) the following limit exists in distribution
\begin{equation*}
\xi^{(n_i)}_{n_i}(\tau):=\lim_{\e\to 0}\xi^{(n_i)}_{n_i}(\tau;\e),
\end{equation*}
and the vector $\big\{\xi^{(n)}_{k}(\tau)\big\}_{1\leq k \leq n \leq N}$ is a centered Gaussian (i.e. $\EE(\xi^{(n)}_{k}(\tau))= 0$)
with covariance determined by the following formula. For $n_i\geq n_j$,
\begin{equation}\label{eq4.8}
\begin{aligned}
{\bf C}(n_i,r_i;n_j,r_j):=\,&{\rm Cov}\left(\xi^{(n_i)}_{n_i}(\tau)+\ldots+\xi^{(n_i)}_{n_i-r_i+1}(\tau);\xi^{(n_j)}_{n_j}(\tau)+\ldots+\xi^{(n_j)}_{n_j-r_j+1}(\tau)\right)\\
=\,&\frac{\displaystyle\oint \oint \mathfrak{Cr}(r_{i},\vec{z}_{i};r_{j},\vec{z}_{j}) \mathfrak{F}(n_i,r_i,\vec{z}_{i}) \mathfrak{F}(n_j,r_j,\vec{z}_{j}) d\vec{z}_{i} d\vec{z}_{j}}
{\displaystyle\oint \mathfrak{F}(n_i,r_i,\vec{z}_{i}) d\vec{z}_{i} \oint \mathfrak{F}(n_j,r_j,\vec{z}_{j}) d\vec{z}_{j}}
\end{aligned}
\end{equation}
where $\vec{z}_i := (z_{i,k})_{k=1}^{r_i}$, $d\vec{z}_{i}:=\prod_{k=1}^{r_i} dz_{i,k}$, and
\begin{equation*}
\begin{aligned}
&\mathfrak{Cr}(r_{i},\vec{z}_{i};r_{j},\vec{z}_{j})=-\sum_{k=1}^{r_i}\sum_{\ell=1}^{r_j}\frac{z_{i,k}}{z_{i,k}-z_{j,\ell}},\\
&\mathfrak{F}(n_i,r_i,\vec{z}_{i}) =\frac{(-1)^{\frac{r_i(r_i+1)}{2}} \prod\limits_{1\leq k<\ell\leq r_i} (z_{i,k} - z_{i,\ell})^2}{(2\pi \I)^{r_i} r_i! \prod\limits_{k=1}^{r_i} (z_{i,k})^{r_i}} \, \prod_{k=1}^{r_i} \bigg(\prod_{\ell=1}^{n_{i}} \frac{a_\ell}{a_\ell-z_{i,k}}\bigg)\, e^{-z_{i,k} \tau}.
\end{aligned}
\end{equation*}
Here the integrals are counterclockwise oriented simple loops including the following poles: the contour for $z_{i,k}$ includes $a_1,\ldots,a_{n_i}$ as well as all $z_{j,\ell}$ for $i<j$ -- see Figure~\ref{Figtheorem45contours} for an illustration of such contours.
\end{proposition}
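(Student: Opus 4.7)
The plan is to derive the Gaussian limit and its covariance from the moment formula (\ref{eqMoments}) by an $\e$-expansion of the integrand around the law of large numbers established in Proposition~\ref{PropLLN}.

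Writing $\Xi_i := \xi^{(n_i)}_{n_i}(\tau;\e)+\cdots+\xi^{(n_i)}_{n_i-r_i+1}(\tau;\e)$, the scaling (\ref{eqScalingGaussian}) factorizes $q^{\lambda^{(n_i)}_{n_i}+\cdots+\lambda^{(n_i)}_{n_i-r_i+1}}$ as $e^{-(x^{(n_i)}_{n_i}(\tau)+\cdots+x^{(n_i)}_{n_i-r_i+1}(\tau))}e^{-\sqrt{\e}\,\Xi_i}$. Substituting into (\ref{eqMoments}) on $\e$-independent contours (as in Figure~\ref{Figtheorem45contours}) and dividing out the LLN prefactor yields a contour integral representation of $\EE[\prod_i e^{-\sqrt{\e}\,\Xi_i}]$. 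On the contours one has the uniform pointwise expansions $\Pi(qz;\e^{-1}\tau)/\Pi(z;\e^{-1}\tau) = e^{-\tau z}(1+O(\e))$ and
$$
\frac{q(z_{i,k}-z_{j,\ell})}{z_{i,k}-qz_{j,\ell}} = 1 - \e\,\frac{z_{i,k}}{z_{i,k}-z_{j,\ell}} + O(\e^2),
$$
so the self-factor limits to $\mathfrak{F}(n_i,r_i,\vec{z}_i)$ while the full cross-factor product becomes $1 + \e\sum_{i<j}\mathfrak{Cr}(r_i,\vec{z}_i;r_j,\vec{z}_j) + O(\e^2)$. The $O(\e)$ self-corrections depend only on variables $\vec{z}_i$ with fixed $i$, and therefore cancel when dividing by the marginal moments, giving
$$
\frac{\EE[\prod_i e^{-\sqrt{\e}\,\Xi_i}]}{\prod_i\EE[e^{-\sqrt{\e}\,\Xi_i}]} = 1 + \e\sum_{i<j}\mathbf{C}(n_i,r_i;n_j,r_j) + O(\e^2),
$$
with $\mathbf{C}$ as in (\ref{eq4.8}).

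Next, set $Y_i := \Xi_i - \EE[\Xi_i]$ and apply inclusion-exclusion to the centered mixed moment:
$$
\EE\!\left[\prod_i\bigl(e^{-\sqrt{\e}\,\Xi_i}-\EE[e^{-\sqrt{\e}\,\Xi_i}]\bigr)\right] = \prod_i\EE[e^{-\sqrt{\e}\,\Xi_i}]\cdot\sum_{S\subseteq[m]}(-1)^{|S^c|}R_S,
$$
with $R_S$ the analogous ratio restricted to $S$. Expanding $R_S$ order by order in $\e$, each monomial is indexed by a (possibly repeated) collection of pairs $(i,j)\in\binom{S}{2}$ and equals an integral that depends only on the vertex set $V$ covered by the pairs (variables at indices outside $V$ integrate to $1$ against $\mu_i := \mathfrak{F}_i(\vec{z}_i)/\int\mathfrak{F}_i$). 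The identity $\sum_{S\supseteq V}(-1)^{|S^c|} = \mathbbm{1}[V=[m]]$ then forces only coverings of $[m]$ to survive the alternating sum. The minimum-order covering of $[m]$ has $m/2$ pairs when $m$ is even (namely a perfect matching) and $(m+1)/2$ when $m$ is odd; hence the centered moment is $O(\e^{m/2})$ for even $m$ and $O(\e^{(m+1)/2})$ for odd $m$. For a perfect matching $\pi$ the pairs involve disjoint variable sets, so the expectation factorizes as $\prod_{(i,j)\in\pi}\mathbf{C}_{ij}$. Matching this with the expansion $e^{-\sqrt{\e}\,\Xi_i}-\EE[e^{-\sqrt{\e}\,\Xi_i}] = -\sqrt{\e}\,Y_i + O(\e)$ and dividing by $(-\sqrt{\e})^m$ gives $\EE[\prod_i Y_i] \to \sum_\pi\prod_{(i,j)\in\pi}\mathbf{C}_{ij}$ for even $m$ and $\EE[\prod_i Y_i]\to 0$ for odd $m$. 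This is Isserlis'/Wick's theorem for a centered Gaussian with covariance $\mathbf{C}$; since Gaussian moments satisfy Carleman's condition, convergence of moments implies joint convergence in distribution of $(Y_1,\ldots,Y_m)$ to such a Gaussian. Finally, each $\xi^{(n)}_k$ is a difference $(\xi^{(n)}_n+\cdots+\xi^{(n)}_k)-(\xi^{(n)}_n+\cdots+\xi^{(n)}_{k+1})$ of the above sums, and linear images of jointly Gaussian vectors are jointly Gaussian, so the claim extends to $\{\xi^{(n)}_k(\tau)\}_{1\leq k\leq n\leq N}$.

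The main obstacle is the combinatorial bookkeeping above: verifying that the $\e$-expansion of the cross-factor product $\prod_{i<j}F_{ij}$, combined with the alternating sum from inclusion-exclusion, produces exactly the Wick pairing sum at leading order, with all non-covering configurations annihilated and all over-covering configurations of strictly higher order in $\e$. The cleanest organization is a cluster/cumulant expansion, where the factorization into disjoint pairs is the standard characterization of Gaussianity by vanishing of cumulants of order $\geq 3$. The analytic input is straightforward since the contours are $\e$-independent and the integrand is uniformly analytic in $\e$ on them; this justifies exchanging $\e\to 0$ with integration, as in the proof of Proposition~\ref{PropLLN}.
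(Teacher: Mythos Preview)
Your combinatorial expansion of the cross-factor product and the inclusion--exclusion reduction to perfect matchings are essentially the same as the paper's Lemma~\ref{Lemma4.2}; this part is correct and indeed shows that
\[
\e^{-m/2}\,\EE\!\Bigg[\prod_{i=1}^m\bigl(e^{-\sqrt{\e}\,\Xi_i}-\EE[e^{-\sqrt{\e}\,\Xi_i}]\bigr)\Bigg]
\]
converges to the Wick sum $\sum_\pi\prod_{(i,j)\in\pi}\mathbf{C}_{ij}$.

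The gap is the next step, where you write $e^{-\sqrt{\e}\,\Xi_i}-\EE[e^{-\sqrt{\e}\,\Xi_i}]=-\sqrt{\e}\,Y_i+O(\e)$ and divide by $(-\sqrt{\e})^m$ to conclude $\EE[\prod_i Y_i]\to\sum_\pi\prod_{(i,j)\in\pi}\mathbf{C}_{ij}$. The $O(\e)$ remainder here is \emph{random}: it is of the form $\tfrac{\e}{2}\Xi_i^2+\cdots$, and after multiplying $m$ such factors and taking expectations you need uniform-in-$\e$ control of mixed moments like $\EE[\Xi_{i_1}^2\Xi_{i_2}\cdots]$ in order to discard it. You have no such a priori bounds on the $\Xi_i$ --- this is precisely what you are trying to prove --- so the Taylor replacement is circular. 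Equivalently, what you have established is convergence of moments of the variables $Y_i^\e:=\e^{-1/2}(1-e^{-\sqrt{\e}\,\Xi_i})$, not of $\Xi_i-\EE[\Xi_i]$.

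The paper closes this gap without ever proving moment convergence of $\Xi_i$: it first uses the method of moments to get \emph{distributional} convergence of $(Y_1^\e,\ldots,Y_m^\e)$ to a Gaussian, extracts tightness of $Y_i^\e$ from this, transfers tightness to $\Xi_i$ via the deterministic relation $\Xi_i=-\e^{-1/2}\ln(1-\e^{1/2}Y_i^\e)$ and elementary bounds on $\ln(1\pm x)$, and then on tight subsequences shows $Y_i^\e-\Xi_i\to 0$ in probability. This detour through tightness is the missing ingredient in your argument; once you insert it (after your Wick computation for the $Y_i^\e$), your proof becomes complete and coincides with the paper's.
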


\begin{remark}\label{Rem4.2}
By linearity we immediately get that the vector $\xi=(\xi_k^{(n)})_{1\leq k\leq n\leq N}$ is also centered Gaussian with covariance matrix given by
\begin{equation*}
{\rm Cov}(\xi^{(n)}_{n-r+1}(\tau);\xi^{(n')}_{n'-r'+1}(\tau)) =
{\bf C}(n,r;n',r') - {\bf C}(n,r-1;n',r')-{\bf C}(n,r;n',r'-1)+{\bf C}(n,r-1;n',r'-1).
\end{equation*}
Also, notice that for a given $i$, the collection of variables $\vec{z}_{i}$ comes into the integrand (\ref{eq4.8}) symmetrically, and that the contours for these variable can be taken to be the same. This means that we can replace $\mathfrak{Cr}(r_{i},\vec{z}_{i};r_{j},\vec{z}_{j})$ with $-r_i r_j \frac{z_{i,1}}{z_{i,1}-z_{j,1}}$.
\end{remark}

\begin{remark}
The alpha specialization version of Proposition~\ref{PropGaussian} is proved identically to the Plancherel case (in fact, since the proof is written using the notation $\Pi(z;\rho)$ there is essentially no change needed). In the alpha case, we employ the scaling (\ref{eqScalingLLNalpha}) except refine it by setting $\lambda^{(n)}_{k}(t) = \e^{-1} x^{(n)}_{k}(t)+ \e^{-1/2} \xi^{(n)}_k(t;\e)$. All that changes in the above proposition is that in $\mathfrak{F}(n_i,r_i,\vec{z}_{i})$ the term $e^{-z_{i,k} \tau}$ is replaced by $\prod_{s=1}^{t} (1-\alpha_s z_{i,k})$.
\end{remark}

In order to prove Proposition~\ref{PropGaussian} we will use the following lemma.

\begin{lemma}\label{Lemma4.2}
Consider the setting of Proposition~\ref{PropGaussian} and define
\begin{equation}\label{eqn411}
Y_i^\e=\frac{1-e^{-\sqrt{\e}\big(\xi^{(n_i)}_{n_i}(\tau;\e) +\cdots + \xi^{(n_i)}_{n_i-r_i+1}(\tau;\e)\big)}}{\sqrt{\e}}.
\end{equation}
Then (using the same contours as in Proposition~\ref{PropGaussian})
\begin{equation}\label{eqncovar}
\begin{aligned}
\lim_{\e\to 0}\EE_{\vec{a};\rho} \Bigg[\prod_{i=1}^{m} \left(Y_i^\e-\EE_{\vec{a};\rho}\left[Y_i^\e\right]\right)\Bigg] &= \prod_{i=1}^m e^{x^{(n_i)}_{n_i}(\tau) +\cdots + x^{(n_i)}_{n_i-r_i+1}(\tau)}\\
\times& \sum_{\begin{subarray}{c}B\textrm{ perfect }\\ \textrm{matching of }\\\{1,\ldots,m\}\end{subarray}} \oint \cdots \oint \prod_{(j_1,j_2)\in B} \mathfrak{Cr}(r_{j_1},\vec{z}_{j_1};r_{j_2},\vec{z}_{j_2}) \prod_{i=1}^m \mathfrak{F}(n_i,r_i,\vec{z}_{i}) d\vec{z}_{i},
\end{aligned}
\end{equation}
if $m$ is even and $0$ if $m$ is odd. When we write perfect matching of $\{1,\ldots,m\}$ we mean a set of $m/2$ ordered pairs of $\{1,\ldots,m\}$ which contains all elements of $\{1,\ldots,m\}$, i.e.,
$$\Big\{\big\{(i_1,j_1),\ldots,(i_{m/2},j_{m/2})\big\}\subset\{1,\ldots,m\}^2 \,|\, i_k<j_k, \textrm{ for all }k, \{i_1,j_1,\ldots,i_{m/2},j_{m/2}\}=\{1,\ldots m\} \Big\}.
$$
Further, for all $i=1,\ldots,m$,
\begin{equation*}
\lim_{\e\to 0}\EE_{\vec{a};\rho} \left[Y_i^\e\right]=0.
\end{equation*}
Consequently, the $\big\{Y_i^{\e}\big\}_{1\leq i\leq m}$ converge as $\e\to 0$ in distribution to the Gaussian vector with limiting covariance between $\lim_{\e\to 0}Y_i^{\e}$ and $\lim_{\e\to 0}Y_j^{\e}$ given by (\ref{eq4.8}).
\end{lemma}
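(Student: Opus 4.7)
The plan is to analyze the $\e\to 0$ asymptotics of $\EE[\prod_{i=1}^m L_i^\e]$, where $L_i^\e := e^{-\sqrt{\e}(\xi^{(n_i)}_{n_i}(\tau;\e)+\cdots+\xi^{(n_i)}_{n_i-r_i+1}(\tau;\e))} = 1-\sqrt{\e}\,Y_i^\e$, via the moment formula of Proposition~\ref{PropTheorem45}. Under the scaling (\ref{eqScalingGaussian}),
\[
\prod_{i=1}^m L_i^\e = \prod_{i=1}^m e^{x^{(n_i)}_{n_i}(\tau)+\cdots+x^{(n_i)}_{n_i-r_i+1}(\tau)}\cdot q^{\lambda^{(n_i)}_{n_i}+\cdots+\lambda^{(n_i)}_{n_i-r_i+1}},
\]
so $\EE[\prod_i L_i^\e]$ equals the right-hand side of (\ref{eqMoments}) (with $\rho=\gamma=\e^{-1}\tau$) times the product of exponentials. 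First I would fix counterclockwise nested contours independent of $\e$ (possible since the $q$-nesting relaxes as $q\to 1$) and use the uniform expansions
\[
\frac{q(z_{i,k}-z_{j,\ell})}{z_{i,k}-qz_{j,\ell}}=1-\e\frac{z_{i,k}}{z_{i,k}-z_{j,\ell}}+O(\e^2),\qquad \frac{\Pi(qz_{i,k};\gamma)}{\Pi(z_{i,k};\gamma)}=e^{-\tau z_{i,k}}(1+O(\e)).
\]
Proposition~\ref{PropLLN} supplies the zeroth-order identity $\oint e^{\sum_k x^{(n_i)}_k(\tau)}\mathfrak{F}(n_i,r_i,\vec z_i)\,d\vec z_i=1$, hence $\EE[L_i^\e]=1+O(\e)$ and therefore $\EE[Y_i^\e]=(1-\EE L_i^\e)/\sqrt{\e}\to 0$, which is the last assertion of the lemma.

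Next I would interpret the full $\e$-expansion of the integrand diagrammatically: setting $\mathcal{C}_{ij}:=\prod_{k,\ell}\frac{q(z_{i,k}-z_{j,\ell})}{z_{i,k}-qz_{j,\ell}}$ and expanding $\mathcal{C}_{ij}$ and the Plancherel ratios as power series in $\e$, each resulting term is indexed by a multigraph $G$ on vertex set $[m]$, with cross-edges between $i$ and $j$ coming from $\mathcal{C}_{ij}$ and tadpoles at $i$ coming from the Plancherel correction. The crucial structural point is that inside the contour integrals the integrand of each such diagram factorizes over the connected components of $G$, since different $\vec z_i$-groups are coupled only through cross-edges. By the standard cluster/M\"obius correspondence between such factorizing expansions and cumulants, this forces
\[
\kappa_k(L^\e_{i_1},\ldots,L^\e_{i_k})=O(\e^{k-1})\quad \text{for every }k\ge 2,
\]
because a connected multigraph on $k$ vertices has at least $k-1$ cross-edges, each carrying one power of $\e$. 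For $k=2$ the single-edge diagram gives $\kappa_2(L^\e_i,L^\e_j)=\e\,\mathbf{C}(n_i,r_i;n_j,r_j)+O(\e^2)$ with $\mathbf{C}$ as defined in (\ref{eq4.8}).

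Finally, by multilinearity of cumulants, $\kappa_k(Y^\e_{i_1},\ldots,Y^\e_{i_k})=(-\sqrt{\e})^{-k}\kappa_k(L^\e_{i_1},\ldots,L^\e_{i_k})=O(\e^{k/2-1})$, which vanishes for $k\ge 3$ and tends to $\mathbf{C}$ for $k=2$. Substituting into the moment-cumulant formula for centered variables,
\[
\EE\Big[\prod_{i=1}^m(Y^\e_i-\EE Y^\e_i)\Big]=\sum_{\substack{\pi\in\mathcal{P}([m])\\ |B|\ge 2\ \forall B\in\pi}}\prod_{B\in\pi}\kappa_{|B|}\big(\{Y^\e_i\}_{i\in B}\big),
\]
and sending $\e\to 0$: only pair partitions survive, and each pair contributes $\mathbf{C}(n_{j_1},r_{j_1};n_{j_2},r_{j_2})$, reproducing the Wick sum in (\ref{eqncovar}); for odd $m$ no perfect matching exists, so the limit is $0$. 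Convergence in distribution of $\{Y^\e_i\}_{1\le i\le m}$ to a centered Gaussian with the stated covariance then follows from moment convergence and Gaussian moment-determinacy. The main obstacle will be making the diagrammatic expansion fully rigorous: one needs uniform (in $\e$) control of the $O(\e^{s+1})$ remainders when truncating $\mathcal{C}_{ij}$ and the Plancherel ratio at order $s$, and the interchange of expansion with contour integration must be justified by keeping the contours fixed and bounded away from the $\e$-dependent poles $z_{i,k}=qz_{j,\ell}$ uniformly for small $\e$.
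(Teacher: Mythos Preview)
Your proposal is correct and arrives at the same conclusion as the paper, but the packaging is genuinely different. The paper expands $\EE\big[\prod_i(q^{\lambda}-\EE q^{\lambda})\big]$ directly by inclusion--exclusion over subsets $A\subset\{1,\dots,m\}$, substitutes the moment formula (\ref{eqMoments}), writes ${\rm Cr}_\e=1+\e\,\mathfrak{Cr}_\e$, and then applies the M\"obius identity $\sum_{A:\,{\rm Supp}(B)\subset A\subset[m]}(-1)^{m-|A|}=\Id_{{\rm Supp}(B)=[m]}$ to collapse the sum to sets $B$ of cross-pairs covering all of $[m]$; the minimal such $B$ are perfect matchings, each worth $\e^{m/2}$. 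Your cumulant route encodes precisely the same combinatorics at a higher level of abstraction: the factorization of the integrand over connected components is exactly the statement that mixed moments of the $L_i^\e$ are products over clusters, so the moment--cumulant inversion (M\"obius on the partition lattice rather than on subsets) identifies $\kappa_k(L^\e)$ with the connected-diagram contributions and forces $\kappa_k=O(\e^{k-1})$. Your approach makes the vanishing of $k\ge 3$ cumulants immediate and plugs straight into standard CLT machinery; the paper's explicit expansion has the compensating advantage that the uniform remainder bounds and contour bookkeeping are fully spelled out (the point you correctly flag as the main obstacle). The ``tadpoles'' from the Plancherel ratio play no role in either argument: the paper simply absorbs them into the uniform convergence $\mathfrak{F}_\e\to\mathfrak{F}$, while in your setup they cancel automatically upon passing from moments to cumulants since they do not connect distinct vertices.
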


\begin{proof}
First note that the Gaussian convergence result stated at the end of the lemma follows immediately from the method of moments (i.e. if all moments converge to those of a Gaussian, this implies convergence in distribution to that Gaussian). Wick's theorem shows that the limiting moments in (\ref{eqncovar}) are those of a Gaussian vector, and noting that
 \begin{equation*}
e^{-(x^{(n_i)}_{n_i}(\tau) +\cdots + x^{(n_i)}_{n_i-r_i+1}(\tau))} = \oint \mathfrak{F}(n_i,r_i,\vec{z}_{i}) d\vec{z}_{i},
\end{equation*}
as obtained in Proposition~\ref{PropLLN} (see (\ref{eq3.3alpha})), we readily identify this limiting Gaussian with the covariance given in (\ref{eq4.8}).

The proof of the limiting moment formulas follows the general approach of~\cite[Lemma~4.2]{BG14}. We present this in some detail for completeness. We start from (\ref{eqMoments}) and rewrite it as
\begin{equation}\label{eq4.2}
\EE_{\vec{a};\rho} \Bigg[\prod_{i=1}^{m} q^{\lambda^{(n_i)}_{n_i} +\cdots + \lambda^{(n_i)}_{n_i-r_i+1}}\Bigg] =
\oint \cdots \oint \prod_{1\leq i<j\leq m} {\rm Cr}_\e(r_i,\vec{z}_{i};r_j,\vec{z}_{j})\prod_{i=1}^m \mathfrak{F}_\e(n_i,r_i,\vec{z}_{i})d \vec{z}_{i},
\end{equation}
where
\begin{equation*}
{\rm Cr}_\e(r_i,\vec{z}_{i};r_j,\vec{z}_{j}) = \prod_{k=1}^{r_i} \prod_{\ell=1}^{r_j} \frac{q(z_{i,k} - z_{j,\ell})}{z_{i,k} - q z_{j,\ell}},
\end{equation*}
and
\begin{equation*}
\mathfrak{F}_\e(n_i,r_i,\vec{z}_{i}) =\frac{(-1)^{\frac{r_i(r_i+1)}{2}} \prod\limits_{1\leq k<\ell\leq r_i} (z_{i,k} - z_{i,\ell})^2}{(2\pi \I)^{r_i} r_i! \prod\limits_{k=1}^{r_i} (z_{i,k})^{r_i}} \, \prod_{k=1}^{r_i} \bigg(\prod_{\ell=1}^{n_{i}} \frac{-a_\ell}{z_{i,k} - a_\ell}\bigg)\, \frac{\Pi(q z_{i,k};\rho)}{\Pi(z_{i,k};\rho)}.
\end{equation*}
Observe that by expanding the product on the left-hand side,
\begin{equation}\label{eq4.5}
\begin{aligned}
&\EE_{\vec{a};\rho} \Bigg[\prod_{i=1}^{m} \left(q^{\lambda^{(n_i)}_{n_i} +\cdots + \lambda^{(n_i)}_{n_i-r_i+1}}
-\EE_{\vec{a};\rho}\Big[ q^{\lambda^{(n_i)}_{n_i} +\cdots + \lambda^{(n_i)}_{n_i-r_i+1}}\Big]\right)\Bigg]\\
&=\sum_{A\subset\{1,\ldots,m\}} (-1)^{m-|A|} \EE_{\vec{a};\rho} \Bigg[\prod_{i\in A} q^{\lambda^{(n_i)}_{n_i} +\cdots + \lambda^{(n_i)}_{n_i-r_i+1}}\Bigg] \prod_{j\not\in A} \EE_{\vec{a};\rho} \Bigg[q^{\lambda^{(n_j)}_{n_j} +\cdots + \lambda^{(n_j)}_{n_j-r_j+1}}\Bigg].
\end{aligned}
\end{equation}
For a set $A$, we denote $A^{(2)}:=\{(i,j)\,|\, i<j, i,j\in A\}$. Since the term ${\rm Cr}$ is not present when $m=1$, using (\ref{eq4.2}) we find that
\begin{equation}\label{eq4.10}
(\ref{eq4.5})= \oint \cdots \oint \sum_{A\subset\{1,\ldots,m\}} (-1)^{m-|A|} \prod_{(j_1,j_2)\in A^{(2)}} {\rm Cr}_\e(r_{j_1},\vec{z}_{j_1};r_{j_2},\vec{z}_{j_2})
\prod_{i=1}^m \mathfrak{F}_\e(n_i,r_i,\vec{z}_{i}) d\vec{z}_{i}.
\end{equation}
In the $\e\to 0$ limit, ${\rm Cr}_\e \to 1$ linearly in $\e$. With this in mind, define $\mathfrak{Cr}_\e$ by
\begin{equation*}
{\rm Cr}_\e(r_{j_1},\vec{z}_{j_1};r_{j_2},\vec{z}_{j_2}) = 1+\e \,\mathfrak{Cr}_\e(r_{j_1},\vec{z}_{j_1};r_{j_2},\vec{z}_{j_2}).
\end{equation*}
Using this expansion we can rewrite (\ref{eq4.10}) to show that
\begin{equation}\label{eq4.12}
\begin{aligned}
(\ref{eq4.5}) =& \oint \cdots \oint \sum_{B\subset\{1,\ldots,m\}^{(2)}} \e^{|B|}\prod_{(j_1,j_2)\in B} \mathfrak{Cr}_\e(r_{j_1},\vec{z}_{j_1};r_{j_2},\vec{z}_{j_2})\\
&\times \prod_{i=1}^m \mathfrak{F}_\e(n_i,r_i,\vec{z}_{i}) d\vec{z}_{i}
\sum_{\begin{subarray}{c}
A\subset\{1,\ldots,m\},\\
{\rm Supp}(B)\subset A
\end{subarray}}(-1)^{m-|A|}
\end{aligned}
\end{equation}
where ${\rm Supp}$ is the set of all elements of $\{1,\ldots, m\}$ which show up in $B$.

Next, using
$$\sum_{A: I_1\subset A\subset I_2} (-1)^{|I_2|-|A|}=\Id_{I_2=I_1},$$
we obtain
\begin{equation}\label{eq4.14}
(\ref{eq4.5})= \oint \cdots \oint \sum_{\begin{subarray}{c}B\subset\{1,\ldots,m\}^{(2)},\\ {\rm Supp}(B)=\{1,\ldots,m\}\end{subarray}} \e^{|B|}\prod_{(j_1,j_2)\in B} \mathfrak{Cr}_\e(r_{j_1},\vec{z}_{j_1};r_{j_2},\vec{z}_{j_2})
\prod_{i=1}^m \mathfrak{F}_\e(n_i,r_i,\vec{z}_{i}) d\vec{z}_{i}.
\end{equation}
We now seek to study the limit $\lim_{\e \to 0} \e^{-m/2} \cdot (\ref{eq4.5})$. Note that $B$ is a set of pairs and the condition ${\rm Supp}(B)=\{1,\ldots,m\}$ implies that $|B|\geq \lceil m/2 \rceil$. If $m$ is odd, then $\e^{-m/2} \e^{|B|}\to 0$ as $\e\to 0$ for all $B$, while if $m$ is even, the only non-vanishing terms in the $\e\to 0$ limit are the perfect matchings of $\{1,\ldots,m\}$ (with the second entries in the pairing to be larger than the first entries).
We also have the following uniform convergence to continuous functions (the contours are fixed)
\begin{equation}\label{eq4.15}
\begin{aligned}
&\lim_{\e\to 0}\mathfrak{Cr}_\e(r_{j_1},\vec{z}_{j_1};r_{j_2},\vec{z}_{j_2}) = \mathfrak{Cr}(r_{j_1},\vec{z}_{j_1};r_{j_2},\vec{z}_{j_2})\\
&\lim_{\e\to 0}\mathfrak{F}_\e(n_i,r_i,\vec{z}_{i})=\mathfrak{F}(n_i,r_i,\vec{z}_{i}).
\end{aligned}
\end{equation}
Thus we can take the $\e\to 0$ limit inside the integrals with the result
\begin{equation*}
\begin{aligned}
&\lim_{\e \to 0} \e^{-m/2} (\ref{eq4.5}) = \lim_{\e\to 0} \e^{-m/2}\EE_{\vec{a};\rho} \Bigg[\prod_{i=1}^{m} \left(q^{\lambda^{(n_i)}_{n_i} +\cdots + \lambda^{(n_i)}_{n_i-r_i+1}}
-\EE_{\vec{a};\rho}\Big[ q^{\lambda^{(n_i)}_{n_i} +\cdots + \lambda^{(n_i)}_{n_i-r_i+1}}\Big]\right)\Bigg]
\\
&= \sum_{\begin{subarray}{c}B\textrm{ perfect }\\ \textrm{matching of }\\\{1,\ldots,m\}\end{subarray}} \oint \cdots \oint \prod_{(j_1,j_2)\in B} \mathfrak{Cr}(r_{j_1},\vec{z}_{j_1};r_{j_2},\vec{z}_{j_2}) \prod_{i=1}^m \mathfrak{F}(n_i,r_i,\vec{z}_{i}) d\vec{z}_{i}.
\end{aligned}
\end{equation*}

Recalling the scalings of (\ref{eqScalingGaussian}) we have that
\begin{equation}\label{eq4.6}
\lim_{\e \to 0} \e^{-m/2}(\ref{eq4.5})=(-1)^m\prod_{i=1}^m e^{-(x^{(n_i)}_{n_i}(\tau) +\cdots + x^{(n_i)}_{n_i-r_i+1}(\tau))} \lim_{\e\to 0}\EE_{\vec{a};\rho} \Bigg[\prod_{i=1}^{m} \left(Y_i^\e-\EE_{\vec{a};\rho}\left[Y_i^\e\right]\right)\Bigg].
\end{equation}

Thus we have proven that
\begin{equation*}
\begin{aligned}
\lim_{\e\to 0}\EE_{\vec{a};\rho} \Bigg[\prod_{i=1}^{m} &\left(Y_i^\e-\EE_{\vec{a};\rho}\left[Y_i^\e\right]\right)\Bigg] =(-1)^m \prod_{i=1}^m e^{x^{(n_i)}_{n_i}(\tau) +\cdots + x^{(n_i)}_{n_i-r_i+1}(\tau)} \\
\times\,& \sum_{\begin{subarray}{c}B\textrm{ perfect }\\ \textrm{matching of }\\\{1,\ldots,m\}\end{subarray}} \oint \cdots \oint \prod_{(j_1,j_2)\in B} \mathfrak{Cr}(r_{j_1},\vec{z}_{j_1};r_{j_2},\vec{z}_{j_2}) \prod_{i=1}^m \mathfrak{F}(n_i,r_i,\vec{z}_{i}) d\vec{z}_{i},
\end{aligned}
\end{equation*}
which by Wick's theorem it is the $m$th moment of a Gaussian process with covariance given by~(\ref{eq4.8}).

Finally let us show that the average of $Y_i^\e$ goes to zero. We have
\begin{equation}\label{eq4.25}
e^{-(x^{(n_i)}_{n_i}(\tau) +\cdots + x^{(n_i)}_{n_i-r_i+1}(\tau))} \EE_{\vec{a};\rho} \left[Y_i^\e\right] = \frac{e^{-(x^{(n_i)}_{n_i}(\tau) +\cdots + x^{(n_i)}_{n_i-r_i+1}(\tau))}-\EE_{\vec{a};\rho} \big[q^{\lambda^{(n_i)}_{n_i}+\cdots \lambda^{(n_i)}_{n_i-r_i+1}}\big]}{\sqrt{\e}}
\end{equation}
and using (\ref{eq4.2}) for $m=1$ we have
\begin{equation*}
(\ref{eq4.25}) = \frac{(-1)^{\frac{r_i(r_i+1)}{2}}}{{(2\pi \I)^{r_i} r_i!}} \oint \frac{\prod_{1\leq k<\ell\leq r_i} (z_{i,k} - z_{i,\ell})^2}{ \prod_{k=1}^{r_i} (z_{i,k})^{r_i}} \, \prod_{k=1}^{r_i} \bigg(\prod_{\ell=1}^{n_{i}} \frac{-a_\ell}{z_{i,k} - a_\ell}\bigg)\, \frac{\frac{\Pi(q z_{i,k};\rho)}{\Pi(z_{i,k};\rho)}-1}{\sqrt{\e}} d \vec{z}_{i}.
\end{equation*}
Since $\frac{\frac{\Pi(q z_{i,k};\rho)}{\Pi(z_{i,k};\rho)}-1}{\sqrt{\e}}\to 0$ as $\e\to 0$ uniformly in the $z$'s on the chosen contours, we can take the limit $\e\to 0$ inside and obtain that $\lim_{\e\to 0}\EE_{\vec{a};\rho} \left[Y_i^\e\right]=0$.
\end{proof}

\begin{proof}[Proof of Proposition~\ref{PropGaussian}]
From Lemma~\ref{Lemma4.2} we have that each $Y_i^{\e}$, for $1\leq i\leq m$, is tight as $\e \to 0$ (joint tightness of these random variables together follows immediately as well). This means that for all $\delta>0$ there exists large $M(\delta)>0$ and small $\e(\delta)>0$ such that for all $\e<\e(\delta)$,
$$\PP_{\vec{a};\rho}\big(|Y_i^{\e}|>M(\delta)\big)<\delta.$$

Now fix any constant $c\in (0,1)$ and observe that there exist constants $c',c''>0$ depending on $c$ such that for all $x\in[0,c]$
$$-\ln(1-x)<c' x, \qquad -\ln(1+x)>-c'' x.$$
Define $\tilde{\e}(\delta) =\min\big(\e(\delta),(c/M(\delta))^2\big)$ and $\tilde{M}(\delta) = M(\delta)\max(c',c'')$. Set $\tilde Y_i^\e:=\xi^{(n_i)}_{n_i}(\tau;\e) +\cdots + \xi^{(n_i)}_{n_i-r_i+1}(\tau;\e)$. Then, it follows from the above inequalities and the fact that \mbox{$\tilde{Y}_i^{\e} = -\e^{-1/2} \ln(1-\e^{1/2} Y_i^{\e})$} that
for all $\delta>0$ there exists large $\tilde{M}(\delta)>0$ (as given above) and $\tilde{\e}(\delta)>0$ (as given above) such that for all $\e<\tilde{\e}(\delta)$,
$$\PP_{\vec{a};\rho}\big(|\tilde{Y}_i^{\e}|>\tilde{M}(\delta)\big)<\delta.$$
It follows from this and Prokhorov's theorem that every sequence in $\e$ of $\big\{\tilde{Y}_i^{\e}\big\}_{1\leq i\leq m}$ has a convergent subsequence. On such a convergent subsequence of $\e$'s, Taylor expansion shows that the differences $Y_i^{\e}-\tilde{Y}_i^{\e}$ converge to zero in probability. Therefore, in light of the final statement of Lemma~\ref{Lemma4.2}, on said subsequences the $\big\{\tilde{Y}_i^{\e}\big\}_{1\leq i\leq m}$ converge in distribution to the centered Gaussian with covariance given in (\ref{eq4.8}). Since every subsequential limit point is the same, this implies convergence as $\e\to 0$ of $\big\{\tilde{Y}_i^{\e}\big\}_{1\leq i\leq m}$ to this Gaussian as well, completing the proof of the proposition.
\end{proof}

\begin{remark}
Given the explicit form of the $q$-Whittaker process, one might hope to observe the Gaussian limit from asymptotics of that formula. We did not see how to do this, which is why we pursued this alternative route using the moment method.
\end{remark}

\subsection{Derivation of SDEs satisfied by Gaussian fluctuations}
In Proposition~\ref{PropGaussian} we have shown that at fixed time $\tau$ the limit process $\xi$ is Gaussian. Here we consider the SDE induced by the Plancherel push-block dynamics started with packed initial conditions and with all $a_1=\ldots=a_N=1$. We do not consider the limits of the various other $q$-Whittaker preserving dynamics considered earlier.

\begin{proposition}\label{propSDE}
Let us denote by $y_k^{(n)}(\tau):=e^{-x_k^{(n)}(\tau)}$ for $1\leq k \leq n \leq N$. Then, the evolution of the vector $\xi(\tau)=\big(\xi^{(n)}_k(\tau)\big)_{1\leq k\leq n\leq N}$ starting from $\xi(0)=0$ satisfies the system of SDE's (all terms $y, \xi, \sigma, a, b, c, W$ are functions of time $\tau$, though we suppress them to shorten expressions)
\begin{equation}\label{eq4.32}
d \xi^{(n)}_k = -a_k^{(n)} (\xi_k^{(n)}-\xi_{k-1}^{(n-1)}) d\tau + b_k^{(n)} (\xi_{k}^{(n)}-\xi_{k+1}^{(n)}) dt-c_k^{(n)} (\xi_{k}^{(n)}-\xi_k^{(n-1)}) d\tau + \sigma_k^{(n)} dW_k^{(n)},
\end{equation}
where $W_k^{(n)}(\tau)$, $1\leq k \leq n \leq N$, are independent standard Brownian motions,
\begin{equation}\label{eq4.33}
\sigma_k^{(n)}=\sqrt{\frac{\left(1-\frac{y^{(n-1)}_{k-1}}{y^{(n)}_{k}}\right) \left(1-\frac{y^{(n)}_{k}}{y^{(n)}_{k+1}}\right)}{1-\frac{y^{(n)}_k}{y^{(n-1)}_k}}},
\end{equation}
and
\begin{equation}\label{eq4.34}
a_k^{(n)} = \frac{y_{k-1}^{(n-1)}}{y_k^{(n)}}\frac{1-\frac{y^{(n)}_k}{y^{(n)}_{k+1}}}{1-\frac{y^{(n)}_k}{y^{(n-1)}_k}}, \quad
b_k^{(n)} = \frac{y_{k}^{(n)}}{y_{k+1}^{(n)}}\frac{1-\frac{y^{(n-1)}_{k-1}}{y^{(n)}_{k}}}{1-\frac{y^{(n)}_k}{y^{(n-1)}_k}}, \quad
c_k^{(n)} = \frac{y_{k}^{(n)}}{y_k^{(n-1)}}\frac{\left(1-\frac{y^{(n-1)}_{k-1}}{y^{(n)}_{k}}\right) \left(1-\frac{y^{(n)}_{k}}{y^{(n)}_{k+1}}\right)}{\left(1-\frac{y^{(n)}_k}{y^{(n-1)}_k}\right)^2}.
\end{equation}
\end{proposition}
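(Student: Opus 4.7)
The strategy is to derive (\ref{eq4.32}) as a diffusion approximation to the Plancherel push-block dynamics. Under the fluctuation scaling (\ref{eqScalingGaussian}) with $a_1=\cdots=a_N=1$, the LLN limit satisfies $x^{(n)}_k(\tau)>x^{(n-1)}_k(\tau)$ strictly for $\tau>0$; equivalently, the denominator $C:=1-y^{(n)}_k/y^{(n-1)}_k$ appearing in (\ref{eqDynamics}) is uniformly bounded away from $0$. Hence on any bounded time interval the gap $\la^{(n)}_k-\la^{(n-1)}_k$ is of order $\e^{-1}$, so the push-block string at $(n,k)$ is with high probability reduced to the single particle $\la^{(n)}_k$ itself. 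Up to asymptotically negligible push events, each $\la^{(n)}_k$ then increments as an independent Poisson process in $\gamma$-time with local rate $R^{(n)}_k$ from (\ref{eqDynamics}). Rescaling to $\tau=\e\gamma$, the Poisson CLT yields increments
$$d\la^{(n)}_k = \e^{-1} R^{(n)}_k\, d\tau + \e^{-1/2}\sqrt{R^{(n)}_k}\, dW^{(n)}_k + o(\e^{-1/2}),$$
with independent standard Brownian motions $W^{(n)}_k$ inherited from the independent clocks.

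The rest is a Taylor expansion of $R^{(n)}_k$ in $\sqrt{\e}$. Writing $q=e^{-\e}$ and using the ansatz $\la^{(n)}_k=\e^{-1}x^{(n)}_k+\e^{-1/2}\xi^{(n)}_k$ together with $y^{(n)}_k=e^{-x^{(n)}_k}$, one gets
$$q^{\la^{(n-1)}_{k-1}-\la^{(n)}_k} = \frac{y^{(n-1)}_{k-1}}{y^{(n)}_k}\bigl(1-\sqrt{\e}\,(\xi^{(n-1)}_{k-1}-\xi^{(n)}_k)\bigr)+\Or(\e),$$
with analogous expressions for the two other $q$-exponentials in (\ref{eqDynamics}) (the $+1$ shifts there bring in an extra factor $q=1+\Or(\e)$, irrelevant at this order). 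Setting $A:=1-y^{(n-1)}_{k-1}/y^{(n)}_k$, $B:=1-y^{(n)}_k/y^{(n)}_{k+1}$, and $C$ as above, one obtains $R^{(n)}_k=AB/C+\sqrt{\e}\,R_1+\Or(\e)$ with
$$R_1 = \frac{y^{(n-1)}_{k-1}}{y^{(n)}_k}\frac{B}{C}(\xi^{(n-1)}_{k-1}-\xi^{(n)}_k)+\frac{y^{(n)}_k}{y^{(n)}_{k+1}}\frac{A}{C}(\xi^{(n)}_k-\xi^{(n)}_{k+1})-\frac{y^{(n)}_k}{y^{(n-1)}_k}\frac{AB}{C^2}(\xi^{(n)}_k-\xi^{(n-1)}_k).$$
The leading term $AB/C$ matches the right-hand side of the ODE (\ref{eqnpushode}) with $a_n=1$, so on substituting $d\la^{(n)}_k=\e^{-1}(dx^{(n)}_k/d\tau)\,d\tau+\e^{-1/2}d\xi^{(n)}_k$ the $\e^{-1}$ contributions cancel and only the $\e^{-1/2}$-equation for $\xi^{(n)}_k$ survives.

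Reading off this remaining equation, the three drift coefficients in $R_1$ are precisely $-a^{(n)}_k,\,b^{(n)}_k,\,-c^{(n)}_k$ of (\ref{eq4.34}) (multiplying $\xi^{(n)}_k-\xi^{(n-1)}_{k-1}$, $\xi^{(n)}_k-\xi^{(n)}_{k+1}$, $\xi^{(n)}_k-\xi^{(n-1)}_k$ respectively), while the diffusion coefficient is $\sqrt{AB/C}=\sigma^{(n)}_k$ of (\ref{eq4.33}). This is exactly the system (\ref{eq4.32}). The main obstacle for full rigor is twofold: (i) uniformly controlling the vanishing probability of push events, which is what decouples the microscopic dynamics into independent local Poisson processes; and (ii) upgrading the formal Poisson CLT plus Taylor expansion to a martingale-problem / Ethier--Kurtz diffusion-approximation argument with tightness on $[0,T]$. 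In the heuristic spirit of Section~\ref{secderodes}, a convenient internal consistency check is that the fixed-time covariance produced by integrating (\ref{eq4.32}) matches the Gaussian covariance (\ref{eq4.8}) of Proposition~\ref{PropGaussian}, which was obtained independently from the moment formulas.
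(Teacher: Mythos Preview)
Your proposal is correct and follows essentially the same approach as the paper: both give an informal diffusion-approximation derivation by Taylor-expanding the Poisson jump rate $R^{(n)}_k$ to order $\sqrt{\e}$, identifying the zeroth-order term with the LLN ODE (\ref{eqnpushode}) and reading off the drift and diffusion coefficients from the first-order correction and the leading rate respectively. The paper likewise does not give a fully rigorous proof in-text, instead deferring to the argument of~\cite[Theorem~1]{BCT15}; your explicit acknowledgment of the two obstacles (controlling push events and upgrading to a genuine martingale-problem argument) is in the same spirit.
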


This proposition can be proved via the argument of~\cite[Theorem 1]{BCT15}. Instead of reproducing that proof, let us informally derive this result. First, however, note that this above SDE almost surely has a unique continuous solution. To see this, note that the $\sigma,a,b,c$ variables are all bounded as $\tau$ varies in a compact interval. The drift for the SDE is uniformly (as $\tau$ varies in a compact interval) Lipschitz in the $\xi$ variables, and the covariance is independent of the $\xi$ variables. Thus, standard uniqueness results for SDE apply.

Under the Plancherel push-block dynamics, each particle jumps rate according to independent Poisson clocks whose intensity depends on the distances of (up to) three neighboring particles, see (\ref{eqDynamics}). Under the scaling (\ref{eqScalingGaussian}), we expect to obtain a diffusion process where the $\xi^{(n)}_k(\tau)$ are driven by independent Brownian motions. What remains is to determine the diffusion coefficients as well as the drifts.

To determine the drift, we need to determine, as $\e\to 0$, the $\Or(d\tau)$ term in
\begin{equation*}
\EE\left[\xi^{(n)}_k(\tau+d\tau;\e)-\xi^{(n)}_k(\tau;\e)\right] = - \frac{x^{(n)}_k(\tau+d\tau)-x^{(n)}_k(\tau)}{\sqrt{\e}} + \sqrt{\e} \EE\left[\lambda^{(n)}_k(\e^{-1}\tau+\e^{-1}d\tau)-\lambda^{(n)}_k(\e^{-1}\tau)\right].
\end{equation*}
The jump rate for $\lambda^{(n)}_k$ is given by (\ref{eqDynamics}) (with $a_n=1$), i.e.,
\begin{equation}\label{eq4.36}
R^{(n)}_k=\frac{(1-c_1 e^{-\sqrt{\e}b_1})(1-c_2 e^{-\sqrt{\e}b_2})}{1-c_3 e^{-\sqrt{\e}b_3}}
\end{equation}
with the short-hand notations
\begin{equation*}
c_1=e^{x^{(n)}_{k}(\tau)-x^{(n-1)}_{k-1}(\tau)},\quad c_2= e^{x^{(n)}_{k+1}(\tau)-x^{(n)}_k(\tau)},\quad c_3=e^{x^{(n-1)}_k(\tau)-x^{(n)}_k(\tau)}
\end{equation*}
and
\begin{equation*}
b_1=\xi^{(n-1)}_{k-1}(\tau;\e)-\xi^{(n)}_k(\tau;\e),\quad b_2=\xi^{(n)}_k(\tau;\e)-\xi^{(n)}_{k+1}(\tau;\e),\quad b_3=\xi^{(n)}_k(\tau;\e)-\xi^{(n-1)}_k(\tau;\e).
\end{equation*}
By (\ref{eqDynamics}) we have
\begin{equation}\label{eq4.39}
-\frac{x^{(n)}_k(\tau+d\tau)-x^{(n)}_k(\tau)}{\sqrt{\e}} = -\frac{d\tau}{\sqrt{\e}} \frac{(1-c_1)(1-c_2)}{1-c_3}+\Or(d\tau^2)
\end{equation}
and by (\ref{eq4.36}) we have
\begin{equation}\label{eq4.40}
\begin{aligned}
&\sqrt{\e} \EE\left[\lambda^{(n)}_k(\e^{-1}\tau+\e^{-1}d\tau)-\lambda^{(n)}_k(\e^{-1}\tau)\right] \\
&= \sqrt{\e}\frac{d\tau}{\e} \frac{(1-c_1)(1-c_2)}{1-c_3}\left(1+\sqrt{\e}\frac{c_1 b_1}{1-c_1}+\sqrt{\e}\frac{c_2 b_2}{1-c_2}-\sqrt{\e}\frac{c_3 b_3}{1-c_3}\right) + \Or(d\tau^2).
\end{aligned}
\end{equation}
Combining (\ref{eq4.39}) and (\ref{eq4.40}) we see that the $\Or(d\tau)$ term has a limit as $\e\to 0$, which is the drift term in (\ref{eq4.32}).

To determine the diffusion coefficient, we need to compute the $\Or(d\tau)$ term in
\begin{equation}\label{eq4.42}
\EE\left[\xi^{(n)}_k(\tau+d\tau;\e)-\xi^{(n)}_k(\tau;\e)\right]^2-\left(\EE\left[\xi^{(n)}_k(\tau+d\tau;\e)-\xi^{(n)}_k(\tau;\e)\right]\right)^2.
\end{equation}
We have
\begin{equation*}
(\ref{eq4.42}) = \e \, \EE\left[\lambda^{(n)}_k(\e^{-1}\tau+\e^{-1}d\tau)-\lambda^{(n)}_k(\e^{-1}\tau)\right]^2 + \Or(d\tau^2) =\e \frac{d\tau}{\e} R^{(n)}_k+\Or(d\tau^2).
\end{equation*}
As $\e\to 0$, $R^{(n)}_k\to \frac{(1-c_1)(1-c_2)}{1-c_3}$, which is the square of $\sigma^{(n)}_k$ in (\ref{eq4.32}) as stated in (\ref{eq4.33}).

\section{Slow decorrelation and Edwards-Wilkinson asymptotics}
In Proposition~\ref{PropGaussian} we have obtained the covariance of the process for sums of random variables counted from the edges. In this section we consider the limit when $n_1,n_2\to \infty$ simultaneously but also $r_1,r_2\to\infty$ with the same speed, i.e., we would like to concentrate in the bulk of the macroscopic picture. For that purpose we need to have manageable expressions (not $(r_1,r_2)$-fold integrals) for the covariance of the single random variables. This is possible if we first consider the large time limit.

In this section we consider the Plancherel case with all $a_i=1$. Also, as it will be used often below, we introduce the following notation: given a set $S$, $\frac{1}{2\pi\I} \oint_{\Gamma_S} dz f(z)$ means that the integral path goes around the poles of $S$ only, i.e., the integral is the sum of the residues of $f$ at the elements of the set $S$.

\subsection{Large time simplification}\label{seclargetimesim}
Since $\xi^{(1)}_1$ is a standard Brownian motion, at times $\tau=T L$ we need to consider the scaling $L^{-1/2}\xi^{(1)}_1(\tau=L T)$ as $L\to\infty$ in order to still see a Brownian motion. This suggest that we consider the same scaling for the set $\{\xi^{(n)}_k,1\leq k\leq n\leq N\}$. Propositions~\ref{PropLargeL} and~\ref{PropLargeLsde}, show how in the $L\to\infty$ limit our Gaussian process simplifies considerably.

\begin{proposition}\label{PropLargeL}
For any fixed $T>0$, the limit
\begin{equation*}
\zeta^{(n)}_k(T)=\lim_{L\to\infty} L^{-1/2} \xi^{(n)}_k(L T)
\end{equation*}
exists and $\zeta=\{\zeta^{(n)}_k(T),1\leq k\leq n\leq N\}$ is a centered Gaussian process with covariance given through the following formula. For $n_1\geq n_2$,
\begin{equation}\label{eq5.2}
\begin{aligned}
&{\rm Cov}\left(\zeta^{(n_1)}_{n_1}(T)+\ldots+\zeta^{(n_1)}_{n_1-r_1+1}(T);\zeta^{(n_2)}_{n_2}(T)+\ldots+\zeta^{(n_2)}_{n_2-r_2+1}(T)\right)\\
=\,&\frac{\displaystyle\oint \oint \sum_{k=1}^{r_1}\sum_{\ell=1}^{r_2}\frac{1}{z_k-w_\ell} \Big(\prod\limits_{1\leq i<j\leq r_1}(z_j-z_i)^2\prod\limits_{m=1}^{r_1} \frac{e^{T z_m}}{(z_m)^{n_1}}dz_m\Big)
\Big(\prod\limits_{1\leq i<j\leq r_2}(w_j-w_i)^2\prod\limits_{m=1}^{r_2} \frac{e^{T w_m}}{(w_m)^{n_2}}dw_m\Big)}
{\Big(\displaystyle\oint \prod\limits_{1\leq i<j\leq r_1}(z_j-z_i)^2\prod\limits_{m=1}^{r_1} \frac{e^{T z_m}}{(z_m)^{n_1}}dz_m\Big)
\Big(\oint \prod\limits_{1\leq i<j\leq r_2}(w_j-w_i)^2\prod\limits_{m=1}^{r_2} \frac{e^{T w_m}}{(w_m)^{n_2}}dw_m\Big)},
\end{aligned}
\end{equation}
where the integrals are around $0$ and the $z$-contours contain the $w$-contours.
\end{proposition}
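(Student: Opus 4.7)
The plan is to derive (\ref{eq5.2}) by taking the $\tau = LT$, $L\to\infty$ limit directly from the covariance formula (\ref{eq4.8}) of Proposition~\ref{PropGaussian} (with $a_1=\cdots=a_N=1$), after rescaling the contour variables in a way that centers the integrals around the pole at $1$. Since $\xi(LT)$ is a centered Gaussian vector and $L^{-1/2}$ scaling of a centered Gaussian only multiplies its covariance by $L^{-1}$, it suffices to show that ${\bf C}(n_1,r_1;n_2,r_2)$ is asymptotic to $L$ times the right-hand side of (\ref{eq5.2}); the existence of the limit as a centered Gaussian process is then automatic, with the full covariance reconstructed from $\bf C$ via the differencing identity of Remark~\ref{Rem4.2}.

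Concretely, substitute $z_{i,k}=1-u_{i,k}/L$ in each integration variable. Under this change of variables, each $z$-contour (a small counterclockwise loop around the pole $a_\ell=1$) becomes a counterclockwise $u$-contour around $0$ of radius $L\delta$, and the nesting order (the $i$-th contour encloses the $j$-th for $j>i$) is preserved. The pole at $z=0$ of the factor $z_k^{-r}$ becomes $u=L$, which is pushed to infinity, so the rescaled integrand is meromorphic in $u$ with only a pole at $0$. Tracking $L$-powers in $\mathfrak{F}(n,r,\vec z)\,d\vec z$: the squared Vandermonde contributes $L^{-r(r-1)}$; $\prod_k(1-z_k)^{-n}$ yields $L^{rn}\prod_k u_k^{-n}$; the product of differentials contributes $(-L)^{-r}$; $\prod_k z_k^{-r}\to 1$ uniformly on compacts; and $\prod_k e^{-z_k\tau}=e^{-LTr}\prod_k e^{u_k T}$. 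The cross term expands as
\begin{equation*}
\mathfrak{Cr}(r_i,\vec z_i;r_j,\vec z_j)
 = -\sum_{k,\ell}\frac{1-u_{i,k}/L}{(u_{j,\ell}-u_{i,k})/L}
 = L\sum_{k,\ell}\frac{1}{u_{i,k}-u_{j,\ell}} + O(1).
\end{equation*}

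Thus the numerator of (\ref{eq4.8}) acquires an overall prefactor $L\cdot L^{r_1(n_1-r_1)+r_2(n_2-r_2)} e^{-LT(r_1+r_2)}$ times a signed combinatorial constant, while the denominator acquires exactly the same prefactor $L^{r_1(n_1-r_1)+r_2(n_2-r_2)} e^{-LT(r_1+r_2)}$ with the same sign; the only surviving mismatch is a single factor $L$ from $\mathfrak{Cr}$. After the cancellation one recognizes the remaining integrals as the numerator and denominator of (\ref{eq5.2}) (with $u_{1,k}=z_k$ and $u_{2,\ell}=w_\ell$, both on nested contours around $0$). To make the interchange of limit and integral rigorous, deform each $u$-contour to a fixed compact nested circle around $0$; this is legitimate since the only pole of the rescaled integrand is at $u=0$, and the pole at $u=L$ lies outside any fixed contour once $L$ is large. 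On such fixed contours the integrand converges uniformly, and dominated convergence gives the claimed asymptotics ${\bf C}(n_1,r_1;n_2,r_2)=L\cdot[\text{RHS of (\ref{eq5.2})}]+o(L)$. The main bookkeeping obstacle — and the only delicate part — is tracking the $L$-power and sign cancellations between numerator and denominator so that precisely one factor of $L$ survives; the analytic input (uniform convergence on fixed compact contours) is straightforward once the substitution is set up.
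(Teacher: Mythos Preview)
Your proposal is correct and follows essentially the same route as the paper: substitute $z_{i,k}=1-u_{i,k}/L$ in the covariance formula (\ref{eq4.8}), observe that the $L$-dependent prefactors from $\mathfrak{F}$ cancel between numerator and denominator while $\mathfrak{Cr}$ contributes exactly one surviving factor of $L$, and pass to the limit by uniform convergence on fixed nested contours around $0$. Your write-up is, if anything, slightly more explicit than the paper's about the $L$-power bookkeeping and the justification for deforming to $L$-independent contours.
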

\begin{proof}
Consider (\ref{eq4.8}) in which we set $i=1$, $j=2$, and we do the change of variables $z_{1,k}\to 1-L^{-1} z_k$, $1\leq k\leq r_1$ and $z_{2,\ell}\to 1-L^{-1} w_\ell$, $1\leq \ell\leq r_2$. Now let us fix the integration contours for $w_\ell$ to be circles centered at $0$ of a given radius and the ones for $z_k$ to be circles centered at $0$ of a larger radius. Then
\begin{equation*}
\lim_{L\to\infty} L^{-1} \mathfrak{Cr}(r_{1},\vec{z}_{1};r_{2},\vec{z}_{2})= \sum_{k=1}^{r_1}\sum_{\ell=1}^{r_2} \frac{1}{z_k-w_\ell}
\end{equation*}
and
\begin{equation}\label{eq5.5}
\lim_{L\to\infty} (-1)^{r_1(r_1+1)/2} e^{L T r_1} L^{r_1(r_1-1)}L^{-n_1 r_1}\mathfrak{F}(n_1,r_1,\vec{z}_{1})= \frac{\prod\limits_{1\leq i<j\leq r_1}(z_i-z_j)^2\prod\limits_{m=1}^{r_1} e^{T z_m} z_m^{-n_1}}{(2\pi\I)^{r_1} r_1!},
\end{equation}
where the limits are uniform in the integration paths. The $L$-dependent prefactors in (\ref{eq5.5}) cancel out with the denominator in (\ref{eq4.8}), leading to the statement of Proposition~\ref{PropLargeL}.
\end{proof}

From (\ref{eq5.2}) it is possible to considerably simplify the covariance for the single random variables $\zeta^{(n)}_{n-r+1}$ by using random-matrix type technology. Indeed, first notice that since all the $z$-contours can be chosen to be the same (as well as all the $w$-contours are the same), we can by symmetry replace
$\sum_{k=1}^{r_1}\sum_{\ell=1}^{r_2}\frac{1}{z_k-w_\ell}$ with just $r_1 r_2 \frac{1}{z_1-w_1}$. Then only $z_1$ and $w_1$ interact and we can put together the integration over $z_2,\ldots,z_{r_1}$ into the notation
\begin{equation*}
\rho^{r_1,n_1}(z_1)=\frac{\frac{1}{(2\pi\I)^{r_1}} \oint_{\Gamma_0} dz_2\ldots dz_{r_1} \prod_{1\leq i<j\leq r_1}(z_j-z_i)^2\prod_{m=1}^{r_1} \frac{e^{T z_m}}{(z_m)^{n_1}}}
{\frac{1}{(2\pi\I)^{r_1}} \oint_{\Gamma_0} dz_1 dz_2\ldots dz_{r_1} \prod_{1\leq i<j\leq r_1}(z_j-z_i)^2\prod_{m=1}^{r_1} \frac{e^{T z_m}}{(z_m)^{n_1}}}.
\end{equation*}
Then, one recognizes that $\rho^{r_1,n_1}(z_1)$ is the first correlation function of the determinantal measure (orthogonal polynomial ensemble) $\prod_{1\leq i<j\leq r_1}(z_j-z_i)^2\prod_{m=1}^{r_1} \frac{e^{T z_m}}{(z_m)^{n_1}}$ normalized to $1$. Therefore, if we find the orthogonal polynomials with respect to the dot product
\begin{equation*}
\langle f,g\rangle_n=\frac{1}{2\pi\I}\oint_{\Gamma_0} f(z) g(z) \frac{e^{T z}}{z^n} dz
\end{equation*}
we can readily get $\rho^{r,n}(z)$. Indeed, let $p^n_k(z)$ be the orthogonal polynomial of degree $k$, then (see e.g.\cite{Koe04,Deift00})
\begin{equation}\label{eq5.7}
\rho^{r,n}(z)=\frac{e^{T z}}{z^{n}}\sum_{k=0}^{r-1}\frac{(p^n_k(z))^2}{\langle p^n_k,p^n_k\rangle_n}.
\end{equation}
By similar reasoning, we also have, for $n_1\geq n_2$,
\begin{equation}\label{eq5.8}
\begin{aligned}
{\rm Cov}\Big(\zeta^{(n_1)}_{n_1}(T)+\ldots+\zeta^{(n_1)}_{n_1-r_1+1}(T)&;\zeta^{(n_2)}_{n_2}(T)+\ldots+\zeta^{(n_2)}_{n_2-r_2+1}(T)\Big)\\
&=\oint_{\Gamma_0} \frac{dw}{2\pi\I}\oint_{\Gamma_{0,w}}\frac{dz}{2\pi\I} \frac{1}{z-w} \rho^{r_1,n_1}(z)\rho^{r_2,n_2}(w).
\end{aligned}
\end{equation}
Further, due to linearity of the covariance, using the sum formula in (\ref{eq5.7}) we directly get
\begin{equation}\label{eq5.9}
{\rm Cov}\Big(\zeta^{(n_1)}_{n_1-r_1+1}(T),\zeta^{(n_2)}_{n_2-r_2+1}(T)\Big)=\oint_{\Gamma_0} \frac{dw}{2\pi\I}\oint_{\Gamma_{0,w}}\frac{dz}{2\pi\I} \frac{1}{z-w} \frac{e^{T z}e^{T w}}{z^{n_1}w^{n_2}}\frac{(p^{n_1}_{r_1-1}(z))^2}{\langle p^{n_1}_{r_1-1},p^{n_1}_{r_1-1}\rangle_{n_1}} \frac{(p^{n_2}_{r_2-1}(w))^2}{\langle p^{n_2}_{r_2-1},p^{n_2}_{r_2-1}\rangle_{n_2}}.
\end{equation}

\begin{remark}
The random matrix methods could be used also on the formula of Proposition~\ref{PropGaussian} (i.e., with finite $L$) to get $\rho^{r,n}(z)$. However, the weight for the orthogonalization depends on $r$ as well and therefore the step from (\ref{eq5.8}) to (\ref{eq5.9}) does not work. This is the reason why in the $L\to\infty$ limit the system is simpler (although still quite nontrivial).
\end{remark}

The orthogonal polynomials from (\ref{eq5.7}) can be explicitly computed as follows. Note that we are free to choose the norm of each polynomial.
\begin{lemma}\label{lemmaOrthoPolyn}
The functions
\begin{equation*}
p^n_k(z)=\frac{k!}{(n-k-1)!}\sum_{\ell=0}^k\frac{(n-1-\ell)!}{(k-\ell)!\, \ell!}(-T z)^\ell
\end{equation*}
are orthogonal under $\langle\cdot,\cdot\rangle_n$ (as $k$ varies in $\{0,1,\ldots, N-1\}$) with norm squared
\begin{equation*}
\langle p^n_k,p^n_k\rangle_n = (-1)^k \frac{k!}{(n-1-k)!}T^{n-1}.
\end{equation*}
Further, the following integral representations hold:
\begin{equation*}
\begin{aligned}
p^n_k(z)&=\frac{T^n(-z)^k}{(n-1-k)!}\int_0^\infty \left(1-\frac{y}{z}\right)^k y^{n-1-k} e^{-Ty} dy\\
&=-e^{-T z} z^{n-1-k} k! \frac{1}{2\pi\I} \oint_{\Gamma_z} \left(1-\frac{v}{z}\right)^{-k-1} v^{k-n} e^{T v} dv
\end{aligned}
\end{equation*}
from which
\begin{equation*}
\frac{(p^n_k(z))^2}{\langle p^n_k,p^n_k\rangle_n} = -\frac{z^{n-1} T}{e^{T z}}\int_0^\infty \left(1-\frac{y}{z}\right)^k y^{n-1-k}e^{-T y} dy
\,\,\frac{1}{2\pi\I}\oint_{\Gamma_z} \left(1-\frac{v}{z}\right)^{-k-1} v^{k-n} e^{T v} dv.
\end{equation*}
\end{lemma}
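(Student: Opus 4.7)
My plan is to verify the two integral representations by direct computation and then establish orthogonality and the norm via a single finite-difference identity.

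The two integral representations follow from their explicit expansions. For the real integral, binomially expand $(1-y/z)^k = \sum_{j=0}^k\binom{k}{j}(-y/z)^j$, integrate using $\int_0^\infty y^{n-1-k+j}e^{-Ty}\,dy = (n-1-k+j)!/T^{n-k+j}$, and reindex by $\ell = k-j$ to recover the polynomial form. For the contour integral, substitute $u = v-z$ to turn $\Gamma_z$ into a small loop around $u=0$, expand $(z+u)^{k-n} = z^{k-n}\sum_j\binom{k-n}{j}(u/z)^j$ and $e^{Tu}$ as power series, and extract the residue at $u=0$ of the $u^{-k-1}$ factor. In both cases reindexing yields exactly the claimed polynomial.

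For orthogonality, since $p^n_k$ has degree $k$ with leading coefficient $(-T)^k$, it suffices to show $\langle z^m, p^n_k\rangle_n = 0$ for $0\leq m<k$. Substituting the polynomial form and using $\frac{1}{2\pi\I}\oint z^{m+\ell-n}e^{Tz}\,dz = T^{n-1-m-\ell}/(n-1-m-\ell)!$ (with the convention $1/N! := 0$ for $N<0$), then reindexing $j = k-\ell$ yields
\begin{equation*}
\langle z^m, p^n_k\rangle_n \;=\; \frac{T^{n-1-m}}{(n-1-k)!}\sum_{j=0}^k(-1)^j\binom{k}{j}\,P_m(j),
\end{equation*}
where $P_m(j) = (n-1-j)(n-2-j)\cdots(n-m-j)$ is a polynomial of degree $m$ in $j$. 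Crucially, the zeros of $P_m$ at $j=n-m,\ldots,n-1$ exactly match where $1/(n-1-m-j)!=0$, so the identification of the factorial ratio with $P_m(j)$ is consistent on the entire summation range $0\leq j\leq k$. The classical finite-difference identity $\sum_{j=0}^k(-1)^j\binom{k}{j}P(j) = (-1)^k k!\cdot[\text{leading coeff of }P]$ (for $\deg P\leq k$) then gives $0$ for $m<k$, proving orthogonality, and $k!$ for $m=k$ (since the leading coefficient of $P_k$ is $(-1)^k$).

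The norm follows at once: $\langle z^k, p^n_k\rangle_n = k!\,T^{n-1-k}/(n-1-k)!$, and orthogonality together with the leading coefficient of $p^n_k$ gives $\langle p^n_k, p^n_k\rangle_n = (-T)^k\langle z^k, p^n_k\rangle_n = (-1)^k k!\,T^{n-1}/(n-1-k)!$, as claimed. The final formula for $(p^n_k(z))^2/\langle p^n_k, p^n_k\rangle_n$ is then obtained by multiplying the two integral representations and dividing by this norm; the explicit prefactors $T^n(-z)^k/(n-1-k)!$ and $-e^{-Tz}z^{n-1-k}k!$ combine with $1/\langle p^n_k, p^n_k\rangle_n$ to collapse precisely to $-Tz^{n-1}e^{-Tz}$ times the product of the two integrals. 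None of the steps is conceptually deep; the main obstacle is careful bookkeeping of signs, the reindexing, and the $1/N!=0$ convention in verifying that the finite-difference identity applies in all boundary cases.
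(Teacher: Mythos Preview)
Your proof is correct, but it follows a genuinely different route from the paper's. The paper establishes orthogonality by identifying $p^n_k$ with monic Laguerre polynomials $\tilde p^{(a)}_k$ after the substitution $a\mapsto -n$, $x\mapsto -Tx$ (together with an overall scalar), arguing that since the moments $\langle 1,z^k\rangle_n$ match the analytically continued Laguerre moments $(1,x^k)_a$, the orthogonality relations and norm carry over directly. Your argument is more elementary and self-contained: you compute $\langle z^m,p^n_k\rangle_n$ explicitly and reduce it to the classical vanishing of the $k$th finite difference of a polynomial of degree $<k$. This avoids the analytic continuation step and any appeal to the Laguerre literature, at the cost of a little extra bookkeeping (which you handle carefully, including the consistency of the $1/N!=0$ convention with the zeros of $P_m$). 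The paper's route is shorter once one knows the Laguerre connection; yours is arguably cleaner for a reader who does not. One small slip: the ``reindexing $j=k-\ell$'' you mention is unnecessary---the sum already has the form $\sum_{\ell}(-1)^\ell\binom{k}{\ell}P_m(\ell)$ in the variable $\ell$---but this does not affect the validity of the argument.
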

\begin{proof}
We establish the desired results by comparing to the monic Laguerre polynomials (see e.g.~\cite{KS96})
$$\tilde{p}^{(a)}_n(x) = (-1)^n \frac{\Gamma(n+a+1)}{\Gamma(a+1)}\, {}_1F_1 \left(\begin{array}{c} -n \\ a+1 \end{array};x\right).$$
These are orthogonal with respect to the dot product $(f,g)_{a}=\int_0^{\infty}dx f(x)g(x) x^ae^{-x}/\Gamma(a+1)$ on $[0,\infty)$ with norm squared $(\tilde{p}^{(a)}_k,\tilde{p}^{(a)}_k)_{a} = k!\tfrac{\Gamma(k+a+1)}{\Gamma(a+1)}$. This holds (by analytic continuation in $a$) for all $a$. We may easily compute the pairing of the constant function $1$ and the power $x^k$ under both $\langle \cdot,\cdot\rangle_{n}$ and $(\cdot,\cdot)_a$, finding
$$
\langle 1,x^k\rangle_n = \frac{T^{n-k-1}}{(n-k-1)!},\qquad (1,x^k)_a = \frac{\Gamma(k+a+1)}{\Gamma(a+1)}.
$$
Considering $(1,x^k)_a$, if we replace $a$ by $-n$, $x$ by $-Tx$, and multiply the result by an overall factor of $T^{n-1}/(n-1)!$, we arrive at the above expression for $\langle 1,x^k\rangle_n$. From this consideration and the explicit form of $\tilde{p}^{(a)}_n(x)$, we conclude the $p^n_k(z)$ are orthogonal with the above specified norm-squared. The integral representations are proved by employing the Binomial theorem and then performing the explicit integration of each resulting term.
\end{proof}
Using Lemma~\ref{lemmaOrthoPolyn} we finally get the formula for the covariance that we are going to use in the asymptotic analysis as well.
\begin{corollary}\label{CorCovFixedTime}
The covariance at $T=1$ is given by
\begin{multline}\label{eq5.1}
{\rm Cov}\Big(\zeta^{(n_1)}_{n_1-r_1+1}(T=1),\zeta^{(n_2)}_{n_2-r_2+1}(T=1)\Big)\\
=\frac{1}{(2\pi\I)^2}\oint_{\Gamma_0} dw\oint_{\Gamma_{0,w}}dz \frac{1}{z-w}
\bigg(\int_0^\infty dx (w-x)^{r_2-1} x^{n_2-r_2} e^{-x}\frac{1}{2\pi\I} \oint_{\Gamma_w} du \frac{e^{u}}{(w-u)^{r_2}u^{n_2-r_2+1}}\bigg)\\
\times\bigg(\int_0^\infty dy (z-y)^{r_1-1} y^{n_1-r_1} e^{-y}\frac{1}{2\pi\I} \oint_{\Gamma_z} dv \frac{e^{v}}{(z-v)^{r_1}v^{n_1-r_1+1}}\bigg).
\end{multline}
\end{corollary}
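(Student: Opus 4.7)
The proof is essentially a direct substitution of the integral representation for $(p^n_k(z))^2/\langle p^n_k,p^n_k\rangle_n$ from Lemma~\ref{lemmaOrthoPolyn} into the covariance formula (\ref{eq5.9}) previously established via orthogonal polynomial techniques. No new analytic work is needed; the entire argument is algebraic bookkeeping, so I do not anticipate any real obstacle.

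The plan is as follows. First, I would specialize (\ref{eq5.9}) to $T=1$, giving
\begin{equation*}
{\rm Cov}\Big(\zeta^{(n_1)}_{n_1-r_1+1}(1),\zeta^{(n_2)}_{n_2-r_2+1}(1)\Big)=\oint_{\Gamma_0} \frac{dw}{2\pi\I}\oint_{\Gamma_{0,w}}\frac{dz}{2\pi\I} \frac{1}{z-w} \frac{e^{z}e^{w}}{z^{n_1}w^{n_2}}\frac{(p^{n_1}_{r_1-1}(z))^2}{\langle p^{n_1}_{r_1-1},p^{n_1}_{r_1-1}\rangle_{n_1}} \frac{(p^{n_2}_{r_2-1}(w))^2}{\langle p^{n_2}_{r_2-1},p^{n_2}_{r_2-1}\rangle_{n_2}}.
\end{equation*}
Next, I would insert the formula from Lemma~\ref{lemmaOrthoPolyn} (with $T=1$) for each of the two ratios. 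Rewriting $(1-y/z)^k = (z-y)^k/z^k$ and $(1-v/z)^{-k-1} = z^{k+1}/(z-v)^{k+1}$, the factor $(p^n_k(z))^2/\langle p^n_k,p^n_k\rangle_n$ acquires an overall prefactor of $z^n/e^{z}$ (from $z^{n-1}\cdot z^{-k}\cdot z^{k+1}$ combined with $e^{-z}$), multiplied by a sign $-1$, the $y$-integral $\int_0^\infty (z-y)^k y^{n-k-1} e^{-y}\,dy$, and the contour integral $\frac{1}{2\pi\I}\oint_{\Gamma_z} \frac{e^v}{(z-v)^{k+1} v^{n-k}}\,dv$.

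The key observation is that the prefactor $z^n/e^z$ on the $z$-side cancels exactly against the $e^z/z^{n_1}$ appearing in the integrand of (\ref{eq5.9}) (with $n=n_1$, $k=r_1-1$), and analogously on the $w$-side. The two minus signs from the two substitutions multiply to $+1$, and inspection of the remaining factors matches them term-for-term with (\ref{eq5.1}). The substitutions $k=r_1-1$, $n=n_1$ and $k=r_2-1$, $n=n_2$ yield the exponents $r_1-1$, $n_1-r_1$, $r_1$, $n_1-r_1+1$ and similarly for the $w$-side, exactly as written in the statement of the corollary.

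The only step warranting care is tracking the $z^n$ and $w^n$ powers through the two forms $(1-y/z)^k$ and $(1-v/z)^{-k-1}$, and making sure that the $y$-integral and $v$-contour integral are each left in the clean form displayed in (\ref{eq5.1}). Since the cancellations are exact and no estimates are required, this bookkeeping exhausts the proof.
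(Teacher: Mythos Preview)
Your proposal is correct and is exactly the approach the paper intends: the corollary is stated immediately after Lemma~\ref{lemmaOrthoPolyn} with the remark ``Using Lemma~\ref{lemmaOrthoPolyn} we finally get the formula,'' and the algebraic substitution you describe (tracking the powers of $z$ and $w$ and the two minus signs) is precisely what is needed to pass from (\ref{eq5.9}) to (\ref{eq5.1}).
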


We turn now to time dynamics. By Proposition~\ref{propSDE} the vector \mbox{$\xi(T)=\{\xi^{(n)}_k(T),1\leq k\leq n \leq N\}$} satisfies a system of linear SDEs. Therefore, one expects that the limit vector \mbox{$\zeta(T)=\{\zeta^{(n)}_k(T),1\leq k\leq n \leq N\}$} likewise satisfies such a (perhaps simplied) system of linear SDEs.

\begin{proposition}\label{PropLargeLsde}
For any $0<T_0<T_1$, the limit
\begin{equation*}
\zeta^{(n)}_k(T)=\lim_{L\to\infty} L^{-1/2} \xi^{(n)}_k(L T)
\end{equation*}
exists in the topology $\mathcal{C}\big([T_0,T_1],\mathcal{C}(\R)\big)$ (i.e., continuous space-time processes on $[T_0,T_1]\times \R$) and $\zeta(T)=\{\zeta^{(n)}_k(T),1\leq k\leq n\leq N\}$
satisfies the system of SDEs
\begin{equation}\label{eqzetasde}
d \zeta^{(n)}_k(T) = \sum_{(k',n')}A(T)_{(k,n),(k',n')} \zeta^{(n')}_{k'}(T) dT + dW_k^{(n)}(T),
\end{equation}
where $W_k^{(n)}(T)$, $1\leq k \leq n \leq N$, are independent standard Brownian motions and the matrix $A(T)$ has as its only non-zero entries
\begin{equation*}
A(T)_{(k,n),(k,n)}=\frac{-n+1}{T},\quad A(T)_{(k,n),(k-1,n-1)}=\frac{k-1}{T},\quad A(T)_{(k,n),(k,n-1)}=\frac{n-k}{T}.
\end{equation*}
\end{proposition}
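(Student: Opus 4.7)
The plan is to start from the linear SDE for $\xi$ given in Proposition~\ref{propSDE}, apply the diffusive rescaling $\zeta^{(n)}_k(T)=L^{-1/2}\xi^{(n)}_k(LT)$ together with the Brownian scaling $L^{-1/2}dW^{(n)}_k(LT)=d\tilde W^{(n)}_k(T)$ for a new family of independent Brownian motions, and pass to the $L\to\infty$ limit at the level of coefficients. Since the SDE for $\xi$ is linear, the rescaled $\zeta$ satisfies
\begin{equation*}
d\zeta^{(n)}_k(T)=L\!\left[-a^{(n)}_k(\zeta^{(n)}_k-\zeta^{(n-1)}_{k-1})+b^{(n)}_k(\zeta^{(n)}_k-\zeta^{(n)}_{k+1})-c^{(n)}_k(\zeta^{(n)}_k-\zeta^{(n-1)}_k)\right]_{\tau=LT}\!dT+\sigma^{(n)}_k(LT)\,d\tilde W^{(n)}_k(T),
\end{equation*}
which is again linear, now with $L$-dependent coefficients. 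The task thus reduces to verifying the four limits $\sigma^{(n)}_k(LT)\to 1$, $L\,a^{(n)}_k(LT)\to (k-1)/T$, $L\,b^{(n)}_k(LT)\to 0$, and $L\,c^{(n)}_k(LT)\to (n-k)/T$, uniformly for $T$ in compact subintervals of $(0,\infty)$; collecting these and rearranging reproduces the matrix $A(T)$ in the statement.

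All four limits rest on a single large-$\tau$ asymptotic for $y^{(n)}_k(\tau)=e^{-x^{(n)}_k(\tau)}$. I would extract this from the Plancherel case of Corollary~\ref{CorPolynomials} (all $a_i=1$): since $G_{r,\tau}(m)\sim\tau^{m-1}/(m-1)!$ as $\tau\to\infty$, every term in the Leibniz expansion of $\det[G_{r,\tau}(n+1-r+j-i)]_{i,j=1}^r$ shares the same leading power $\tau^{r(n-r)}$, with leading coefficient $D^n_r:=\det[1/(n-r+j-i)!]_{i,j=1}^r$. The closed form $D^n_r=\prod_{k=1}^r (k-1)!/(n-k)!$ (provable by row reduction, or alternatively via the Lindstr\"om--Gessel--Viennot interpretation of $D^n_r$ as a count of families of non-intersecting lattice paths, alluded to in Appendix~\ref{appother}) then gives, after taking the ratio between consecutive $r$-values,
\begin{equation*}
y^{(n)}_k(\tau)=e^{-\tau}\tau^{2k-n-1}\,\frac{(n-k)!}{(k-1)!}\bigl(1+\Or(\tau^{-1})\bigr), \qquad \tau\to\infty.
\end{equation*}
Substituting into (\ref{eq4.33})--(\ref{eq4.34}) one finds that $y^{(n-1)}_{k-1}/y^{(n)}_k$ and $y^{(n)}_k/y^{(n-1)}_k$ are of order $\tau^{-1}$, while $y^{(n)}_k/y^{(n)}_{k+1}$ is of order $\tau^{-2}$. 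The leading coefficients read off from the display above are exactly $k-1$ and $n-k$, producing the stated drift limits, and each factor appearing in $\sigma^{(n)}_k$ tends to $1$. The boundary conventions $y^{(n)}_k\equiv 0$ for $k\le 0$ and $y^{(n)}_k\equiv 1$ for $k>n$ are automatically consistent with the prefactors $k-1$ and $n-k$ vanishing at $k=1$ and $k=n$ respectively.

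To upgrade pointwise convergence of coefficients to convergence in $\mathcal{C}([T_0,T_1],\mathcal{C}(\R))$, I observe that on any $[T_0,T_1]\subset(0,\infty)$ both the rescaled and the limiting drift matrices are bounded (controlled by the $1/T_0$ factor) and the convergence is uniform. A Gronwall argument applied to the difference $\zeta_L-\zeta$ of the solutions to the two linear SDEs, driven by the same Brownian motions after $\zeta_L(T_0)$ and $\zeta(T_0)$ are coupled on a common probability space via Skorokhod's representation (using Proposition~\ref{PropLargeL} for the convergence of the one-time marginal at $T_0$), yields $\EE\sup_{T\in[T_0,T_1]}\|\zeta_L(T)-\zeta(T)\|^2\to 0$; uniqueness of strong solutions to the limiting linear SDE identifies $\zeta$. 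The main technical obstacle is pinning down the exact combinatorial constant $(n-k)!/(k-1)!$ in the asymptotics of $y^{(n)}_k$, and relatedly controlling the $\Or(\tau^{-1})$ error terms uniformly in $T$ on $[T_0,T_1]$, which is where the full determinant formula (not just its leading term) is needed.
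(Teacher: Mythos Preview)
Your proposal is correct and follows essentially the same route as the paper: both derive the large-$\tau$ asymptotic $y^{(n)}_k(\tau)=e^{-\tau}\tau^{2k-n-1}\tfrac{(n-k)!}{(k-1)!}(1+\Or(\tau^{-1}))$ from the determinantal formula in Corollary~\ref{CorPolynomials}, evaluate the Toeplitz determinant $\det[1/(n-r+j-i)!]$ explicitly (you via row reduction/LGV, the paper via the Desnanot--Jacobi identity), and then read off the limiting drift and diffusion coefficients. The only notable difference is in the final convergence step: the paper argues via convergence of the multi-time covariance (Appendix~\ref{AppGaussians}) plus tightness, whereas you couple the pre-limit and limit SDEs on the same Brownian motions and close with Gronwall after Skorokhod-representing the time-$T_0$ marginal; both are standard and either works here.
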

\begin{remark}
As one sees from the form of $A(T)$, as $T\to 0$ the matrix entries diverge. While it may be possible (using a logarithmic time change) to prove convergence for all $T\geq 0$, we opt to deal only with $T\in [T_0,T_1]$. After all, in light of Proposition~\ref{PropLargeL} we know the Gaussian process limit at time $T_0$, and thus do not need to start the SDEs above at $T=0$.
\end{remark}
\begin{proof}
From Corollary~\ref{CorPolynomials} we know that for large $\tau$ we have (setting $r=n+1-k$)
\begin{equation*}
y^{(n)}_k(\tau)=e^{-x^{(n)}_k(\tau)} = e^{-\tau} \frac{\det\left(\frac{\tau^{n+j-r-i}}{(n+j-r-i)!}\right)_{i,j=1}^r}{\det\left(\frac{\tau^{n+j-r-i+1}}{(n+j-r-i+1)!}\right)_{i,j=1}^{r-1}}(1+\Or(\tau^{-1})).
\end{equation*}
This is obvious from the intersecting path interpretation since in (\ref{eq3.19}) the leading term for large $\tau$ is the one with $c=i$.
We claim that
\begin{equation}\label{eq5.18}
\frac{\det\left(\frac{\tau^{n-r+j-i}}{(n-r+j-i)!}\right)_{i,j=1}^{r}}{\det\left(\frac{\tau^{n-r+1+j-i}}{(n-r+1+j-i)!}\right)_{i,j=1}^{r-1}} = \tau^{n+1-2r}\frac{(r-1)!}{(n-r)!}.
\end{equation}
It is easy to see that $\tau$ factors out of the determinants and gives the right power. What remains is to show the following evaluation formula (from which the desired ratio $M(n,r)/M(n,r-1)=(r-1)!/(n-r)!$ is clear) for the remaining Toeplitz determinant
\begin{equation*}
M(n,r)=\det\left(\frac{1}{(n-r+j-i)!}\right)_{i,j=1}^r =\prod_{k=0}^{r-1} \frac{(r-1-k)!}{(n-r+k)!}.
\end{equation*}
To prove this it suffices to check (as is readily done after some cancelations) that it satisfies the Desnanot--Jacobi identity, which is equivalent to
\begin{equation*}
M(n,r)M(n-2,r-2) = M(n,-1,r-1)^2 - M(n,r-1)M(n-2,r-1).
\end{equation*}

With the identity (\ref{eq5.18}) we have
\begin{equation*}
y^{(n)}_k(\tau) = e^{-\tau} \tau^{2k-n-1}\frac{(n-k)!}{(k-1)!} (1+\Or(\tau^{-1})).
\end{equation*}
Inserting this into the diffusion coefficient (\ref{eq4.33}) and into the drift coefficients in (\ref{eq4.34}) we get
\begin{equation*}
\sigma^n_k=1+\Or(\tau^{-1}),\quad
a^n_k=\tau^{-1}(k-1)(1+\Or(\tau^{-1})),\quad
b^n_k=\Or(\tau^{-2}),\quad c^n_k=\tau^{-1} (n-k)(1+\Or(\tau^{-1})).
\end{equation*}
Taking $L\to\infty$ and remembering that $d\tau= L dT$ we arrive at the linear system of SDEs given in (\ref{eqzetasde}).

It remains to deduce that the convergence of the drift and standard deviation matrices implies convergence of the full space-time process. To put this in a more general context, consider matrices $A_{\epsilon}(T)$ and $B_{\epsilon}(T)$ with entries uniformly bounded over all $\epsilon\in (0,1)$ and $T\in [T_0,T_1]$ and which have likewise bounded limits $A(T)$ and $B(T)$. Also, consider Gaussian initial data (at time $T_0$) given by $X_{\epsilon}(T_0)$ and assume it has a limit in distribution to some Gaussian vector $X(T_0)$. Let $X_{\e}(T)$ denote the solution to the system of SDEs $dX_{\epsilon}(T) = A_{\epsilon}(T)X_{\epsilon}(T) dT + B_{\epsilon}(T)d W_{\epsilon}(T)$ with initial data $X_{\epsilon}(T_0)$. Then, we claim, $X_{\epsilon}\to X$ in the topology on $\mathcal{C}\big([T_0,T_1],\mathcal{C}(\R)\big)$ where $X(T)$ solves the SDEs with matrices $A(T)$ and $B(T)$ with initial data $X(T_0)$. This result is not hard to prove. For instance, one can use the results from Appendix~\ref{AppGaussians} to write down the multi-time covariance of $X_{\epsilon}$. Given the hypotheses, this clearly has a limit as $\epsilon\to 0$, from which follows convergence of finite-dimensional-distributions to those of $X$. Tightness of $X_{\epsilon}$ is quite readily checked (for instance, also from the multi-time covariance).
\end{proof}

In order to determine the covariance of $\xi's$ at different times, we need to determine the propagator (see (\ref{eqAppPropagator})), i.e., the solution $Y(T)$ of
\begin{equation*}
\frac{d Y(T)}{dT}=A(T) Y(T),\quad Y(0)=\Id.
\end{equation*}
Since $A(T)$ depends on $T$ only by multiplication of $1/T$, let us write $A(T)=\widehat A T^{-1}$. Then, taking $S=\ln(T)$ we obtain $T \frac{d}{dT}=\frac{d}{dS}$, from which
\begin{equation}\label{eq5.12}
\frac{dY}{dS} = \widehat A Y.
\end{equation}
In what follows we will determine the covariance at a fixed time $T_0$ and then propagate from $T_0$, so that the $T^{-1}$ singularity of $A$ does not pose a problem. Notice that setting $S=\ln(T/T_0)$ we have still to find a solution of (\ref{eq5.12}) if we propagate from $T_0$.
\begin{lemma}\label{lemmaPropagator}
For any $1\leq k \leq n$, $1\leq k'\leq n'$, we have
\begin{equation*}
[\exp(S \widehat A)]_{(k,n),(k',n')}= e^{S(1-n)}(e^{S}-1)^{n'-n} \binom{k-1}{k'-1}\binom{n-k}{n'-k'}.
\end{equation*}
\end{lemma}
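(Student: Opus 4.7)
The plan is to identify $\widehat{A}$ as the infinitesimal generator of an explicit continuous-time Markov chain and compute the transition probabilities by a decoupling argument. Observe that $\widehat{A}$ has zero row-sums, non-negative off-diagonal entries, and non-positive diagonal entries, so $[\exp(S\widehat{A})]_{(k,n),(k',n')}$ equals the transition probability $\PP((k',n') \text{ at time } S\mid (k,n) \text{ at time } 0)$ for the Markov chain on $\{(k,n):1\leq k\leq n\leq N\}$ which, from state $(k,n)$ with $n\geq 2$, jumps at total rate $n-1$, going to $(k-1,n-1)$ with probability $(k-1)/(n-1)$ and to $(k,n-1)$ with probability $(n-k)/(n-1)$. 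In particular, the second coordinate is non-increasing along sample paths, so $[\exp(S\widehat{A})]_{(k,n),(k',n')}$ vanishes unless $n'\leq n$, and the exponent $n'-n$ displayed in the statement has to be read as $n-n'$.

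The key observation is a decoupling: every jump decreases the second coordinate by exactly one, so the waiting times depend only on $n$, while the probability of a diagonal vs.\ vertical jump at each step depends only on the current first coordinate. I would therefore factor the transition probability as $\PP(Z_S=n'\mid Z_0 = n)$ for the pure-death ``level process'' $Z_S$ with rates $n-1,n-2,\ldots,1$, times the conditional distribution of the first coordinate given that $n-n'$ jumps have occurred.

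For the level process, a direct verification of the Kolmogorov forward equation (or a short induction on $n-n'$) gives
\begin{equation*}
\PP(Z_S = n' \mid Z_0 = n) = \binom{n-1}{n-n'}\, e^{-(n'-1)S}(1-e^{-S})^{n-n'}.
\end{equation*}
For the label distribution, note that at state $(a,b)$ the choice between a diagonal and a vertical jump amounts to drawing one element from a pool of $b-1$ objects consisting of $a-1$ ``diagonal tokens'' and $b-a$ ``vertical tokens'': in either case, both the total pool and the chosen type shrink by one. This is exactly one step of sampling without replacement, so iterating over the $n-n'$ jumps yields the hypergeometric law
\begin{equation*}
\PP((k',n')\mid Z_S=n') = \frac{\binom{k-1}{k-k'}\binom{n-k}{n-n'-(k-k')}}{\binom{n-1}{n-n'}} = \frac{\binom{k-1}{k'-1}\binom{n-k}{n'-k'}}{\binom{n-1}{n-n'}}.
\end{equation*}
Multiplying the two factors and rewriting $(1-e^{-S})^{n-n'}=e^{-(n-n')S}(e^S-1)^{n-n'}$ produces the claimed closed form.

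The main obstacle is purely bookkeeping; there are no nontrivial analytic estimates. For readers preferring an analytic route, the same formula may be checked directly by plugging it into $dY/dS=\widehat{A}Y$: the initial condition $Y(0)=\Id$ is immediate from the factor $(1-e^{-S})^{n-n'}$, and after cancelling the common diagonal contribution the ODE reduces to the elementary binomial identity
\begin{equation*}
(n-n')\binom{k-1}{k'-1}\binom{n-k}{n'-k'} = (k-1)\binom{k-2}{k'-1}\binom{n-k}{n'-k'} + (n-k)\binom{k-1}{k'-1}\binom{n-1-k}{n'-k'},
\end{equation*}
which follows at once from $(k-1)\binom{k-2}{k'-1} = (k-k')\binom{k-1}{k'-1}$ and $(n-k)\binom{n-1-k}{n'-k'} = (n-n'-k+k')\binom{n-k}{n'-k'}$.
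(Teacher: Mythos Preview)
Your proof is correct, and you are right that the exponent in the displayed formula should be $n-n'$ rather than $n'-n$; this is confirmed by the expression in Corollary~\ref{corPropagator}.

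Your argument differs from the paper's in how the chain is decoupled. The paper changes coordinates to $(x,y)=(n-k,\,k-1)$ and observes that under $\widehat{A}$ the process factorizes completely into two \emph{independent} one-dimensional pure-death chains, each with generator $L$ acting by $x\mapsto x-1$ at rate $x$; it then computes $[\exp(SL)]_{x,x'}=\binom{x}{x'}e^{-Sx}(e^S-1)^{x-x'}$ by direct ODE verification, and the two-dimensional kernel is simply the product. Your route instead separates the second coordinate (a single pure-death process on $n$) from the conditional law of the first coordinate given the number of jumps, which you identify as hypergeometric via sampling without replacement. Both decompositions are valid and lead to the same closed form with comparable effort. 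The paper's change of variables buys a fully symmetric product structure and reduces everything to one computation; your approach stays in the original coordinates and gives a pleasant combinatorial interpretation of the binomial factors, at the cost of handling the level process and the embedded chain separately. Your alternative analytic check via the binomial identity is essentially the same style of verification the paper uses for its one-dimensional kernel.
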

\begin{proof}
First make the change variables $n-k=x$, $n'-k'=x'$ and likewise $k-1=y$, $k'-1=y'$. The Markov chain on $n,k$ with generator $\widehat A$ turns into the Markov chain on $x,y$ which, in fact, factorizes into two independent Markov chains where $x$ transitions to $x-1$ at rate $x$ and $y$ transitions to $y-1$ at rate $y$. Call the generator of one of these two chains (as they have the same generator) $L$. Then our lemma reduces (through the change of variables) to showing that
$$
[\exp(S L)]_{x,x'} = \binom{x}{x'}e^{-Sx}(e^S-1)^{x-x'}.
$$
First note that for $S=0$, the above expression equals the indicator function for $x=x'$ (if $x'>x$ the binomial coefficient is zero, and if $x'<x$ the term $(e^S-1)^{x-x'}$ is zero). We must then show that for any function $f$, we have
$$
\frac{d}{dS}\big(\!\exp(S L) f\big)(x) = \big(L\exp(SL)f \big)(x).
$$
Checking this is elementary. Let $g(x) = \big(\!\exp(S L) f\big)(x)=\sum_{x'\leq x} \binom{x}{x'}e^{-Sx}(e^S-1)^{x-x'}f(x')$ and compute
$$
\frac{d}{dS}g(x) = \sum_{x'\leq x} \binom{x}{x'}e^{-Sx}(e^S-1)^{x-x'}f(x') \left(\frac{x-x' e^{S}}{e^S-1}\right)
$$
and likewise
$$
Lg(x) = x\sum_{x'\leq x-1} \binom{x-1}{x'}e^{-S(x-1)}(e^S-1)^{x-1-x'}f(x') - \sum_{x'\leq x} \binom{x}{x'}e^{-Sx}(e^S-1)^{x-x'}f(x').
$$
It is now easy to show, for each $x'$, the equality of the summands in $\frac{d}{dS}g(x)$ and $Lg(x)$, thus completing the proof.
\end{proof}
As a consequence of Lemma~\ref{lemmaPropagator} we readily have the propagator from time $T_0$.
\begin{corollary}\label{corPropagator}
The propagator matrix from time $T_0>0$ is given by
\begin{equation}\label{eqPropagator}
\left[Y^{T_0}(T)\right]_{(k,n),(k',n')}=\left(\frac{T_0}{T}\right)^{n-1}\left(\frac{T-T_0}{T_0}\right)^{n-n'} \binom{k-1}{k'-1}\binom{n-k}{n'-k'},
\end{equation}
for $1\leq k\leq n\leq N$ and $1\leq k'\leq n'\leq N$, and it solves
\begin{equation*}
\frac{dY^{T_0}}{dT}=A(T) Y^{T_0}(T),\quad Y(T_0)=\Id.
\end{equation*}
\end{corollary}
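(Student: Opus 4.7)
The plan is to reduce the computation to Lemma~\ref{lemmaPropagator} via the logarithmic time change $S = \ln(T/T_0)$ already flagged in the discussion preceding the corollary. Since $A(T) = \widehat{A}/T$, under the substitution $T = T_0 e^{S}$ one has $\frac{dY}{dS} = \frac{dT}{dS}\frac{dY}{dT} = T \cdot \frac{\widehat{A}}{T} Y = \widehat{A} Y$, so the ODE $\frac{dY^{T_0}}{dT} = A(T) Y^{T_0}$ with $Y^{T_0}(T_0) = \Id$ translates into the autonomous constant-coefficient initial value problem $\frac{dY}{dS} = \widehat{A} Y$ with $Y|_{S=0} = \Id$. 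The unique solution of the latter is $Y(S) = \exp(S\widehat{A})$, and uniqueness pulls back to the original problem on $[T_0,T_1]$ because $A(T)$ is smooth for $T>0$, so standard ODE theory applies.

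It then remains to apply Lemma~\ref{lemmaPropagator} with $S = \ln(T/T_0)$. One substitutes $e^{S(1-n)} = (T_0/T)^{n-1}$ and $e^{S} - 1 = (T-T_0)/T_0$ directly into the formula provided by the lemma; the binomial factors $\binom{k-1}{k'-1}\binom{n-k}{n'-k'}$ are independent of $S$ and carry through unchanged. This yields the explicit expression in \eqref{eqPropagator}.

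Since the nontrivial combinatorial work has already been done in Lemma~\ref{lemmaPropagator}, no real obstacle remains; the statement is essentially a one-paragraph consequence of that lemma together with the time change. I would present it as such, optionally remarking that the support conditions $n'\leq n$, $k'\leq k$, $n'-k'\leq n-k$ visible in the binomial coefficients reflect the decomposition of $\widehat{A}$ into two independent pure-death chains used in the proof of Lemma~\ref{lemmaPropagator}.
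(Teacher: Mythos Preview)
Your proposal is correct and matches the paper's approach exactly: the paper states the corollary as an immediate consequence of Lemma~\ref{lemmaPropagator} via the substitution $S=\ln(T/T_0)$, which is precisely what you carry out. (Note that the exponent in the lemma's displayed formula should read $n-n'$ rather than $n'-n$, as the derivation in its proof and your substitution both confirm; your argument is unaffected.)
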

According to the general theory of Gaussian processes (see Appendix~\ref{AppGaussians}), if we denote by ${\rm Cov}(T_0)$ the covariance matrix at time $T_0$, then
\begin{equation}\label{eq5.29}
Y^{T_0}(T) {\rm Cov}(T_0)
\end{equation}
is the matrix of covariances between time moments $T_0$ and $T\geq T_0$.

\subsection{Bulk scaling limit at fixed time}
Now we want to determine the $N\to\infty$ limit of the covariance at a fixed time. Without loss of generality we can first consider $T=1$ and by scaling one can get the covariance for any fixed time $T>0$. Indeed, $\zeta^{(n)}_{k}(T)\stackrel{(d)}{=}\sqrt{T}\zeta^{(n)}_k(T)$ since it is the scaling limit as $L\to\infty$, see Proposition~\ref{PropLargeLsde}.
Our result is the following.
\begin{theorem}\label{thm5.1}
Let us denote
\begin{equation*}
\Omega(c,b)=c(1-2b+2 \I \sqrt{b(1-b)}).
\end{equation*}
Take any $a,b\in (0,1)$, $d>0$ and $c\in (0,d]$ such that $\Omega(c,b)\neq\Omega(d,a)$. Then, the large $N$ limit of the covariance is given by
\begin{equation}\label{eqCov5.31}
\begin{aligned}
&\lim_{N\to\infty} N {\rm Cov}\Big(\zeta^{(dN)}_{(1-a)dN}(T=1),\zeta^{(cN)}_{(1-b)cN}(T=1)\Big)\\
&=\frac{16}{(2\pi\I)^2}\int_{\overline \Omega(c,b)}^{\Omega(c,b)} dW \int_{\overline \Omega(d,a)}^{\Omega(d,a)} dZ \frac{1}{Z-W}\frac{1}{\sqrt{(W-\Omega(c,b))(W-\overline\Omega(c,b))}\sqrt{(Z-\Omega(d,a))(Z-\overline\Omega(d,a))}},
\end{aligned}
\end{equation}
where the path $Z$ stays to the right of $W$ (see Figure~\ref{FigContoursCovariance} for an illustration).
\end{theorem}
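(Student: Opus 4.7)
The plan is to start from the explicit six-fold integral representation of the covariance in Corollary~\ref{CorCovFixedTime} and perform a two-stage steepest descent analysis, corresponding to the one slow manifold and two fast manifolds alluded to in the introduction. Specializing to $n_1=dN$, $r_1=adN$, $n_2=cN$, $r_2=bcN$, I would rescale all six integration variables by $N$ (so $z=NZ$, $w=NW$, $y=NY$, $v=NV$, $x=NX$, $u=NU$). The $N$-powers coming from $(z-y)^{r-1}$, $y^{n-r}$, the $v$-kernel, and their $w$-counterparts cancel in such a way that each inner integrand takes the form $\exp(N\Psi(\cdot\,;Z))$ times slowly varying factors, where
\[
\Psi_y(Y;Z)=ad\ln(Z-Y)+(1-a)d\ln Y-Y,\qquad \Psi_v(V;Z)=V-ad\ln(Z-V)-(1-a)d\ln V,
\]
and analogously $\Psi_x,\Psi_u$ with $a,d,Z$ replaced by $b,c,W$.

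The first stage is saddle point analysis on the inner integrals. The equation $\partial_Y\Psi_y=0$ is a quadratic with two complex-conjugate roots $Y_\pm(Z)$ as long as $Z$ lies in an appropriate region; these coalesce exactly at $Z=\Omega(d,a)$ and $Z=\overline\Omega(d,a)$, where the saddle reaches the positive real axis. For generic $Z$, I would deform the $Y$-contour $[0,\infty)$ into the steepest descent curve through one saddle and the $V$-contour around $Z$ into one passing through both. Summing the Gaussian contributions gives
\[
\text{(inner $Y$-$V$ block)}\;=\;\frac{h(Z)}{N}\,\exp\!\bigl(N\,\Phi(Z;d,a)\bigr)\,\bigl(1+O(N^{-1})\bigr)
\]
for an explicit holomorphic phase $\Phi$ and amplitude $h$, and the analogous result holds for the $W$-block. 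The structural fact to extract is that $\Phi(\cdot\,;d,a)$ is purely imaginary on the arc joining $\overline\Omega(d,a)$ to $\Omega(d,a)$ and satisfies $\Re\Phi<0$ on the ``outside'' of that arc; this is what drives the next layer.

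The second stage is steepest descent for the outer $Z$ and $W$ integrals. I would deform the $Z$-contour (originally a small circle around $0$) into a ``bowtie'' through $\Omega(d,a),\overline\Omega(d,a)$ lying entirely in $\{\Re\Phi\le 0\}$ except at the saddles, and likewise for $W$ through $\Omega(c,b),\overline\Omega(c,b)$. Collecting $N$-powers, $dz\,dw=N^2\,dZ\,dW$, $(z-w)^{-1}=N^{-1}(Z-W)^{-1}$, and the two inner factors $N^{-1}$ each, produces $N^{-1}$ overall, matching the stated normalization of $N\,\mathrm{Cov}$. Away from the saddles the outer integrand is exponentially small, while near the saddles the local quadratic of $\Phi$, after the rescaling $Z-\Omega(d,a)=O(N^{-1/2})$ and pasting together of the two halves of the bowtie, converts into the square-root density $1/\sqrt{(Z-\Omega(d,a))(Z-\overline\Omega(d,a))}$ with the outer integration reinterpreted as a line integral from $\overline\Omega(d,a)$ to $\Omega(d,a)$ (and similarly for $W$). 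The only surviving non-local factor is $1/(Z-W)$, which reproduces~\eqref{eqCov5.31}; the hypothesis $\Omega(c,b)\neq\Omega(d,a)$ keeps the four saddles genuinely apart and preserves integrability of the Cauchy kernel.

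The hardest part is the global construction of the bowtie contours in the second stage. One must verify that the level lines of $\Re\Phi(\cdot\,;d,a)$ starting from $\Omega(d,a)$ and $\overline\Omega(d,a)$ close into a curve winding once around $0$ and strictly inside $\{\Re\Phi<0\}$ away from the saddles, and simultaneously that the $W$-bowtie can be nested inside the $Z$-bowtie as required by the original pole structure $\Gamma_z\supset\Gamma_w$. This requires a careful global analysis of the critical graph of $\Phi$ together with a case analysis in which the assumption $c\le d$ is what enables the nesting and the assumption $\Omega(c,b)\neq\Omega(d,a)$ excludes a degenerate confluence of the two bowties. Once the contours are in place, the remainder is routine: a uniform Gaussian approximation at each of the four saddles, together with tail estimates showing that the portions of the contours outside $O(N^{-1/2}\log N)$ neighbourhoods of the saddles contribute negligibly to the double integral.
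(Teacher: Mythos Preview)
Your two-stage picture is correct in outline, and the starting point and rescaling match the paper's. But there are two substantive gaps.

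First, and most seriously, the claim that ``the assumption $c\le d$ is what enables the nesting'' is false. The slow-manifold curves $\Pi(a)$ and $c\,\Pi(b)$ (on which the relevant phase $\Re H$ vanishes) can and do intersect even when $c\le d$; see Lemma~\ref{lem5.16}, Corollary~\ref{CorIntersections}, and Figure~\ref{FigResidue}. When this happens one cannot keep the $Z$-contour outside the $W$-contour throughout, and deforming to the desired configuration produces residue contributions from the pole at $Z=W$. A substantial part of the paper's proof (the ``Second case'') is devoted to these residues: there are four mixed exponential terms $e^{N(\pm H_{b,c}(W)\pm H_{a,1}(W))}$, and for each one must exhibit a path from $\overline\Theta$ to $\Theta$ (the intersection points) on which the real part is strictly negative. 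Your proposal neither anticipates this case nor indicates how to dispose of it; the hypothesis $\Omega(c,b)\neq\Omega(d,a)$ is used here only to keep the residue estimates nondegenerate, not to prevent the crossing.

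Second, the mechanism by which the square-root density $1/\sqrt{(Z-\Omega)(Z-\overline\Omega)}$ appears is not a local quadratic rescaling near the endpoints as you suggest. On the arc $\Pi_+$ (between $\overline\Omega$ and $\Omega$) the inner $X,U$ integrals receive contributions from \emph{both} pairs of critical points $(X_-,U_-)$ and $(X_+,U_+)$, and the algebraic identity $F(b,W,X_\pm)+G(b,W,U_\pm)=0$ of Lemma~\ref{lem5.3} makes each contribution carry \emph{zero} exponential phase. Their sum then simplifies exactly (equation~\eqref{eq5.29b}) to $\tfrac{4}{N}\bigl[(W-\Omega)(W-\overline\Omega)\bigr]^{-1/2}$ along the entire arc, not just near its ends. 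Thus there is no genuine second steepest descent on the outer variable: once the inner integrals are evaluated on $\Pi_+$, the $W$-integral is an ordinary Riemann integral over that arc, while the $\Pi_-$ contribution is subleading because only the single combination $(X_+,U_-)$ survives there, with nonzero phase $H(W)$. Your single-$\Phi$ description with ``bowtie contours through the saddles'' and a local $O(N^{-1/2})$ window does not capture this structure and would not, as written, produce the global integral from $\overline\Omega$ to $\Omega$.
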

\begin{figure}
\begin{center}
\psfrag{Z}[c]{$Z$}
\psfrag{W}[c]{$W$}
\psfrag{c}[b]{$c$}
\psfrag{1}[b]{$d$}
\psfrag{w}[c]{$\Omega(c,b)$}
\psfrag{wb}[c]{$\overline \Omega(c,b)$}
\psfrag{z}[c]{$\Omega(d,a)$}
\psfrag{zb}[c]{$\overline \Omega(d,a)$}
\includegraphics[height=6cm]{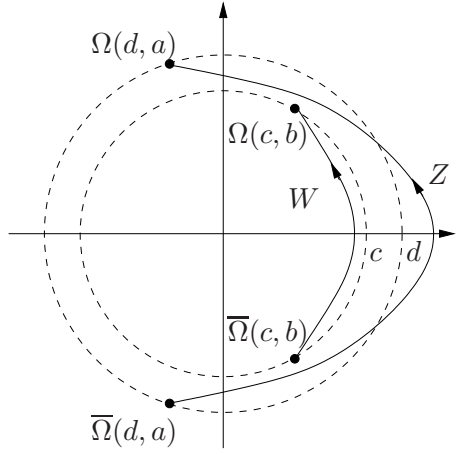}
\caption{Illustration of the integration contours of the limiting covariance (\ref{eqCov5.31}).}
\label{FigContoursCovariance}
\end{center}
\end{figure}

Surprisingly, there is a concise way of expressing the limiting covariance in terms of a complete elliptic integral.
\begin{proposition}\label{PropCovElliptic}
It holds
\begin{equation}\label{eqCovElliptic}
\begin{aligned}
&\frac{16}{(2\pi\I)^2}\int_{\overline \Omega(c,b)}^{\Omega(c,b)} dW \int_{\overline \Omega(d,a)}^{\Omega(d,a)} dZ \frac{1}{Z-W}\frac{1}{\sqrt{(W-\Omega(c,b))(W-\overline\Omega(c,b))}\sqrt{(Z-\Omega(d,a))(Z-\overline\Omega(d,a))}}\\
&=\frac{4 \kappa}{\pi \sqrt{\Im\Omega(d,a) \Im\Omega(c,b)}}\mathbb{K}(\kappa),
\end{aligned}
\end{equation}
with $\kappa=\frac{2\sqrt{\Re\Omega(d,a) \Re\Omega(c,b)}}{\sqrt{(\Re\Omega(d,a)-\Re\Omega(c,b))^2+(\Im\Omega(d,a)+\Im\Omega(c,b))^2}}$ and $\mathbb{K}$ is the complete elliptic integral of the first kind, namely
\begin{equation*}
\mathbb{K}(\kappa)=\int_0^1 \frac{d x}{\sqrt{(1-x^2)(1-\kappa^2 x^2)}}.
\end{equation*}
\end{proposition}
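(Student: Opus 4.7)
The plan is to evaluate the double contour integral iteratively: the inner $W$-integral reduces by an elementary calculation to an inverse square root in $Z$, leaving a single-variable integral of $1/\sqrt{P(Z)}$ with $P$ a quartic having two pairs of complex conjugate roots — a classical elliptic period integral that one brings to Legendre normal form. For the inner integration, I would parametrize $W=\gamma+i\delta t$ with $\gamma=\Re\Omega(c,b)$, $\delta=\Im\Omega(c,b)$ and $t\in[-1,1]$, so that $dW=i\delta\,dt$ and, on the principal branch, $\sqrt{(W-\Omega(c,b))(W-\overline\Omega(c,b))}=\delta\sqrt{1-t^2}$. The Weierstrass substitution $t=\cos\theta$ combined with the elementary identity $\int_0^\pi d\theta/(a-b\cos\theta)=\pi/\sqrt{a^2-b^2}$ then gives
$$
\int_{\overline\Omega(c,b)}^{\Omega(c,b)}\frac{dW}{(Z-W)\sqrt{(W-\Omega(c,b))(W-\overline\Omega(c,b))}}=\frac{i\pi}{\sqrt{(Z-\Omega(c,b))(Z-\overline\Omega(c,b))}}.
$$

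Substituting this back and parametrizing $Z=\alpha+i\beta s$ with $\alpha=\Re\Omega(d,a)$, $\beta=\Im\Omega(d,a)$, $s\in[-1,1]$, the left-hand side of the claimed identity collapses to
$$
\mathcal{I}=\frac{4}{\pi}\int_{-1}^{1}\frac{ds}{\sqrt{1-s^{2}}\,\sqrt{(A+i\beta s)^{2}+\delta^{2}}},\qquad A=\alpha-\gamma.
$$
The cleanest way to evaluate $\mathcal{I}$ is to also parametrize $t=\sin\psi$ in the original double integral and rewrite it as $\frac{4}{\pi^{2}}\iint_{-\pi/2}^{\pi/2}d\phi\,d\psi/(A+i(\beta\sin\phi-\delta\sin\psi))$, then insert the Laplace representation $(A+ix)^{-1}=\int_{0}^{\infty}e^{-(A+ix)t}dt$ (valid for $A>0$; the general case follows by the $(c,b)\leftrightarrow(d,a)$ symmetry of the integrand and analytic continuation). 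The inner angular integrations produce Bessel functions via $\int_{-\pi/2}^{\pi/2}e^{\pm i\beta t\sin\phi}d\phi=\pi J_{0}(\beta t)$, reducing $\mathcal{I}$ to the Lipschitz--Hankel integral
$$
\mathcal{I}=4\int_{0}^{\infty}e^{-At}J_{0}(\beta t)J_{0}(\delta t)\,dt.
$$
Applying Gegenbauer's addition formula $J_{0}(\beta t)J_{0}(\delta t)=\pi^{-1}\int_{0}^{\pi}J_{0}(t\sqrt{\beta^{2}+\delta^{2}-2\beta\delta\cos\theta})\,d\theta$, the standard Laplace transform $\int_{0}^{\infty}e^{-At}J_{0}(bt)dt=(A^{2}+b^{2})^{-1/2}$, and the half-angle identity then produces the complete elliptic integral $\mathbb{K}$ with a modulus determined by the four branch points.

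The principal technical obstacle is the final identification of the modulus and prefactor with the specific forms stated in the proposition. The Bessel--Gegenbauer computation naturally yields $\mathcal{I}$ as a multiple of $\mathbb{K}$ of some modulus that is an algebraic function of $A,\beta,\delta$; matching this with the form $\kappa=2\sqrt{\Re\Omega(d,a)\Re\Omega(c,b)}/\sqrt{(\Re\Omega(d,a)-\Re\Omega(c,b))^{2}+(\Im\Omega(d,a)+\Im\Omega(c,b))^{2}}$ with prefactor $4\kappa/(\pi\sqrt{\Im\Omega(d,a)\Im\Omega(c,b)})$ requires a cross-ratio computation for the four branch points $\{\pm 1,(\pm\delta+iA)/\beta\}$ of the underlying elliptic curve $y^{2}=(1-s^{2})((A+i\beta s)^{2}+\delta^{2})$ together with a Landen--Gauss modular identity relating the two natural moduli attached to this curve. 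An equivalent route that bypasses the Bessel step is a direct Möbius substitution $s=(u+C)/(1+Cu)$, with $C$ chosen to eliminate the imaginary linear-in-$u$ coefficient inside the inner square root — a condition that becomes a quadratic equation for $C$ with discriminant $\sqrt{PQ}$ where $P=A^{2}+(\beta+\delta)^{2}$ and $Q=A^{2}+(\beta-\delta)^{2}$. This brings the quartic directly to Legendre normal form, after which the explicit algebraic identification of $\kappa$ reduces to coefficient comparison.
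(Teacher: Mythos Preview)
Your approach is essentially the same as the paper's. The paper parametrizes both segments simultaneously ($Z=R_1+\I Y$, $W=R_2+\I X$ with $R_j=\Re\Omega$, $I_j=\Im\Omega$), inserts the Laplace representation of $1/(R_1-R_2+\I(Y-X))$, recognizes the two angular integrals as $\pi J_0(I_j\lambda)$, and lands on exactly your Lipschitz--Hankel integral
\[
4\int_0^\infty e^{-\lambda(R_1-R_2)} J_0(I_1\lambda)J_0(I_2\lambda)\,d\lambda,
\]
which it then evaluates by quoting a table identity (Prudnikov--Brychkov--Marichev, Vol.~2, Eq.~2.12.38.1) rather than by your Gegenbauer route; the general case is handled by analytic continuation just as you suggest. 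Your initial detour through the inner $W$-integral via the Weierstrass substitution is unnecessary but harmless.

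Your ``principal technical obstacle'' is an artifact of a typo in the proposition as stated: the numerator of $\kappa$ should read $2\sqrt{\Im\Omega(d,a)\,\Im\Omega(c,b)}$, not $2\sqrt{\Re\Omega(d,a)\,\Re\Omega(c,b)}$. The paper's own proof derives the $\Im$-form directly from the table identity, and the Lipschitz--Hankel integral you wrote down depends only on $A=R_1-R_2$, $\beta=I_1$, $\delta=I_2$ --- it cannot possibly produce a modulus involving $R_1 R_2$. So no Landen--Gauss transformation or cross-ratio gymnastics are needed; once you correct the statement, the identification is immediate.
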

\begin{proof}
Let us first consider the case $\Re\Omega(d,a)>\Re\Omega(c,b)$. We use the notations $\Omega(d,a)=R_1+\I I_1$ and $\Omega(c,b)=R_2+\I I_2$. Doing the change of variables $Z=R_1+\I Y$ and $W=R_2+\I X$ we get
\begin{equation}\label{eq5.71}
\begin{aligned}
(\ref{eqCov}) &=\frac{4}{\pi^2} \int_{-I_2}^{I_2} d X \int_{-I_1}^{I_1} d Y \frac{1}{R_1-R_2+\I(Y-X)}\frac{1}{\sqrt{I_1^2-Y^2}\sqrt{I_2^2-X^2}}\\
&=\frac{4}{\pi^2} \int_0^\infty \d\lambda e^{-\lambda(R_1-R_2)} \int_{-I_2}^{I_2} d X \frac{e^{\I \lambda X}}{\sqrt{I_2^2-X^2}} \int_{-I_1}^{I_1} d Y \frac{e^{-\I\lambda Y}}{\sqrt{I_1^2-Y^2}}\\
&=4 \int_0^\infty \d\lambda e^{-\lambda(R_1-R_2)} J_0(I_1\lambda) J_0(I_2\lambda),
\end{aligned}
\end{equation}
with the Bessel function $J_0$ (see e.g.\ Eq.~2.5.3.3 of~\cite{Pru1}). Using the identity (see Eq.~2.12.38.1 of~\cite{Pru2}) we get
\begin{equation*}
(\ref{eq5.71})= \frac{4 \kappa}{\pi \sqrt{I_1 I_2}}\mathbb{K}(\kappa),\quad \kappa=\frac{2\sqrt{I_1 I_2}}{\sqrt{(R_1-R_2)^2+(I_1+I_2)^2}}.
\end{equation*}

For $\Re\Omega(d,a)\leq \Re\Omega(b,c)$, notice that for fixed $\Omega(d,a)$, both sides of the equations are analytic as a functions of $\Re\Omega(b,c)$ and $\Im\Omega(b,c)>0$ away from $\Omega(b,c)=\Omega(d,a)$. Thus the identity holds also for the general case by analytic continuation.
\end{proof}

\subsection{Proof of Theorem~\ref{thm5.1}}
By simple rescaling, it is enough to consider the case $d=1$, and clearly a shift by $1$ in the $r_i$ has no influence in the limiting result. Let us denote $M=cN$. We do the change of variables $w=N W$, $x=N X$, $u=N(U+W)$, and $z=N Z$, $y=N Y$, $v=N(V+Z)$ in (\ref{eq5.1}) with the result
\begin{multline}\label{eqCovBulk}
{\rm Cov}\Big(\zeta^{(N)}_{N-aN+1}(1),\zeta^{(M)}_{M-bM+1}(1)\Big)=\frac{N}{(2\pi\I)^2}\oint_{\Gamma_0} dW\oint_{\Gamma_{0,W}}dZ \frac{1}{Z-W}\\
\times\bigg(\int_0^\infty dX \frac{e^{N F(c,b,W,X)}}{X-W}\frac{1}{2\pi\I} \oint_{\Gamma_0} dU \frac{e^{N G(c,b,W,U)}}{W+U}\bigg)
\bigg(\int_0^\infty dY \frac{e^{N F(1,a,Z,Y)}}{Y-Z}\frac{1}{2\pi\I} \oint_{\Gamma_0} dV \frac{e^{N G(1,a,Z,V)}}{Z+V}\bigg),
\end{multline}
where
\begin{equation*}
\begin{aligned}
F(c,b,W,X)&=b c \ln(X-W)+(1-b)c\ln(X)-X,\\
G(c,b,W,U)&=-b c \ln(U)-(1-b)c\ln(W+U)+U+W.
\end{aligned}
\end{equation*}

\subsubsection{Analysis of the slow manifold}

In order to analyze the $N\to\infty$ limit of the covariance, we need to understand the properties of the functions $F$ and $G$. Notice that
\begin{equation}\label{eqRescFs}
F(c,b,W,X)=c F(1,b,W/c,X/c)-c \ln(c),\quad G(c,b,W,U)=c G(1,b,W/c,U/c)+ c \ln(c).
\end{equation}
Therefore if we analyze the situation for $c=1$, the case for $c<1$ is obtained by scaling linearly with $c$ all the variables $W,X,U$. Therefore, below we consider first $c=1$ and we write $F(b,W,X)$ for $F(1,b,W,X)$ (and similarly for $G$).

\begin{lemma}[Critical points]\label{lem5.1}
For given $W$, the critical points of $F$ and $G$ are given by
\begin{equation*}
X_\pm=\frac{1+W\pm \Delta}{2},\quad\textrm{and}\quad U_\pm=X_\pm-W=\frac{1-W\pm \Delta}{2},
\end{equation*}
where $\Delta=\sqrt{(1-W)^2+4bW}$.
\end{lemma}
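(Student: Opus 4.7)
My plan is to prove Lemma~\ref{lem5.1} by direct computation, since both $F$ and $G$ are explicit functions whose $X$- and $U$-derivatives respectively reduce to rational equations with common denominator, yielding quadratics whose roots can be extracted with the quadratic formula. In the statement of the lemma, $c=1$ is implicit (per the paragraph preceding it that reduces the general $c$ case to $c=1$ using the scaling relation~(\ref{eqRescFs})), so I only need to treat $F(b,W,X)=b\ln(X-W)+(1-b)\ln(X)-X$ and $G(b,W,U)=-b\ln(U)-(1-b)\ln(W+U)+U+W$.

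For $F$, I would compute
\begin{equation*}
\partial_X F(b,W,X) \;=\; \frac{b}{X-W} \;+\; \frac{1-b}{X} \;-\; 1,
\end{equation*}
and set it equal to zero. Clearing denominators by multiplying through by $X(X-W)$ yields the quadratic
\begin{equation*}
X^2 \;-\; (1+W)X \;+\; W(1-b) \;=\; 0.
\end{equation*}
Its discriminant is $(1+W)^2-4W(1-b) = (1-W)^2 + 4bW = \Delta^2$, and the quadratic formula gives the claimed $X_\pm = (1+W\pm\Delta)/2$.

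For $G$, the analogous computation $\partial_U G = -b/U - (1-b)/(W+U) + 1 = 0$, after multiplying by $U(W+U)$, yields
\begin{equation*}
U^2 \;+\; (W-1)U \;-\; bW \;=\; 0,
\end{equation*}
whose discriminant is again $(1-W)^2+4bW=\Delta^2$, giving $U_\pm=(1-W\pm\Delta)/2$. The identity $U_\pm = X_\pm - W$ is then immediate from the explicit formulas and serves as an internal consistency check: it reflects the natural fact that under the substitution $U=X-W$ the critical-point equations for $F$ (in $X$) and $G$ (in $U$) coincide. There is no genuine obstacle here; the only mild subtlety is choosing the branch of $\Delta$, which I would fix by taking the principal square root so that $\Delta>0$ for real $W$ with $(1-W)^2+4bW>0$, and by analytic continuation elsewhere (relevant later when $W$ runs over a complex contour in the saddle-point analysis of~(\ref{eqCovBulk})).
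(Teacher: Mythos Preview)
Your proposal is correct and is exactly the elementary computation the paper alludes to; the paper's own proof reads in its entirety ``It is an elementary computation,'' and your derivation simply supplies those details.
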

\begin{proof}
It is an elementary computation.
\end{proof}

The critical points are double critical point only for two values of $W$.
\begin{corollary}[Double critical points]\label{cor5.2}
We have $U_+=U_-$ (and $X_+=X_-$) if and only if \mbox{$W\in\{W_c,\overline{W}_c\}$}, where
\begin{equation*}
W_c=\Omega(1,b)\equiv1-2b+2\I \sqrt{b(1-b)}.
\end{equation*}
Further, $|W_c|=1$ and we define $\varphi_c=\arg(W_c)$, and
\begin{equation*}
\begin{aligned}
U_c&=U_\pm(W_c)=b-\I \sqrt{b(1-b)},\\
X_c&=X_\pm(W_c)=1-b+\I\sqrt{b(1-b)}.
\end{aligned}
\end{equation*}
\end{corollary}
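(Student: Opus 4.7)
The proof is essentially a direct algebraic computation built on Lemma~\ref{lem5.1}. The plan is to observe that $X_+ = X_-$ and $U_+ = U_-$ are both equivalent to $\Delta = 0$, which is equivalent to the vanishing of the radicand $(1-W)^2 + 4bW$. Solving this as a quadratic in $W$ will give the two claimed values $W_c$ and $\overline{W}_c$, and substitution back into the formulas for $X_\pm$ and $U_\pm$ will yield $X_c$ and $U_c$.

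More concretely, first I would note that from Lemma~\ref{lem5.1} one has $X_+ - X_- = U_+ - U_- = \Delta$, so the coincidence of the critical points is equivalent to $\Delta^2 = (1-W)^2 + 4bW = 0$. Expanding gives $W^2 - 2(1-2b)W + 1 = 0$, and the quadratic formula yields
\begin{equation*}
W = (1-2b) \pm \sqrt{(1-2b)^2 - 1}.
\end{equation*}
The key simplification is that $(1-2b)^2 - 1 = -4b(1-b)$, so for $b \in (0,1)$ the discriminant is negative and $\sqrt{(1-2b)^2 - 1} = 2\I \sqrt{b(1-b)}$, giving exactly $W_c = 1 - 2b + 2\I\sqrt{b(1-b)}$ and its conjugate.

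Next I would verify $|W_c| = 1$ by computing
\begin{equation*}
|W_c|^2 = (1-2b)^2 + 4b(1-b) = 1 - 4b + 4b^2 + 4b - 4b^2 = 1,
\end{equation*}
which justifies defining $\varphi_c = \arg(W_c)$. Finally, since $\Delta(W_c) = 0$, one has $U_\pm(W_c) = (1-W_c)/2$ and $X_\pm(W_c) = (1+W_c)/2$; substituting the explicit value of $W_c$ immediately gives $U_c = b - \I\sqrt{b(1-b)}$ and $X_c = 1-b + \I\sqrt{b(1-b)}$, as claimed.

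There is no real obstacle here: the argument is a one-page algebraic verification, and no part of it requires more than elementary manipulation. The only mildly delicate point is ensuring the correct labeling of the $\pm$ branches of the square root when defining $W_c$ versus $\overline{W}_c$, but this is a matter of convention (choosing $\sqrt{-4b(1-b)} = 2\I\sqrt{b(1-b)}$ with positive imaginary part) rather than substance.
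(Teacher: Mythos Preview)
Your proposal is correct and is precisely the elementary computation the paper alludes to (the paper's own proof reads simply ``It is an elementary computation as well''). You have filled in exactly the details one would expect: reducing to $\Delta=0$, solving the resulting quadratic $W^2-2(1-2b)W+1=0$, checking $|W_c|^2=(1-2b)^2+4b(1-b)=1$, and substituting $\Delta=0$ back into $X_\pm=(1+W)/2$ and $U_\pm=(1-W)/2$.
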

\begin{proof}
It is an elementary computation as well.
\end{proof}

We have an integral over $(W,X,U)$ of the function $e^{M(F(b,W,X)+G(b,W,U))}$ times $M$-independent terms. If we compute the eigenvalues of the Jacobi matrix of $\Phi(W,X,U):=F(b,W,X)+G(b,W,U)$ one sees that there is a zero eigenvalue. This means that for the steep descent analysis there is a slow mode and two fast modes. Thus we have to study the submanifold where $\Re \Phi(W,X,U)=0$, that we call \emph{slow manifold}. We will then take $W$ on this submanifold and integrate with respect to $U,X$ (which will be essentially Gaussian integrals) and only after that we integrate over $W$. The next goal is thus to say something about the slow manifold.

\begin{lemma}\label{lem5.3}
We have the following identities. First of all,
\begin{equation}\label{eq5.3}
F(b,W,X_\pm)+G(b,W,U_\pm)=0.
\end{equation}
Denoting $H(W):=F(b,W,X_+)+G(b,W,U_-)$ and $\tilde H(W):=F(b,W,X_-)+G(b,W,U_+)$ we have
\begin{equation}\label{eq5.4}
H(W)+\tilde H(W)=0.
\end{equation}
\end{lemma}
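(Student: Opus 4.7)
The plan is to prove both identities by direct algebraic substitution. The key observation is that the critical points satisfy the pair of linear relations
\[
X_\pm - W = U_\pm, \qquad W + U_\pm = X_\pm,
\]
which come straight from Lemma~\ref{lem5.1}. These relations mean that inside $F(b,W,X_\pm)$ the log-arguments $X_\pm-W$ coincide with the log-arguments $U_\pm$ appearing in $G$, and similarly $W+U_\pm$ coincides with $X_\pm$. So all the logarithmic terms are set up to cancel when we pair matching signs.

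For the first identity \eqref{eq5.3}, I would simply expand
\[
F(b,W,X_\pm)+G(b,W,U_\pm) = b\ln(X_\pm-W)+(1-b)\ln X_\pm - X_\pm - b\ln U_\pm - (1-b)\ln(W+U_\pm)+U_\pm + W,
\]
substitute $X_\pm - W = U_\pm$ and $W+U_\pm = X_\pm$ into the logarithms so that all four logarithmic terms cancel in pairs, and observe that the remaining polynomial part $-X_\pm + U_\pm + W = 0$ by the very same relation.

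For the second identity \eqref{eq5.4}, the strategy is the same but now the log-arguments are grouped in products that are symmetric under $\pm$. Summing,
\[
H(W)+\tilde H(W) = b\ln\bigl((X_+-W)(X_--W)\bigr)+(1-b)\ln(X_+X_-)-(X_++X_-)
\]
\[
\qquad\qquad\qquad -\,b\ln(U_+U_-)-(1-b)\ln\bigl((W+U_+)(W+U_-)\bigr)+(U_++U_-)+2W.
\]
Again using $X_\pm-W=U_\pm$ and $W+U_\pm=X_\pm$, the logarithms cancel. What remains is $-(X_++X_-)+(U_++U_-)+2W$, which vanishes since $U_++U_- = (X_++X_-)-2W$.

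Both statements therefore reduce to the two simple linear relations between the critical points, so there is no serious obstacle; the only thing to watch is consistency of branches of the logarithm, but since the identities are polynomial/rational in $X_\pm, U_\pm, W$ after clearing logs via $\exp$, and both sides are analytic in $W$ away from $\{W_c,\overline W_c\}$, verifying them at a convenient real point (e.g.\ $W$ small real) extends them to the whole slow-manifold regime by analytic continuation if any branch ambiguity arises.
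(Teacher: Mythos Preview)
Your proof is correct and follows essentially the same approach as the paper, which simply says ``It is simple to verify the identity \eqref{eq5.3} and then \eqref{eq5.4} follows directly from \eqref{eq5.3}.'' One small simplification: for \eqref{eq5.4} you can avoid the separate computation entirely by regrouping
\[
H(W)+\tilde H(W)=\bigl[F(b,W,X_+)+G(b,W,U_+)\bigr]+\bigl[F(b,W,X_-)+G(b,W,U_-)\bigr]=0+0,
\]
which is what the paper has in mind; also, since the cancellations in your argument are termwise (the very same logarithm appears once with each sign), there is no branch ambiguity to worry about and the analytic-continuation remark is unnecessary.
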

\begin{proof}
It is simple to verify the identity (\ref{eq5.3}) and then (\ref{eq5.4}) follows directly from (\ref{eq5.3}).
\end{proof}

The next goal is to see that $W$ on the slow manifold $(W,X_\pm(W),U_\pm(W))$ can be parameterized as a function in polar coordinates and it takes values inside the circle of radius one.
\begin{lemma}\label{lem5.4}
Let $W=r e^{\I\varphi}$. Then $\Delta^2=\alpha+\I\beta=\rho e^{\I\theta}$, with
\begin{equation*}
\begin{aligned}
\alpha&=2 r \cos(\varphi)(r\cos(\varphi)-1+2b)+1-r^2,\\
\beta&= 2 r \sin(\varphi)(r \cos(\varphi)-1+2b),\\
\rho&=\sqrt{\alpha^2+\beta^2},\quad \theta=2\arctan\left(\frac{\rho-\alpha}{\beta}\right).
\end{aligned}
\end{equation*}
Further, it holds
\begin{equation*}
\Re \Delta=\frac{\sqrt{\rho}}{\sqrt{1+\left(\frac{\rho-\alpha}{\beta}\right)^2}},\quad \Im\Delta=\frac{\rho-\alpha}{\beta}\frac{\sqrt{\rho}}{\sqrt{1+\left(\frac{\rho-\alpha}{\beta}\right)^2}}.
\end{equation*}
\end{lemma}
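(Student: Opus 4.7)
The lemma is a direct computation, starting from the definition $\Delta^2 = (1-W)^2 + 4bW$ recorded in Lemma~\ref{lem5.1}. The plan is to substitute the polar form $W = r\cos\varphi + \I r\sin\varphi$, separate real and imaginary parts to identify $\alpha$ and $\beta$, and then apply standard half-angle formulas to extract the principal square root of $\Delta^2$.

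First I would expand $(1-W)^2 + 4bW = 1 + (4b-2)W + W^2$, substitute $W = r\cos\varphi + \I r\sin\varphi$, and use $W^2 = r^2(\cos^2\varphi - \sin^2\varphi) + 2\I r^2 \sin\varphi\cos\varphi$. Collecting real parts yields
\begin{equation*}
\Re(\Delta^2) = 1 + (4b-2)r\cos\varphi + r^2(2\cos^2\varphi - 1) = 2r\cos\varphi\bigl(r\cos\varphi - 1 + 2b\bigr) + 1 - r^2,
\end{equation*}
which matches the stated $\alpha$, while the imaginary parts give
\begin{equation*}
\Im(\Delta^2) = (4b-2)r\sin\varphi + 2r^2\sin\varphi\cos\varphi = 2r\sin\varphi\bigl(r\cos\varphi - 1 + 2b\bigr),
\end{equation*}
which is the stated $\beta$. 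The formulas $\rho = \sqrt{\alpha^2 + \beta^2}$ and $\alpha = \rho\cos\theta$, $\beta = \rho\sin\theta$ are the definitions of polar coordinates, so only the half-angle identity remains to be established.

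For $\theta$ and for the real and imaginary parts of $\Delta = \sqrt{\rho}\,e^{\I\theta/2}$ (taking the principal branch of the square root), I would apply the standard identity
\begin{equation*}
\tan(\theta/2) = \frac{1-\cos\theta}{\sin\theta} = \frac{1 - \alpha/\rho}{\beta/\rho} = \frac{\rho - \alpha}{\beta},
\end{equation*}
which gives the formula $\theta = 2\arctan\bigl((\rho-\alpha)/\beta\bigr)$. Combining this with $\cos(\theta/2) = 1/\sqrt{1+\tan^2(\theta/2)}$ and $\sin(\theta/2) = \tan(\theta/2)\cos(\theta/2)$ yields
\begin{equation*}
\Re\Delta = \sqrt{\rho}\cos(\theta/2) = \frac{\sqrt{\rho}}{\sqrt{1 + \left(\tfrac{\rho-\alpha}{\beta}\right)^2}}, \qquad \Im\Delta = \sqrt{\rho}\sin(\theta/2) = \frac{\rho-\alpha}{\beta}\cdot\frac{\sqrt{\rho}}{\sqrt{1 + \left(\tfrac{\rho-\alpha}{\beta}\right)^2}},
\end{equation*}
as claimed.

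There is no real obstacle here — the lemma is purely bookkeeping for later use in identifying the slow manifold. The only mild subtlety is the choice of branch: one wants the root with $\Re\Delta \geq 0$ (consistent with the critical points in Lemma~\ref{lem5.1}), which corresponds to taking $\theta \in (-\pi, \pi]$ and $\theta/2 \in (-\pi/2, \pi/2]$, so that $\cos(\theta/2)\geq 0$. I would note this branch choice but not belabor it. The formulas degenerate when $\beta = 0$ (i.e.\ on the real axis or when $r\cos\varphi = 1-2b$), but in those cases $\Delta^2$ is real and the square root can be taken directly; this boundary case can be handled either by a separate line or by passing to the limit in the above identities.
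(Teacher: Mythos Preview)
Your proof is correct and follows essentially the same approach as the paper, which simply declares the computation of $\alpha,\beta$ elementary and then invokes $\cos(\arctan x)=1/\sqrt{1+x^2}$, $\sin(\arctan x)=x/\sqrt{1+x^2}$ to extract $\Delta=\sqrt{\rho}\,e^{\I\theta/2}$. Your version is more explicit and your remarks on the branch choice and the $\beta=0$ degeneration are welcome additions.
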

\begin{proof}
An elementary computation gives the values of $\alpha$ and $\beta$. Then, using $\cos(\arctan(x))=1/\sqrt{1+x^2}$ and $\sin(\arctan(x))=x/\sqrt{1+x^2}$ we get the formulas for $\Delta=\sqrt{\rho} e^{\I\theta/2}$.
\end{proof}

To understand where $\Re H(W)=0$, first notice that
\begin{equation*}
H(W)=b\ln\left(\frac{1-W+\Delta}{1-W-\Delta}\right)+(1-b)\ln\left(\frac{1+W+\Delta}{1+W-\Delta}\right)-\Delta.
\end{equation*}
Thus at least for the two critical points $W\in \{W_c,\overline{W}_c\}$ we have $H(W)=0$. Further,
\begin{equation*}
\frac{d H}{dW}=-\frac{\Delta}{W}.
\end{equation*}

By Corollary~\ref{cor5.2} we thus know that $\frac{d H}{dW}=0$ if and only if $W\in\{W_c,\overline{W}_c\}$, i.e., where $\Delta=0$.
\begin{lemma}\label{lem5.5}
We have
\begin{equation*}
\Re\Delta=0\textrm{ if and only if }\beta=0\textrm{ and }\alpha\leq 0.
\end{equation*}
\end{lemma}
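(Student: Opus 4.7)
The statement reduces to a tautology once one notices that $\Delta$ is (up to a sign choice of the square root) just a complex number whose square equals $\alpha+\I\beta$. Writing $\Delta=A+\I B$ with $A,B\in\R$, one has
\begin{equation*}
\Delta^2 = (A^2-B^2) + 2\I A B = \alpha + \I\beta,
\end{equation*}
so $\alpha = A^2-B^2$ and $\beta = 2AB$.

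For the forward direction, I would assume $\Re\Delta = A = 0$. Then $\beta = 2AB = 0$ immediately, and $\alpha = -B^2 \leq 0$. For the reverse direction, I would assume $\beta = 0$ and $\alpha \leq 0$. From $2AB = 0$ either $A=0$ (in which case we are done) or $B=0$; in the latter case $\alpha = A^2 \geq 0$, which combined with $\alpha \leq 0$ forces $A=0$ as well. Thus $\Re\Delta = 0$.

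Alternatively, one can read the equivalence directly from the explicit formula
\begin{equation*}
\Re\Delta = \frac{\sqrt{\rho}}{\sqrt{1+\left(\tfrac{\rho-\alpha}{\beta}\right)^2}}
\end{equation*}
provided in Lemma~\ref{lem5.4}: this expression vanishes precisely when $\sqrt{\rho} = 0$ (i.e.\ $\alpha=\beta=0$) or when $(\rho-\alpha)/\beta$ diverges, which in the limit $\beta\to 0$ means $\rho = |\alpha| \neq \alpha$, i.e.\ $\alpha<0$. The only delicate point is the boundary case $\alpha=\beta=0$ (where the formula is indeterminate), but this is covered directly by the algebraic argument above. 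Since the proof is essentially one line, I expect no real obstacle; the purpose of recording this lemma is merely to pin down the locus $\{W : \Re\Delta(W)=0\}$ for the subsequent steep-descent analysis of the slow manifold.
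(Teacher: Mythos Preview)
Your first argument is correct and is more elementary than what the paper does. The paper works instead from the explicit polar-coordinate formula for $\Re\Delta$ in Lemma~\ref{lem5.4}: it treats the case $\alpha=\beta=0$ separately, then for $\alpha\neq 0$ observes from that formula that $\Re\Delta$ can only vanish in the limit $\beta\to 0$, and checks by Taylor expansion that the limit is $\sqrt{\alpha}\neq 0$ when $\alpha>0$ and $0$ when $\alpha<0$. Your direct algebraic manipulation of $\Delta=A+\I B$, $\Delta^2=\alpha+\I\beta$ bypasses the polar formula and the limiting argument entirely, giving a cleaner one-line proof. The paper's route has the minor advantage of staying within the $(\rho,\theta)$ parametrization already set up for later use, but your approach is the natural one and loses nothing.
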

\begin{proof}
If $\alpha=\beta=0$, then $\rho=0$ and $\Delta=0$. Otherwise assume $\alpha\neq 0$. Then $\Re\Delta=0$ possibly only if $\beta\downarrow 0$.\\
First case: $\alpha>0$. Then as $\beta\downarrow 0$, $\Re\Delta\simeq \sqrt{\alpha}+O(\beta^2)$, thus $\Re\Delta\not\to 0$,\\
Second case: $\alpha<0$. Then as $\beta\downarrow 0$, $\Re\Delta\simeq \beta/(2\sqrt{|\alpha|})$ and $\Im\Delta\simeq \beta/(2\sqrt{|\alpha|})$, which gives the result.
\end{proof}

From this it follows that
\begin{corollary}\label{cor5.6}
It holds
\begin{equation*}
\Re\Delta=0\textrm{ if and only if }r \cos\varphi = 1-2b\textrm{ and }r>1.
\end{equation*}
\end{corollary}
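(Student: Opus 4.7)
The plan is to reduce the statement to the criterion of Lemma~\ref{lem5.5}, which characterizes $\Re\Delta=0$ by the two conditions $\beta=0$ and $\alpha\leq 0$, and then solve these conditions explicitly via the formulas of Lemma~\ref{lem5.4}.

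First I would exploit the factorization $\beta=2r\sin(\varphi)(r\cos(\varphi)-1+2b)$. Assuming $r>0$, so that $W=re^{\I\varphi}$ is a well-defined point distinct from the origin, the equation $\beta=0$ splits into two sub-cases: either (a) $\sin\varphi=0$, in which case $W$ is real, or (b) $r\cos\varphi=1-2b$, which is the condition claimed by the corollary.

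To rule out sub-case (a), I would substitute $W\in\R$ directly into the expression for $\alpha$ from Lemma~\ref{lem5.4}; since $\cos\varphi=\pm 1$ and $r=|W|$, the formula collapses to the quadratic $\alpha=W^{2}+(4b-2)W+1$, whose discriminant is $(4b-2)^2-4=16b(b-1)<0$ for $b\in(0,1)$. Thus $\alpha>0$ identically along the real axis, so Lemma~\ref{lem5.5} forbids $\Re\Delta=0$ in sub-case (a).

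In sub-case (b), the factor $r\cos\varphi-1+2b$ appearing in the first summand of the formula for $\alpha$ vanishes, leaving $\alpha=1-r^{2}$. Therefore the condition $\alpha\leq 0$ of Lemma~\ref{lem5.5} becomes exactly $r\geq 1$. The borderline $r=1$ forces $\alpha=\beta=0$, hence $\Delta=0$, and in this case $W$ has modulus one with $\cos\varphi=1-2b$, which is precisely $W=W_c$ or $W=\overline{W}_c$ from Corollary~\ref{cor5.2}. These two double critical points are treated separately in the subsequent steep-descent analysis, so excluding them yields the strict inequality $r>1$ stated in the corollary. The whole argument is an elementary case analysis; no obstacle is anticipated, because all the analytic content has already been packaged into Lemmas~\ref{lem5.4} and~\ref{lem5.5}.
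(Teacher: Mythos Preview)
Your argument follows the same route as the paper---reduce to Lemma~\ref{lem5.5} and read off the conditions on $\alpha,\beta$ from Lemma~\ref{lem5.4}---and is in fact more complete: the paper's proof simply asserts ``$\beta=0$ if and only if $r\cos\varphi=1-2b$'' without addressing the $\sin\varphi=0$ branch, whereas you correctly dispose of it via the discriminant computation, and you also make explicit the borderline $r=1$ (where $\Delta=0$ at $W_c,\overline{W}_c$) that the paper tacitly excludes when writing the strict inequality.
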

\begin{proof}
$\beta=0$ if and only if $r\cos\varphi = 1-2 b$. Setting this into $\alpha$ leads to $\alpha=1-r^2$, which is negative if and only if $r>1$.
\end{proof}

Next we consider the derivative of $\Re H$ with respect to $r$ and $\varphi$.
\begin{lemma}\label{lem5.7}
We have
\begin{equation*}
\frac{d\Re H}{dr} = -\frac{\Re\Delta}{r}\leq 0\quad\textrm{and}\quad \frac{d\Re H}{d\varphi}=\Im\Delta.
\end{equation*}
In particular, on the circle of radius one, $W=e^{\I\varphi}$,
$\frac{d\Re H}{d\varphi}>0$ on $\varphi\in (0,\varphi_c)$ and $\frac{d\Re H}{d\varphi}<0$ on $\varphi\in (\varphi_c,\pi)$. This implies that on the circle of radius one, $\Re H<0$ except at the two critical points $\{W_c,\overline{W}_c\}$.
\end{lemma}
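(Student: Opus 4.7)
The derivative formulas are an immediate chain rule calculation from the identity $dH/dW = -\Delta/W$ established just before the lemma. Writing $W = re^{i\varphi}$, one has $\partial_r W = W/r$ and $\partial_\varphi W = iW$, hence
\begin{equation*}
\partial_r H = -\frac{\Delta}{r}, \qquad \partial_\varphi H = -i\Delta,
\end{equation*}
whose real parts give the two stated equalities. The inequality $\partial_r \Re H \leq 0$ then reduces to $\Re\Delta \geq 0$, which is the sign convention built into the explicit expression for $\Re\Delta$ in Lemma~\ref{lem5.4} (a positive square root divided by another positive square root).

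Next I would determine the sign of $\Im\Delta$ on the unit circle. Specializing Lemma~\ref{lem5.4} to $r=1$ gives $\alpha = 2\cos\varphi(\cos\varphi + 2b - 1)$ and $\beta = 2\sin\varphi(\cos\varphi + 2b - 1)$. From Corollary~\ref{cor5.2} we have $|W_c|=1$, so $\cos\varphi_c = \Re W_c = 1-2b$. For $\varphi \in (0,\pi)$ we have $\sin\varphi > 0$, so the sign of $\beta$ is the sign of $\cos\varphi - \cos\varphi_c$: positive on $(0,\varphi_c)$ and negative on $(\varphi_c,\pi)$. Lemma~\ref{lem5.4} expresses
\begin{equation*}
\Im\Delta = \frac{\rho-\alpha}{\beta}\cdot\frac{\sqrt\rho}{\sqrt{1+((\rho-\alpha)/\beta)^2}},
\end{equation*}
and since $\rho = \sqrt{\alpha^2+\beta^2} \geq \alpha$ the factor $\rho-\alpha$ is nonnegative; hence $\Im\Delta$ has the same sign as $\beta$, yielding the two monotonicity claims.

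For the final implication, at $W = W_c$ we have $\Delta=0$, so the explicit formula
\begin{equation*}
H(W) = b\ln\frac{1-W+\Delta}{1-W-\Delta} + (1-b)\ln\frac{1+W+\Delta}{1+W-\Delta} - \Delta
\end{equation*}
gives $H(W_c)=0$, in particular $\Re H(W_c)=0$. Combining this with the strict monotonicity just established yields $\Re H(e^{i\varphi})<0$ for every $\varphi\in(0,\varphi_c)\cup(\varphi_c,\pi)$, and by the Schwarz reflection symmetry $H(\overline W) = \overline{H(W)}$ (valid because $\Delta^2$ has real coefficients under the principal branch), the same holds on the lower semicircle. Strict negativity at the two boundary angles $\varphi \in \{0,\pi\}$ still follows from the monotonicity, since $\Im\Delta$ vanishes only at these isolated endpoints so $\Re H$ strictly decreases across the full closed arc from $W_c$; alternatively one can verify $\Re H(\pm 1)<0$ by an elementary direct computation using $\Delta(1)=2\sqrt b$ and $\Delta(-1)=2\sqrt{1-b}$.

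The main care point is bookkeeping with the branches of $\Delta$ and of the logarithms in $H$ so that $\Re H$ is single-valued and continuous on the unit circle away from $W_c$ and $\overline W_c$; this is already implicit in the formulas of Lemma~\ref{lem5.4}, and no individual step beyond it is difficult once $dH/dW = -\Delta/W$ is in hand.
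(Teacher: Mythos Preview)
Your proof is correct and follows essentially the same route as the paper's: both derive the two derivative identities by applying the chain rule to $dH/dW=-\Delta/W$ with $W=re^{\I\varphi}$, and both determine the sign of $\Im\Delta$ on the unit circle by reducing it (via Lemma~\ref{lem5.4}) to the sign of $\beta=2\sin\varphi(\cos\varphi-(1-2b))$ and invoking $\cos\varphi_c=1-2b$. Your version is slightly more explicit about the endpoint values $\varphi\in\{0,\pi\}$ and the reflection symmetry, but the argument is the same.
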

\begin{proof}
The identities follow from
\begin{equation*}
\frac{d\Re H}{dr}+\I\frac{d\Im H}{dr} = \frac{d H}{dr} = \frac{dH}{dW}\frac{dW}{dr}=-\frac{\Delta}{r}=-\frac{\Re\Delta}{r}-\I\frac{\Im\Delta}{r},
\end{equation*}
and
\begin{equation*}
\frac{d\Re H}{d\varphi}+\I\frac{d\Im H}{d\varphi} = \frac{d H}{d\varphi} = \frac{dH}{dW}\frac{dW}{d\varphi}=-\I\Delta=-\I\Re\Delta+\Im\Delta.
\end{equation*}
Using the formula for $\Im\Delta$ in Lemma~\ref{lem5.4} we get $\Im\Delta>0$ if and only if $\beta>0$, i.e., if and only if $r\cos\varphi>1-2b$, and for $r=1$, $\cos\varphi>1-2b$ if and only if $\varphi\in (0,\varphi_c)$ (it suffices to consider $\varphi\in [0,\pi]$ only by symmetry).
\end{proof}

\begin{lemma}\label{lem5.8}
We have $\lim_{r\to 0} \Re H=\infty$.
\end{lemma}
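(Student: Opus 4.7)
The plan is a direct asymptotic expansion of each piece of $H(W)$ as $W=re^{\I\varphi}\to 0$, keeping track only of the quantities relevant to $\Re H$.

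First I would determine the behavior of $\Delta=\sqrt{(1-W)^2+4bW}$ near $W=0$. Writing $(1-W)^2+4bW = 1+(4b-2)W+W^2$ and Taylor expanding the square root (choosing the branch with $\Delta(0)=+1$) gives
\begin{equation*}
\Delta = 1+(2b-1)W + O(W^2).
\end{equation*}
Plugging into the four linear combinations that appear in $H(W)$ yields
\begin{equation*}
1-W+\Delta = 2+O(W),\quad 1-W-\Delta = -2bW+O(W^2),
\end{equation*}
\begin{equation*}
1+W+\Delta = 2+O(W),\quad 1+W-\Delta = 2(1-b)W+O(W^2).
\end{equation*}
Here we use that $b\in(0,1)$ so the leading coefficients $-2b$ and $2(1-b)$ are nonzero.

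Next I would take real parts of the logarithms, using $\Re\ln(z)=\ln|z|$. The two ratios satisfy
\begin{equation*}
\left|\frac{1-W+\Delta}{1-W-\Delta}\right| = \frac{1}{br}\bigl(1+O(r)\bigr),\qquad
\left|\frac{1+W+\Delta}{1+W-\Delta}\right| = \frac{1}{(1-b)r}\bigl(1+O(r)\bigr),
\end{equation*}
so that
\begin{equation*}
\Re H(W) = -b\ln(br) - (1-b)\ln((1-b)r) - \Re\Delta + O(r).
\end{equation*}
Since $b+(1-b)=1$, this simplifies to
\begin{equation*}
\Re H(W) = -\ln r - b\ln b - (1-b)\ln(1-b) - \Re\Delta + O(r).
\end{equation*}
Finally, $\Re\Delta\to 1$ as $r\to 0$, while the entropy term $-b\ln b-(1-b)\ln(1-b)$ is a finite constant. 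Therefore the $-\ln r$ term dominates and $\Re H(W)\to +\infty$ as $r\to 0$, uniformly in $\varphi$.

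I expect this proof to be essentially routine; the only minor care required is the branch choice of $\Delta$ (fixed by continuity from $\Delta(0)=1$, consistently with the parametrization $\Delta^2=\rho e^{\I\theta}$ used in Lemma~\ref{lem5.4}) and the observation that the two small denominators $1-W-\Delta$ and $1+W-\Delta$ each vanish linearly in $W$, producing the single $-\ln r$ blow-up after combining the coefficients $b$ and $1-b$.
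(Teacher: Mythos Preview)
Your proof is correct and is essentially the same computation as the paper's: the paper phrases the asymptotics as $U_+\to 1$, $U_-\to -bW$, $X_+\to 1$, $X_-\to (1-b)W$ and then reads off $\Re H\sim -b\ln b-(1-b)\ln(1-b)-\ln r$, which is exactly your expansion of the two log-ratios rewritten via $X_\pm=\tfrac{1+W\pm\Delta}{2}$, $U_\pm=\tfrac{1-W\pm\Delta}{2}$. Your version is more explicit about the branch choice of $\Delta$ and the $O(r)$ error terms, but the content is identical.
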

\begin{proof}
As $r\to 0$, $U_+\to 1$, $U_-\to -br e^{\I\varphi}$, $X_+\to 1$, $X_-\to (1-b)r e^{\I\varphi}$. Thus, as $r\to 0$, $\Re H\sim -b\ln(b)-(1-b)\ln(1-b)-\ln(r)\to \infty$.
\end{proof}

The above lemmas imply the following.
\begin{proposition}\label{prop5.9}
There exists a simple path $\Pi=\Pi(b)$ around $0$ such that for $W\in\Pi$
\begin{enumerate}
 \item $\Re H(W)=0$,
 \item $W=r(\varphi) e^{\I\varphi}$: where $r(\varphi)$ is a unique solution of $\Re H(W)=0$ in $[0,1]$,
 \item $r(\varphi_c) e^{\I\varphi_c}=W_c$,
\end{enumerate}
see Figure~\ref{FigSlowManyfold} for an illustration. We denote by $\Pi_+(b)$ the part of $\Pi(b)$ with $\varphi\in [-\varphi_c,\varphi_c]$ and $\Pi_-(b)$ the remaining part.
\begin{figure}
\begin{center}
\psfrag{+}[c]{$+$}
\psfrag{-}[c]{$-$}
\psfrag{wc}[l]{$W_c$}
\psfrag{wcb}[l]{$\overline{W}_c$}
\psfrag{gamma}[l]{$\Pi$}
\includegraphics[height=5cm]{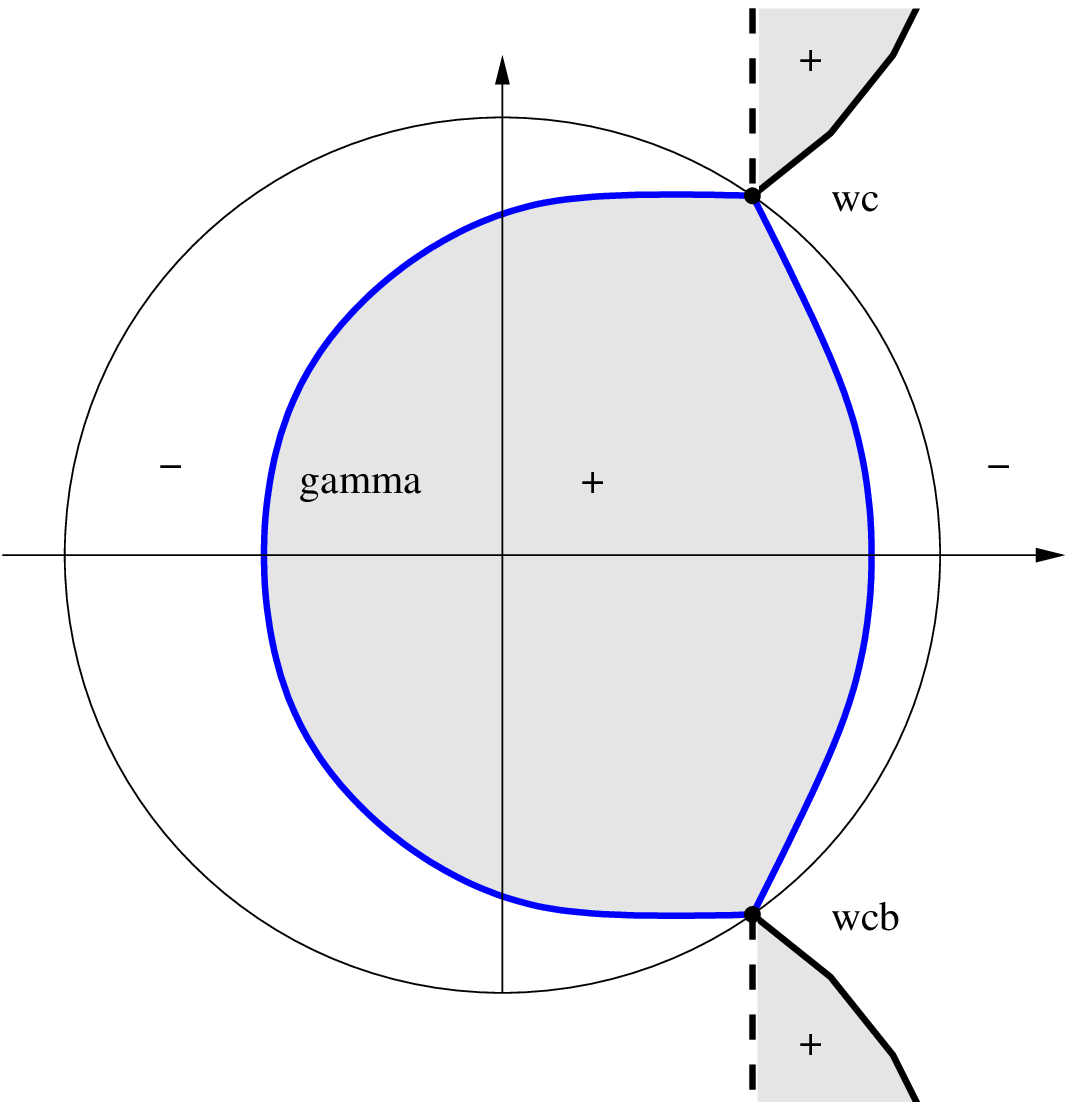}
\caption{The thick blue curve is the path $\Pi$ of Proposition~\ref{prop5.9}. On the solid black lines $\Re H=0$ as well. The dashed black lines are the discontinuities of $\Re H$, which correspond to $\Re\Delta=0$. The gray regions are the ones with positive real part of $H$.}
\label{FigSlowManyfold}
\end{center}
\end{figure}
\end{proposition}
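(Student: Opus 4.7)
The plan is to realize $\Pi$ as the polar graph $W = r(\varphi)e^{\I\varphi}$ of a function $r : [-\pi, \pi] \to (0, 1]$ defined implicitly by $\Re H(W) = 0$. All of the analytic input needed is already supplied by Lemmas~\ref{lem5.3}--\ref{lem5.8}; the proof amounts to assembling them via the intermediate value theorem.

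First I would fix the branch of $\Delta = \sqrt{(1-W)^2 + 4bW}$ with $\Re \Delta \geq 0$ as in Lemma~\ref{lem5.4}. By Corollary~\ref{cor5.6} the locus $\{\Re\Delta = 0\}$ is contained in $\{r > 1\}$, hence lies strictly outside the open unit disk, so $\Delta$ and thus $H$ are continuous on the punctured closed disk $\{0 < |W| \leq 1\}$. The core step is then to fix $\varphi$ and view $f_\varphi(r) := \Re H(re^{\I\varphi})$ as a real function of $r \in (0, 1]$. By Lemma~\ref{lem5.7} its derivative equals $-\Re\Delta/r$, and by Corollary~\ref{cor5.6} one has $\Re\Delta > 0$ strictly on $r \in (0, 1)$; hence $f_\varphi$ is strictly decreasing on $(0, 1]$. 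Combined with $f_\varphi(r) \to +\infty$ as $r \to 0^+$ (Lemma~\ref{lem5.8}) and $f_\varphi(1) \leq 0$ with equality exactly at $\varphi = \pm\varphi_c$ (Lemma~\ref{lem5.7}), the intermediate value theorem produces a unique $r(\varphi) \in (0, 1]$ satisfying $\Re H(r(\varphi)e^{\I\varphi}) = 0$, establishing properties (1) and (2). Property (3) is immediate since $r(\pm\varphi_c) = 1$ and $e^{\I\varphi_c} = W_c$ by Corollary~\ref{cor5.2}.

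Finally, $\Pi$ is a simple closed curve around $0$: injectivity of $\varphi \mapsto r(\varphi)e^{\I\varphi}$ is automatic because distinct $\varphi$ yield distinct arguments, and continuity of $r(\varphi)$ on $[-\pi, \pi]$ follows from the implicit function theorem (using $\partial_r \Re H = -\Re\Delta/r \neq 0$ for $r(\varphi) < 1$) together with a compactness argument at $\pm\varphi_c$: any subsequential limit $r^* \in (0, 1]$ of $r(\varphi_n)$ as $\varphi_n \to \varphi_c$ must satisfy $\Re H(r^* e^{\I\varphi_c}) = 0$, and strict monotonicity of $f_{\varphi_c}$ together with $H(W_c) = 0$ (from Lemma~\ref{lem5.3}) forces $r^* = 1$. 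The main technical obstacle is the branch-cut bookkeeping in the first step: one must carefully verify that the chosen branch of $\Delta$ keeps the arguments of the logarithms in the explicit formula $H(W) = b\ln\frac{1-W+\Delta}{1-W-\Delta} + (1-b)\ln\frac{1+W+\Delta}{1+W-\Delta} - \Delta$ away from any branch cut on the entire punctured unit disk, so that $f_\varphi$ is globally single-valued as a real function. Once that is confirmed, the rest is a routine application of one-variable real analysis.
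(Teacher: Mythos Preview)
Your proposal is correct and follows essentially the same approach as the paper: fix $\varphi$, use Lemma~\ref{lem5.7} to get strict monotonicity of $r\mapsto\Re H(re^{\I\varphi})$, combine with the limits from Lemmas~\ref{lem5.7} and~\ref{lem5.8}, and apply the intermediate value theorem to produce a unique $r(\varphi)\in(0,1]$. The paper's proof is a terse two-sentence version of exactly this argument; your additional remarks on continuity of $r(\varphi)$, simplicity of $\Pi$, and branch-cut bookkeeping are details the paper leaves implicit.
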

\begin{proof}
By Lemma~\ref{lem5.7} we know that $\Re H(r e^{\I\varphi})$ is monotonically decreasing as $r$ increases, and goes from $\infty$ at $r=0$ (Lemma~\ref{lem5.8}) to $\Re H(W)\leq 0$ for $W$ on a circle of radius $1$ (with equality if and only if $\{W_c,\overline{W}_c\}$) (Lemma~\ref{lem5.8}), thus it has to cross $0$ at some values $r=r(\varphi)\in (0,1]$ (with $r(\varphi)=1$ if and only if $\varphi=\pm \varphi_c$).
\end{proof}

Along the path $\Pi$ of Proposition~\ref{prop5.9}, the real part of $H$ is equal to zero and the imaginary part is not constant.
\begin{lemma}\label{lem5.10}
Along the path $\Pi$ of Proposition~\ref{prop5.9},
\begin{equation*}
\frac{d\Im H}{d\varphi}=-\frac{|\Delta|^2}{\Re\Delta}\leq 0,
\end{equation*}
with equality if and only if $\Delta=0$.
\end{lemma}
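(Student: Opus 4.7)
The plan is to derive the identity directly from the chain rule together with the formulas $\partial_r H = -\Delta/r$ and $\partial_\varphi H = -\I\Delta$ from Lemma~\ref{lem5.7}, using the defining equation $\Re H\equiv 0$ on $\Pi$ to eliminate $r'(\varphi)$.

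Parameterize $W = r(\varphi)\,e^{\I\varphi}$ on $\Pi$. The chain rule gives
\begin{equation*}
\frac{dH}{d\varphi}\bigg|_{\Pi}
 = \frac{\partial H}{\partial\varphi} + \frac{\partial H}{\partial r}\,r'(\varphi)
 = -\I\Delta - \frac{\Delta}{r}\,r'(\varphi).
\end{equation*}
Separating real and imaginary parts yields
\begin{equation*}
\frac{d\Re H}{d\varphi} = \Im\Delta - \frac{\Re\Delta}{r}\,r'(\varphi),
\qquad
\frac{d\Im H}{d\varphi} = -\Re\Delta - \frac{\Im\Delta}{r}\,r'(\varphi).
\end{equation*}
Since $\Re H \equiv 0$ on $\Pi$, the first equation forces $r'(\varphi) = r\,\Im\Delta/\Re\Delta$ whenever $\Re\Delta\neq 0$. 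Substituting into the second,
\begin{equation*}
\frac{d\Im H}{d\varphi}
 = -\Re\Delta - \frac{(\Im\Delta)^2}{\Re\Delta}
 = -\frac{(\Re\Delta)^2 + (\Im\Delta)^2}{\Re\Delta}
 = -\frac{|\Delta|^2}{\Re\Delta},
\end{equation*}
which is the claimed identity.

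It remains to verify the sign $\Re\Delta \geq 0$ on $\Pi$ so that $-|\Delta|^2/\Re\Delta\leq 0$. By Corollary~\ref{cor5.6}, $\Re\Delta$ can only vanish when $r\cos\varphi = 1-2b$ with $r>1$, i.e., strictly outside the unit disk. By Proposition~\ref{prop5.9}, on the other hand, $\Pi$ lies in the closed unit disk with $r(\varphi)=1$ only at the two endpoints $W_c, \overline W_c$, where $\Delta = 0$. Hence $\Re\Delta \neq 0$ on the open arcs of $\Pi$, and by continuity of $\Delta$ there (using the branch $\Delta = \sqrt{(1-W)^2 + 4bW}$ that makes $\Delta>0$ at $W=0$) one has $\Re\Delta>0$ throughout. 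The expression $-|\Delta|^2/\Re\Delta$ therefore extends continuously to $0$ at the endpoints $W_c, \overline W_c$, and this is the only locus of equality.

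The main obstacle in the proof is not the chain-rule computation, which is short, but the careful bookkeeping for the sign of $\Re\Delta$: one needs to invoke both Corollary~\ref{cor5.6} (to rule out zeros of $\Re\Delta$ inside the unit disk) and a reference value of the chosen branch of $\Delta$ (to argue $\Re\Delta>0$ rather than $<0$) in order to exclude a sign ambiguity. Once this is settled, the equality case follows from the identification of the zeros of $\Delta$ in Corollary~\ref{cor5.2}.
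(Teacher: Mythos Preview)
Your proof is correct and follows essentially the same route as the paper: compute $dH/d\varphi$ along $\Pi$ via the chain rule, use the constraint $\Re H\equiv 0$ to solve for $r'(\varphi)$ (equivalently $d\ln r/d\varphi$), and substitute back to obtain $d\Im H/d\varphi=-|\Delta|^2/\Re\Delta$. Your additional discussion of the sign of $\Re\Delta$ is valid but slightly more work than needed, since the explicit formula $\Re\Delta=\sqrt{\rho}\big/\sqrt{1+((\rho-\alpha)/\beta)^2}$ in Lemma~\ref{lem5.4} already shows $\Re\Delta\geq 0$ throughout (this is also implicitly used in Lemma~\ref{lem5.7}).
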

\begin{proof}
Let $r=r(\varphi)$ and $W=r(\varphi) e^{\I\varphi}$. Then,
\begin{equation*}
\frac{d\Re H}{d\varphi}+\I\frac{d\Im H}{d\varphi} = \frac{d H}{d\varphi} = \frac{dH}{dW}\frac{dW}{d\varphi} = \left(\Im\Delta-\Re\Delta \frac{d\ln r(\varphi)}{d\varphi}\right)-\I \left(\Re\Delta+\Im\Delta \frac{d\ln r(\varphi)}{d\varphi}\right).
\end{equation*}
Since the real part is by definition of $r(\varphi)$ equal to $0$, we have $\frac{d\ln r(\varphi)}{d\varphi}=\frac{\Im\Delta}{\Re\Delta}$. Thus
\begin{equation*}
\frac{d\Im H}{d\varphi} = -\frac{|\Delta|^2}{\Re\Delta}.
\end{equation*}
\end{proof}

Finally, as a simple corollary we have:
\begin{corollary}\label{cor5.11}
Along the path $\Pi$ of Proposition~\ref{prop5.9},
\begin{equation*}
\Re F(b,W,X_+)=-\Re G(b,W,U_-)=\Re F(b,W,X_-)=-\Re G(b,W,U_+).
\end{equation*}
\end{corollary}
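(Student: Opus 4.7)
The plan is short: the identity to prove is purely algebraic once one combines the two pieces of information available, namely the identities already recorded in Lemma~\ref{lem5.3} and the defining property of the path $\Pi$.

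First I would take real parts of the identity~\eqref{eq5.3} with each choice of the sign to obtain
\begin{equation*}
\Re F(b,W,X_+)=-\Re G(b,W,U_+),\qquad \Re F(b,W,X_-)=-\Re G(b,W,U_-).
\end{equation*}
These two equalities hold for all $W$ (on the principal branch implicit in the definition of $\Delta$), not just on $\Pi$.

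Next I would use the definition of $\Pi$. By Proposition~\ref{prop5.9}, for $W\in\Pi$ we have $\Re H(W)=0$, i.e.\ $\Re F(b,W,X_+)+\Re G(b,W,U_-)=0$, so
\begin{equation*}
\Re F(b,W,X_+)=-\Re G(b,W,U_-).
\end{equation*}
By~\eqref{eq5.4} we also have $\Re\tilde H(W)=-\Re H(W)=0$ on $\Pi$, which gives $\Re F(b,W,X_-)=-\Re G(b,W,U_+)$. Combining these four identities yields
\begin{equation*}
\Re F(b,W,X_+)=-\Re G(b,W,U_-)=\Re F(b,W,X_-)=-\Re G(b,W,U_+),
\end{equation*}
which is the assertion. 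There is no genuine obstacle in this argument; the only mild point to verify is that the $X_\pm$, $U_\pm$ are consistently defined by the same choice of branch of $\Delta$ on $\Pi$ (which is automatic since $\Pi$ stays in the interior of the disk $|W|\leq 1$ and, by Corollary~\ref{cor5.6}, avoids the locus $\Re\Delta=0$ except at the endpoints $W_c,\overline W_c$), so that~\eqref{eq5.3} and the definitions of $H,\tilde H$ can be applied simultaneously.
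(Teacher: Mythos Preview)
Your proof is correct and is exactly the argument the paper has in mind; the paper does not give an explicit proof but labels the statement ``a simple corollary'' of Lemma~\ref{lem5.3} together with the defining property $\Re H(W)=0$ of $\Pi$, which is precisely what you have spelled out. One minor remark: step~4 (using~\eqref{eq5.4}) is in fact redundant, since combining your first three equalities already yields $\Re F(b,W,X_-)=-\Re G(b,W,U_-)=\Re F(b,W,X_+)=-\Re G(b,W,U_+)$; but including it does no harm.
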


As we shall see, when integrating over $W$ and $Z$, we will have to choose in some situations contours which do not satisfy the original constraints ($Z$ encloses $c W$). In these cases we will have to control the residue term as well. For that purpose we need to see where the paths $\Pi$ for different values of $a,b,c$ intersects. Since in the following the path $\Pi$ as well as $W_c$ and related quantities depends on $b$, we will write $b$ explicitly when needed.
\begin{lemma}\label{lem5.16}
Fix $0<a<b<1$, from which we know that $\varphi_c(a)<\varphi_c(b)$. We have:
\begin{itemize}
\item[(a)] For $\varphi\in[0,\varphi_c(a)]$, $r_a(\varphi)>r_b(\varphi)$.
\item[(b)] There exists a unique $\varphi\in(\varphi_c(a),\varphi_c(b))$ such that $r_a(\varphi)=r_b(\varphi)$.
\item[(c)] For $\varphi\in[\varphi_c(b),\pi]$, $r_a(\varphi)<r_b(\varphi)$.
\end{itemize}
See Figure~\ref{FigNonIntContours} for an illustration.
\end{lemma}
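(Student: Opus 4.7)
The natural approach is implicit differentiation in $b$ at fixed $\varphi$. From $\Re H_b(r_b(\varphi)e^{\I\varphi}) = 0$ together with $\partial_r \Re H_b = -\Re\Delta/r$ of Lemma~\ref{lem5.7}, one obtains
$$\partial_b r_b(\varphi) = \frac{r_b \, \Re\partial_b H_b(W)}{\Re\Delta(W,b)}\bigg|_{W = r_b(\varphi)e^{\I\varphi}}.$$
Since $\Re\Delta > 0$ on $\Pi(b)$ away from the saddles $W_c(b), \overline{W}_c(b)$, the sign of $\partial_b r_b(\varphi)$ matches that of $\Re\partial_b H_b$, and the task reduces to a sign analysis of $\Re\partial_b H_b$ along $\Pi(b)$.

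A short direct computation, using $\partial_b \Delta = 2W/\Delta$ together with the identities $(1-W)^2 - \Delta^2 = -4bW$ and $(1+W)^2 - \Delta^2 = 4W(1-b)$ that cause the logarithmic derivatives in $H_b$ to telescope, gives
$$\partial_b H_b(W) = \ln\frac{(1-W-2b)+\Delta}{(1-W-2b)-\Delta}.$$
Using $(1-W-2b)^2 - \Delta^2 = -4b(1-b)$, so that the product of the two linear factors above equals the negative real number $-4b(1-b)$, this simplifies to
$$\Re\partial_b H_b(W) = \ln\frac{|(1-W-2b)+\Delta|^2}{4b(1-b)}.$$
At $W = W_c(b)$ one has $\Delta = 0$ and $|1-W_c-2b|^2 = |-2\I\sqrt{b(1-b)}|^2 = 4b(1-b)$, so $\Re\partial_b H_b$ vanishes there (and symmetrically at $\overline{W}_c$).

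The crux is then to show that $W_c, \overline{W}_c$ are the \emph{only} zeros of $\Re\partial_b H_b$ on $\Pi(b)$. Granted this, these two saddles partition $\Pi(b)$ into an inner arc $\{|\varphi| < \varphi_c(b)\}$ and an outer arc $\{|\varphi| > \varphi_c(b)\}$ on each of which $\Re\partial_b H_b$ has constant sign. The signs are then pinned down by a single-point evaluation: at $\varphi = 0$, the point $W = r_b(0) > 0$ lies on the positive real axis and direct analysis using the real-axis formula for $\Re H_b$ gives $|1 - r_b(0) - 2b + \Delta|^2 < 4b(1-b)$, so $\Re\partial_b H_b < 0$ on the inner arc and $>0$ on the outer arc. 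The ``only zeros'' claim is the main obstacle of the proof; I expect it can be established by studying the auxiliary real algebraic curve $\{|(1-W-2b)+\Delta|^2 = 4b(1-b)\}$ inside the unit disk and verifying, via explicit local parameterization near each candidate intersection point, that it meets $\Pi(b)$ transversally only at $W_c$ and $\overline{W}_c$.

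Granted this sign structure, the three parts follow. For (a): if $\varphi \in [0, \varphi_c(a)]$ and $b' \in [a,b]$, monotonicity of $\varphi_c$ in $b$ (from $\cos\varphi_c(b) = 1-2b$) gives $\varphi \leq \varphi_c(a) \leq \varphi_c(b')$, placing the point $r_{b'}(\varphi)e^{\I\varphi}$ on the inner arc of $\Pi(b')$, so $\partial_{b'} r_{b'}(\varphi) < 0$; integrating in $b'$ over $[a,b]$ yields $r_a(\varphi) > r_b(\varphi)$. Statement (c) is symmetric using the outer arc. For (b), $f(\varphi) := r_a(\varphi) - r_b(\varphi)$ is continuous on $[\varphi_c(a), \varphi_c(b)]$, strictly positive at $\varphi_c(a)$ (where $r_a = 1 > r_b$) and strictly negative at $\varphi_c(b)$ (where $r_a < 1 = r_b$), so the intermediate value theorem provides a zero in between; uniqueness follows from the observation that as $b'$ varies in $[a,b]$ the sign of $\partial_{b'} r_{b'}(\varphi)$ flips exactly once, at the unique $b'$ with $\varphi_c(b') = \varphi$, which combined with a transversality argument forces exactly one crossing in the upper half-plane.
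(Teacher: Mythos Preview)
Your approach---differentiating $r_b(\varphi)$ in $b$ at fixed $\varphi$---is genuinely different from the paper's, which instead differentiates in $\varphi$ at fixed $b$ and compares the resulting slope expressions for the two curves. However, your proposal has two real gaps.

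First, as you yourself flag, the claim that $\Re\partial_b H_b$ vanishes on $\Pi(b)$ \emph{only} at $W_c(b),\overline{W}_c(b)$ is not proved. You reduce it to intersecting $\Pi(b)$ with the locus $\{|(1-W-2b)+\Delta|^2=4b(1-b)\}$, but neither curve is algebraic in any simple sense ($\Pi(b)$ is defined only through the transcendental condition $\Re H_b=0$), and ``explicit local parameterization near each candidate intersection point'' presupposes you already know where the candidates are. Without this, the whole sign dichotomy on inner/outer arcs collapses, and with it parts (a) and (c).

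Second, even granting the sign structure, your uniqueness argument for (b) does not go through. You observe that for fixed $\varphi\in(\varphi_c(a),\varphi_c(b))$ the map $b'\mapsto r_{b'}(\varphi)$ is unimodal (increasing then decreasing, with the switch at the $b'$ for which $\varphi_c(b')=\varphi$). But unimodality of $b'\mapsto r_{b'}(\varphi)$ at each fixed $\varphi$ says nothing about the number of zeros of $\varphi\mapsto r_a(\varphi)-r_b(\varphi)$; the promised ``transversality argument'' would have to be supplied and is not.

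The paper's route avoids both difficulties by working in $\varphi$ rather than $b$. From Lemmas~\ref{lem5.4} and~\ref{lem5.7} one has $dr/d\varphi = r\,\Im\Delta/\Re\Delta$, whose sign is read off directly from $\beta$: positive on $(0,\varphi_c)$ and negative on $(\varphi_c,\pi)$. For (b), this gives immediately that on $(\varphi_c(a),\varphi_c(b))$ the curve $r_a$ is strictly decreasing while $r_b$ is strictly increasing, so $r_a-r_b$ is strictly monotone and has exactly one zero. For (a) and (c), the paper computes explicitly how $dr/d\varphi$ depends on $b$ at a fixed point $(r,\varphi)$ and shows that, at any hypothetical crossing, the wrong curve would have the steeper slope, contradicting the known ordering at the endpoints $\varphi_c(a)$ and $\varphi_c(b)$. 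No auxiliary algebraic curve and no unproven zero-set claim is needed.
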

\begin{proof}
By Lemma~\ref{lem5.7} and Lemma~\ref{lem5.4} we get
\begin{equation*}
\frac{d r}{d \varphi} = r \frac{\Im\Delta}{\Re\Delta} = r \frac{\sqrt{\alpha^2+\beta^2}-\alpha}{\beta}.
\end{equation*}
Let $\varphi\in (0,\varphi_c(b))$. Then $\beta>0$ and
\begin{equation}\label{eq5.44}
\frac{d r}{d \varphi} = r \left(\sqrt{(\alpha/\beta)^2+1}-\alpha/\beta\right)>0,
\end{equation}
where
\begin{equation*}
\frac{\alpha}{\beta}=\frac{1}{\tan\varphi} + \frac{1-r^2}{2 r \sin(\varphi)(r \cos(\varphi)-1+2b)}.
\end{equation*}
From (\ref{eq5.44}) together with the fact that $r(\varphi)=1$ only at $\varphi=\varphi_c$, we have that $1=r_a(\varphi_c(a))>r_b(\varphi_c(a))$.

Similarly, for $\varphi\in (\varphi_c(b),\pi)$, $\beta<0$ and
\begin{equation}\label{eq5.44b}
\frac{d r}{d \varphi} = -r \left(\sqrt{(\alpha/|\beta|)^2+1}-\alpha/|\beta|\right)<0,
\end{equation}
where
\begin{equation*}
\frac{\alpha}{\beta}=\frac{1}{\tan\varphi} - \frac{1-r^2}{2 r \sin(\varphi)(r \cos(\varphi)-1+2b)}.
\end{equation*}
From (\ref{eq5.44}) together with the fact that $r(\varphi)=1$ only at $\varphi=\varphi_c$, we have that $1=r_b(\varphi_c(b))>r_a(\varphi_c(b))$.

\emph{Case (b):} (\ref{eq5.44}) and (\ref{eq5.44b}) imply that for $\varphi\in(\varphi_c(a),\varphi_c(b))$, $r_a$ is (strictly) decreasing starting from $1$ and $r_b$ is (strictly) increasing ending at $1$. Thus, by continuity of $r(\varphi)$, there is a unique intersection point of $r_a(\varphi)$ and $r_b(\varphi)$.

\emph{Case (a):} At $(r,\varphi)\in [0,1]\times (0,\varphi_c(a))$, $b\mapsto r\cos(\varphi)-1+2b$ is increasing in $b$. Thus $b\mapsto \alpha/\beta$ is decreasing, but since $x\mapsto \sqrt{x^2+1}-x$ is also decreasing, this implies that $b\mapsto r \frac{\sqrt{\alpha^2+\beta^2}-\alpha}{\beta}$ is increasing in $b$. Thus, if there is an intersection of the paths $\Pi_a$ and $\Pi_b$ for this range of angle, say at $(r_0,\varphi_0)$, then at this point
\begin{equation*}
0<\frac{d r_a}{d \varphi}<\frac{d r_b}{d \varphi}.
\end{equation*}
This would then imply that $r_a(\varphi_c(a))<r_b(\varphi_c(a))$, which is false. Therefore by contradiction we have shown part (a).

\emph{Case (c):} It is similar to case (a). At $(r,\varphi)\in [0,1]\times (\varphi_c(b),\pi)$, $b\mapsto r\cos(\varphi)-1+2b$ is increasing in $b$. Thus $b\mapsto \alpha/\beta$ is increasing as well. The map $x\mapsto -(\sqrt{x^2+1}-x)$ is increasing, which implies that $b\mapsto r \frac{\sqrt{\alpha^2+\beta^2}-\alpha}{\beta}$ is increasing in $b$. Thus, if there is an intersection of the paths $\Pi_a$ and $\Pi_b$ for this range of angle, say at $(r_0,\varphi_0)$, then at this point
\begin{equation*}
\frac{d r_a}{d \varphi}<\frac{d r_b}{d \varphi}<0.
\end{equation*}
This would then imply that $r_b(\varphi_c(b))<r_a(\varphi_c(b))$, which is false. Therefore by contradiction we have shown part (c) as well.
\end{proof}

\begin{figure}
\psfrag{w1}[r]{\small $\Omega(1,b)$}
\psfrag{w2}[l]{\small $\Omega(1,a)$}
\psfrag{Ppa}[l]{$\Pi_+(a)$}
\psfrag{Ppb}[l]{$\Pi_+(b)$}
\psfrag{Pma}[r]{$\Pi_-(a)$}
\psfrag{Pmb}[r]{$\Pi_-(b)$}
\includegraphics[height=5cm]{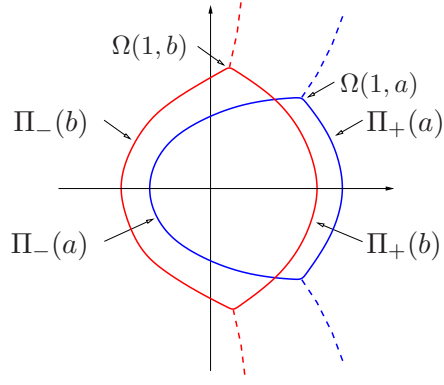}
\caption{An illustration of Lemma~\ref{lem5.16}. For $a=0.2$ (blue) and $b=0.4$ (red) we plot the lines where $\Re H(W)=0$. Inside the closed contours $\Re H(W)>0$. Along the dashed contours, the $\Re H(W)=0$ as well.}
\label{FigNonIntContours}
\end{figure}

Here is a corollary.
\begin{corollary}\label{CorIntersections}
If $\Pi_+(a)\cap c\Pi(b)\neq \varnothing$, then $\Pi_-(a)\cap c\Pi(b)=\varnothing$.
\end{corollary}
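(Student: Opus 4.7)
The plan is to derive the statement directly from the non-intersection analysis already done in Lemma~\ref{lem5.16}, with no further analytic input. First I would note that both $\Pi(a)$ and $c\Pi(b)$ are simple closed curves invariant under $W\mapsto\overline W$, so intersections come in conjugate pairs; writing points in polar form as $W=r(\varphi)e^{\I\varphi}$, the intersection condition at angle $\varphi$ is simply $r_a(\varphi)=c\,r_b(\varphi)$. By conjugation symmetry it suffices to work with $\varphi\in[0,\pi]$. I would then split into three cases according to whether $a=b$, $a>b$, or $a<b$.

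In the case $a=b$, the standing exclusion $\Omega(c,b)\neq\Omega(d,a)=W_c(a)$ forces $c\neq 1$; since $r_a\equiv r_b$ on $[0,\pi]$, the intersection equation $r_a=c r_b$ cannot hold, so $\Pi(a)\cap c\Pi(b)=\varnothing$ and the implication is vacuous.

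In the case $a>b$, I would invoke Lemma~\ref{lem5.16}(c) with the roles of $a$ and $b$ exchanged (applying the lemma to the ordered pair $b<a$), which yields the strict inequality $r_a(\varphi)>r_b(\varphi)$ throughout $\varphi\in[\varphi_c(a),\pi]$. Combined with $c\le 1$ this gives $c\,r_b(\varphi)\le r_b(\varphi)<r_a(\varphi)$, so the intersection condition fails on $\Pi_-(a)\cap\{\Im W\ge 0\}$ and, by conjugation, on all of $\Pi_-(a)$. The conclusion $\Pi_-(a)\cap c\Pi(b)=\varnothing$ therefore holds outright, whether or not the hypothesis on $\Pi_+(a)$ is met. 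In the case $a<b$, Lemma~\ref{lem5.16}(a) directly provides $r_a(\varphi)>r_b(\varphi)\ge c\,r_b(\varphi)$ on $[0,\varphi_c(a)]$, and by symmetry on $[-\varphi_c(a),\varphi_c(a)]$, so $\Pi_+(a)\cap c\Pi(b)=\varnothing$ and the hypothesis of the corollary is vacuous.

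There is really no main obstacle here; the whole content is a bookkeeping exercise once Lemma~\ref{lem5.16} is in hand. The only point to verify is the strictness of the inequalities used — which is clear from the monotonicity argument inside the proof of Lemma~\ref{lem5.16}, where equality between $r_a$ and $r_b$ occurs only at the single crossing in $(\varphi_c(\min),\varphi_c(\max))$ — together with the standing bound $c\in(0,d]$ with $d=1$ (the general $d>0$ reducing to this case by the rescaling used at the start of the proof of Theorem~\ref{thm5.1}).
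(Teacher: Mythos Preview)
Your proof is correct and relies on the same key ingredient as the paper --- the radial inequalities of Lemma~\ref{lem5.16} --- but your case decomposition is different and cleaner. The paper splits according to whether $\Pi_+(a)$ meets $c\Pi_+(b)$ or $c\Pi_-(b)$, and in cases (a) and (c) uses an implicit monotonicity/continuity argument in $c$ (e.g.\ ``by continuity $\Pi_+(a)\cap\Pi_+(b)\neq\varnothing$ as well'') to pass from intersections with $c\Pi(b)$ back to intersections with $\Pi(b)$. You instead split directly on the trichotomy $a\lessgtr b$ and read off the needed strict inequality $r_a(\varphi)>c\,r_b(\varphi)$ on the relevant angular arc straight from parts (a) and (c) of Lemma~\ref{lem5.16} combined with $c\le 1$. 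This is more transparent: for $a>b$ you actually prove the conclusion unconditionally, and for $a<b$ you show the hypothesis is vacuous, so no continuity step is needed at all. You also make explicit that the degenerate case $a=b$, $c=1$ must be excluded via the standing hypothesis $\Omega(c,b)\neq\Omega(d,a)$ of Theorem~\ref{thm5.1}; the paper's proof tacitly bypasses this case but does not flag it.
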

\begin{proof}
There are a few cases to be considered:\\
(a) $\Pi_+(a)\cap c\Pi_+(b)\neq \varnothing$ and $b>a$ can not occur together. Otherwise, by continuity $\Pi_+(a)\cap \Pi_+(b)\neq\varnothing$ as well, which contradicts Lemma~\ref{lem5.16}(a).\\
(b) $\Pi_+(a)\cap c\Pi_+(b)\neq \varnothing$ and $b<a$. In this case by Lemma~\ref{lem5.16} there is no intersection of $\Pi_-(a)$ and $\Pi(b)$, which implies that $\Pi_-(a)\cap c\Pi(b)=\varnothing$ as well.\\
(c) $\Pi_+(a)\cap c\Pi_+(b)=\varnothing$ but $\Pi_+(a)\cap c\Pi_-(b)\neq \varnothing$. In this case also $\Pi_+(a)\cap \Pi_-(b)\neq \varnothing$, meaning by Lemma~\ref{lem5.16} that $b<a$, which in turns implies $\Pi_-(a)\cap \Pi(b)=\varnothing$, thus also $\Pi_-(a)\cap c\Pi(b)=\varnothing$.
\end{proof}

\subsubsection{Analysis of the fast manifolds}

For a given $W$ on $\Pi$, we deform the integration paths for $X$ and $U$ so that they pass through some critical points $X_\pm(W)$ and $U_\pm(W)$. Through which critical points the paths must pass is determined by the fact that $X$ needs to start at $0$ and end at $\infty$ and $U$ is closed loop around $0$ not including $-W$. There is a difference on whether $W$ is in the part of $\Pi$ between $\overline{W}_c$ and $W_c$ or not. As we shall see, in the first case, the paths needs to pass through both critical points, while in the second case $X$ passes only through $X_+(W)$ and $U$ through $U_-(W)$.

\begin{proposition}\label{Prop5.12}
Let us denote by $\Gamma_X:=\{X\in\C \,|\, \Re F(b,W,X)=\Re F(b,W,X_\pm(W))\}$ and similarly $\Gamma_U:=\{U\in\C \,|\, \Re G(b,W,U)=\Re G(b,W,U_\pm(W))\}$. Parameterize $W=r(\varphi) e^{\I \varphi}$ on $\Pi$ as in Proposition~\ref{prop5.9}. Let $\gamma_X\subset\C$ be any deformation of the path from $0$ to $\infty$ of the real line and $\gamma_U\subset\C$ any simple counterclockwise oriented loop around $0$ but not including the point $U=-W$.
\begin{enumerate}
 \item For $\varphi\in [0,\varphi_c]$, any path $\gamma_X$ which satisfies $\Re F(b,W,X)\leq \Re F(b,W,X_\pm(W))$ need to pass through both critical points $X_+(W)$ and $X_-(W)$.
 \item For $\varphi\in [\varphi_c,\pi]$, any path $\gamma_X$ which satisfies $\Re F(b,W,X)\leq \Re F(b,W,X_\pm(W))$ need to pass through critical points $X_+(W)$ but not through $X_-(W)$.
 \item For $\varphi\in [0,\varphi_c]$, any path $\gamma_U$ which satisfies $\Re G(b,W,U)\leq \Re G(b,W,U_\pm(W))$ need to pass through both critical points $U_+(W)$ and $U_-(W)$.
 \item For $\varphi\in [\varphi_c,\pi]$, any path $\gamma_U$ which satisfies $\Re G(b,W,U)\leq \Re G(b,W,U_\pm(W))$ need to pass through critical points $U_-(W)$ but not through $U_+(W)$.
\end{enumerate}
\end{proposition}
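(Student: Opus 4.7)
The plan is to carry out a topological analysis of the level sets $\Gamma_X$ and $\Gamma_U$. For $W \in \Pi \setminus \{W_c,\overline{W}_c\}$, Corollary~\ref{cor5.2} gives $\Delta \neq 0$, so $X_\pm(W)$ and $U_\pm(W)$ are simple (non-degenerate) saddles of $F(b,W,\cdot)$ and $G(b,W,\cdot)$, respectively. Near each such saddle the corresponding level set consists of two smooth real-analytic arcs crossing transversally. Moreover, Corollary~\ref{cor5.11} shows that $\Re F(b,W,X_+)=\Re F(b,W,X_-)$ and $\Re G(b,W,U_+)=\Re G(b,W,U_-)$ along $\Pi$, so within each family both saddles lie on the \emph{same} connected level set.

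Next I would examine the behavior at the singularities. For $F$, one has $\Re F(b,W,X)\to -\infty$ at $X=0$, $X=W$, and $X=\infty$ (from the two logarithms and the linear term $-X$), and analogously for $G$ at $U=0$, $U=-W$, and $U=\infty$. Consequently, an admissible deformation $\gamma_X$ (respectively $\gamma_U$) staying in the set where $\Re F\leq \Re F(X_\pm)$ (resp.\ $\Re G\leq \Re G(U_\pm)$) can only pass from one connected region of the complement of $\Gamma_X$ (resp.\ $\Gamma_U$) to another by crossing at a saddle, and the problem reduces to the combinatorial question of which saddles separate the sinks $0$ and $\infty$ in the complement of $\Gamma_X$, respectively which saddles separate the loop enclosing $0$ from the excluded point $-W$ and from $\infty$ in the complement of $\Gamma_U$. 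Since the saddles remain non-degenerate on each of the two open arcs $\Pi_+$ and $\Pi_-$ of $\Pi\setminus\{W_c,\overline{W}_c\}$, the connectivity pattern of $\Gamma_X$ and $\Gamma_U$ (as a finite graph with vertices at the saddles and sinks, and edges given by the analytic arcs) is locally constant on each arc by continuity; it can only change at the coalescence points $W_c,\overline{W}_c$, where a pair of saddles collides.

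It thus suffices to verify each of the two claims at one representative value of $W$ on each arc. On $\Pi_-$ I would take $W$ real and negative (the angle $\varphi=\pi$ belongs to $\Pi_-$): there $\Delta$ is real, $X_\pm$ and $U_\pm$ are real, and one checks directly that of the two saddles only $X_+$ separates $0$ from $\infty$ on the positive real axis while $X_-$ is screened off by the singularity at $X=W$ on the negative real axis, and symmetrically only $U_-$ separates the allowed loop around $0$ from the excluded point $-W$ and from $\infty$. On $\Pi_+$ I would take $W$ close to $W_c$ so that $X_+$ and $X_-$ lie near the coalesced saddle $X_c$; the standard local analysis of a coalescing pair of simple saddles shows that the two steepest-descent arcs emanating from the pair close up into a bounded ``lens'' region, and any contour from $0$ to $\infty$ in the allowed set must be pinched through both saddles. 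The analogous statement for $U_\pm$ follows by the same argument applied to $G$. The main obstacle is the global bookkeeping of how the four branches of $\Gamma_X$ (resp.\ $\Gamma_U$) emanating from each saddle connect to the three sinks; the cleanest route is to follow the four gradient-descent trajectories of $\Re F$ (resp.\ $\Re G$) out of each saddle, use compactness together with the fact that the only sinks of $\Re F$ in the extended plane are $\{0,W,\infty\}$ (resp.\ $\{0,-W,\infty\}$) to classify each trajectory by its limit point, and read off the separating structure from the resulting dual graph.
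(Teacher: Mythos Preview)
Your overall strategy---analyze the topology of the level sets $\Gamma_X,\Gamma_U$, use that the connectivity pattern can only change when the two saddles coalesce (i.e.\ at $W_c,\overline W_c$), and verify the claimed separation structure at one representative $W$ on each of $\Pi_+$ and $\Pi_-$---is exactly the paper's approach. The paper also checks $\varphi=\pi$ (real negative $W$) for $\Pi_-$, just as you propose, and then invokes continuity.

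The one substantive difference is your choice of representative on $\Pi_+$. You take $W$ near $W_c$ and appeal to a ``standard local analysis of a coalescing pair of simple saddles''. The paper instead takes $\varphi=0$, i.e.\ $W$ real and \emph{positive}, where again everything is real: one finds $0<X_-<W<X_+$, and the level-set picture is the mirror image of the $\varphi=\pi$ case with the roles of $0$ and $W$ swapped. From this one reads off directly that a path from $0$ to $+\infty$ in $\{\Re F\le \Re F(X_\pm)\}$ must first exit the basin of $0$ through $X_-$, then exit the basin of $W$ through $X_+$---so both saddles are forced. This bypasses precisely the ``global bookkeeping'' obstacle you flag: near $W_c$ the local lens picture is correct, but deciding that $0$ and $+\infty$ sit on opposite sides of the lens is a global statement that your local argument alone does not supply. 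Using the real endpoint $\varphi=0$ makes that global statement elementary.

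One small correction: $\Re F(b,W,X)\to -\infty$ as $X\to\infty$ only in directions with $\Re X\to +\infty$; in directions with $\Re X\to -\infty$ one has $\Re F\to +\infty$. This matters for the shape of $\Gamma_X$ at infinity (the two outer branches open up asymptotically vertically rather than closing), and is what guarantees that the ``sink at $\infty$'' relevant to $\gamma_X$ is specifically the one in the right half-plane.
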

\begin{proof}
First we consider the extreme cases, $\varphi=0$ and $\varphi=\pi$.

\emph{Case 1: $\varphi=\pi$}. In this case $W=-r\in\R_-$ and thus $\Delta=\sqrt{(1+r)^2-4br}>1-r=1+W$ (since $b\leq 1$), but also $\Delta=\sqrt{(1-r)^2+4(1-b)r}<1+r=1-W$. Thus $X_-=(1+W-\Delta)/2\in (W,0)$ and $X_+=(1+W+\Delta)/2>0$. Thus we have obtained
\begin{equation*}
W<X_-<0<X_+.
\end{equation*}
We know that the function $\Re F(b,W,X)=-\infty$ at $X=W,0,\infty$ and it goes to $\infty$ when going to infinity in the directions with negative real part. For $W$ real $\Gamma_X$ is symmetric with respect to complex conjugation. Further there are only two points where branches of $\Gamma_X$ comes together, namely at $X=X_\pm$. Thus two of the branches that leave the point $X_-$ need to close on the left of $W$, the other two have to go around $0$ and meet at $X_+$, from which point they open up asymptotically in the vertical direction. See Figure~\ref{FigConstantReHCases1and2} (left) for an illustration of $\Gamma_X$.
\begin{figure}
\centering
\begin{subfigure}
 \centering
\psfrag{W}[c]{$W$}
\psfrag{0}[c]{$0$}
\psfrag{Xp}[c]{$X_+$}
\psfrag{Xm}[c]{$X_-$}
\psfrag{Gx}[c]{$\Gamma_X$}
\includegraphics[height=4cm]{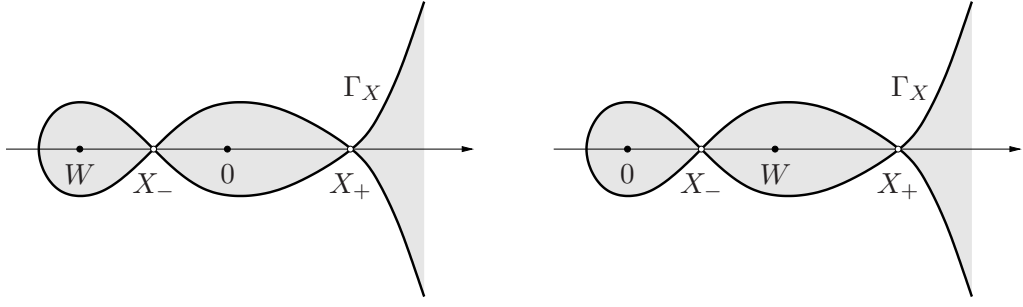}
\end{subfigure}
\qquad
\begin{subfigure}
 \centering
\psfrag{W}[c]{$0$}
\psfrag{0}[c]{$W$}
\psfrag{Xp}[c]{$X_+$}
\psfrag{Xm}[c]{$X_-$}
\psfrag{Gx}[c]{$\Gamma_X$}
\includegraphics[height=4cm]{FigConstantReHCase1.eps}
\end{subfigure}
\caption{Illustration of the path $\Gamma_X$ for the case $W\in\R_-$ (left) and $W\in\R_+$ (right). In the shaded regions $\Re F(X)< \Re F(X_\pm)$.}
\label{FigConstantReHCases1and2}
\end{figure}

\emph{Case 2: $\varphi=0$}. In this case $W=r\in\R_+$ and thus $\Delta=\sqrt{(1+r)^2-4(1-b)r}<1+r=1+W$, but also $\Delta>1-W$. Thus $X_-=(1+W-\Delta)/2\in (0,W)$ and $X_+=(1+W+\Delta)/2>W$. Thus we have obtained
\begin{equation*}
0<W<X_-<X_+.
\end{equation*}
The same argument as in Case 1, but with the roles of $W$ and $0$ interchanged leads to $\Gamma_X$ as in Figure~\ref{FigConstantReHCases1and2} (right).

\emph{Case 3: $\varphi=\varphi_c$}. In this case $W=W_c$ and thus $\Delta=0$. It is the situation where the two critical points comes together, see Figure~\ref{FigConstantReHCase3}.
\begin{figure}
\centering
\psfrag{W}[c]{$W$}
\psfrag{0}[c]{$0$}
\psfrag{Xpm}[c]{$X_\pm$}
\psfrag{Gx}[c]{$\Gamma_X$}
\includegraphics[height=4cm]{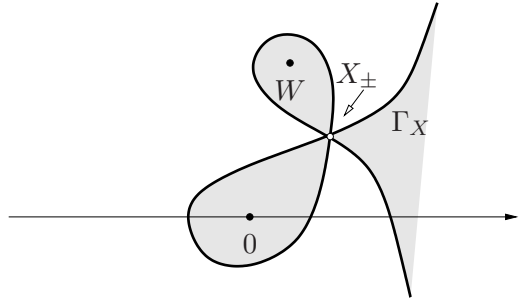}
\caption{Illustration of the path $\Gamma_X$ for the case $W=W_c$, for which $X_+=X_-$. In the shaded regions $\Re F(X)< \Re F(X_\pm)$.}
\label{FigConstantReHCase3}
\end{figure}

By continuity, to go from the topological situation of Case 1 to the one of Case 2, there is only one possibility, namely when $X_+$ and $X_-$ comes together. But this happens only at the value of $\varphi$ of Case 3. Statements (1) and (2) then follows from these observations.

(3) and (4) are proved similarly to (1) and (2).
\end{proof}

\begin{remark}
The paths $\Gamma_X$ and $\Gamma_U$ are just a shift by $W$ of each other. Further, at $X_\pm$ the branches of $\Gamma_X$ form a cross with $\pi/2$ degrees, except for the case when $X_+=X_-$ where the angles are $\pi/3$ between each branch.
\end{remark}

\begin{proof}[Proof of Theorem~\ref{thm5.1}]
Let us recall the formula for the covariance that we want to compute, namely (\ref{eqCovBulk}):
\begin{multline}
{\rm Cov}\Big(\zeta^{(N)}_{N-aN+1}(1),\zeta^{(M)}_{M-bM+1}(1)\Big)=\frac{N}{(2\pi\I)^2}\oint_{\Gamma_0} dW\oint_{\Gamma_{0,W}}dZ \frac{1}{Z-W}\\
\times\bigg(\int_0^\infty dX \frac{e^{N F(c,b,W,X)}}{X-W}\frac{1}{2\pi\I} \oint_{\Gamma_0} dU \frac{e^{N G(c,b,W,U)}}{W+U}\bigg)
\bigg(\int_0^\infty dY \frac{e^{N F(1,a,Z,Y)}}{Y-Z}\frac{1}{2\pi\I} \oint_{\Gamma_0} dV \frac{e^{N G(1,a,Z,V)}}{Z+V}\bigg).
\end{multline}

\textbf{First case: $\Pi_+(a)\cap c\,\Pi(b) = \varnothing$.} First we consider the case where the path $\Pi_+(a)$ for $Z$ does not cross the path for $W$ ($\Pi_\pm(b)$ are defined in Proposition~\ref{prop5.9}).

We divide the integrations in four terms:
\begin{equation}\label{eq5.58}
\oint_{\Gamma_0} dW \oint_{\Gamma_{0,W}} dZ = \int_{W_c}^{\overline W_c}dW \int_{Z_c}^{\overline Z_c} dZ+\int_{\overline W_c}^{W_c}dW \int_{Z_c}^{\overline Z_c} dZ+\int_{W_c}^{\overline W_c}dW \int_{Z_c}^{\overline Z_c} dZ+\int_{\overline W_c}^{W_c}dW \int_{Z_c}^{\overline Z_c} dZ
\end{equation}
where $W_c=\Omega(c,b)$ and $Z_c=\Omega(1,a)$. Here the integration path from $\overline W_c$ to $W_c$ passes to the right of the origin, while from $W_c$ to $\overline W_c$ to the left of the origin (similarly for $Z$).

By Proposition~\ref{prop5.9} we can choose the path for $Z$ from $Z_c$ to $\overline Z_c$ satisfying $\Re H(Z)<0$ and staying outside $c\Pi(b)$ (the initial choice of the path for $W$). Thus in this case we do not have to deal with potential residue terms arising from the factor $1/(Z-W)$ in the integrand.

Now we discuss which contributions remain in the $N\to\infty$ limit from the integration over $W$. The same argument goes through for the contributions for the integration over $Z$. For simplicity we consider $c=1$, as in the generic case one has just to replace $W\to c W$ in the final expression.

\medskip
\textbf{(a) Contributions for $W\in \Pi_-(b)$ and $Z\in\Pi_-(a)$ (or a deformation of them).} As discussed in Proposition~\ref{Prop5.12}, the paths for $X$ and $U$ can be chosen to pass by only one of the critical points, namely $X_+$ and $U_-$, see Figure~\ref{FigIntegrationCase2} for an illustration.
\begin{figure}
\centering
\psfrag{W}[c]{$W$}
\psfrag{-W}[c]{$-W$}
\psfrag{0}[c]{$0$}
\psfrag{Xp}[c]{$X_+$}
\psfrag{Xm}[c]{$X_-$}
\psfrag{gx}[c]{$\gamma_X$}
\psfrag{Up}[l]{$U_+$}
\psfrag{Um}[c]{$U_-$}
\psfrag{gu}[c]{$\gamma_U$}
\includegraphics[height=5cm]{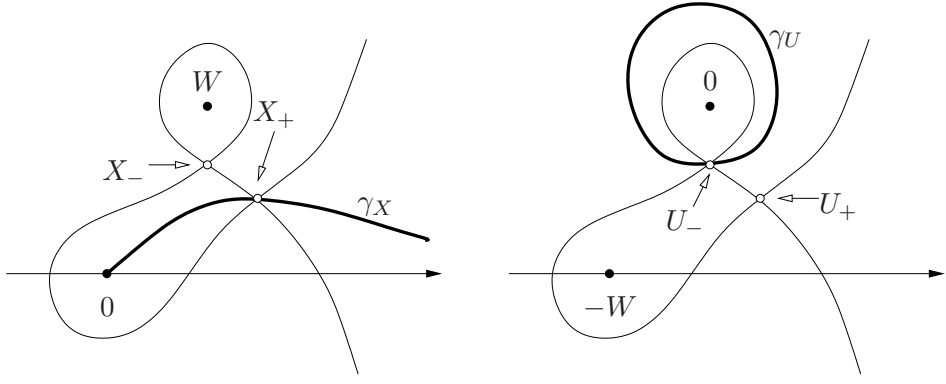}
\caption{Illustration of the integration paths $\gamma_X$ and $\gamma_U$ for the case $\varphi\in (\varphi_c,\pi]$.}
\label{FigIntegrationCase2}
\end{figure}

Then there are paths $\gamma_X$ (resp.\ $\gamma_U$) such that $\Re F(b,W,X)$ reaches its maximum at $X=X_+$ (resp.\ $\Re G(b,W,U)$ reaches its maximum at $U=U_-$). Further, due to the linear term in $X$, when $X\to\infty$ along $\gamma_X$, $\Re F(b,W,X)$ decreases linearly in the real part of $X$. Thus $\gamma_X$ and $\gamma_U$ can be taken to be steep descent paths and the leading contribution to the integrals comes from a $\delta$-neighborhood of $X_+$ and $U_-$; the contribution from the remainder is $\Or(e^{-N c(\delta)})$ for some $c(\delta)>0$ (with $c(\delta)\sim \delta^2$ as $\delta\to 0$ away from the critical points and $c(\delta)\sim \delta^3$ as $\delta\to 0$ at the critical point). This does not change if we deform the path for $W$ to a path $\widetilde \Pi_-(p)$ to stay outside $\Pi_-(p)$ (and to the left of the critical points, i.e., to the left of the dashed lines in Figure~\ref{FigSlowManyfold}).

It remains to determine the contribution coming from the neighborhood of $(X_+,U_-)$ (as outside a $\delta$-neighborhood the contribution is exponentially small in $N$). As $X_+$ and $U_-$ are (simple) critical points of $F(b,W,X)$ and $G(b,W,U)$ respectively, they are determined by the Gaussian integrals up to smaller order in $N$. We have
\begin{equation}
F(b,W,X)\simeq F(b,W,X_+)-\frac12 \sigma_+ (X-X_+)^2,\quad G(b,W,U)\simeq G(b,W,U_-)+\frac12 \sigma_- (U-U_-)^2,
\end{equation}
where $\sigma_\pm$ is defined by
\begin{equation}\label{eq5.26}
\sigma_{\pm}(W)=- \frac{\partial^2}{\partial X^2} F(b,W,X)\big|_{X=X_\pm}
=\frac{1-b}{X_\pm^2}+\frac{b}{U_\pm^2}.
\end{equation}
Thus, the integrals over $\gamma_X$ and $\gamma_U$ will be dominated by the Gaussian integrals around the critical points, with the result
\begin{equation}\label{eq5.26b}
\frac{1}{N}\frac{e^{N \Re F(b,W,X_+)}}{(X_+-W) \sqrt{\sigma_+(W)}}\frac{e^{N \Re G(b,W,U_-)}}{(W+U_-) \sqrt{\sigma_-(W)}}(1+\Or(N^{-1/2})).
\end{equation}
The error term $\Or(N^{-1/2})$ is the correction to the Gaussian integrals coming from the higher order non-zero terms. The term in the exponential is $N H(W)$, where $H(W)$ was defined in Lemma~\ref{lem5.3}.

Let $\widetilde \Pi_-^\e(b)$ the region of $\widetilde \Pi_-(b)$ at distance $\e$ from $W_c=\Omega(1,b)$. For $W$ at distance $\e$ from the critical points, by Lemma~\ref{lem5.7} we get $\Re H(W)\leq -C(\e)<0$ with $C(\e)\sim\e^{3/2}$ as $\e\to 0$ (the reason being that $\Delta^2\simeq 4\I \sqrt{b(1-b)} (W-W_c)$ as $W\to W_c$). Thus the contribution of the integral over $\widetilde \Pi_-(b)\setminus \widetilde \Pi_-^\e(b)$ is exponentially small in $N$, bounded by $\Or(e^{- c \e^{3/2}})$ for some $c>0$.

To estimate the integral over $\widetilde \Pi_-^\e(b)$, we use that
\begin{equation}\label{eq5.24}
\sigma_\pm(W)\simeq \frac{2\sqrt{-\I}(1-b)^{3/4} b^{-1/4}}{X_+(W_c)^4}\sqrt{W-W_c}
\end{equation}
as $W\to W_c$. Since $\int_0^\e e^{-N x^{3/2}} x^{-1/2} dx=\Or(N^{-1/3})$, the contribution of the integral over $W\in \widetilde \Pi_-^\e(b)$ of (\ref{eq5.26b}) is of order $N^{-1} N^{-1/3}$. We shall see that the leading terms as $N\to\infty$ is of order $N^{-1}$. Thus this contribution is at least $N^{-1/3}$ times smaller and it becomes irrelevant as $N\to\infty$.

\medskip
\textbf{(b) Contributions for $W\in \Pi_+(b)$.} As discussed in Proposition~\ref{Prop5.12}, the paths for $X$ and $U$ have to be chosen to pass by both critical points, see Figure~\ref{FigIntegrationCase1} for an illustration.
\begin{figure}
\centering
\psfrag{W}[c]{$W$}
\psfrag{-W}[c]{$-W$}
\psfrag{0}[c]{$0$}
\psfrag{Xp}[c]{$X_+$}
\psfrag{Xm}[c]{$X_-$}
\psfrag{gx}[c]{$\gamma_X$}
\psfrag{Up}[c]{$U_+$}
\psfrag{Um}[c]{$U_-$}
\psfrag{gu}[c]{$\gamma_U$}
\includegraphics[height=5cm]{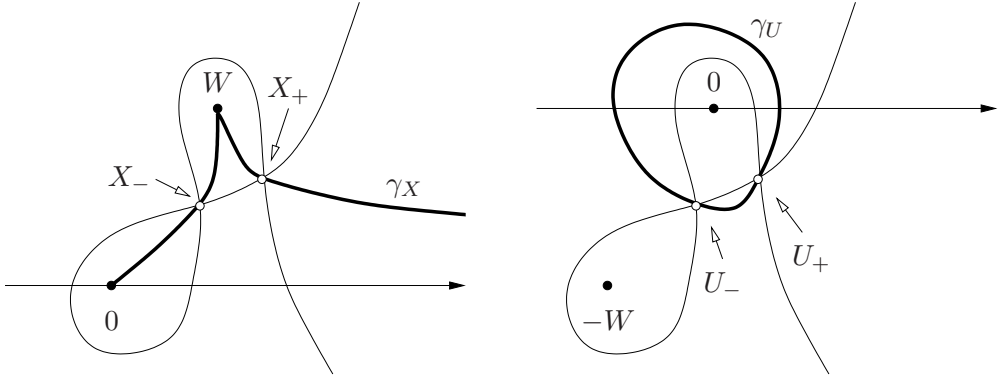}
\caption{Illustration of the integration paths $\gamma_X$ and $\gamma_U$ for the case $\varphi\in [0,\varphi_c)$.}
\label{FigIntegrationCase1}
\end{figure}

We split the integrand over $X$ into two pieces: the first going from $0$ to $W$ and the second from $W$ to $\infty$.
We thus have
\begin{equation}\label{eq5.27}
\begin{aligned}
\int_0^\infty dX \frac{e^{N F(b,W,X)}}{X-W}\frac{1}{2\pi\I} \oint_{\Gamma_0} dU \frac{e^{N G(b,W,U)}}{W+U}
&=\int_0^W dX \frac{e^{N F(b,W,X)}}{X-W}\frac{1}{2\pi\I} \oint_{\Gamma_0} dU \frac{e^{N G(b,W,U)}}{W+U}\\
&+\int_W^\infty dX \frac{e^{N F(b,W,X)}}{X-W}\frac{1}{2\pi\I} \oint_{\Gamma_0} dU \frac{e^{N G(b,W,U)}}{W+U}.
\end{aligned}
\end{equation}
At this point we can deform the integration path for $W$, namely $\Pi_+(b)$, in different ways depending on whether we want to estimate the first or the second term in (\ref{eq5.27}).

\smallskip
\textbf{(b.1) Contribution of the first term of (\ref{eq5.27})}. The integration path for $X$ passes by $X_-$, while the integration path for $U$ passes by both $U_+$ and $U_-$.
The leading term of the integrand comes from the integrals around the critical points. At $(X_-,U_-)$ the term in the exponent is $N F(b,W,X_-)+ N G(b,W,U_-)=0$, while the term in the exponent at $(X_-,U_+)$ is $N F(b,W,X_-)+N G(b,W,U_+)=-N H(W)$. Thus we deform the path $\Pi_+(b)$ into $\widetilde \Pi_+(b)$ such that $H(W)>0$ along $\widetilde \Pi_+(b)$. To do this is enough to deform $\Pi_+(b)$ slightly in the interior of $\Pi(b)$. Then, the contribution of the integral coming from the neighborhoods of $(U_+,X_-)$ can be estimated exactly as we did for the previous case (i.e., for \textbf{(a) Contributions for $W\in \Pi_-(b)$}) and in the $N\to\infty$ limit will become irrelevant.

It remains to determine the contribution coming from the neighborhood of $(X_-,U_-)$ (as outside a $\delta$-neighborhood the contribution is exponentially small in $N$). As $X_-$ and $U_-$ are (simple) critical points of $F(b,W,X)$ and $G(b,W,U)$ respectively, they are determined by the Gaussian integrals up to smaller order in $N$. We have
\begin{equation}
F(b,W,X)\simeq F(b,W,X_-)-\frac12 \sigma_- (X-X_-)^2,\quad G(b,W,U)\simeq G(b,W,U_-)+\frac12 \sigma_- (U-U_-)^2,
\end{equation}
where $\sigma_-$ is defined in (\ref{eq5.26}). Thus
\begin{equation}\label{eq5.29second}
\int_{|X-X_-|\leq \delta} dX \frac{e^{N F(b,W,X)}}{X-W}\frac{1}{2\pi\I} \int_{|U-U_-|\leq \delta} dU \frac{e^{N G(b,W,U)}}{W+U}=
\frac{1}{2\pi\I}\frac{\sqrt{2\pi}}{(X_- - W) \sqrt{\sigma_- N}}\frac{-\I\sqrt{2\pi}}{(U_- + W) \sqrt{\sigma_- N}}
\end{equation}
up to smaller terms in $N$ (of order $N^{-3/2}$ instead of $N^{-1}$).

\smallskip
\textbf{(b.2) Contribution of the second term of (\ref{eq5.27})}. This case is analogous to the previous one, except that now the integration over $X$ passes by $X_+$. This time the term in the exponent at $(X_+,U_-)$ is $N F(b,W,X_+)+N G(b,W,U_-)=N H(W)$ and therefore we deform the contour for $W$ to stay slightly outside $\Pi_+(b)$ (without crossing the dashed lines in Figure~\ref{FigSlowManyfold}). The leading contribution is then
\begin{equation}\label{eq5.30}
\int_{|X-X_+|\leq \delta} dX \frac{e^{N F(b,W,X)}}{X-W}\frac{1}{2\pi\I} \int_{|U-U_+|\leq \delta} dU \frac{e^{N G(b,W,U)}}{W+U}
= \frac{1}{2\pi\I}\frac{\sqrt{2\pi}}{(X_+ - W) \sqrt{\sigma_+ N}}\frac{\I\sqrt{2\pi}}{(U_+ + W) \sqrt{\sigma_+ N}}
\end{equation}
up to smaller terms in $N$.

Summing up, we have obtained that, up to smaller order terms in $N$, the leading contribution of (\ref{eq5.27}) is given by (\ref{eq5.29second}) plus (\ref{eq5.30}). An explicit computation gives
\begin{equation}\label{eq5.29b}
{\rm r.h.s.}~(\ref{eq5.29second})+{\rm r.h.s.}~(\ref{eq5.30}) = \frac{4}{N \sqrt{(W-W_c)(W-\overline W_c)}}\equiv \frac{4}{N \sqrt{(W-\Omega(1,b))(W-\overline\Omega(1,b))}}.
\end{equation}

\smallskip
To resume, the integral over $W$ becomes in the $N\to\infty$ limit the integral over $W$ from $\overline\Omega(1,b)$ to $\Omega(1,b)$ (passing to the right of the origin).

\medskip
The analysis of the integral over $Z$ is almost verbatim as the one for $W$ and thus we are not going to repeat it. The result is that the integral over $Z$ becomes the integral over $Z$ from $\overline\Omega(1,a)$ to $\Omega(1,a)$ passing to the right of the origin and to the right of $W$, namely we get
\begin{equation}\label{eq5.65}
\begin{aligned}
&\lim_{N\to\infty} N {\rm Cov}\Big(\zeta^{(N)}_{N-aN+1}(1),\zeta^{(M)}_{M-bM+1}(1)\Big)\\
&=\frac{16}{(2\pi\I)^2}\int_{\overline \Omega(c,b)}^{\Omega(c,b)} dW \int_{\overline \Omega(1,a)}^{\Omega(1,a)}\frac{1}{Z-W}\frac{1}{\sqrt{(W-\Omega(c,b))(W-\overline\Omega(c,b))}\sqrt{(Z-\Omega(1,a))(Z-\overline\Omega(1,a))}},
\end{aligned}
\end{equation}
which is what we had to prove for $d=1$, see (\ref{eqCov5.31}).

\textbf{Second case: $\Pi_-(a)\cap c\,\Pi(b) \neq \varnothing$.} This situation is illustrated in Figure~\ref{FigResidue}.
\begin{figure}
\begin{center}
\psfrag{w1}[l]{\small $\Omega(c,b)$}
\psfrag{w2}[r]{\small $\Omega(1,a)$}
\psfrag{z}[c]{\small $\Theta$}
\includegraphics[height=5cm]{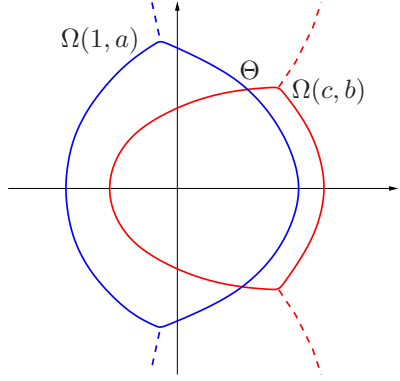}
\caption{An illustration of the case where $\Pi_-(a)\cap c\,\Pi(b) \neq \varnothing$. The closed paths are $\Pi(a)$ (blue) and $c\Pi(b)$ (red). Here the parameters are $a=0.6$, $b=0.2$ and $c=0.8$. The intersection in the upper half plane is denoted by $\Theta$.}
\label{FigResidue}
\end{center}
\end{figure}

The same argument as for the first case can be applied for most of the contributions. However, it remains to deal with the residue obtained for $Z=W$ for the portion of $W$ from $\overline \Theta\to \overline \Omega(c,b) \to \Omega(c,b)\to \Theta$. What we are going to show below is that the mixed terms (i.e., the ones involving $H(W)$ or $\tilde H(W)$) all give vanishing contributions in the $N\to\infty$ limit.

Assuming that this is shown and using the fact that for the portion of the integration from $\overline\Theta\to\overline\Omega(c,b)$ and as well from $\Omega(c,b)\to\Theta$ only mixed terms are present, the final result will be (\ref{eq5.65}) with the path $Z$ passing to the left of $W$ plus the residue at $Z=W$. Deforming the contours back to have $Z$ passing to the right of $W$ leads to the claimed result.

Thus now let us verify that the mixed terms in the residue at $Z=W$ for the integration from $\overline \Theta$ to $\Theta$ all asymptotically vanish. We have the following four possible mixed terms (we write only the exponential part, of course for each term with an $H$ there are also the denominators as in (\ref{eq5.26b})):
\begin{equation*}
e^{N (H_{b,c}(W)+H_{a,1}(W))},\quad e^{N (H_{b,c}(W)-H_{a,1}(W))},\quad e^{N (-H_{b,c}(W)+H_{a,1}(W))},\quad e^{N (-H_{b,c}(W)-H_{a,1}(W))},
\end{equation*}
where $H_{b,c}(W):=c H_{b,1}(W/c)$ with $H_{b,1}(W)=H(W)$ of Lemma~\ref{lem5.3}.

(1) The contribution containing $e^{N (H_{b,c}(W)+H_{a,1}(W))}$ vanishes because from $\overline \Theta$ to $\Theta$ we can choose a path satisfying $\Re (H_{b,c}(W)+H_{a,1}(W))<0$. Indeed, we could just use $W\in c\Pi_+(b)$ as on this path $\Re H_{b,c}(W)=0$ and $\Re H_{a,1}(W)<0$, or any other path in the shaded region of Figure~\ref{FigResiduePMandPP}(left) passing to the right of the origin. Then, the integration over $W$ gives a term of order $N^{-1}$ smaller than the leading term (in total $\Or(N^{-2}))$.

(2) The contribution with $e^{N (-H_{b,c}(W)-H_{a,1}(W))}$ vanishes as well. Indeed, by Corollary~\ref{CorIntersections}, we can choose a path from $\overline\Theta$ to $\Theta$ in the intersection of the interiors of $\Pi(a)$ and $c\Pi(b)$, where both terms are negative.
\begin{figure}
\centering
\begin{subfigure}
\centering
\includegraphics[height=5cm]{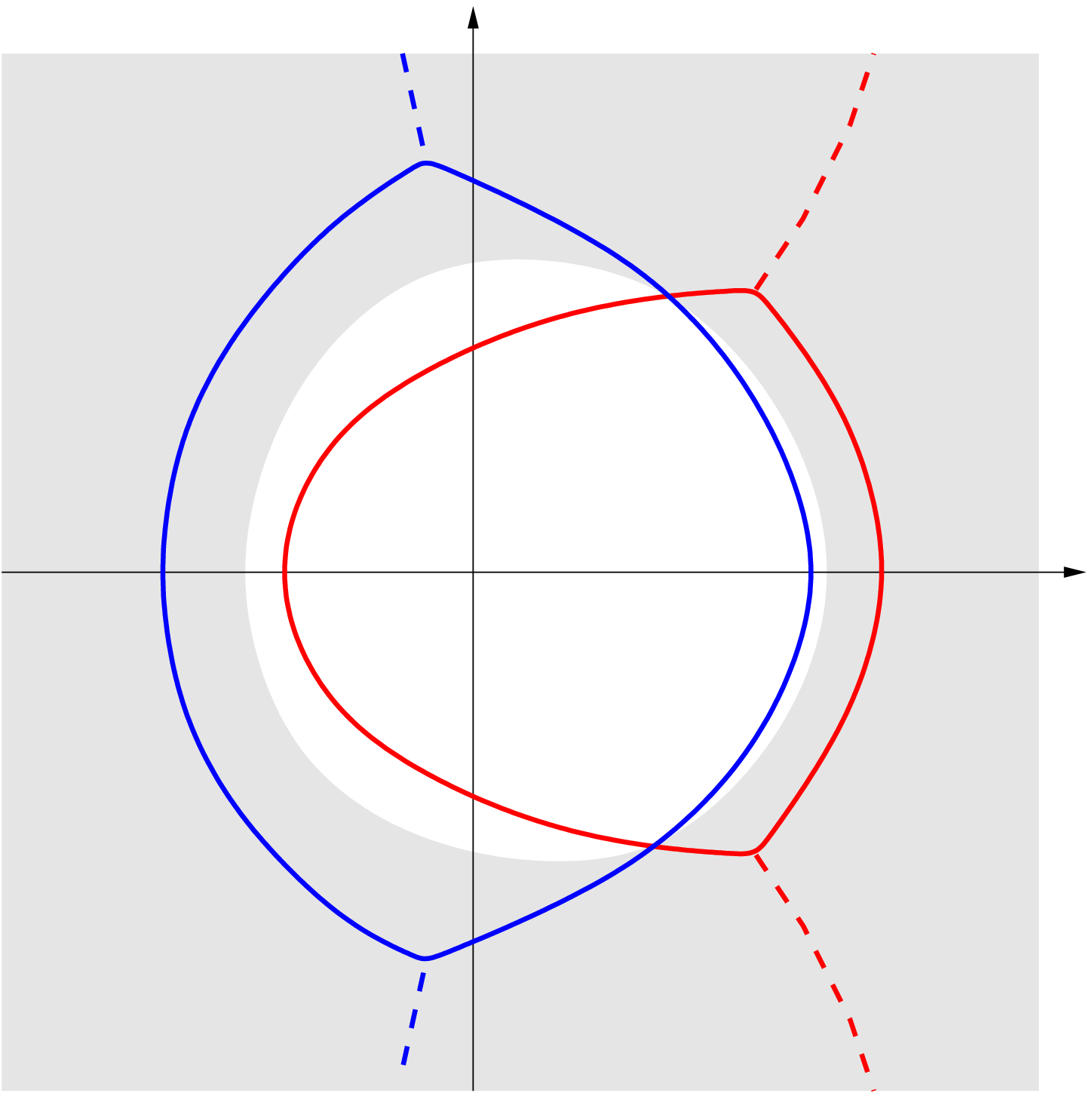}
\end{subfigure}
\qquad
\begin{subfigure}
\centering
\includegraphics[height=5cm]{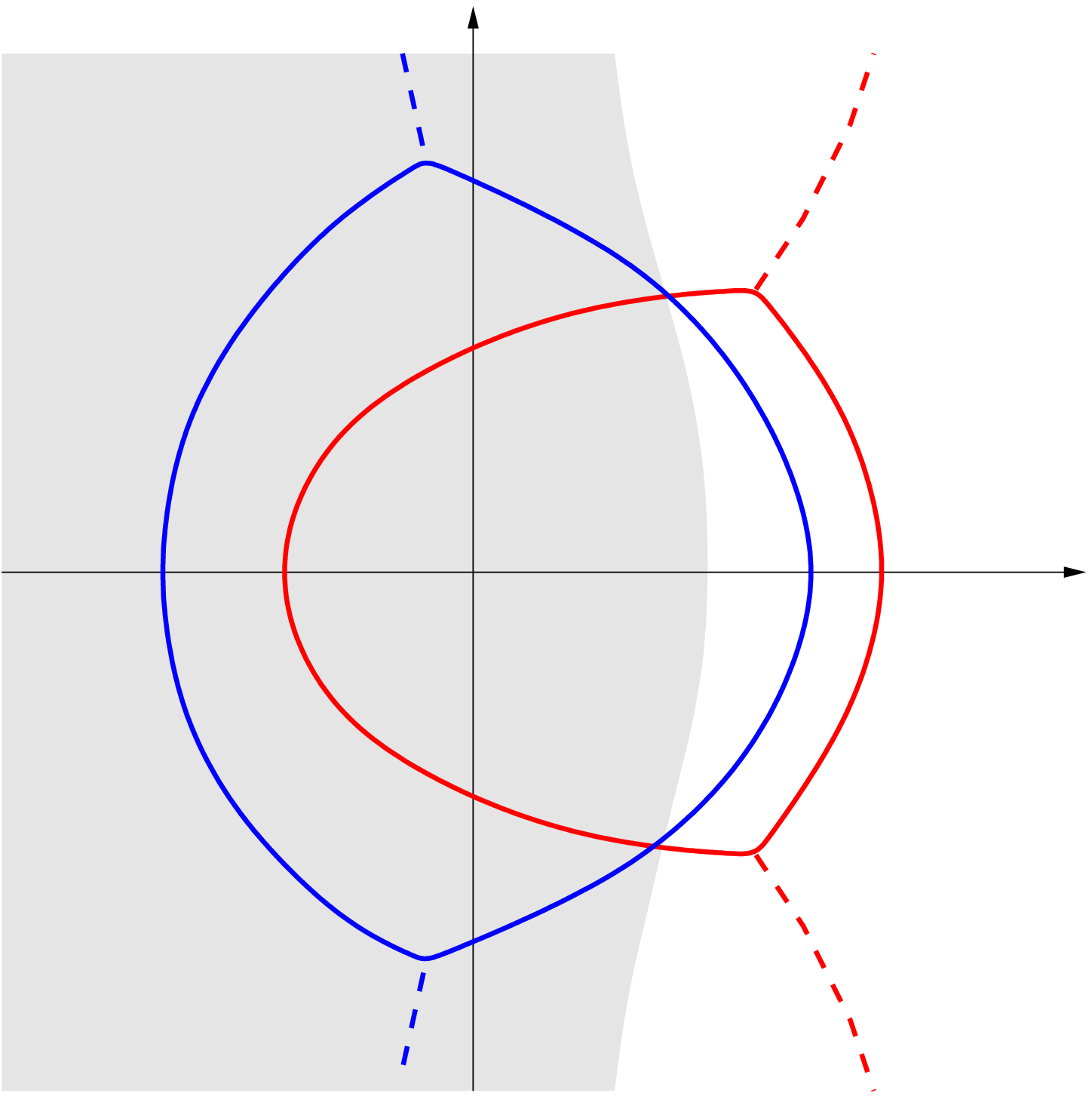}
\end{subfigure}
\caption{Illustrations with the same parameters as in Figure~\ref{FigResidue}. (Left) The shaded region is where $\Re(H_{b,c}(W)+H_{a,1}(W))<0$. (Right) The shaded region is where $\Re(H_{b,c}(W)-H_{a,1}(W))<0$.}
\label{FigResiduePMandPP}
\end{figure}

(3) The contribution including $e^{N (H_{b,c}(W)-H_{a,1}(W))}$ also vanishes. First notice that $W=0$ is not a pole of $e^{N (H_{b,c}(W)-H_{a,1}(W))}$. Indeed, by looking at the asymptotic behavior at $W=0$, we have that $H_{b,c}(W)-H_{a,1}(W)\sim (1-c)\ln(W)+O(1)$, thus $e^{N (H_{b,c}(W)-H_{a,1}(W))}\sim W^{(1-c)N}$.
We have $\Re(H_{b,c}(W)-H_{a,1}(W))=0$ for $W\in\{\Theta,\overline\Theta\}$, $\Re(H_{b,c}(W)-H_{a,1}(W))<0$ on $W\in c\Pi(b)\big|_{(\Theta,\overline\Theta)}$ since it is in the interior of the region with $\Re H_{a,1}(W)\geq 0$ (by Corollary~\ref{CorIntersections}), and $\Re (-H_{b,c}(W)+H_{a,1}(W))<0$ on $W\in\Pi(a)\big|_{\overline\Theta,\Theta}$. Taking this as integration path we get that the contribution of this term is also vanishing. See Figure~\ref{FigResiduePMandPP} for an illustration.

(4) The contributions with $e^{N (-H_{b,c}(W)+H_{a,1}(W))}$. It is similar to case (3), except that now $\Re (-H_{b,c}(W)+H_{a,1}(W))<0$ on $W\in\Pi(a)\big|_{(\overline\Theta,\Theta)}$.

Summing up, as soon as $|\Omega(1,a)-\Omega(c,b)|$ are bounded away from zero (which implies that the same holds for $|\Omega(1,a)-\Theta|$ as well), all the mixed terms (i.e., the ones containing $H(W)$) are exponentially small in $N$ and thus they become irrelevant in the $N\to\infty$ limit. This completes the proof of the result.
\end{proof}

For what we are going to do later, it is important to understand when, in the proof of Theorem~\ref{thm5.1}, the hypothesis that $|\Omega(1,a)-\Omega(c,b)|$ bounded away from zero is used. The hypothesis was not used when determining the leading contribution coming from integrating out the $X,U$ (resp.\ $Y,V$) variables as those depend only on $\Omega(c,b)$ (resp.\ $\Omega(1,a)$). We obtained
\begin{equation}\label{eq5.31}
\int_0^\infty dX \frac{e^{N F(c,b,W,X)}}{X-W}\frac{1}{2\pi\I} \oint_{\Gamma_0} dU \frac{e^{N G(c,b,W,U)}}{W+U} = \frac{4 (1+\Or(N^{-1/2})) \Id_{W\in c\Pi_+(b)}}{N \sqrt{(W-\Omega(c,b))(W-\overline\Omega(c,b))}}+\frac{\Id_{W\in c\Pi_-(b)}}{N^{1+1/3}},
\end{equation}
and similarly for the integration over $Z$.

As a consequence, the integrals over $W,Z$ in ${\rm Cov}(\zeta^{(dN)}_{(1-a)dN}(1),\zeta^{(cN)}_{(1-b)cN}(1))$ in which we do not consider the residue terms give
\begin{equation}\label{eq5.32}
\begin{aligned}
\frac{1}{N}\frac{1}{(2\pi\I)^2}\oint_{c\Pi(b)} dW \oint_{\Pi(a)} dZ \frac{1}{Z-W}&\bigg(\frac{4 (1+\Or(N^{-1/2})) \Id_{W\in c\Pi_+(b)}}{\sqrt{(W-\Omega(c,b))(W-\overline\Omega(c,b))}}+\frac{\Id_{W\in c\Pi_-(b)}}{N^{1/3}}\bigg)\\
\times&\bigg(\frac{4 (1+\Or(N^{-1/2})) \Id_{Z\in \Pi_+(a)}}{\sqrt{(Z-\Omega(1,a))(Z-\overline\Omega(1,a))}}+\frac{\Id_{Z\in \Pi_-(a)}}{N^{1/3}}\bigg).
\end{aligned}
\end{equation}

For the residue terms, as mentioned above, only between $\overline\Omega(c,b)$ and $\Omega(c,b)$ there are terms which are not mixed. These give a contribution to the covariance equal to
\begin{equation}\label{eq5.33}
\frac{1}{N}\frac{1}{2\pi\I}\int_{\bar \Omega(c,b)}^{\Omega(c,b)} dW
\frac{4 (1+\Or(N^{-1/2}))}{\sqrt{(W-\Omega(c,b))(W-\overline\Omega(c,b))}}\frac{4 (1+\Or(N^{-1/2}))}{\sqrt{(W-\Omega(1,a))(W-\overline\Omega(1,a))}}.
\end{equation}

Finally, let us consider the mixed residue terms. When the two critical points are close to each other (of distance going to $0$ as $N\to\infty$), the statements mentioned above are still true, but the exponential decay in $N$ when integrating over $W$ is less strong. Indeed, by Lemma~\ref{lem5.7} and the fact used already just above (\ref{eq5.24}), if we move in a steep descent direction of $H(W)$ starting from $W=\Theta$ (which is of distance $\Or(N^{-1/2})$ from the double critical point), then $0\geq\Re H_{b,c}(W)\simeq (W-\Theta)^{3/2}$ for small $W-\Theta$. Moreover, since $\Theta$ is close to the double critical point, we have to be a little bit careful with the term in the denominator of (\ref{eq5.26b}), but by (\ref{eq5.24}) we know that $\sigma_+(W)\simeq \sqrt{W-\Omega(b,c)}$. Since we have the contribution from $W$ and the one from the residue at $Z=W$, overall the denominator is just of order $N^{-1}/|\Omega(1,a)-\Omega(c,b)|$.
Further, since for $\gamma>0$, $\int_0^\infty dx e^{-\gamma N x^{3/2}}=\Or(N^{-2/3})$, the full contribution for each of the possible four cases for the mixed terms will be $\Or(N^{-1-2/3}/|\Omega(1,a)-\Omega(c,b)|)$. As a consequence, if we consider $|\Omega(1,a)-\Omega(c,b)|\to 0$ as $N\to\infty$, we get
\begin{equation}\label{eq5.34}
{\rm Cov}\Big(\zeta^{(dN)}_{(1-a)dN}(1),\zeta^{(cN)}_{(1-b)cN}(1)\Big) = \frac{1}{N}\Or\Big(\frac{1}{N^{2/3}|\Omega(1,a)-\Omega(c,b)|}\Big) + (\ref{eq5.32})+ (\ref{eq5.33}).
\end{equation}
In particular, if $|\Omega(1,a)-\Omega(c,b)|=\Or(N^{-1/2})$, the error terms from the mixed terms in the residue are $\Or(N^{-1/6})$. Moreover, as we will see below, in this case the leading contribution to the integrals comes actually from the two neighborhoods of the double critical points only, i.e., from the regions where $Z-W$ is small.

\subsection{Logarithmic correlations at short distances}
In Theorem~\ref{thm5.1} we have obtained the limiting covariance. Here we consider the limiting covariance and investigate its behavior at short distances, i.e., in the limit when $|\Omega(d,a)-\Omega(c,b)|=\delta\to 0$. We obtain a logarithmic divergence as $\delta\to 0$.
\begin{proposition}\label{propLogCov}
Let $a\in(0,1)$ and $d>0$ be fixed. Then
\begin{equation}\label{eqCov}
\begin{aligned}
&\frac{16}{(2\pi\I)^2}\int_{\overline \Omega(c,b)}^{\Omega(c,b)} dW \int_{\overline \Omega(d,a)}^{\Omega(d,a)} dZ \frac{1}{Z-W}\frac{1}{\sqrt{(W-\Omega(c,b))(W-\overline\Omega(c,b))}\sqrt{(Z-\Omega(d,a))(Z-\overline\Omega(d,a))}}\\
&=\frac{-4}{\pi}\frac{\ln(|\Omega(d,a)-\Omega(c,b)|)}{\sqrt{\Im\Omega(d,a)}\sqrt{\Im\Omega(c,b)}}+\Or(1)
\end{aligned}
\end{equation}
as $|\Omega(d,a)-\Omega(c,b)|\to 0$.
\end{proposition}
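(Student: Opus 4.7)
The plan is to return to the Bessel-integral representation of the covariance that was used in the proof of Proposition~\ref{PropCovElliptic}. Writing $\Omega(d,a) = R_1 + \I I_1$ and $\Omega(c,b) = R_2 + \I I_2$ with $I_1, I_2 > 0$, and setting $\alpha := R_1 - R_2$, $\beta := I_1 - I_2$, $\delta := \sqrt{\alpha^2+\beta^2} = |\Omega(d,a)-\Omega(c,b)|$, I would begin from the identity (assuming $\alpha\geq 0$ after possibly swapping the parameters, which preserves the covariance)
\begin{equation*}
\text{LHS of (\ref{eqCov})} \;=\; 4 \int_0^\infty e^{-\alpha\lambda}\, J_0(I_1\lambda)\, J_0(I_2\lambda) \, d\lambda,
\end{equation*}
as in the derivation in the proof of Proposition~\ref{PropCovElliptic}. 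Since $a,d$ are fixed, $I_1$, $I_2$ and $I_1+I_2$ all stay bounded away from $0$ as $\delta\to 0$, which will ensure uniformity of every error term below.

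The key ingredient is the standard large-argument expansion $J_0(x) = \sqrt{2/(\pi x)}\cos(x - \pi/4) + \Or(x^{-3/2})$, which yields, for $\lambda$ large,
\begin{equation*}
J_0(I_1\lambda)\, J_0(I_2\lambda) \;=\; \frac{\cos(\beta\lambda) + \sin((I_1+I_2)\lambda)}{\pi\lambda\sqrt{I_1 I_2}} \;+\; \Or(\lambda^{-2}).
\end{equation*}
I would split the integral at a fixed threshold $\Lambda$. On $[0,\Lambda]$ the integrand is uniformly bounded and converges pointwise as $\delta\to 0$, so by dominated convergence this portion contributes $\Or(1)$. On $[\Lambda,\infty)$, the $\Or(\lambda^{-2})$ remainder is absolutely integrable and hence $\Or(1)$, and the $\sin((I_1+I_2)\lambda)/\lambda$ term also contributes $\Or(1)$ via one integration by parts, exploiting that $I_1+I_2$ stays bounded away from $0$.

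All of the divergence therefore comes from
\begin{equation*}
\frac{4}{\pi\sqrt{I_1 I_2}} \int_\Lambda^\infty \frac{e^{-\alpha\lambda}\cos(\beta\lambda)}{\lambda}\, d\lambda,
\end{equation*}
which I would evaluate using the exponential integral: the inner integral equals $\Re\, E_1((\alpha - \I\beta)\Lambda)$, and the expansion $E_1(z) = -\gamma - \ln z + \Or(z)$ as $z\to 0$ gives
\begin{equation*}
\int_\Lambda^\infty \frac{e^{-\alpha\lambda}\cos(\beta\lambda)}{\lambda}\, d\lambda \;=\; -\tfrac{1}{2}\ln(\alpha^2+\beta^2) + \Or(1) \;=\; -\ln\delta + \Or(1).
\end{equation*}
Replacing $\sqrt{I_1 I_2}$ in the prefactor by $\sqrt{\Im\Omega(d,a)\,\Im\Omega(c,b)}$ changes only the $\Or(1)$ piece (the difference is $\Or(\delta)$), yielding the claimed asymptotic. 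The only genuine difficulty in the write-up is bookkeeping: verifying that each ``$\Or(1)$'' is genuinely uniform along any route with $\delta\to 0$. With $a,d$ fixed, this is routine since $I_1, I_2, I_1+I_2$ all stay in a compact subset of $(0,\infty)$ and the finite-$\lambda$ piece converges uniformly on that range.
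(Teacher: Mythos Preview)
Your approach is correct and takes a genuinely different route from the paper's. The paper works directly with the double contour integral: it fixes explicit paths for $W$ and $Z$, isolates $\varepsilon$-neighborhoods of the endpoints $\overline\Omega_1,\overline\Omega_2$ (and their conjugates) where the inverse-square-root factors blow up, and reduces the local contribution to the elementary integral $\iint \frac{dx\,dy}{\sqrt{x}\sqrt{y}\,(\delta+x+y)}$ over $\{x,y\ge 0,\ x+y\le\varepsilon\}$, which a change of variables evaluates to $-\pi\ln\delta+\Or(1)$. You instead pass through the Bessel-integral representation obtained in the proof of Proposition~\ref{PropCovElliptic} and extract the divergence from the large-$\lambda$ tail via the exponential integral $E_1$. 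Your route is slicker in that it recycles machinery already established; the paper's route is more self-contained and does not rely on Proposition~\ref{PropCovElliptic}. (The paper in fact remarks, immediately after its proof, that the result also follows from Proposition~\ref{PropCovElliptic} together with the $\kappa\to 1$ asymptotics of $\mathbb{K}(\kappa)$ --- a third route, closely related to yours.) One small point worth making explicit in your write-up: the Bessel representation in the proof of Proposition~\ref{PropCovElliptic} was derived only for $\alpha>0$, with the general case handled by analytic continuation of the final elliptic-integral expression; your reduction to $\alpha\ge 0$ by swapping the two $\Omega$'s is legitimate because that expression is manifestly symmetric in them, and the boundary case $\alpha=0$ then follows by continuity.
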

\begin{proof}
Let us set $\delta=|\Omega(d,a)-\Omega(c,b)|$. Let us denote $\Omega_1= \Omega(d,a)$, $\Omega_2=\Omega(c,b)$ and the unit vector $\vec e=|\Omega_1-\Omega_2|^{-1} (\overline \Omega_1-\overline \Omega_2)$. Because of the assumptions, $c_0:=\Im \Omega_1=2\sqrt{a(1-a)}>0$ and thus for $\delta$ small enough, $\Im\Omega_2>c_0/2$ as well. Then in (\ref{eqCov}) we choose the paths for $W,Z$ as follows: \\
(1) $Z$ has a first straight piece of length $c_0/4$ in the direction $\vec e$,\\
(2) $W$ has a first straight piece of length $c_0/4$ in the direction $-\vec e$,\\
(3) for the rest, on $\{z\in\C, \Im z\leq 0\}$, the contours stay at a distance at least $c_0/4$ of each other and from $\overline \Omega_1,\overline \Omega_2$,\\
(4) the contours are chosen symmetric with respect to complex conjugation.

Pick a small positive $\e<c_0/2$. We divide the integral in (\ref{eqCov}) into $I_\e:=\{|Z-\overline\Omega_1|+|W-\overline\Omega_2|\leq \e\}$, $\tilde I_\e=\{|Z-\Omega_1|+|W-\Omega_2|\leq \e\}$ and $J_\e=(I_\e\cup\tilde I_\e)^c$.
The integral over $J_\e$ is bounded by $\Or(1/\e)$, since $x\mapsto 1/\sqrt{x}$ is integrable around $0$. For the integral over $I_\e$, we consider the parametrization: $Z=\overline \Omega_1+y\vec e$ and $W=\overline \Omega_2-x\vec e$, where $x+y\leq \e$. Plugging in these we get, for the integral over $I_\e$,
\begin{equation}\label{eq5.35b}
\frac{16}{(2\pi\I)^2} \int dZ \int dW (\cdots) = \frac{16}{(2\pi)^2\I\sqrt{(\overline \Omega_1-\Omega_1)(\overline \Omega_2-\Omega_2)}}\int dx \int dy \frac{1}{\sqrt{x}\sqrt{y}(\delta+x+y)}+\Or(1),
\end{equation}
where the integral is over $\{x,y\geq 0 | x+y\leq \e\}$. The $\Or(1)$ error term comes from replacing $1/\sqrt{Z-\Omega_1}=(1 + \Or(Z-\overline\Omega_1))/\sqrt{\overline \Omega_1 - \Omega_1}$ and $1/\sqrt{W-\Omega_1}=(1 + \Or(W-\overline\Omega_2))/\sqrt{\overline \Omega_2 - \Omega_2}$. Indeed, doing these replacements, due to the fact that $|W-\overline\Omega_2|<|W-Z|$ and similarly $|Z-\overline\Omega_1|<|Z-W|$ on $I_\e$, the integrals with these error terms are bounded. Doing the change of variables $u=x-y$, $v=x+y$ we get
\begin{equation*}
\int dx \int dy \frac{1}{\sqrt{x}\sqrt{y}(\delta+x+y)} = \int_0^\e dv \frac{1}{\delta+v}\int_{-v}^v du \frac{1}{\sqrt{v^2-u^2}} = \pi \ln(\delta+\e) - \pi \ln(\delta).
\end{equation*}
Therefore the integral over $I_\e$ leads to
\begin{equation*}
\frac{2}{\pi\sqrt{\Im\Omega_1 \Im\Omega_2}}\ln(1/\delta) + \Or(1),
\end{equation*}
since we can take $\e$ small but fixed, the statement to be proven holds. The contribution of the integral over $\tilde I_\e$ is the same. This finishes the proof.
\end{proof}

Remark that the logarithmic correlations at small distances follow also from the expression in Proposition~\ref{PropCovElliptic} together with the asymptotic behavior (see Eq.~13.8(10) in~\cite{Bat53})
\begin{equation}\label{eq5.35}
\mathbb{K}(\kappa)=\ln(4/\sqrt{1-\kappa^2})+\Or((1-\kappa^2)\ln(1-\kappa^2)),\quad\textrm{as }\kappa\to 1.
\end{equation}

\subsection{Two-time covariance in the bulk scaling limit (and slow decorrelation)}
We have seen that the covariance of $N^{1/2}\zeta^{(cN)}_{(1-b)cN}(T=1)$ has a non-trivial limit as $N\to\infty$. Now we argue that if we consider the covariance of the process at different times, we effectively see the covariance observed at fixed time. Recall that the covariance at two times is obtained by applying the propagator (\ref{eqPropagator}) to the fixed time covariance (see (\ref{eq5.29})), namely
\begin{equation*}
Y^{T_0}(T) {\rm Cov}(T_0).
\end{equation*}
Thus, to study the space-time covariance we have to understand which regions at time $T_0$ are correlated with a given point at time $T$. This is done by studying the propagator (\ref{eqPropagator}). In Lemma~\ref{lemmaPropAsympt} we show that the correlated region is around the characteristic and it is of order $\sqrt{N}$ (linear scale) only, independently of $T>T_0$. By Theorem~\ref{thm5.1} we know that the limiting covariance changes over $\Or(N)$ only. Thus, as stated in Proposition~\ref{PropCovTwoTimes}, the two-time covariance is asymptotically the same as the fixed time covariance between points scaled linearly in $T$. The reason is that along the ray of fixed direction (characteristic lines for our system) the correlation persists forever, which is at first unexpected.

\begin{lemma}\label{lemmaPropAsympt}
Consider the propagator (\ref{eqPropagator}), namely,
\begin{equation*}
\left[Y^{T_0}(T)\right]_{(k,n),(k',n')}=\left(\frac{T_0}{T}\right)^{n-1}\left(\frac{T-T_0}{T_0}\right)^{n-n'} \binom{k-1}{k'-1}\binom{n-k}{n'-k'}.
\end{equation*}
For $T>1=T_0$, we set
\begin{equation}\label{eq5.82}
\begin{aligned}
k=(1-a)d T N,&\quad k'=(1-a)d N + \sigma_1 \sqrt{(1-a) d N},\\
n=d T N,&\quad n'=d N+\sigma_1\sqrt{(1-a) d N}+\sigma_2 \sqrt{a d N}.
\end{aligned}
\end{equation}
Then, as $N\to\infty$,
\begin{equation}\label{eqPropagatorAsympt}
\left[Y^{T_0}(T)\right]_{(k,n),(k',n')} =\frac{1}{2\pi (T-1)/T}\frac{1}{\sqrt{a(1-a)}dN} \exp\left(-\frac{\sigma_1^2+\sigma_2^2}{2 (T-1)/T}\right)(1+o(1)).
\end{equation}
The result still holds if $T$ goes to infinity with $N$ at any speed, in which case $(T-1)/T$ is replaced by $1$.
\end{lemma}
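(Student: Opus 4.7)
The central observation is that the propagator~(\ref{eqPropagator}) factors exactly as a product of two binomial probability mass functions, after which (\ref{eqPropagatorAsympt}) reduces to the local central limit theorem applied to each factor. Setting $T_0=1$ and $p:=1/T$, the algebraic identity $(1/T)^{n-1}(T-1)^{n-n'} = (1/T)^{n'-1}((T-1)/T)^{n-n'}$ combined with the decompositions $n'-1=(k'-1)+(n'-k')$ and $n-n'=(k-k')+(n-k-(n'-k'))$ gives the exact factorization
\begin{equation*}
\left[Y^{1}(T)\right]_{(k,n),(k',n')} = \binom{k-1}{k'-1}p^{k'-1}(1-p)^{k-k'}\cdot\binom{n-k}{n'-k'}p^{n'-k'}(1-p)^{n-k-(n'-k')}.
\end{equation*}
This is not a surprise: as already noted in the proof of Lemma~\ref{lemmaPropagator}, after the change of variables $x=n-k$, $y=k-1$ the generator $\widehat A$ decouples into two independent copies of the same pure-death chain, whose time-$S$ transition probability is exactly the $\mathrm{Binomial}(x, e^{-S})$ PMF with $e^{-S}=1/T$.

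Next, I would apply the local CLT to each factor separately. Under the scaling (\ref{eq5.82}) the first factor is the PMF of $\mathrm{Binomial}(k-1,\,1/T)$ evaluated at $k'-1$: its mean equals $(k-1)/T = (1-a)dN(1+o(1))$, its variance equals $(k-1)p(1-p) = (1-a)dN(T-1)/T(1+o(1))$, and the sample point $k'-1 = (1-a)dN + \sigma_1\sqrt{(1-a)dN}$ sits at normalized deviation $\sigma_1/\sqrt{(T-1)/T}$ from the mean. The local CLT (equivalently, a direct Stirling expansion of the binomial coefficient around its mode) therefore gives
\begin{equation*}
\binom{k-1}{k'-1}p^{k'-1}(1-p)^{k-k'} = \frac{1+o(1)}{\sqrt{2\pi(1-a)dN(T-1)/T}}\exp\!\left(-\frac{\sigma_1^2}{2(T-1)/T}\right),
\end{equation*}
and the analogous statement for the second factor with $(1-a),\sigma_1$ replaced by $a,\sigma_2$. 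Multiplying the two estimates yields exactly (\ref{eqPropagatorAsympt}).

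The only subtle point is uniformity when $T$ is allowed to diverge with $N$. In that regime $p=1/T\to 0$, but the number of trials $(k-1)\sim (1-a)dTN$ grows in compensation, so the effective sample sizes $(k-1)p\sim (1-a)dN$ and $(k-1)p(1-p)\sim (1-a)dN$ both diverge; the variance prefactor and exponent coefficient $(T-1)/T$ tend to $1$, which is precisely the $(T-1)/T\rightsquigarrow 1$ replacement stated in the lemma. Standard proofs of the local CLT via Stirling's formula give an $o(1)$ remainder depending only on these effective sample sizes, so the estimate is uniform over $T\in[T_0,\infty)$. The main (minor) obstacle is therefore just bookkeeping the CLT error uniformly in $T$; the structural step — the exact factorization of the propagator — comes for free and does all the real work.
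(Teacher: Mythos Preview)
Your proof is correct and slightly cleaner than the paper's. The paper applies Stirling's formula directly to each binomial coefficient, writes $\binom{k}{k'}=\sqrt{k/(2\pi k'(k-k'))}\,e^{B(k,k')}(1+o(1))$ with $B(k,k')=k\ln k-k'\ln k'-(k-k')\ln(k-k')$, and then Taylor-expands the combination $-n\ln T+(n-n')\ln(T-1)+B(k,k')+B(n-k,n'-k')$ under the scaling~(\ref{eq5.82}) to extract the Gaussian exponent and prefactor. Your route differs in that you first absorb the $(1/T)^{n-1}(T-1)^{n-n'}$ factor into the two binomial coefficients to recognize the propagator as an exact product of two $\mathrm{Binomial}(\,\cdot\,,1/T)$ probability mass functions---a fact already implicit in the pure-death-chain decomposition of Lemma~\ref{lemmaPropagator}---and then invoke the local CLT on each factor. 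The two arguments are analytically equivalent (the local CLT for binomials \emph{is} the Stirling expansion around the mode), but your packaging explains for free why the limiting density factorizes as a product in $\sigma_1,\sigma_2$, and it makes the uniformity in $T\to\infty$ transparent via the effective sample sizes $(k-1)/T$ and $(n-k)/T$, whereas the paper handles this last point by an informal ``inspecting the computations'' remark.
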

\begin{proof}
Using Stirling formula we have
\begin{equation*}
\binom{k}{k'}= \sqrt{\frac{k}{2\pi k'(k-k')}} e^{B(k,k')} (1+o(1)),\quad B(k,k')=k\ln(k)-k'\ln(k')-(k-k')\ln(k-k').
\end{equation*}
Also, $\binom{k-1}{k'-1}\frac{k'}{k}=\binom{k}{k'}$. Thus
\begin{equation*}
\left[Y^{T_0}(T)\right]_{(k,n),(k',n')} = C(k,k',n,n',T) e^{-n\ln(T)}e^{(n-n')\ln(T-1)} e^{B(k,k')} e^{B(n-k,n'-k')} (1+o(1)),
\end{equation*}
where
\begin{equation*}
C(k,k',n,n',T)=T \sqrt{\frac{k'}{2\pi k(k-k')}} \sqrt{\frac{n-k}{2\pi (n'-k')(n-n'-k+k')}}.
\end{equation*}
A computation gives
\begin{equation*}
-n\ln(T)+(n-n')\ln(T-1)+B(k,k')+B(n-k,n'-k')= -\frac{T(\sigma_1^2+\sigma_2^2)}{2 (T-1)}+\Or(N^{-1/2}),
\end{equation*}
and
\begin{equation*}
C(k,k',n,n',T) = \frac{T}{2\pi \sqrt{a(1-a)}d (T-1)N}+\Or(N^{-3/2}).
\end{equation*}
This gives the result for $T$ independent of $N$. However, inspecting the computations one realizes that if $T\to\infty$ as $N\to\infty$, then the same result holds, where of course one replaces $T/(T-1)$ by its limit, which is $1$.
\end{proof}

As a consequence of this result we have
\begin{proposition}\label{PropCovTwoTimes}
Take any $a,b\in (0,1)$, $d>0$ and $c\in (0,d]$. Then for any $T>1$,
\begin{equation}\label{eqCov5.93}
\begin{aligned}
&\lim_{N\to\infty} N {\rm Cov}\Big(\zeta^{(dN T)}_{(1-a)dN T}(T),\zeta^{(cN)}_{(1-b)cN}(T=1)\Big)\\
&=\frac{16}{(2\pi\I)^2}\int_{\overline \Omega(c,b)}^{\Omega(c,b)} dW \int_{\overline \Omega(d,a)}^{\Omega(d,a)} dZ \frac{1}{Z-W}\frac{1}{\sqrt{(W-\Omega(c,b))(W-\overline\Omega(c,b))}\sqrt{(Z-\Omega(d,a))(Z-\overline\Omega(d,a))}},
\end{aligned}
\end{equation}
where the path $Z$ stays to the right of $W$. Here $T$ can even go to infinity as $N\to\infty$.
\end{proposition}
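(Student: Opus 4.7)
My plan is to exploit the fact that the system \eqref{eqzetasde} is linear Gaussian, so the two-time covariance factors through the propagator computed in Corollary~\ref{corPropagator}. Specifically, by \eqref{eq5.29}, for $T > T_0 = 1$,
\begin{equation*}
{\rm Cov}\!\left(\zeta^{(n)}_k(T),\zeta^{(cN)}_{(1-b)cN}(1)\right)
= \sum_{(k',n')} \left[Y^{1}(T)\right]_{(k,n),(k',n')} \, {\rm Cov}\!\left(\zeta^{(n')}_{k'}(1),\zeta^{(cN)}_{(1-b)cN}(1)\right),
\end{equation*}
where $(k,n) = ((1-a)dNT, dNT)$. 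The strategy is then to show that, after multiplying by $N$, this sum is asymptotically the fixed-time covariance at the point of time $1$ lying on the characteristic through $(k,n)$, namely $(k_0',n_0') := ((1-a)dN, dN)$, at which point Theorem~\ref{thm5.1} immediately gives the stated right-hand side.

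The first step is to quantify the localization of the propagator. Lemma~\ref{lemmaPropAsympt}, with the parametrization \eqref{eq5.82}, gives
\begin{equation*}
\left[Y^{1}(T)\right]_{(k,n),(k',n')}
= \frac{1}{\sqrt{a(1-a)}\,dN}\,p_{(T-1)/T}(\sigma_1,\sigma_2)(1+o(1)),
\end{equation*}
where $p_{\tau}$ is the $2$d Gaussian density used in Section~1, and $(\sigma_1,\sigma_2)$ parametrize the displacement of $(k',n')$ from $(k_0',n_0')$ on scale $\sqrt{N}$. In particular, the kernel is a probability density with respect to $(k',n')$ of width $O(\sqrt{N})$ concentrated around the characteristic point, with Gaussian tails and total mass asymptotically $1$ (modulo a rescaling I will track carefully through the change of variables from $(k',n')$ to $(\sigma_1,\sigma_2)$).

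The second step is to exploit the separation of scales: Theorem~\ref{thm5.1} shows that the limiting fixed-time covariance is a function $\mathcal{C}(\Omega(d,a),\Omega(c,b))$ of continuous arguments in $(d,a)$, varying on scale $N$ in the microscopic coordinates $(k',n')$. Therefore, if we let $(k',n')$ vary over a $\sqrt{N}$-neighborhood of $(k_0',n_0')$, the effective $(d',a')$ parameters at time $1$ differ from $(d,a)$ by $O(N^{-1/2})$, so the covariance $\mathcal{C}(\Omega(d',a'),\Omega(c,b))$ differs from $\mathcal{C}(\Omega(d,a),\Omega(c,b))$ by $o(1)$, provided $\Omega(d,a) \neq \Omega(c,b)$. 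Substituting this into the propagator sum and using that the Gaussian kernel integrates to $1$ gives the limit \eqref{eqCov5.93}. For the tail of the propagator sum (where $|\sigma_1|^2 + |\sigma_2|^2$ is large), the Gaussian decay dominates the at-worst polynomial growth of the finite-$N$ covariance, so these terms contribute $o(1/N)$ uniformly.

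The main obstacle, and the place where some care is needed, is that the fixed-time covariance has a logarithmic divergence as $\Omega(d',a') \to \Omega(c,b)$ (Proposition~\ref{propLogCov}). If the characteristic point happens to satisfy $\Omega(d,a) = \Omega(c,b)$, or even if the propagator smears across the singular locus $\{\Omega(d',a') = \Omega(c,b)\}$, one cannot naively substitute the continuous limit. This requires two ingredients: (a) going back to the finite-$N$ estimate \eqref{eq5.34}, which shows the covariance is $O(\ln N/N)$ even near the singularity, so the contribution from a sub-$\sqrt{N}$ neighborhood of the singular locus is negligible under the Gaussian integration; and (b) using that the singularity is only logarithmic, hence integrable against the Gaussian kernel, so the convolution remains $O(1/N)$ with the expected limit. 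The claim that the result extends to $T$ depending on $N$ follows because Lemma~\ref{lemmaPropAsympt} retains its form (with $(T-1)/T$ replaced by $1$), keeping the propagator width $O(\sqrt{N})$ uniformly; the separation-of-scales argument above is unchanged. This persistence of nontrivial correlation along a characteristic ray, independent of the temporal separation, is exactly the slow-decorrelation phenomenon referenced in the introduction.
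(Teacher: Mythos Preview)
Your proposal is correct and follows essentially the same approach as the paper's own proof: both use the propagator representation \eqref{eq5.29} to write the two-time covariance as a Gaussian average of width $O(\sqrt{N})$ of the fixed-time covariance, invoke the separation of scales together with Theorem~\ref{thm5.1} to extract the constant limit, and control the tails via the Gaussian decay of the propagator against the at-worst logarithmic/polynomial growth of the covariance near the singular and boundary loci. The paper organizes the argument via an explicit truncation $\{|\sigma_1|,|\sigma_2|\le R\}$ followed by $N\to\infty$ then $R\to\infty$, but the substance is the same.
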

\begin{proof}
First of all, notice that the prefactor $\frac{1}{\sqrt{a(1-a)}dN}$ in (\ref{eqPropagatorAsympt}) is the volume element of the change of variables $(k',n')\to (\sigma_1,\sigma_2)$, necessary to turn the sum in
\begin{equation*}
\sum_{k',n'}\left[Y^{T_0}(T)\right]_{(k,n),(k',n')} {\rm Cov}(T_0)_{(k',n'),(k'',n'')}
\end{equation*}
into an integral over $(\sigma_1,\sigma_2)$. We can write (using $(k',n')$ as in (\ref{eq5.82}))
\begin{equation}\label{eq5.91}
\begin{aligned}
&N {\rm Cov}\left(\zeta^{d N T}_{(1-a)d N T}(T),\zeta^{c N}_{(1-b)c N}(1)\right)\\
& = \int d\sigma_1 d\sigma_2 \sqrt{a(1-a)}dN \left[Y^{1}(T)\right]_{((1-a)d N T,d N T),(k',n')}
N {\rm Cov}\left(\zeta^{n'}_{n-k'}(1),\zeta^{c N}_{(1-b)c N}(1)\right),
\end{aligned}
\end{equation}
where the integrand is thought to be piece-wise constant so as to coincide with the sum (i.e., we just have rescaled $(k',n')$ but not taken any limit).

For any $R>0$, consider the integral of (\ref{eq5.91}) restricted to $\{|\sigma_1|,|\sigma_2|\leq R\}$. From Lemma~\ref{lemmaPropAsympt}, it is a convolution of a Gaussian kernel (with variance $\sqrt{(T-1)/T}$) and of $N {\rm Cov}\left(\zeta^{n'}_{n-k'}(1),\zeta^{c N}_{(1-b)c N}(1)\right)$. By Theorem~\ref{thm5.1} the latter is independent of $\sigma_1$ and $\sigma_2$ in the $N\to\infty$ limit. Thus the contribution of (\ref{eq5.91}) restricted to $\{|\sigma_1|,|\sigma_2|\leq R\}$ is given by (\ref{eqCov5.93}) times $M_R=\left({\rm Erf}(R/\sqrt{2-2/T})\right)^2$ as $R\to\infty$ (which is the mass of the Gaussian in the integration domain). Since $M_R\to 1$ as $R\to\infty$, this will be the full contribution to the two-times covariance in the large-$N$ limit.

The contribution of (\ref{eq5.91}) outside $\{|\sigma_1|,|\sigma_2|\leq R\}$ is of order \mbox{$\Or(1)(1-M_R)\to 0$} as $R\to\infty$. To see this, first observe that the limiting covariance (multiplied by $N$) is uniformly bounded as soon as (a) the $\Omega$'s for $(k',n')$ and for $(bcN,cN)$ away from each other or (b) away from $\Im\Omega=0$. Case (a) is violated only for some $\sigma_1,\sigma_2$ of order $\sqrt{N}$, the region where anyway the propagator vanishes as exponentially in $\sigma_1,\sigma_2$ (on top of it, from Proposition~\ref{propLogCov}, when two points $(k,n)$ and $(k',n')$ are at distance $\delta N$ with small $\delta$, then their covariance is only diverging like $\ln(\delta)$, which can still be integrated.). Case (b) is also not a problem, since this breaks down as well for $\sigma_1,\sigma_2$ of order $\sqrt{N}$ and when $\Im\Omega\to 0$, the covariance might diverge but only polynomially in $N$, which is dominated by the Gaussian decay of the propagator. Thus taking $N\to\infty$ and then $R\to\infty$ one establishes the result.
\end{proof}

\subsection{Correlations close to the characteristic lines}
In Proposition~\ref{PropCovTwoTimes} we showed that there is slow-decorrelation and the time correlations equal at first order the correlations at fixed time. This holds in the case when the projections along the characteristic at fixed time of the space-time points under focus are at distance of order $\Or(N)$ of each other. Now we want to consider the correlations close to the characteristic lines. (\ref{eqPropagatorAsympt}) suggests that non-trivial correlation are present when we consider space-time points at distance $\Or(\sqrt{N})$ from a given characteristic line. This is what we show in the next proposition.
\begin{proposition}\label{PropCorrCaracteristics}
Let $d>0$, $a\in (0,1)$, and $T>1$ be fixed. For any given $\xi_1,\xi_2\in\R$, consider the scaling
\begin{equation*}
k=(1-b) c N=(1-a) d N + \xi_1 \sqrt{(1-a) d N},\quad n=c N=d N+\xi_1\sqrt{(1-a) d N}+\xi_2 \sqrt{a d N}.
\end{equation*}
Then,
\begin{equation}
\begin{aligned}
&\lim_{N\to\infty} N {\rm Cov}\Big(\zeta^{(dN T)}_{(1-a)dN T}(T),\zeta^{(cN)}_{(1-b)cN}(T=1)\Big)- \frac{\ln(N/d)}{\pi d \sqrt{a(1-a)}}=\\
&+\frac{-1}{\pi d \sqrt{a(1-a)}}\frac{1}{2\pi (T-1)/T} \int_{\R^2} d\sigma_1 d\sigma_2 e^{-(\sigma_1^2+\sigma_2^2)/(2 (T-1)/T)} \ln[(\sigma_1-\xi_1)^2+(\sigma_2-\xi_2)^2].
\end{aligned}
\end{equation}
\end{proposition}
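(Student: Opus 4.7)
The strategy is to combine the propagator/fixed-time convolution representation of the two-time covariance with the short-distance logarithmic behavior of the fixed-time covariance. Starting from the identity~\eqref{eq5.91} used in the proof of Proposition~\ref{PropCovTwoTimes}, I would write
\begin{equation*}
N\,{\rm Cov}\bigl(\zeta^{dNT}_{(1-a)dNT}(T),\zeta^{cN}_{(1-b)cN}(1)\bigr) = \int d\sigma_1 d\sigma_2\,\sqrt{a(1-a)}dN\,[Y^1(T)]_{(k,n),(k'(\sigma),n'(\sigma))}\,N\,{\rm Cov}\bigl(\zeta^{n'(\sigma)}_{k'(\sigma)}(1),\zeta^{cN}_{(1-b)cN}(1)\bigr),
\end{equation*}
where $(k'(\sigma),n'(\sigma))$ are the scaled indices from Lemma~\ref{lemmaPropAsympt}. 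That lemma provides the Gaussian limit of the propagator: the product $\sqrt{a(1-a)}dN\cdot [Y^1(T)]$ converges to $\frac{1}{2\pi\tau}\exp(-(\sigma_1^2+\sigma_2^2)/(2\tau))$ with $\tau=(T-1)/T$. Thus the problem reduces to understanding the $\sigma$-dependence of the fixed-time covariance between two points both lying within $O(\sqrt N)$ of the common macroscopic location $(dN,(1-a)dN)$, one parametrized by $\sigma$ and the other by $\xi$.

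The next step is a clean change-of-variables identity relating $\sqrt{N}$-scale shifts to $\Omega$-differences. A point $(k'',n'')$ with $n''=dN+s_1\sqrt{(1-a)dN}+s_2\sqrt{adN}$ and $k''=(1-a)dN+s_1\sqrt{(1-a)dN}$ corresponds to $d''=n''/N$ and $a''=1-k''/n''$, both differing from $(d,a)$ on scale $1/\sqrt N$. Using $\partial_d\Omega=1-2a+2\I\sqrt{a(1-a)}$ and $\partial_a\Omega=-2d+\I d(1-2a)/\sqrt{a(1-a)}$, a direct computation yields the remarkably clean identity
\begin{equation*}
\delta\Omega = \sqrt{d/N}\,(\sqrt{1-a}+\I\sqrt{a})(s_1+\I s_2)+O(1/N),
\end{equation*}
so in particular $|\Omega_A-\Omega_B|^2=(d/N)[(\sigma_1-\xi_1)^2+(\sigma_2-\xi_2)^2](1+o(1))$, while $\Im\Omega_A,\Im\Omega_B\to 2d\sqrt{a(1-a)}$. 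Applying Proposition~\ref{propLogCov} to this fixed-time covariance and substituting the expansion of $\ln|\Omega_A-\Omega_B|$ gives
\begin{equation*}
N\,{\rm Cov}\bigl(\zeta_A(1),\zeta_B(1)\bigr) = \frac{\ln(N/d)}{\pi d\sqrt{a(1-a)}}-\frac{\ln[(\sigma_1-\xi_1)^2+(\sigma_2-\xi_2)^2]}{\pi d\sqrt{a(1-a)}}+O(1).
\end{equation*}
Substituting into the convolution and using unit mass of the Gaussian kernel, the $\ln(N/d)$ term factors through and the $\ln|\sigma-\xi|^2$ term yields precisely the claimed integral.

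The main obstacle is controlling the $O(1)$ remainder in Proposition~\ref{propLogCov}, which a priori contributes a further additive constant after Gaussian integration. To pin this down, I would use the exact elliptic-integral form of Proposition~\ref{PropCovElliptic} together with $\mathbb{K}(\kappa)=\ln(4/\sqrt{1-\kappa^2})+O((1-\kappa^2)\ln(1-\kappa^2))$ and the identity $1-\kappa^2=|\Omega_A-\Omega_B|^2/|\Omega_A-\overline{\Omega_B}|^2$, producing a refined expansion whose residual, uniformly in $\sigma$ on compacta, converges to an explicit constant (the $|\Omega_A-\overline{\Omega_B}|\to 2\Im\Omega$ contribution) which is absorbed into the $\ln(N/d)$ normalization. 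A secondary point is integrability of the $O(N^{-2/3}/|\Omega_A-\Omega_B|)$ correction from Theorem~\ref{thm5.1}, which at the scaling $|\Omega_A-\Omega_B|\sim |\sigma-\xi|/\sqrt{N}$ contributes at most $O(N^{-1/6}/|\sigma-\xi|)$ per point; the polar integral $\int_0^r\rho\,d\rho/\rho=r$ shows convolution against the Gaussian produces $O(rN^{-1/6})$ uniformly on compacta, and the Gaussian tail absorbs the large-$|\sigma|$ regime where the log-asymptotics are no longer valid. Combining these estimates and letting $N\to\infty$ (for each fixed $\xi$) then yields the stated identity.
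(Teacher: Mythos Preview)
Your approach is essentially the same as the paper's: start from the convolution representation \eqref{eq5.91}, insert the Gaussian propagator asymptotics of Lemma~\ref{lemmaPropAsympt}, compute the $\sqrt{d/N}$ scaling of $|\Omega_A-\Omega_B|$ under the $(\sigma,\xi)$ parametrization, control the error terms via \eqref{eq5.34}, and then sharpen the $\Or(1)$ of Proposition~\ref{propLogCov} to $o(1)$ using the exact elliptic-integral expression of Proposition~\ref{PropCovElliptic} together with the asymptotics \eqref{eq5.35}. The paper carries out exactly these steps in the same order.
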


\begin{remark}
In particular, at equal time $T=1$, setting $c=1+\xi_1\sqrt{(1-a)/N}+\xi_2\sqrt{a/N}$, and $b=a+(\xi_2 \sqrt{1-a}-\xi_1\sqrt{a})\sqrt{a(1-a)/N}$, then we have
\begin{equation}
\lim_{N\to\infty} N {\rm Cov}\Big(\zeta^{(N)}_{(1-a)N}(1),\zeta^{(cN)}_{(1-b)cN}(1)\Big)- \frac{\ln(N)}{\pi\sqrt{a(1-a)}}=\frac{-1}{\pi \sqrt{a(1-a)}} \ln[\xi_1^2+\xi_2^2].
\end{equation}
\end{remark}

\begin{proof}[Proof of Proposition~\ref{PropCorrCaracteristics}]
The covariance to be computed is given explicitly by (\ref{eq5.91}) with
\begin{equation}
k'=(1-a')d'N=(1-a)d N + \sigma_1 \sqrt{(1-a) d N},\quad n'=d'N=d N+\sigma_1\sqrt{(1-a) d N}+\sigma_2 \sqrt{a d N},
\end{equation}
in which we can insert the asymptotics of the propagator (\ref{eqPropagatorAsympt}) (the control for large $\sigma_1,\sigma_2$ is as in the proof of Proposition~\ref{PropCovTwoTimes} and thus we do not repeat the details). The difference with respect to Proposition~\ref{PropCovTwoTimes} is that the distance between the double critical points $\Omega(d,a)$ and $\Omega(d',a')$ scales as $N^{-1/2}$ and therefore we need to be a bit more careful and use (\ref{eq5.34}) which holds in that regime. An explicit computation gives
\begin{equation}\label{eq5.60}
|\Omega(c,b)-\Omega(d',a')|=\sqrt{(\xi_1-\sigma_1)^2+(\xi_2-\sigma_2)^2}\sqrt{d/N}(1+\Or(N^{-1/2})).
\end{equation}
Let us verify that the error terms are all negligible in the $N\to\infty$ limit. The $\Or(N^{-1/2})$ error terms in (\ref{eq5.32}) are all neglibible: (a) when $W\in c\Pi_+(b)$ and $Z\in\Pi_+(a)$, then we can do the approximation of the integrals used to prove Proposition~\ref{propLogCov} and the $\Or(1)$ in (\ref{eqCov}) is multiplied by $\Or(N^{-1/2})$; (b) when either $W\in c\Pi_+(b)$ or $Z\in\Pi_+(a)$, then with the same strategy of the proof of Proposition~\ref{propLogCov} we get that the double integral is bounded times $\Or(N^{-1/3})$. The reason is that in this case, see (\ref{eq5.32}), one of the inverse square root term is not present, which means that in (\ref{eq5.35b}) either $1/\sqrt{y}$ or $1/\sqrt{x}$ is absent; (c) when $W\in c\Pi_-(b)$ and $Z\in\Pi_-(a)$, then the term is $\Or(N^{-2/3})$ since $1/(Z-W)$ is integrable in two-dimensions. The same holds for the error terms in (\ref{eq5.33}). Finally, the error term in (\ref{eq5.34}) is clearly integrable in two dimensions and it goes to $0$ as $N\to\infty$.

What remains to be done is to determine the asymptotics of (\ref{eq5.91}) in which we consider only the non-error terms in (\ref{eq5.34}), namely the l.h.s.\ of (\ref{eqCov}). However, Proposition~\ref{propLogCov} is not good enough, since we want to prove the limiting covariance up to $\Or(1)$. Instead, for the l.h.s.\ of (\ref{eqCov}) we can as well use the exact expression contained in Proposition~\ref{PropCovElliptic}. The asymptotics (\ref{eq5.35}) of the complete elliptic integral gives us $o(1)$ instead of $\Or(1)$ in (\ref{eqCov}). Thus, up to $o(1)$ terms, $N {\rm Cov}(\zeta^{(dN T)}_{(1-a)dN T}(T),\zeta^{(cN)}_{(1-b)cN}(T=1))$ is given by
\begin{equation}
\frac{-1}{2\pi (T-1)/T} \int_{\R^2} d\sigma_1 d\sigma_2 e^{-(\sigma_1^2+\sigma_2^2)/(2 (T-1)/T)} \frac{4}{\pi}\frac{\ln[|\Omega(c,b)-\Omega(d',a')|]}{\sqrt{\Im(\Omega(c,b))}\sqrt{\Im(\Omega(d',a'))}}.
\end{equation}
Using $\Im(\Omega(c,b))=2 d \sqrt{a(1-a)}+\Or(N^{-1/2})$, $\Im(\Omega(d',a'))=2d\sqrt{a(1-a)}+\Or(N^{-1/2})$, and (\ref{eq5.60}) we obtain the claimed result.
\end{proof}

\subsection{Identifying the additive stochastic heat equation}\label{secidentify}
One might ask what is the behaviour of the limiting double integral in Proposition~\ref{PropCorrCaracteristics}. We use $\tau=(T-1)/T$ and use polar coordinates, $\xi_1=R\sqrt{\tau} \cos(\phi)$ and $\xi_2=R\sqrt{\tau} \sin(\phi)$, the change of variables $\sigma_1=\xi_1+\lambda\sqrt{\tau}\cos(\theta+\phi)$ and $\sigma_2=\xi_2+\lambda\sqrt{\tau}\sin(\theta+\phi)$ leads to
\begin{equation}\label{eq5.49}
\frac{1}{2\pi} \int_{\R^2} d\sigma_1 d\sigma_2 e^{-(\sigma_1^2+\sigma_2^2)/2} \ln[(\sigma_1-\xi_1)^2+(\sigma_2-\xi_2)^2]=\ln(\tau)+C(R)
\end{equation}
with
\begin{equation}\label{CReq}
C(R)=\frac{1}{\pi} \int_{\R_+} d\lambda\, \lambda \ln(\lambda) e^{-(\lambda^2+R^2)/2} \int_{-\pi}^\pi d\theta e^{\lambda R \cos(\theta)}=2e^{-R^2/2} \int_{\R_+} d\lambda\, \lambda \ln(\lambda)e^{-\lambda^2/2} I_0(\lambda R).
\end{equation}
This is a function that goes from $C(0)=\ln(2)-\gamma_{\rm Euler}$ to $C(R)\simeq 2 \ln(R)$ as $R\to\infty$. It can be expressed in terms of incomplete Gamma function,
\begin{equation*}
\Gamma(s,x) = \int_x^{\infty} t^{s-1}e^{-t}dt,
\end{equation*}
as follows
\begin{equation*}
C(R)=\Gamma(0,R^2/2)+\ln(R^2).
\end{equation*}
This can be readily proved by subsisting the series expansion for $I_0(2z) = \sum_{k=0}^{\infty} z^{2k}/(k!)^2$ into the last integral in (\ref{CReq}), interchanging the integral in $\lambda$ with the sum in $k$ (easily justified) and then evaluating the resulting $\lambda$ integrals
$$
\int_0^{\infty} d\lambda \,\lambda^{2k+1} \log(\lambda) e^{-\lambda^2/2}e^{-\lambda^2/2} = 2^{k-1} k!\big(\log 2 + \psi(1+k)\big),
$$
where $\psi$ is the digamma function. The resulting sum can be identified with a series expansion~\cite[Equation (8.19.9)]{NIST:DLMF} for the incomplete Gamma function (equivalently the exponential integral $E_1$).

For the general statement below, we define a parameter $\tau$ depending on $T>S>0$, and we will work with the following function for $r\in(0,\infty)$:
$$
G_{\tau}(r) = -\Gamma\big(0,\tfrac{r^2}{2\tau}\big) - \ln(r^2),
$$
where $\xi=(\xi_1,\xi_2)$, $r= |\xi|$.

\begin{corollary}\label{cor530}
Fix $d>0$, $a\in (0,1)$, $T>S>0$. For $\eta=(\eta_1,\eta_2)$ let $\zeta(T,\eta;N) = N^{1/2} \zeta^{(n)}_k(T)$
$$
\textrm{where} \quad n=\left(dN+\left(\eta_1\sqrt{(1-a)d}+\eta_2\sqrt{ad}\right)\sqrt{N}\right)T, \qquad k= \left((1-a)dN+\eta_1\sqrt{(1-a)d}\sqrt{N}\right)T.
$$
Then for $\eta,\lambda,\mu,\nu\in \R^2$ (all different),
\begin{align*}
&\lim_{N\to\infty} {\rm Cov}\left(\zeta(T,\eta;N)-\zeta(T,\lambda;N),\, \zeta(S,\mu;N)-\zeta(S,\nu;N)\right)\\
&\qquad = \frac{S}{\pi d\sqrt{a(1-a)}} \Big(G_{\tau}(|\eta-\mu|)-G_{\tau}(|\eta-\nu|)-G_{\tau}(|\lambda-\mu|)+G_{\tau}(|\lambda-\nu|)\Big).
\end{align*}
\end{corollary}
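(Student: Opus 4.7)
The plan is to combine scale invariance of the $\zeta$-process with the propagator representation of two-time covariances and the bulk logarithmic fixed-time asymptotics established earlier in Section~\ref{seclargetimesim}.

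First, I would use the scaling identity $\zeta^{(n)}_k(cT) \stackrel{d}{=} \sqrt{c}\,\zeta^{(n)}_k(T)$ as processes in $T$, which follows directly from the definition $\zeta^{(n)}_k(T) = \lim_{L\to\infty} L^{-1/2}\xi^{(n)}_k(LT)$ in Proposition~\ref{PropLargeLsde} and is also reflected in the $1/T$-form of the drift matrix $A(T)$ there. Taking $c = 1/S$ yields
\begin{equation*}
{\rm Cov}\bigl(\zeta^{(n)}_k(T),\zeta^{(n')}_{k'}(S)\bigr) \;=\; S\,{\rm Cov}\bigl(\zeta^{(n)}_k(T/S),\zeta^{(n')}_{k'}(1)\bigr),
\end{equation*}
which immediately produces the overall factor $S$ in the corollary and reduces the problem to a two-time covariance with smaller time equal to $1$ and effective bulk density $d' = dS$ at the characteristic centres.

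Next, I would apply the propagator representation of Corollary~\ref{corPropagator}, writing the reduced covariance as $Y^{1}(T/S)\,{\rm Cov}(\cdot,\cdot)|_{T=1}$. Converting the sum over intermediate index pairs $(k'',n'')$ into an integral over bulk perturbation coordinates $\sigma\in\R^2$ at time $1$, Lemma~\ref{lemmaPropAsympt} (modified so that the time-$T/S$ input point may also carry a $\sqrt{N}$-scale perturbation away from the characteristic --- this amounts only to translating the Gaussian's centre, since the binomial mean of the propagator is linear in its input) gives a two-dimensional Gaussian density $p_\tau(\sigma - \sigma_\eta^{\ast})$ with variance $\tau = (T-S)/T$, centred at the characteristic pullback $\sigma_\eta^{\ast}$ of the time-$T$ index $(k,n)$. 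For the fixed-time-$1$ bulk covariance between two perturbed bulk points I would take the $T\to 1$ limit of Proposition~\ref{PropCorrCaracteristics}, in which the inner Gaussian becomes a delta, and invoke translation invariance of the leading logarithm to allow both points to be perturbed; this gives
\begin{equation*}
N\,{\rm Cov}(\zeta(1),\zeta(1)) \;=\; \frac{\ln(N/d')}{\pi d'\sqrt{a(1-a)}} \;-\; \frac{\ln|\sigma - \sigma_\mu^{\ast}|^2}{\pi d'\sqrt{a(1-a)}} + o(1).
\end{equation*}
Substituting into the propagator convolution and invoking the shifted form of the identity from Section~\ref{secidentify}, namely $G_\tau(|a-b|) = -\int_{\R^2} p_\tau(\sigma - a)\ln|\sigma - b|^2\,d\sigma$, the $\sigma$-integral collapses to $G_\tau(|\sigma_\eta^{\ast}-\sigma_\mu^{\ast}|)/(\pi d'\sqrt{a(1-a)})$ plus an $\eta,\lambda,\mu,\nu$-independent divergent piece. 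Multiplying by the $S$ from the first step and taking the four-term alternating difference cancels the divergence and, after the bookkeeping described below, yields the corollary's expression.

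The delicate point is tracking the several changes of perturbation-coordinates in step two: the corollary's coordinates $\eta,\mu$ are related to the natural bulk perturbation coordinates at times $T$ and $S$ by respective factors $\sqrt{T}$ and $\sqrt{S}$, and these in turn relate to the effective-density-$dS$ perturbation coordinates at rescaled time $1$ (in which the propagator asymptotics and the logarithmic fixed-time estimate naturally live) by a further $\sqrt{S}$-rescaling. One must verify that all of these $\sqrt{S}$-rescalings, together with the logarithmic identity $G_\tau(\sqrt{S}\,r) = G_{\tau/S}(r) - \ln S$, combine in the four-term alternating difference to leave precisely the $G_\tau(|\eta-\mu|)$ form with prefactor $S/(\pi d\sqrt{a(1-a)})$. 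Beyond this rescaling bookkeeping and the routine extension of the propagator asymptotics to a perturbed outer point, the argument is a standard convolution of a Gaussian kernel with a logarithm.
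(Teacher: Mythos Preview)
Your proposal is correct and follows essentially the same route as the paper. Both arguments (i) use the diffusive scaling $\zeta^{(n)}_k(cT)\stackrel{d}{=}\sqrt{c}\,\zeta^{(n)}_k(T)$ to extract the prefactor $S$ and reduce to the case $S=1$, and (ii) feed the resulting two-time covariance into the Gaussian-propagator/logarithmic-covariance machinery of Section~\ref{seclargetimesim}. The only real difference is one of packaging: the paper invokes Proposition~\ref{PropCorrCaracteristics} directly, absorbing the $\sqrt{N}$-perturbation of the time-$T$ point into slightly shifted parameters $\tilde a=a+O(N^{-1/2})$, $\tilde d=d+O(N^{-1/2})$ and then reading off the relevant $\xi_1,\xi_2$; you instead redo the content of that proposition by hand, allowing the outer point of Lemma~\ref{lemmaPropAsympt} to be perturbed (which, as you note, merely shifts the Gaussian's centre) and then convolving with the fixed-time logarithm. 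Your route makes the Gaussian-convolution-with-log structure more explicit, while the paper's route is shorter since the convolution has already been carried out once in Proposition~\ref{PropCorrCaracteristics}. The coordinate bookkeeping you flag as delicate is exactly the content of the paper's explicit substitutions for $\tilde a,\tilde d,\xi_1,\xi_2$.
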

\begin{proof}
Due to the scaling in Proposition~\ref{PropLargeLsde},
$$
{\rm Cov}\Big(\zeta\big(T,\eta;N\big),\zeta\big(S,\nu;N\big)\Big) =S\, {\rm Cov}\Big(\zeta\big(T/S,\eta;N\big),\zeta\big(1,\nu;N\big)\Big).
$$
By this and linearity, it suffices to show that
$$
\lim_{N\to\infty} {\rm Cov}\Big(\zeta\big(T,\eta;N\big),\zeta\big(1,\nu;N\big)\Big)- \frac{\ln(N/d)}{\pi d \sqrt{a(1-a)}} = \frac{1}{\pi d\sqrt{a(1-a)}} G_{\tau}\big( |\eta-\nu|\big),
$$
where $\tau$ is defined with $S=1$ (i.e. $\tau=(T-1)/T$).
This follows from the result of Proposition~\ref{PropCorrCaracteristics}. In that result, replace for the moment the notation $a,d$ by $\tilde{a},\tilde{d}$. Then, there are four free variables $\tilde{a},\tilde{d},\xi_1,\xi_2$. If we make the following substitutions
$$
\tilde{a} = a+ \frac{\eta_1 - a\eta_1 - \eta_2}{dT}N^{-1/2},\quad
\tilde{d} = d + \frac{\eta_1}{T} N^{-1/2},\quad
\xi_1 = \frac{\nu_2 - \eta_2/T}{\sqrt{d(1-a)}},\quad
\xi_2 = \frac{-\eta_1+\eta_2 + T(\nu_1-\nu_2)}{T\sqrt{ad}}.
$$
then up to negligible error, we turn the expression in Proposition~\ref{PropCorrCaracteristics} into the desired left-hand side formula above.
\end{proof}

The following calculations are formal. Consider the $(2+1)$-dimensional EW equation (\ref{EWeq}). We will calculate the covariance formally, ignoring the fact that $u$ is not function valued (see, for example,~\cite{Hairer}). We start by computing the invariant measure which turns out to be a Gaussian free field. Assuming ergodicity, the invariant solution can be written as
$$
u(t,x) = \int_{-\infty}^t \int_{\R^2} p_{t-s}(x-y)\, \xi(ds,dy)
$$
where the heat kernel is as above. From this, and the delta covariance of $\xi$ we find that
\begin{align*}
{\rm Cov}\big[u(t,x),u(\tilde t,\tilde x)\big] &= \int_{-\infty}^{t} \int_{-\infty}^{\tilde t} \int_{\R^2} \int_{\R^2}\,p_{t-s}(x-y)\,p_{\tilde{t}-\tilde{s}}(\tilde{x}-\tilde{y})\, {\rm Cov}\big[\xi(ds,dy),\,\xi(d\tilde{s},d\tilde{y})\big] \\
&=\int_{-\infty}^{\min(t,\tilde{t})} ds \int_{\R^2} dy\, p_{t-s}(x-y)\,p_{\tilde{t}-s}(\tilde{x}-y).
\end{align*}
From this we can calculate the full space-time covariance structure. First assume $t=\tilde{t}$ in which case the above integral evaluates to
$$
{\rm Cov}\big[u(t,x),\,u(t,\tilde x)\big] =(4\pi)^{-1} \Gamma\left(0,\frac{|x-\tilde x|^2}{4s}\right)\Bigg|_{s=0}^{s=\infty}.
$$
Notice that for $s$ near infinity,
$$(4\pi)^{-1} \Gamma\left(0,\frac{|x-\tilde x|^2}{4s}\right) \approx (4\pi)^{-1} \big(\ln s - \ln|x-\tilde x| + c\big)$$
for some constant $c$, whereas for $s$ near zero, the expression goes to 0. Therefore, if we look at the covariance of differences, we can remove the $\ln s$ divergence and the constant $c$. This implies that
$$
{\rm Cov}\big[u(t,x)-u(t,y) ,u(t,\tilde x) -u(t,\tilde y)\big] = -(2\pi)^{-1} \Big(\ln|x-\tilde x| -\ln|x-\tilde y| - \ln|y-\tilde x| + \ln|y-\tilde y|\Big),
$$
which shows that for any fixed time $t$, $u(t,x)$ is a Gaussian free field in $x\in \R$~\cite{She03,BF08}.

To compute the space-time covariance, we can use the above formulas. Alternatively, by the Duhamel principle we can write for $t>0$,
$$
u(t;x) = \int_{\R^2}dy\, p_t(x-y) \,u_0(y) + \int_{0}^t \int_{\R^2}p_{t-s}(x-y)\,\xi(ds,\,dy).
$$
For initial data $u_0(x)$ given by the Gaussian free field with the above covariance, using the independence of the noise $\xi$ with $u_0$ we compute
$$
{\rm Cov}\big(u(t,x),u(0,0)) = \int_{\R^2}dy \,p_t(x-y)\,{\rm Cov}[u_0(y),u_0(0)] = \frac{-1}{2\pi} \int_{\R^2}dy \,p_t(x-y)\,\ln(|y|) = \frac{G_{t}(|x|)}{4\pi}.
$$
By translation invariance we thus conclude that for $t>\tilde t$
$$
{\rm Cov}\big[u(t,x)-u(t,y) ,u(\tilde t,\tilde x) -u(\tilde t,\tilde y)\big] = (4\pi)^{-1} \Big(G_{t-\tilde t}(|x-\tilde x|) -G_{t-\tilde t}(|x-\tilde y|) - G_{t-\tilde t}(|y-\tilde x|) + G_{t-\tilde t}(|y-\tilde y|)\Big).
$$

\appendix
\section{Generalities of Gaussian processes}\label{AppGaussians}
Here we recall some basics of Gaussian processes as we will use them (see e.g.~\cite[Section VIII.6]{Ka07}) An $n$-dimensional diffusion process $X_t$ with linear drift, say drift $\mu=A X_t$ for a given matrix $A$ (possibly time-dependent), and space-independent dispersion matrix $\sigma$ is a solution of a system of SDE's
\begin{equation*}
dX^k_t= A(t) X_t dt+\sigma(t) dW_t
\end{equation*}
with $W_t$ being a standard $n$-dimensional Brownian motion. Then, the probability density $P(x,t)$ that the process $X$ is at $x$ at time $t$ satisfies the Fokker-Planck equation
\begin{equation*}
\frac{d P(x,t)}{dt} = -\sum_{i,j} A_{i,j} \frac{d}{dx_i} (x_j P(x,t)) +\frac12 \sum_{i,j} B_{i,j} \frac{d^2 P(x,t)}{dx_i dx_j},
\end{equation*}
where $B=\sigma^{\mathrm{T}}\sigma$ is the diffusion matrix.

In particular, if one starts with $\delta$-initial condition at $x(0)$, i.e., $P(x,0)=\prod_{i=1}^r \delta(x_i-x_i(0))$, then
\begin{equation*}
\EE(X(t))= Y(t) X(0),
\end{equation*}
where $Y(t)$ is the evolution matrix satisfying
\begin{equation}\label{eqAppPropagator}
\frac{d Y(t)}{dt} = A(t) Y(t),\quad Y(0)=\Id.
\end{equation}
Further, the solution of the Fokker-Plank equation is given by
\begin{equation*}
P(x,t)=\frac{1}{[(2\pi)^n\det(\Xi)]^{1/2}} \exp\left(-\frac12 (x-\EE(X(t))^{\mathrm{T}} \Xi^{-1} (x-\EE(X(t))\right)
\end{equation*}
where $\Xi(t)$ is the covariance matrix given by
\begin{equation*}
\Xi(t)=\int_0^t ds Y(t) Y^{-1}(s) B(s) Y^{-{\mathrm{T}}}(s) Y^{\mathrm{T}}(t).
\end{equation*}
Further, $\Xi$ can be characterized as the solution of the equation
\begin{equation*}
\frac{d\Xi}{dt}=A \Xi + \Xi A^{\mathrm{T}}+B,\quad \Xi(0)=0.
\end{equation*}

We compute the two-time distribution when such a Gaussian transition probability is applied to a Gaussian distribution with covariance matrix $C_1:=\Xi(t_1)$, i.e., with density given by \mbox{${\rm const}\cdot e^{-\frac12 x^{\mathrm{T}} C_1^{-1} x}$}. Denote as well $C_{1,2}=\Xi(t_2,t_1)$ and $t=t_2-t_1$. Then,
\begin{equation*}
\PP(x(t_1)\in dx, x(t_2)\in dy)={\rm const} \exp\left(-\frac12 \left[x^{\mathrm{T}} C_1^{-1} x+(y-Y(t))^{\mathrm{T}} C_{1,2}^{-1} (y-Y(t)x)\right]\right)dx dy.
\end{equation*}
The quadratic form in the parenthesis can be written as
\begin{equation*}
(x^{\mathrm{T}},y^{\mathrm{T}})
\left(\begin{array}{cc}
C_1^{-1}+ Y(t)^{\mathrm{T}} C_{1,2}^{-1} Y(t) & -Y^{\mathrm{T}}(t)\\
-C_{1,2}^{-1} Y(t) & C_{1,2}^{-1}
\end{array}\right)
\left(\begin{array}{c}
x \\ y
\end{array}\right).
\end{equation*}
We use the block matrix inversion formula
\begin{equation*}
\left(\begin{array}{cc}
a & b \\ c & d
\end{array}\right)^{-1}
=
\left(\begin{array}{cc}
-m^{-1} & m^{-1} b d^{-1} \\
d^{-1} c m^{-1} & d^{-1}-d^{-1} c m^{-1} b d^{-1}
\end{array}\right)
\end{equation*}
with $m=b d^{-1} c -a$ and obtain
\begin{equation*}
\left(\begin{array}{cc}
C_1^{-1}+ Y(t)^{\mathrm{T}} C_{1,2}^{-1} Y(t) & -Y^{\mathrm{T}}(t)\\
-C_{1,2}^{-1} Y(t) & C_{1,2}^{-1}
\end{array}\right)^{-1}
=
\left(\begin{array}{cc}
C_1 & C_1 Y^{\mathrm{T}}(t) \\
Y(t) C_1 & C_{1,2}+Y(t)C_1Y^{\mathrm{T}}(t).
\end{array}\right)
\end{equation*}
Thus this is the covariance matrix for the two-time distribution. In particular, the covariance between a site $x(t_1)$ and $y(t_2)$ is given by the application of the propagator $Y(t_2-t_1)$ to the covariance $C_1$ at time $t_1$.

\section{Additional $q$-Whittaker dynamics}\label{appother}

\subsection{Alpha dynamics}

In addition to the push-block alpha dynamics, there is also an RSK type dynamic (see~\cite[Section 6.2]{PM15}, or~\cite{P16}). Considering interlacing partitions $\bflambda$, we define a Markov transition matrix $P^{\rm{RSK}}_{\vec{a};\alpha}\big(\bflambda \to \bfnu\big)$ to a new set of interlacing partition $\bfnu$ according to the following update procedure. For $k=1,\ldots, N$ choose independent random variables $v_k$ distributed according to the $q$-geometric law with parameter $\alpha a_k$ (see Section~\ref{secqdef}). For $1\leq k\leq n-1$, let $c_k = \nu^{(n-1)}_{k}-\lambda^{(n-1)}_k$. Choose $w_1,\ldots, w_{n-1}$ independently so that $w_k\in \{0,1,\ldots, c_k\}$ is distributed according to
$$
\PP(w_k = s)= \varphi_{q^{-1}, q^{\lambda^{(n)}_k - \lambda^{(n-1)}_{k}}, q^{\lambda^{(n-1)}_{k-1}-\lambda^{(n-1)}_{k}}} \big(s | c_k\big),
$$
where we recall the convention that $\lambda^{(n)}_0=+\infty$ for all $n$.

Now update
$$
\nu^{(n)}_1 = \lambda^{(n)}_1 + w_1 +v_n,\qquad\textrm{and for $k\geq 2$ }\quad \nu^{(n)}_k = \lambda^{(n)}_{k} +w_k + c_{k-1}-w_{k-1}.
$$

\begin{proposition}
Define a Markov process indexed by $t$ on interlacing partitions $\bflambda(t)$ with packed initial data and Markov transition between time $t-1$ and $t$ given by $P^{\rm{RSK}}_{\vec{a};\alpha_{t}}\big(\bflambda(t-1) \to \bflambda(t)\big)$. Then, for any $t\in \{0,1,\ldots\}$, $\bflambda(t)$ is marginally distributed according to the $q$-Whittaker measure $\PP_{\vec{a};\vec{\alpha}(t)}$ with $\vec{\alpha}(t) = (\alpha_1,\ldots, \alpha_t)$.
\end{proposition}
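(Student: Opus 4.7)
The plan is to proceed by induction on $t$, reducing to the one-step claim: if $\bflambda$ is distributed according to $\PP_{\vec{a};\vec{\alpha}(t-1)}$, then $\bfnu$ obtained by applying the transition $P^{\rm RSK}_{\vec{a};\alpha_t}$ is distributed according to $\PP_{\vec{a};\vec{\alpha}(t)}$. The base case $t=0$ is the packed initial configuration, which is precisely the $q$-Whittaker measure associated to the empty alpha specialization.

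For the inductive step, the main tool is the skew Cauchy identity for $q$-Whittaker functions (the Macdonald $t=0$ degeneration of the skew Macdonald Cauchy identity),
\begin{equation*}
\sum_{\kappa} P_{\kappa/\lambda}(a)\, Q_{\kappa/\mu}(\alpha) \;=\; \frac{1}{(a\alpha;q)_\infty} \sum_{\pi} Q_{\lambda/\pi}(\alpha)\, P_{\mu/\pi}(a).
\end{equation*}
The RSK-type update is built so that it performs exactly one application of this identity per row. More concretely, writing the joint density $\PP_{\vec{a};\vec{\alpha}(t-1)}(\bflambda)\, P^{\rm RSK}_{\vec{a};\alpha_t}(\bflambda\to\bfnu)$ as a product of local factors indexed by $n=1,\ldots,N$ and summing out $\lambda^{(n)}$ from the bottom row upward, each level should telescope via the identity above, rearranging the product of $P$'s and $Q$'s from a $(\vec{\alpha}(t-1))$-configuration on $\bflambda$ into a $(\vec{\alpha}(t))$-configuration on $\bfnu$, while collecting the normalizing factors $1/(a_n\alpha_t;q)_\infty$ that assemble into the correct $\Pi(\vec{a};\vec{\alpha}(t))/\Pi(\vec{a};\vec{\alpha}(t-1))$.

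The key local input is that the conditional distribution of $\nu^{(n)}$ given $(\lambda^{(n)},\lambda^{(n-1)},\nu^{(n-1)})$ prescribed by the algorithm matches the elementary Markov kernel that realizes this one-row version of the skew Cauchy identity. The $q$-geometric variable $v_n$ accounts for the growth of the leftmost coordinate (producing the $1/(a_n\alpha_t;q)_\infty$ factor), while the $\varphi_{q^{-1},q^a,q^b}(\cdot|c_k)$-distributed variables $w_k$ redistribute the mass $c_k=\nu^{(n-1)}_k-\lambda^{(n-1)}_k$ between the $k$-th and $(k+1)$-st coordinates in a way that preserves interlacing and produces the correct skew $P$ and $Q$ factors; this is a direct translation of the general $q$-RSK construction of~\cite[Section~6]{PM15}. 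The main technical obstacle will be verifying this local identity in explicit form: at Macdonald $t=0$ the skew functions admit concrete formulas in terms of the $\phi_{\lambda/\mu}$ and $\psi_{\lambda/\mu}$ factors from Section~\ref{alphadyn}, so the identity becomes a finite $q$-Pochhammer summation identity which either follows from~\cite[Section~6.1]{PM15} (where the distribution $\varphi_{q^{-1},q^a,q^b}$ is introduced precisely for this purpose) or can be checked directly by induction on the length of the partition.
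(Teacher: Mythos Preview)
Your proposal is correct in spirit and outlines the actual mechanism behind the result: induction on $t$, with the one-step claim reduced via the skew Cauchy identity for $q$-Whittaker functions, and the local verification that the $q$-geometric plus $\varphi_{q^{-1},q^a,q^b}$ update produces the required factorization. This is precisely the content of~\cite[Theorem~6.4]{PM15}.

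The paper itself, however, does not reproduce any of this argument; its proof consists solely of the citation ``This follows from~\cite[Theorem~6.4]{PM15}.'' So your sketch is not a different route so much as an unpacking of the cited reference. If you intend this as a self-contained proof rather than a pointer, the one place requiring genuine work is the local identity you flag at the end: verifying that the conditional law of $\nu^{(n)}$ given $(\lambda^{(n)},\lambda^{(n-1)},\nu^{(n-1)})$ matches the kernel arising from the skew Cauchy identity is exactly the computation carried out in~\cite[Section~6]{PM15}, and is not entirely trivial (it uses the explicit branching coefficients $\phi$, $\psi$ and the $q$-Hahn-type summation). Your plan to either cite that section or check it directly is the right call; just be aware that ``by induction on the length of the partition'' understates the bookkeeping involved.
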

\begin{proof}
This follows from~\cite[Theorem~6.4]{PM15}.
\end{proof}

In Section~\ref{secalphaode} we probe the limiting difference equation which arises from the push-block dynamics as $q\to 1$. There should be difference equations for the RSK type dynamics above, though we do not pursue this here. We also do not pursue any fluctuation limits.

\subsection{Plancherel dynamics}
We consider two additional dynamics besides the push-block Plancherel dynamics.

The ``right-pushing dynamics'' were introduced as~\cite[Dynamics 9]{BP13b}. For $2\leq k\leq n\leq N$, each $\lambda^{(n)}_{k}$ evolves according to the push-block dynamics. The only difference is in the behavior of $\lambda^{(n)}_1$. For $1\leq n\leq N$, each $\lambda^{(n)}_1$ jumps (i.e., increases value by one) at rate $a_n q^{\lambda^{(n-1)}_{1}-\lambda^{(n)}_{2}}$. When $\lambda^{(n)}_{1}$ jumps it deterministically forces $\lambda^{(n+1)}_1,\ldots, \lambda^{(N)}_1$ to likewise increase by one. It is clear that these dynamics preserve the interlacing structure of $\bflambda$.

The ``RSK type dynamics'' were introduced as~\cite[Dynamics 8]{BP13b} (see also~\cite{P16}). For $1\leq n\leq N$, each $\lambda^{(n)}_1$ has its own independent exponential clock with rate $a_n$. When the clock rings, the particle $\lambda^{(n)}_1$ {\it jumps} (i.e., increases value by one). These are all of the independent jumps, however there are certain triggered moves. When a particle $\lambda^{(n-1)}_k$ jumps, it triggers a single jump from some coordinate of $\lambda^{(n)}$. Let $\xi(k)$ represent the maximal index less than $k$ for which increasing $\lambda^{(n)}_{\xi(k)}$ by one does not violate the interlacing rules between the new $\lambda^{(n)}$ and $\lambda^{(n-1)}$. With probability
$$
q^{\lambda^{(n)}_k-\lambda^{(n-1)}_k} \frac{1- q^{\lambda^{(n-1)}_{k-1}-\lambda^{(n)}_k}}{1- q^{\lambda^{(n-1)}_{k-1}-\lambda^{(n-1)}_k}}
$$
(recall the convention that $\lambda^{(n)}_0\equiv +\infty$) $\lambda^{(n)}_{\xi(k)}$ jumps; and with complementary probability $\lambda^{(n)}_{k+1}$ jumps. It is clear that these dynamics maintain the interlacing structure of $\bflambda$.

\begin{proposition}
For each of the right-pushing and RSK type dynamics described above, define continuous time Markov processes, all denoted by $\bflambda(\gamma)$, started from packed initial data. Then, for any $\gamma>0$, $\bflambda(\gamma)$ is marginally distributed according to the Plancherel specialized $q$-Whittaker process $\PP_{\vec{a};\gamma}$.
\end{proposition}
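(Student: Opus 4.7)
The plan is to deduce both assertions from the general multivariate construction of Markov chains preserving Macdonald/$q$-Whittaker processes developed in~\cite{BP13b}, specializing at the Macdonald parameter $t=0$ and at the Plancherel limit. The common framework is the following. For each level $n$, let $P^{(n)}_\gamma$ denote a continuous-time Markov semigroup on partitions of length $n$ which preserves the $q$-Whittaker measure with specialization $\vec a\mid_{\{1,\dots,n\}}$ and Plancherel parameter $\gamma$. Let $\Lambda^{n}_{n-1}(\lambda^{(n)},\cdot)$ denote the conditional distribution of $\lambda^{(n-1)}$ given $\lambda^{(n)}$ under the ascending $q$-Whittaker process. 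The key algebraic input is the intertwining relation
\begin{equation*}
\Lambda^{n}_{n-1}\, P^{(n-1)}_\gamma \;=\; P^{(n)}_\gamma\, \Lambda^{n}_{n-1},
\end{equation*}
which, combined with the Diaconis--Fill coupling, yields a Markov process on the full interlacing array whose time-$\gamma$ marginal is the ascending $q$-Whittaker process $\PP_{\vec a;\gamma}$. The proof for each dynamic thus reduces to (i) identifying the univariate semigroup $P^{(n)}_\gamma$ that lives ``at the top'' of the array, and (ii) verifying that the prescribed cross-level jump rules coincide with the Diaconis--Fill coupling (equivalently, that the specified rates solve the intertwining).

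First I would handle the right-pushing dynamics. On level $n$, the bulk particles $\lambda^{(n)}_k$, $k\geq 2$, jump at the push-block rates of~(\ref{eqDynamics}) with a triggered ``block'' from above, while $\lambda^{(n)}_1$ jumps at the rate $a_n q^{\lambda^{(n-1)}_1-\lambda^{(n)}_2}$ and, upon jumping, deterministically pushes all $\lambda^{(m)}_1$ for $m>n$. The univariate generator for $\lambda^{(n)}$ in isolation is the natural $q$-analog of a TASEP-type chain that increments $\lambda^{(n)}_1$ at a free rate $a_n$ and respects the order through push rules; its preservation of the $q$-Whittaker marginal can be checked by applying it to the explicit formula $P_{\lambda^{(n)}}(\vec a)Q_{\lambda^{(n)}}(\gamma)/\Pi(\vec a;\gamma)$ and using the derivative identity $\frac{d}{d\gamma}\Pi(u;\gamma)=u\Pi(u;\gamma)$ together with the Pieri-type rules for skew $P_{\nu/\lambda^{(n)}}(\square)$. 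Given this, the commutation with $\Lambda^{n}_{n-1}$ follows from the $t=0$ specialization of~\cite[Theorem 6.1]{BP13b}; substitution of the explicit Plancherel rates into the intertwining identity is a finite computation involving only the factor $(1-q^{\lambda^{(n-1)}_{k-1}-\lambda^{(n)}_{k}})$, $(1-q^{\lambda^{(n)}_{k}-\lambda^{(n)}_{k+1}+1})$ and $(1-q^{\lambda^{(n)}_{k}-\lambda^{(n-1)}_{k}+1})$ terms appearing in~(\ref{eqDynamics}).

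The RSK type dynamics requires an analogous but more delicate analysis. Here only the leftmost particles $\lambda^{(n)}_1$ carry independent Poisson clocks at rate $a_n$, and a jump of $\lambda^{(n-1)}_k$ triggers exactly one jump on level $n$, distributed between index $\xi(k)$ and $k+1$ with probabilities written in the statement. These probabilities are exactly the Bernoulli split that arises when one writes the commutation $\Lambda^{n}_{n-1} P^{(n-1)}_\gamma=P^{(n)}_\gamma \Lambda^{n}_{n-1}$ in coordinates and identifies the conditional probability of which particle at level $n$ must move to preserve the conditional law $\Lambda^{n}_{n-1}(\lambda^{(n)},\lambda^{(n-1)})$. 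Concretely, using the explicit form of $\Lambda^{n}_{n-1}$ (which factors into one-dimensional $q$-Hahn-type weights $\varphi_{q^{-1},q^a,q^b}$ as in Section~\ref{secqdef}), the ratio $\Lambda^{n}_{n-1}(\lambda^{(n)\text{+trigger}},\lambda^{(n-1)\text{+jump}})/\Lambda^{n}_{n-1}(\lambda^{(n)},\lambda^{(n-1)})$ matches the stated conditional probability at the index $\xi(k)$. With this identification, the construction in~\cite[Section 3]{BP13b} (together with the $t=0$ degeneration) immediately gives that the resulting multivariate process has the claimed marginal.

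The main obstacle in both cases is bookkeeping rather than conceptual: verifying that the prescribed jump rates reproduce exactly the intertwining kernel, and in particular that the deterministic push/trigger rules correctly handle boundary cases $\lambda^{(n)}_k=\lambda^{(n-1)}_{k-1}$ or $\lambda^{(n)}_k=\lambda^{(n)}_{k+1}$ (where some probabilities degenerate to $0$ or $1$). For the RSK dynamics one also needs to rule out simultaneous triggered jumps propagating through all levels instantaneously, which can be handled because the independent clocks at the bottom are Poisson and trigger cascades that almost surely do not accumulate in finite time, so the process is well-defined. Once these technicalities are settled, the time-$\gamma$ marginal distribution follows by induction on $n$ from the intertwining, exactly as in the proof of Proposition~\ref{propplanpres}.
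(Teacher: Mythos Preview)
Your approach is essentially the same as the paper's: both reduce the claim to the general machinery of~\cite{BP13b}. The paper is far more terse, however---its entire proof is the one-line citation ``This follows from a combination of~[Proposition~8.2 and Theorem~6.13]{BP13b},'' whereas you have attempted to sketch the intertwining/Diaconis--Fill mechanism that underlies those results. Your expansion is reasonable in spirit, though your internal citations to~\cite{BP13b} (Theorem~6.1, Section~3) differ from the paper's (Proposition~8.2, Theorem~6.13), and some of your descriptions of the single-level ``univariate'' generators are heuristic rather than precise. Since the dynamics in question are literally Dynamics~8 and~9 of~\cite{BP13b}, and their preservation of the $q$-Whittaker process is established there, a direct citation suffices and there is no need to re-derive the intertwining.
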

\begin{proof}
This follows from a combination of~\cite[Proposition 8.2 and Theorem 6.13]{BP13b}.
\end{proof}

In the same manner as in Section~\ref{secplancode}, we can derive ODEs for the LLN from the above continuous time dynamics. In the right-pushing case, the dynamics are the same as in the push-block case aside from $\lambda^{(n)}_1$. Hence by the same reasoning that for $k\geq 2$, (\ref{eqnpushode}) still holds.
From the right-pushing rule, we similarly deduce that the following should hold
$$
\frac{d x^{(n)}_{1}(\tau)}{d\tau} = a_n e^{-x^{(n-1)}_1(\tau) + x^{(n)}_2(\tau)} + \sum_{\ell=1}^{n-1}\frac{d x^{(\ell)}_1(\tau)}{d\tau}.
$$

For the RSK type dynamics, let us assume that all particles are well-spaced (as they surely will be after a short amount of time). Then we need not worry about transferring jumps in the RSK type dynamics. Thus, by similar reasoning as in the push-block case we find that
$$
\frac{d}{d\tau} x^{(1)}_{1}(\tau) = a_1, \qquad \frac{d}{d\tau} x^{(n)}_{1}(\tau) = a_n + \frac{d}{d\tau}x^{(n-1)}_1(\tau) \, \cdot \, e^{-x^{(n)}_1(\tau)+x^{(n-1)}_{1}(\tau)}
$$
and for $k\geq 2$,
\begin{align*}
 \frac{d}{d\tau} x^{(n)}_{k}(\tau) &= \frac{d}{d\tau} x^{(n-1)}_{k}(\tau) e^{-x^{(n)}_k(\tau)+x^{(n-1)}_k(\tau)}\, \frac{1-e^{-x^{(n-1)}_{k-1}(\tau)+x^{(n)}_k(\tau)}}{1-e^{-x^{(n-1)}_{k-1}(\tau)+x^{(n-1)}_k(\tau)}}\\
 &\,\,\,+ \frac{d}{d\tau} x^{(n-1)}_{k-1}(\tau) \bigg(1- e^{-x^{(n)}_{k-1}(\tau)+x^{(n-1)}_{k-1}(\tau)}\, \frac{1-e^{-x^{(n-1)}_{k-2}(\tau)+x^{(n)}_{k-1}(\tau)}}{1-e^{-x^{(n-1)}_{k-2}(\tau)+x^{(n-1)}_{k-1}(\tau)}}\bigg).
\end{align*}
In these equations, on the right-hand side the differential term $d/d\tau$ gives the rate of jumps from below whereas the terms multiplying that correspond to the proportion of this jump rate which is transferred to $x^{(n)}_k$.

We do not pursue these alternative dynamics any further, though note that they may yield different fluctuation SDEs than in the push-block case (though they will all have the same marginals).

\subsection{Positivity in determinantal expressions}
Recall that in Corollary~\ref{CorPolynomials}, equation (\ref{eq3.16p}) provides the determinantal formula

$$e^{-(x^{(n)}_{n}(\tau) +\cdots + x^{(n)}_{n-r+1}(\tau))} = e^{-\tau r} \det\Big[G_{r,\tau}(n+1-r+j-i)\Big]_{i,j=1}^r.$$

We show below that
$$ e^{-\tau r} \det\Big[G_{r,\tau}(n+1-r+j-i)\Big]_{i,j=1}^r =e^{-\tau r} p^{n}_r(\tau),$$
where $p^{n}_r(\tau)$ is a polynomial in $\tau$ with \emph{positive coefficients}. This positivity is surprising and its origins warrants further investigation. % and we suspect has to do with a cluster algebra related to the law of large number evolution. \note{cluster algebra reference}

The above representation is shown by realizing the determinant as a partition function for a certain system of non-intersecting paths with positive weights. We explain this for the above determinant, equation (\ref{eq3.16p}) in the main text, as well as the alpha version, equation (\ref{eq3.16}) in the main text.

\begin{figure}
\centering
\begin{subfigure}
 \centering
\psfrag{0}[r]{$0$}
\psfrag{rt}[r]{$r+t$}
\psfrag{r}[r]{$r$}
\psfrag{1}[c]{$1$}
\psfrag{n}[c]{$n$}
 \includegraphics[height=5cm]{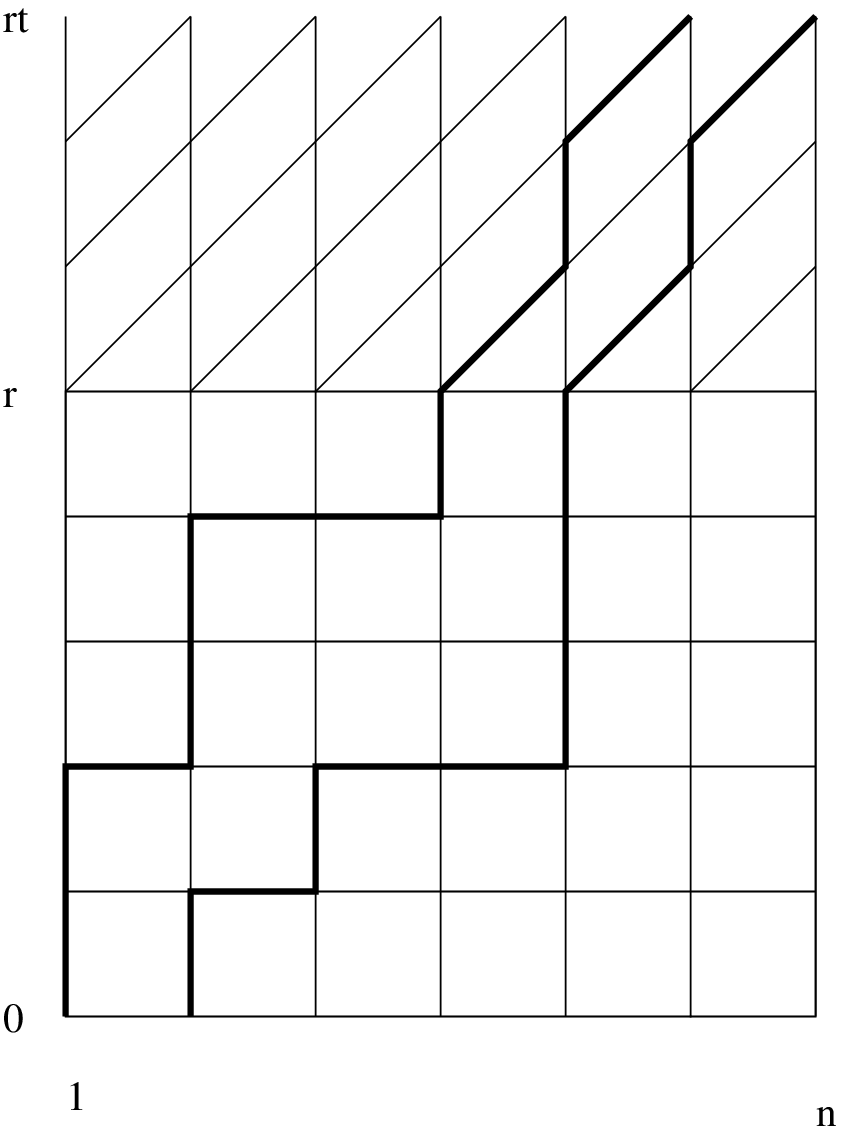}
\end{subfigure}\hspace{2cm}
\begin{subfigure}
 \centering
\psfrag{0}[r]{$0$}
\psfrag{r-1}[r]{$r-1$}
\psfrag{r}[r]{$r$}
\psfrag{1}[c]{$1$}
\psfrag{n}[c]{$n$}
 \includegraphics[height=5cm]{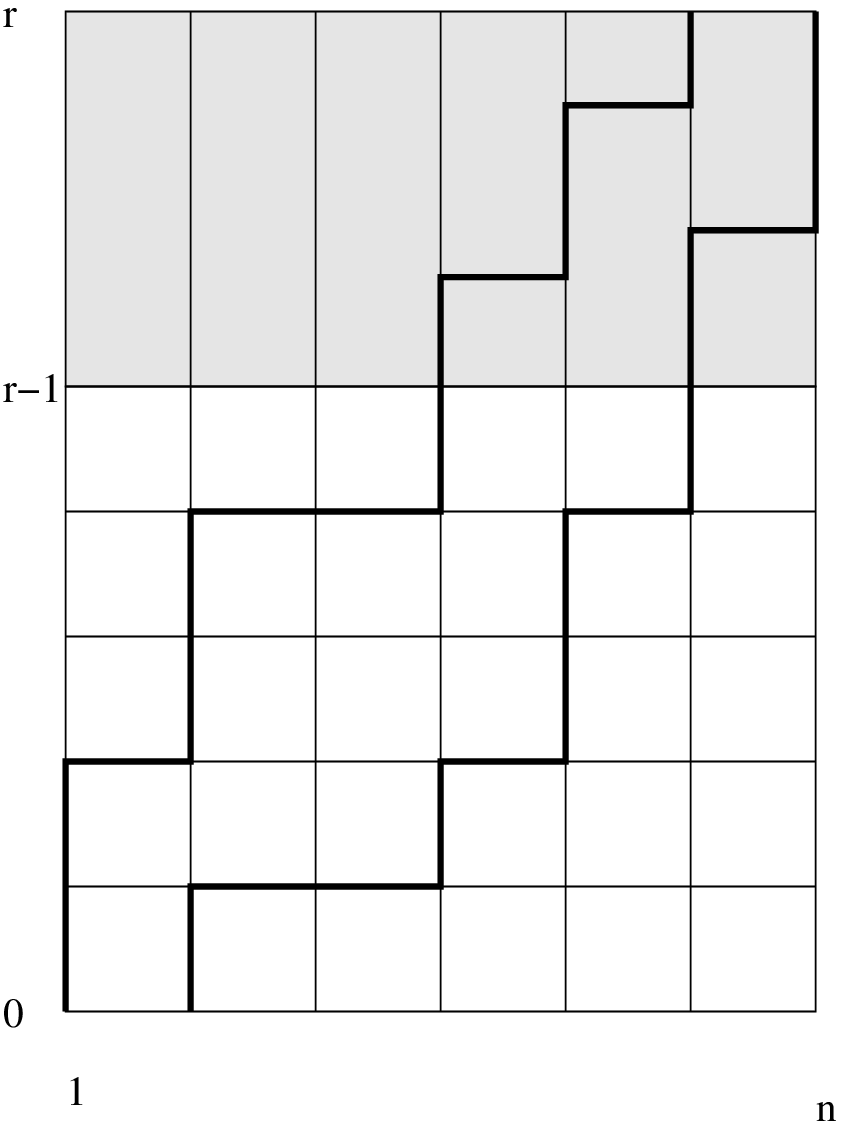}
\end{subfigure}
\caption{Karlin-McGregor interpretation of the determinant in (\ref{eq3.16}) and (\ref{eq3.16p}).}
\label{FigKarlinMcGregor}
\end{figure}

It is possible to represent the determinant in (\ref{eq3.16}) in terms of the partition function for a collection of $r$ non-intersecting paths on a certain weighted lattice. Consider the lattice on the left of Figure~\ref{FigKarlinMcGregor} which has width $n$ and height $r+t$. The bottom $r$ portion of the lattice is the standard square lattice, and every edge (horizontal and vertical) has a weight of 1. The top $t$ portion of the lattice is composed of vertical edges and diagonal up-right edges. Each diagonal edge between level $r+\ell$ and $r+\ell+1$ has weight $1-\alpha_{\ell}$ and each vertical edge between those levels has weight $\alpha_\ell$. The weight of a directed path (only taking up or right edges for the first $r$ levels and then up or up-right edge for the remaining levels) from level 1, position $i$ to level $r+t$, position $j$ ($1\leq i\leq j\leq n$) is the product of the weights along the path. The partition function is the sum of these weights over all such paths and is readily computed as
\begin{equation}\label{eq3.19}
\sum_{c=i}^{j}\binom{r}{c-i} e_{j-c}(1-\vec{\alpha};\vec{\alpha}) = \sum_{\ell=0}^{j-i}e_{\ell}(1-\vec{\alpha};\vec{\alpha}) \frac{(r)_{j-i-\ell}}{(j-i-\ell)!}=G_{r,j-i}(j-i+1).
\end{equation}
The Lindstr\"om-Gessel-Viennot theorem implies that the partition function for a collection of $r$ non-intersecting paths is written as an $r$-by-$r$ determinant. In particular, taking the starting points on level one of the $r$ paths to be $(1,\ldots, r)$ and the ending points on level $r+t$ to be $(n+1-r,\ldots, n)$ we find that this partition function is exactly $\det[G_{r,t}(n+1-r+j-i)]_{i,j=1}^r$.

Similarly, in the Plancherel case of (\ref{eq3.16p}) (see the right part of Figure~\ref{FigKarlinMcGregor}) we consider $r$ non-intersecting paths from positions $(1,\ldots,r)$ to $(n+1-r,\ldots,n)$ such that in the first part they either go up or to the right until reaching level $r-1$, in the second part they perform one-sided continuous simple random walk with jump rate $\tau$ during a time span of $1$. A combination of the Lindstr\"om-Gessel-Viennot and Karlin-McGregor theorems imply that the probability that these $r$ paths do not intersect is proportional to $e^{-\tau r} \det[G_{r,\tau}(n+1-r+j-i)]_{i,j=1}^r$. On the other hand, for a single path, the probability of going from a fixed starting point to fixed ending point is proportional to $e^{-\tau}$ times a polynomial in $\tau$ with positive coefficients. Therefore the probability of the $r$ non-intersecting paths also takes the form $e^{-\tau r}$ times a polynomial in $\tau$ with positive coefficients, which shows the positivity of the polynomial $p^{n}_r(\tau)$.

\section{Proof of Proposition~\ref{propMatrixIdentities}}\label{AppMatrixIdentites}
Let us first prove (\ref{eqApp12}). Doing linear combinations of columns and using the relation (\ref{eqApp11}), the determinants in (\ref{eqApp12}) can be rewritten as follow:
\begin{equation}
\begin{aligned}
Q_1&:=\det[B_{i,j}]_{1\leq i,j\leq M+1} =
\left|\left(
 \begin{array}{cccc}
 C_{1,1} & \cdots & C_{1,M} & B_{1,M+1} \\
 \vdots & \ddots & \vdots & \vdots \\
 C_{M+1,1} & \cdots & C_{M+1,M} & B_{M+1,M+1} \\
 \end{array}
 \right)
\right|,\\
Q_2&:= \det[C_{i+1,j+1}]_{i,j=1}^M,
\end{aligned}
\end{equation}
and
\begin{equation}
\begin{aligned}
Q_3&:=\det[C_{i,j}]_{i,j=1}^{M+1}, \\
Q_4&:=\det[B_{i+1,j+1}]_{i,j=1}^M =
\left|\left(
 \begin{array}{cccc}
 C_{2,2} & \cdots & C_{2,M} & B_{2,M+1} \\
 \vdots & \ddots & \vdots & \vdots \\
 C_{M+1,1} & \cdots & C_{M+1,M} & B_{M+1,M+1} \\
 \end{array}
 \right)
\right|,
\end{aligned}
\end{equation}
and finally
\begin{equation}
\begin{aligned}
Q_5&:=\gamma \det[B_{i,j+1}]_{i,j=1}^{M+1} =
-\left|\left(
 \begin{array}{cccc}
 C_{1,2} & \cdots & C_{1,M+1} & B_{1,M+1} \\
 \vdots & \ddots & \vdots & \vdots \\
 C_{M+1,2} & \cdots & C_{M+1,M+1} & B_{M+1,M+1} \\
 \end{array}
 \right)
\right|,\\
Q_6&:=\det[C_{i+1,j}]_{i,j=1}^M.
\end{aligned}
\end{equation}
With these notations we have $(\ref{eqApp12})=Q_1 Q_2 -Q_3 Q_4 + Q_5 Q_6$.

Let us define the following $(2M+1)\times (2M+1)$ matrix,
\begin{equation}
Q=\left(
 \begin{array}{ccccccccc}
 C_{1,2} & \cdots & C_{1,M} & C_{1,M+1} & B_{1,M+1} & C_{1,1} &0 & \cdots & 0\\
 \vdots & \ddots & \vdots & \vdots & \vdots& \vdots &\vdots & \ddots & \vdots \\
 C_{M+1,2} & \cdots & C_{M+1,M} & C_{M+1,M+1} & B_{M+1,M+1} &C_{M+1,1} &0 & \cdots & 0\\
 0 & \cdots & 0 & C_{2,M+1} & B_{2,M+1} & C_{2,1} &C_{2,2} & \cdots & C_{2,M} \\
 \vdots & \ddots & \vdots & \vdots & \vdots & \vdots &\vdots & \ddots & \vdots \\
 0 & \cdots & 0 & C_{M+1,M+1} & B_{M+1,M+1} &C_{M+1,1} &C_{M+1,2} & \cdots & C_{M+1,M} \\
 \end{array}
 \right).
\end{equation}
Next, notice that for a square block matrix of the form $\left(\begin{array}{cc} \alpha & 0\\ 0 &\beta\end{array}\right)$, its determinant is always zero unless $\alpha$ (and thus $\beta$) are square matrices. Adding the block of the last $M-1$ columns to the first $M-1$ columns and then subtracting rows $1+j$ from $M+1+j$, $j=1,\ldots,M$, we obtain a matrix block matrix with zeroes but with $\alpha$ of size $(M+2)\times (M+1)$. Thus $\det(Q)=0$.

In $Q$ there are three columns without zero entries. Call the first block with $C$'s above the zeroes as $A_1$ and the last block below the zeroes as $A_2$. Then we can write $Q$ in the following form
\begin{equation}
Q=\left(
 \begin{array}{ccccc}
 A_1 & U_1 & U_2 & U_3 &0\\
 0 & L_1 & L_2 & L_3 & A_2
 \end{array}
 \right),
\end{equation}
where $U_i$ are $(M+1)$-vectors and $L_i$ are $M$-vectors. By multi-linearity of the determinant, we have that $\det(Q)$ equals the sum of the determinants of the matrices obtained by replacing for each pair $(U_i,L_i)$, $i=1,2,3$, one of the elements by the vector of zeroes. Thus obtained matrices are of the block matrix form with zero corners as described above but, except if one sets exactly one of the $U_i=0$, the $\alpha$ matrix is not square. Thus we have
\begin{equation}
\begin{aligned}
0=\det(Q)&=\det\left(
 \begin{array}{ccccc}
 A_1 & 0 & U_2 & U_3 &0\\
 0 & L_1 & 0 & 0 & A_2
 \end{array}
 \right)+
 \det\left(
 \begin{array}{ccccc}
 A_1 & U_1 &0 & U_3 &0\\
 0 & 0& L_2 & 0 & A_2
 \end{array}
 \right)\\
 &+\det\left(
 \begin{array}{ccccc}
 A_1 & U_1 & U_2 &0 &0\\
 0 & 0& 0 & L_3 & A_2
 \end{array}
 \right) = -Q_1 Q_2 + Q_3 Q_4 - Q_5 Q_6 = - (\ref{eqApp12}).
\end{aligned}
\end{equation}

Next we prove (\ref{eqApp13}). In the first step, using linear combinations of columns, we can replace in the determinants of (\ref{eqApp13}), the $B_{i,j}$'s with $C_{i,j}$'s except for the last column. This gives
\begin{equation}
\begin{array}{ll}
P_1:=\gamma \det[B_{i,j}]_{i,j=1}^{M+1} = Q_1, &P_2:=\det[C_{i+1,j+1}]_{i,j=1}^{M-1},\\[0.5em]
P_3:=\det[C_{i,j}]_{i,j=1}^{M}, &P_4:=\det[B_{i+1,j+1}]_{i,j=1}^{M}=Q_4,
\end{array}
\end{equation}
and finally
\begin{equation}
\begin{aligned}
P_5&:=\det[B_{i,j+1}]_{i,j=1}^{M} =
\left|\left(
 \begin{array}{cccc}
 C_{1,2} & \cdots & C_{1,M} & B_{1,M+1} \\
 \vdots & \ddots & \vdots & \vdots \\
 C_{M,2} & \cdots & C_{M,M} & B_{M,M+1} \\
 \end{array}
 \right)
\right|,\\
P_6&:=\det[C_{i+1,j}]_{i,j=1}^M.
\end{aligned}
\end{equation}
We have to prove $(\ref{eqApp13})=P_1 P_2 - P_3 P_4 + P_5 P_6 =0$. Now we have written all the $P_i$'s in terms of $C_{i,j}$'s and sometimes one single column of $B_{k,M+1}$'s. Let us show that the factor multiplying $B_{k,M+1}$ equals zero for all $k=1,\ldots,M+1$. First, for $k=1$ (resp.\ $k=M+1$), it is immediate to see that the factors are zero, since there are only two contributing terms: one from $P_1$ and the other one from $P_4$ (resp.\ $P_5$). Now take a fixed $k\in\{2,\ldots,M\}$. Then, the factor in (\ref{eqApp13}) which multiplies $B_{k,M+1}$ is given by the sum of these three terms:
\begin{equation}
A_1=\left|\left(
 \begin{array}{cccccc}
 C_{2,2} & \cdots & C_{2,M} & & &\\
 \vdots & \ddots & \vdots & & 0 &\\
 C_{M+1,2} & \cdots & C_{M+1,M} & & &\\
 & & & C_{1,1} & \cdots & C_{1,M}\\
 & 0 &  & \vdots &\textrm{No }C_{k,\cdot} & \\
 & & & C_{M,2} & \cdots & C_{M,M}\\
 \end{array}
 \right)
\right|,
\end{equation}
where $\textrm{No }C_{k,\cdot}$ means that the row with the $C_{k,j}$'s is missing,
\begin{equation}
A_2=-\left|\left(
 \begin{array}{cccccc}
 C_{1,1} & \cdots & C_{1,M} & & &\\
 \vdots & \ddots & \vdots & & 0 &\\
 C_{M,1} & \cdots & C_{M,M} & & &\\
 & & & C_{2,2} & \cdots & C_{2,M}\\
 & 0 &  & \vdots &\textrm{No }C_{k,\cdot} & \\
 & & & C_{M+1,2} & \cdots & C_{M+1,M}\\
 \end{array}
 \right)
\right|,
\end{equation}
and
\begin{equation}
A_3=\left|\left(
 \begin{array}{cccccc}
 C_{2,1} & \cdots & C_{2,M} & & &\\
 \vdots & \ddots & \vdots & & 0 &\\
 C_{M+1,1} & \cdots & C_{M+1,M} & & &\\
 & & & C_{1,2} & \cdots & C_{1,M}\\
 & 0 &  & \vdots &\textrm{No }C_{k,\cdot} & \\
 & & & C_{M,2} & \cdots & C_{M,M}\\
 \end{array}
 \right)
\right|.
\end{equation}
We need to show that $A_1+A_2+A_3=0$. Define the matrix
\begin{equation}
P=\left|\left(
 \begin{array}{cccccc}
 C_{1,1} & \cdots & C_{1,M} & C_{1,2} & \cdots & C_{1,M}\\
 \vdots & \ddots & \vdots & & 0 &\\
 C_{k,1} & \cdots & C_{k,M} & C_{k,2} & \cdots & C_{k,M}\\
 \vdots & \ddots & \vdots & & 0 &\\
 C_{M+1,1} & \cdots & C_{M+1,M} & C_{M+1,2}& \cdots& C_{M+1,M}\\
 & & & C_{2,2} & \cdots & C_{2,M}\\
 & 0 &  & \vdots &\textrm{No }C_{k,\cdot} & \\
 & & & C_{M,2} & \cdots & C_{M,M}\\
 \end{array}
 \right)
\right|,
\end{equation}
where the top-right $(M+1)\times (M-1)$ block contains only three non-zero rows. It is easy to verify that $\det(P)=0$, since by linear combinations of rows and columns we can delete the three lines in the just mentioned block. Expanding the determinant by multi-linearity, for each of the three rows of $P$ without zeroes, we can decide whether to keep the first $M$ terms or the last $M-1$. Only when we replace with zeroes exactly one set of the $M$ terms and the other two sets of $M-1$ terms we get a non-zero determinant by the same argument with block matrices with zero corners as used above. Up to reordering the columns, we have a block diagonal determinant, leading (up to a $(-1)^M$ factor) to $A_1$ when we keep $(C_{k,2},\cdots,C_{k,M})$, to $A_2$ when we keep $(C_{M+1,2},\cdots,C_{M+1,M})$ and $A_3$ when we keep $(C_{1,2},\cdots,C_{1,M})$. This finishes the proof of the identity (\ref{eqApp13}).

%\bibliographystyle{plain}
%\bibliography{Biblio}

\end{document}